\def\fullpage
\newtheorem{theorem}{Theorem}[section]
\newtheorem{lemma}[theorem]{Lemma} 
\newtheorem{corollary}[theorem]{Corollary}
\theoremstyle{definition}
\newtheorem*{remark}{Remark}
\newtheorem*{acknowledgements}{Acknowledgements}
\numberwithin{equation}{section}
\newcommand{\ex}{\E}
\newcommand{\eqn}[1]{\eqref{#1}}
\newcommand{\aas}{\whp} 
\newcommand{\remove}[1]{}
\newcommand{\eq}{\begin{equation}}
\newcommand{\en}{\end{equation}}
\newcommand\eps{\epsilon}
\renewcommand\epsilon{\varepsilon}
\newcommand\Var{\operatorname{Var}}
\newcommand\E{\operatorname{\mathbb E{}}}
\newcommand\pr{\operatorname{\mathbb P{}}}
\newcommand\bb[1]{\bigl(#1\bigr)}
\newcommand\whp{whp}
\newcommand\Po{\operatorname{Po}}
\newcommand\diam{\operatorname{diam}}
\renewcommand\Pr{{\mathbb P}}
\newcommand\Ga{\Gamma}
\newcommand\la{\lambda}
\newcommand\las{\lambda_\star}
\newcommand\bp{{\mathfrak X}}
\newcommand\ceil[1]{\lceil#1\rceil}
\newcommand\floor[1]{\lfloor#1\rfloor}
\newcommand\Gat{\Gl{t}}
\newcommand\Gatm{\Glm{t}}
\newcommand\Gl[1]{G_{\le #1}}
\newcommand\Glm[1]{G_{\le #1}^0}
\newcommand\isom{\cong}
\newcommand\dd{\,{\mathrm d}}
\newcommand\Bi{\mathrm{Bi}}
\newcommand\amax{a_{\mathrm{max}}}
\newcommand\XP{{\tilde X}^+}
\newcommand\YP{{\tilde Y}^+}
\newcommand\tY{{\tilde Y}}
\newcommand\bpP{{\tilde \bp}^+}
\newcommand\Op{O_{\mathrm p}} 
\newcommand\littleop{o_{\mathrm p}} 
\newcommand\downto{\searrow}
\newcommand\noproof{\hfill$\Box$}
\newcommand\RP{{\mathbb R}^+}
\newcommand\Y{\mathcal{Y}}
\newcommand\cc{{\mathrm{c}}}
\newcommand\cs{{\tilde s}}
\newcommand\Ts{{T^\star}}
\newcommand\bx{{\bar x}}
\newcommand\by{{\bar y}}
\newcommand\rx{{\bf x}}
\newcommand\ry{{\bf y}}
\newcommand\Tht{{\widetilde\Theta}}
\newcommand\Ot{{\widetilde O}}
\newcommand\La{\Lambda}
\newcommand\surv{{\mathcal S}}
\newcommand\tdef{{\mathcal S}_\omega}
\newcommand\ind[1]{{{\bf 1}_{#1}}}
\newcommand\refSS[1]{Subsection~\ref{#1}}
\newcommand\refSSs[1]{Subsections~\ref{#1}}
\newcommand\gba{{\mathcal B}_1}
\newcommand\gbb{{\mathcal B}_2}
\newcommand\tA{{\tilde A}}
\newcommand\tB{{\tilde B}}
\newcommand\tN{{\tilde N}}
\newcommand\bi{{\bf i}}
\newcommand\PP{{\mathcal P}}
\newcommand\R{{\mathbb R}}
\newcommand\FF{F}
\newcommand\tF{{\tilde F}}
\newcommand\bceil[1]{\left\lceil #1 \right\rceil}
\newcommand\bfloor[1]{\left\lfloor #1 \right\rfloor}
\newcommand\bone{B_1}
\newcommand\boneb{B_1^0}
\newcommand\btwo{B_2}
\newcommand\mut{\widetilde\mu}
\newcommand\muh{\widehat\mu}
\newcommand\mun{\mu}
\newcommand\kmax{k_{\max}}
\newcommand\wtE{\widetilde E}
\newcommand\fp[1]{\left\{ #1\right\}}
\newcommand\eventand[2]{#1,\,#2}
\newcommand\Eri{E_{\rho_i}}
\newcommand\wN{w}
\begin{document}

\title{The diameter of sparse random graphs}

\author{Oliver Riordan%
\thanks{Mathematical Institute, University of Oxford, 24--29 St Giles', Oxford OX1 3LB, UK.
E-mail: {\tt riordan@maths.ox.ac.uk}.}
\and  Nicholas Wormald%
\thanks{Department of Combinatorics and Optimization, University of Waterloo,  Waterloo ON, Canada.
E-mail: {\tt  nwormald@uwaterloo.ca}.    Supported by the  Canada Research Chairs Program and NSERC.}
}
\date{5th September 2010
}

\maketitle

\begin{abstract}
In this paper we study the diameter of the random graph
$G(n,p)$, i.e., the
largest finite distance between two vertices, for
a wide range of functions $p=p(n)$. For $p=\la/n$ with $\la>1$ constant
we give a simple proof of an essentially best possible result,
with an $\Op(1)$ additive correction term.
Using similar techniques, we establish two-point concentration
in the case that $np\to\infty$.
For $p=(1+\eps)/n$ with $\eps\to 0$,
we obtain a corresponding result that applies
 all the way down to the scaling
window of the phase transition,
with an $\Op(1/\eps)$ additive correction term whose 
(appropriately scaled) limiting distribution we describe.
Combined with earlier results, our new results complete the determination of the diameter of the random graph $G(n,p)$ to an accuracy of the order of its standard deviation (or better), for all functions $p=p(n)$.
Throughout we use branching process methods, rather than the more common
approach of separate analysis of the 2-core and the
trees attached to it.
\end{abstract}

\section{Introduction and main results}\label{sec_intro}

Throughout, we write $\diam(G)$ for the {\em diameter} of a graph $G$, meaning the
largest graph distance $d(x,y)$ between two vertices $x$ and $y$
in the same component of $G$:
\[
 \diam(G)=\max\{d(x,y): x,y\in V(G),\, d(x,y)<\infty\},
\]
where, as usual, $V(G)$ denotes the vertex set of $G$.
In this paper we shall study the diameter of the random graph $G(n,p)$
with vertex set $[n]=\{1,2,\ldots,n\}$, where each possible edge is present
with probability $p=p(n)$, independently of the others.
For certain functions $p=p(n)$, tight bounds on the diameter of $G(n,p)$ are known; our main
aim is to prove such bounds for all remaining functions.
In particular, in the special case $p=\la/n$ with $\la>1$ constant we shall
determine the diameter up to an additive error term that is bounded in probability,
where earlier results achieved only $o(\log n)$. A secondary aim is to 
present a particularly simple proof in this case.
All our results apply just as well to $G(n,m)$;
in the range of parameters
we consider there is essentially no difference between the models. More precisely, although the
results for one model do not obviously transfer to the other, the proofs for $G(n,m)$ are 
essentially the same.

We treat three ranges of $p=p(n)$ separately: $p=\la/n$ with $\la>1$ constant,
$np\to\infty$ but with an upper bound on the growth rate that extends
well into the range covered by classical results, and finally $p=(1+\eps)/n$ with $\eps(n)\to 0$
but $\eps^3n\to\infty$.
In each case, our
analysis investigates the neighbourhoods of vertices, and has three
components or phases: `early growth' --- we study the distribution of the
number of vertices at distance $t$ from a given vertex $v$ when $t$ is small; `regular
growth' in the middle --- we show that the number of vertices at
distance $t$ is very likely to grow regularly
once the neighbourhoods have become `moderately
large'; `meeting up' --- we show that the distance between two
vertices is almost determined by the times their respective
neighbourhoods take to become `large'. This is eventually translated
into a result on the diameter.
 
Our overall plan is made possible by the very accurate information we obtain on the first phase (early growth). The main approach for this is to compare the neighbourhoods
of a vertex of
$G(n,\la/n)$ with the 
standard Poisson Galton--Watson branching process $\bp_\la=(X_t)_{t\ge 0}$;
this starts with a
single particle in generation 0, and each particle in generation
$t$ has a Poisson $\Po(\la)$ number of children in the next
generation, independently of the other particles and of the history.

A particle in the process $\bp_\la$ {\em survives} if it has descendants in all
later generations; the process {\em survives} if the initial particle
survives. If $\la>1$, then the survival probability $s=\Pr(\forall t: |X_t|>0)$
is the unique positive solution to
\begin{equation}\label{sdef}
 1-s=e^{-\la s}.
\end{equation}
Since particles in generation 1 survive independently of each other, the
number of such particles that survive has a $\Po(s\la)$ distribution,
the number that die has a $\Po((1-s)\la)$ distribution, and these
numbers are independent.
It follows that conditioning on the process dying, we obtain
again a Poisson Galton--Watson process $\bp_{\las}=(X_t^-)_{t\ge 0}$, with the `dual' parameter
\begin{equation}\label{lasdef}
 \las=\la(1-s),
\end{equation}
which may also be characterized as
the solution $\las<1$ to
\begin{equation}
 \las e^{-\las}=\la e^{-\la}.
\end{equation}
This parameter
is crucial to understanding the diameter of $G(n,\la/n)$.
For this and other basic branching process results, see, for example,
Athreya and Ney~\cite{AN}.

Throughout the paper we use standard notation for probabilistic asymptotics
as in~\cite{JLR}. In particular, $X_n=\littleop(f(n))$ means $X_n/f(n)$ converges
to $0$ in probability, and $X_n=\Op(f(n))$
means $X_n/f(n)$ is bounded in probability.

Our first aim is to give a proof of a tight estimate for the diameter
of $G(n,\la/n)$ when $\la>1$ is constant as $n\to \infty$ that is simpler than
our result for the general case, and also compares favourably with the existing proofs of much weaker bounds
for the more general models discussed below.
\begin{theorem}\label{th_const}
Let $\la>1$ be fixed, and let $\las<1$ satisfy $\las e^{-\las}=\la e^{-\la}$.
Then
\begin{equation}\label{dform}
 \diam(G(n,\la/n)) = \frac{\log n}{\log\la} + 2\frac{\log n}{\log(1/\las)} +\Op(1).
\end{equation}
\end{theorem}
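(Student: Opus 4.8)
The plan is to study, for a fixed vertex $v$ of $G(n,\la/n)$, the growth of the neighbourhoods $N_t(v):=\{u:d(v,u)=t\}$ and $N_{\le t}(v):=\bigcup_{s\le t}N_s(v)$ by comparison with the Poisson branching process $\bp_\la$, splitting the argument into the three phases mentioned above. For the \emph{early growth} phase, as long as $|N_{\le t}(v)|=\littleop(\sqrt n)$, the breadth-first exploration from $v$ can be coupled with $\bp_\la$ with only a $\bb{1+\littleop(1)}$ multiplicative error, the discrepancies being caused by the $\Op\bb{|N_{\le t}(v)|^2/n}$ edges internal to the neighbourhood. What must be extracted here is not just that the process survives with probability $s+\littleop(1)$, but the precise lower tail of the random rate of growth: writing $|N_t(v)|=W_v\la^t\bb{1+\littleop(1)}$ for $t$ in a suitable range on the event that $v$ lies in the giant component, the claim is that $\pr\bb{W_v\le\epsilon\mid v\text{ in the giant}}\asymp\epsilon^{\gamma}$ as $\epsilon\downto0$, where $\gamma=\log(1/\las)/\log\la$. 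The exponent is dictated by the dual parameter: conditioning $\bp_\la$ to survive and keeping only the surviving particles yields a supercritical Galton--Watson process of mean $\la$ whose offspring equals $1$ with probability exactly $\la(1-s)=\las$ (using $e^{-s\la}=1-s$); hence a chain of $k$ consecutive such particles --- in graph terms, a pendant path of length $\approx k$ leading off the $2$-core, dressed with finite bushes distributed like $\bp_{\las}$ --- occurs with probability $\asymp\las^{k}$ and multiplies $W_v$ by $\la^{-k}$, and this is the dominant way for $W_v$ to be small.

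For \emph{regular growth}, once $|N_{\le t_0}(v)|$ first reaches a slowly growing threshold, a second-moment/martingale estimate shows that $|N_{\le t_0+j}(v)|$ keeps growing by a factor $\la\bb{1+\littleop(1)}$ per step, uniformly, up to size about $\sqrt n$; together with the first phase this pins down $h(v):=\min\fp{t:|N_{\le t}(v)|\ge\sqrt n}$ to within an additive constant, namely $h(v)=\tfrac{\log n}{2\log\la}+\tfrac{\log(1/W_v)}{\log\la}+\Op(1)$. For \emph{meeting up}, a first/second-moment computation shows that \whp, \emph{simultaneously for all} pairs $v,w$ whose neighbourhoods $N_{\le t}(v)$ and $N_{\le s}(w)$ have been grown (in disjoint vertex sets, which is possible since each has size $o(n)$) to size a little more than $\sqrt n$, there is an edge between them --- there are $\gg\log n$ candidate edges --- so $d(v,w)\le t+s+1$; the matching bound $d(v,w)\ge h(v)+h(w)-\Op(1)$ comes from a union bound over potential short $v$--$w$ paths. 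Hence the diameter of the giant equals $2\max_v h(v)+\Op(1)$.

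It remains to assemble these. The extreme-value analysis of the lower tail from the first phase gives $\max_v\log(1/W_v)=\tfrac{\log\la}{\log(1/\las)}\log n+\Op(1)$ (solving $n\epsilon^{\gamma}=\Theta(1)$), with only $\Op(1)$ vertices within a constant factor of the extreme, and \whp\ these lie in well-separated parts of the giant, so two of them serve as the endpoints of a near-diametral pair (giving the lower bound), while no pair can do better (giving the upper bound). Substituting into $\diam(\text{giant})=2\max_v h(v)+\Op(1)$ and using $\tfrac{2}{\log\la}\cdot\tfrac{\log\la}{\log(1/\las)}\log n=\tfrac{2\log n}{\log(1/\las)}$ yields \eqref{dform}. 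The small components are handled separately: each is \whp\ a tree distributed like the subcritical process $\bp_{\las}$, so the largest has diameter $\tfrac{\log n}{\log(1/\las)}+\Op(1)$, comfortably below the right-hand side of \eqref{dform}, so it does not affect the answer.

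The main obstacle is the first phase together with the extreme-value step: one must identify the lower tail of $W_v$ with the \emph{exact} exponent $\gamma=\log(1/\las)/\log\la$ and control $\max_v W_v^{-1}$ over the $\approx sn$ \emph{correlated} giant vertices to within a constant factor, rather than the $n^{\littleop(1)}$ factor a crude bound would allow --- only the former is consistent with an $\Op(1)$ error term in \eqref{dform}. Dealing with the dependence between the neighbourhoods of nearby vertices, and between the events ``$W_v$ small'' and ``$v$ in the giant'', is where the real work lies; by comparison, the regular-growth concentration and the union bound giving the lower bound on $d(v,w)$ are routine.
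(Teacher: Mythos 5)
Your outline follows the paper's plan quite closely: the three-phase analysis (coupling of early neighbourhood growth to $\bp_\la$, regular growth to size $\approx\sqrt n$, meeting up), the dual parameter via $\Pr(Z_\la=1)=\las$, and the lower-tail estimate $\Pr(W_v\le\epsilon)\asymp\epsilon^{\log(1/\las)/\log\la}$, which is the paper's Lemma~\ref{l1} in the guise of $\Pr(0<|X_t|<\omega)\asymp\las^{t-\log\omega/\log\la}$. You also correctly single out the second-moment control in the lower bound as the crux; the paper handles it with a strong wedge (``diamond'') condition that in effect counts one candidate per pendant tree rather than per vertex, and it explicitly remarks that simpler-looking arguments there resisted rigorisation, so flagging this as the hard part is apt.

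There is, however, a genuine gap in your upper bound. You justify $\diam\le 2\max_v h(v)+\Op(1)$ by growing both neighbourhoods until there are ``$\gg\log n$ candidate edges'' and then taking a union bound over all $\Theta(n^2)$ pairs. But $\gg\log n$ expected candidate edges forces each neighbourhood to size $\Theta(\sqrt{n\log n})$, costing an extra $\Theta(\log\log n/\log\la)$ steps beyond $h(v)$, so this route only delivers $\diam\le 2\max_v h(v)+O(\log\log n)$, which does not match the theorem's $\Op(1)$ error. The paper's remark following the upper-bound proof is precisely about this: a crude first moment over pairs cannot work, because if one vertex $x$ happens to have $t_\omega(x)=t_0+t_1+r$ for some $r\ge K$ (probability $\approx\las^r$), there are of order $\las^{K-r}$ vertices $y$ with $t_\omega(y)=t_0+t_1-r+K$, and many of these really are at the threatened distance from $x$. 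The fix is two-pronged: first rule out (the event $B_1$) any vertex with $t_\omega$ exceeding $t_0+t_1+K$, and then perform a \emph{weighted} first moment over pairs, using the bound $\Pr(E_{x,y,i,j}\setminus B_2)\le O\bb{n^{-2}\las^{-i-j}e^{-c\la^{3K+i+j}}}$ and summing over $i,j\ge -K$; the super-exponential meeting-time tail beats the geometric weight $\las^{-i-j}$, and letting $K\to\infty$ arbitrarily slowly yields the $\Op(1)$ error. Your ``no pair can do better'' step needs exactly this trade-off, and without it the upper bound falls short of the stated precision.
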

As usual, we say that an event holds {\em with high probability}, or {\em whp}, if
its probability tends to 1 as $n\to\infty$. Theorem~\ref{th_const} simply says
that, for any $K=K(n)\to\infty$, the diameter is whp within $K$ of the sum of
the first two terms on the right of \eqref{dform}.

The proof of Theorem~\ref{th_const} is fairly simple, and will be given in Section~\ref{sec_const}.

Turning to the case $\la=\la(n)\to\infty$, we obtain the following result, 
proved in Section~\ref{sec_inf} using essentially the same method,
although there are various additional complications.
\begin{theorem}\label{th_inf}
Let $\la=\la(n)$ satisfy $\la\to\infty$ and $\la\le n^{1/1000}$,
and let $\las<1$ satisfy $\las e^{-\las}=\la e^{-\la}$.
Then $\diam(G(n,\la/n))$ is two-point concentrated:  
there exists a function $f(n,\la)$ satisfying
\[
 f(n,\la) = \frac{\log n}{\log\la} + 2\frac{\log n}{\log(1/\las)} +O(1)
\]
such that \aas\ $ \diam(G(n,\la/n))  \in\{f(n,\la),f(n,\la)+1\}$.
Furthermore, for any $\eps>0$ and any function $\la$ such that,
for large $n$,
neither
$\log n/\log(1/\las)$ nor $\log n/\log\la$ is within $\eps$ of an integer, we have
\begin{equation}\label{normalform}
 \diam(G(n,\la/n)) = \bceil{\frac{\log n}{\log\la}} + 2\bfloor{\frac{\log n}{\log(1/\las)}}+1
\end{equation}
whp.
\end{theorem}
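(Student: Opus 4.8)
The plan is to run the same three-phase scheme --- early growth, regular growth, meeting up --- used for Theorem~\ref{th_const}, now with $\la=\la(n)\to\infty$; because there is no limiting object, every estimate must be made quantitative and uniform over $2\le\la\le n^{1/1000}$. Two features of this regime do much of the work. First, since $1-s=e^{-\la s}\to0$ we have $\las=\la(1-s)=\la e^{-\la s}\to0$, so $\log(1/\las)\to\infty$ and the subcritical-looking parts of the graph are short. Second, the Poisson offspring distributions concentrate, so the (normalised) martingale limit of $\bp_\la$ contributes only an $\Op(1/\log\la)=\littleop(1)$ error to exponents once these are divided by $\log\la$; this is what upgrades the $\Op(1)$ of Theorem~\ref{th_const} to two-point concentration. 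Throughout fix a slowly growing $\omega=\omega(n)\to\infty$, and for a vertex $v$ and threshold $A$ write $T_v^{(A)}=\min\{t:|B_t(v)|\ge A\}$, where $B_t(v)$ is the ball of radius $t$ about $v$.

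\emph{Early growth.} Comparing the breadth-first exploration from $v$ with $\bp_\la$, and, on the event it would die out, with the dual process $\bp_{\las}$, I would show that all but a controlled set of vertices have an exploration agreeing with a surviving $\bp_\la$, reaching size $n^\delta$ within $\delta\log n/\log\la+\Op(1)$ steps, while the exceptional vertices are precisely those from which the exploration stays ``thin'' --- dominated by a subcritical process, of size $O(t)$ at time $t$ --- for a number of steps before taking off. The number of vertices whose exploration is thin for $\ge k$ steps is governed by $\bp_{\las}$, with expectation of order $n\las^k$, and is tightly concentrated. Writing $k^\star$ for the largest such $k$: since consecutive counts $n\las^k$ differ by the unbounded factor $1/\las$, at most one value of $k$ has a moderate expected count, so $k^\star$ is \whp\ confined to two consecutive integers near $\log n/\log(1/\las)$; and when $\log n/\log(1/\las)$ is bounded away from the integers the two boundary counts are $(1/\las)^{\Theta(1)}\to\infty$ and $\las^{\Theta(1)}\to0$, pinning $k^\star=\bfloor{\log n/\log(1/\las)}$ \whp. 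In particular every vertex takes off within $k^\star$ steps.

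\emph{Regular growth and meeting up.} Once $|B_t(v)|>n^\delta$, the regular-growth lemma --- a uniform concentration bound on the number of new vertices found at each step --- gives $|B_{t+1}(v)|=\la(1+\littleop(1))|B_t(v)|$ until the ball reaches size $n/2$; with the early-growth phase this yields $T_v^{(A)}\le k^\star+\log A/\log\la+O(1)$ for every $v$ in the giant component. Two disjoint sets of size $\ge A$ are joined by an edge except with probability $\exp(-\Omega(\la A^2/n))$, giving $d(u,v)\le T_u^{(A)}+T_v^{(A)}+1$; crucially this union bound need not run over all $\binom n2$ pairs. For pairs both of whose explorations are thin for close to $k^\star$ steps there are only $\min(n,e^{O(\la)})$ relevant vertices, so $A=\sqrt n\,\omega$ suffices and costs nothing beyond $O(1)$ in $\log A/\log\la$; for the remaining pairs a larger target $A'=\sqrt{n\log n/\la}\,\omega$ and the crude union bound suffice, and the extra $O(\log\log n/\log\la)$ incurred is absorbed because, putting the ``thin threshold'' slightly below $k^\star$, such pairs fall correspondingly short of the final bound. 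Assembling these, $\diam(G(n,\la/n))\le 2k^\star+\log n/\log\la+O(1)$.

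\emph{Lower bound, conclusion, and main obstacle.} For the lower bound, a vertex $u$ that is thin for $k^\star-O(1)$ steps lies \whp\ in a pendant-tree-like structure whose only exit is a single take-off vertex $a_u$, so $d(u,v)=k^\star+d(a_u,a_v)+k^\star$ up to $O(1)$ for two such $u,v$; since $1/\las\to\infty$ there are $\to\infty$ candidates, and two of their take-off points are \whp\ at distance $\ge\log n/\log\la-O(1)$ --- this being the ``diameter'' of the giant with its thin parts removed, which the same regular-growth and meeting-up estimates identify as $\bceil{\log n/\log\la}+1$ \whp\ under the non-integrality hypothesis. Hence $\diam(G(n,\la/n))=2\bfloor{\log n/\log(1/\las)}+\bceil{\log n/\log\la}+1+O(1)$, and two-point concentration follows because every fluctuating ingredient --- $k^\star$, the stripped diameter, and continuous corrections divided by $\log\la$ --- takes $O(1)$ values, with the floors and ceilings locking everything onto \eqref{normalform} under the non-integrality hypothesis. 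The delicate point throughout is performing the meeting up to additive precision $O(1)$ \emph{uniformly} in $\la$: the naive device of growing two balls to size $\sqrt{n\log n}$ loses an additive $\Theta(\log\log n/\log\la)$, which is unbounded exactly when $\log\la=o(\log\log n)$ --- the range where the first term of $f(n,\la)$ dominates, so this would wreck two-point concentration. Avoiding it forces the split above, restricting the expensive union bound to the $e^{O(\la)}$ vertices below near-maximal-depth thin structures and burying the cheap bound on all other pairs inside the surplus $2k^\star$; this, together with the quantitative control of the early and regular growth needed to make the $\pm1$'s in \eqref{normalform} exact, is where the ``various additional complications'' relative to Theorem~\ref{th_const} reside.
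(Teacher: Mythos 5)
Your plan is a reasonable adaptation of the Theorem~\ref{th_const} scheme, but the central bookkeeping device --- first-passage times $T_v^{(A)}=\min\{t:|B_t(v)|\ge A\}$ --- is exactly the thing the paper's proof of Theorem~\ref{th_inf} is designed to \emph{avoid}, for a reason that your argument does not overcome. When the ball first exceeds $A$, its size $|B_{T_v^{(A)}}(v)|$ lies anywhere in $[A,\la A)$; the factor-$\la$ overshoot is genuine, since a thin exploration typically multiplies the live population by a full factor $\approx\la$ in one step. Two balls of sizes $r_u,r_v\in[A,\la A)$ at times $T_u,T_v$ then first join after roughly $\log\bigl(n/(\la r_ur_v)\bigr)/\log\la$ further joint steps, and as $r_ur_v$ ranges over $[A^2,\la^2A^2)$ this quantity varies by $2$. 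So even after fixing $T_u^{(A)}$ and $T_v^{(A)}$ exactly, $d(u,v)$ carries an intrinsic additive spread of about two integers, and the accumulated $O(1)$'s in your outline cannot be tightened to $\pm1$ without further information. Your remark that ``continuous corrections divided by $\log\la$'' vanish handles the martingale fluctuation $\log Y/\log\la=\littleop(1)$, but the overshoot appears as a ceiling in $T_v^{(A)}$ and is \emph{not} divided by $\log\la$; it survives as an honest integer ambiguity in the distance.

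The paper flags this explicitly at the start of Section~\ref{sec_inf} and takes a different route: rather than the time to reach a given size, it fixes a \emph{time} $t_0+10$ and records the \emph{exact size} $r=|\Gamma_{t_0+10}(x)|$, partitioning the relevant vertices into sets $S_r$ (with $\la^9/4\le r\le\la^{13.1}$) and tracking, via $\muh(r_1,r_2,k)$, the probability that two vertices with prescribed sizes $r_1,r_2$ are at distance exceeding $2(t_0+10)+k$. Keeping $r_1,r_2$ on the books --- instead of discarding them after observing $T_v^{(A)}$ --- is what permits the second-moment argument and the final extremisation over $(r_1,r_2)\in R^2$ to pin the diameter to two consecutive values. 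Your secondary concern, the $\Theta(\log\log n/\log\la)$ loss from growing balls to size $\sqrt{n\log n}$, which is unbounded when $\log\la=o(\log\log n)$, is real and your two-regime union bound is a sensible fix for it, but it is not the binding constraint; the overshoot is. As written, your scheme delivers the $\Op(1)$ version of the statement (the analogue of Theorem~\ref{th_const}), not two-point concentration, and the step ``the floors and ceilings lock onto \eqref{normalform}'' is precisely the part left unproved.
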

Bruce Reed has independently announced a related result, in joint work with Nikolaos Fountoulakis;
the details are still to appear. We believe that the methods used are quite different.

The main interest of Theorem~\ref{th_inf} is when $\la$ tends to infinity fairly slowly;
if $\la$ grows significantly faster than $\log n$, then the situation
is much simpler, and much more precise results are known.
Indeed, when $\la/(\log n)^3\to\infty$, Bollob\'as~\cite{B}
showed concentration of the diameter on at most two values, and found
the asymptotic probability of each value.
In the light of this result we would lose nothing by assuming that
$\la\le (\log n)^4$, say; however, the bound $\la\le n^{1/1000}$ turns out to be
enough for our arguments.

The bulk of the paper
is devoted to the case of expected degree tending to $1$,
where we prove the following result.

\begin{theorem}\label{th_eps}
Let $\eps=\eps(n)$ satisfy  $0<\eps<1/10$ and $\eps^3n\to\infty$.
Set $\la=\la(n)=1+\eps$, and let
$\las<1$ satisfy $\las e^{-\las}=\la e^{-\la}$.
Then
\begin{equation}\label{deps}
 \diam(G(n,\la/n)) = \frac{\log(\eps^3 n)}{\log\la} + 2\frac{\log(\eps^3 n)}{\log(1/\las)} +\Op(1/\eps).
\end{equation}
\end{theorem}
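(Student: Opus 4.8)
The plan is to adapt the three-phase scheme (early growth, regular growth, meeting up) already used for Theorems~\ref{th_const} and~\ref{th_inf}; the genuinely new ingredient is that the relevant branching process $\bp_\la$ with $\la=1+\eps$ is now \emph{near-critical}, and controlling it to the precision of \eqref{deps} is what both demands and produces all the extra error. Note first that $s=s(\eps)=2\eps+O(\eps^2)$ and $\las=\la(1-s)=1-\eps+O(\eps^2)$, so $\log\la$ and $\log(1/\las)$ are both $\eps+O(\eps^2)$, and the $\Op(1/\eps)$ in \eqref{deps} is genuinely of order $1/\eps\asymp1/\log\la\asymp1/\log(1/\las)$ --- it is the near-critical analogue of the $\Op(1)$ of Theorem~\ref{th_const}. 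The structural picture underlying everything is that the giant component has a ``bulk'' --- morally its $2$-core, of order $\eps^2 n$ vertices --- off which hang subcritical $\Po(\las)$ Galton--Watson trees; the appearance of $\eps^3 n$ (rather than $n$) in \eqref{deps} will come from combining this size $\Theta(\eps^2 n)$ with one further factor of $\eps$ contributed by each of the two near-critical phenomena below.

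\textbf{Phase 1 (early growth).} For a vertex $v$ I would couple the neighbourhood exploration $(|\Gamma_{\le t}(v)|)_{t\ge0}$ with $\bp_\la$, keeping the coupling exact while the neighbourhood stays below a threshold $M$ chosen well past the critical window but with $M=o(\sqrt n)$, so that the tree/graph and binomial/Poisson discrepancies alter the exploration time by only $o(1/\eps)$; $M=\eps^{-1}\log^2(\eps^3 n)$ will do. The two facts to extract are: (i) conditioned on survival, $|X_t|$ does \emph{not} grow geometrically from the start --- for $t=O(1/\eps)$ it behaves like a critical process conditioned to survive, growing roughly linearly to size $\Theta(1/\eps)$, and only thereafter does $|X_t|\sim\la^t W$, where the martingale limit $W$ satisfies, conditioned on survival, $\eps W=\Op(1)$ and $1/(\eps W)=\Op(1)$ (indeed $\eps W$ converges in law); hence $\tau^+(v):=\min\{t:|\Gamma_{\le t}(v)|\ge M\}$ equals $\tfrac{\log(\eps M)}{\log\la}+\Op(1/\eps)$; and (ii) dually, $\bp_{\las}$ is near-critical subcritical with $\Pr(\text{height}\ge t)\asymp\eps\las^t$ for $t\gg1/\eps$, whence by first- and second-moment calculations over the $\Theta(\eps^2 n)$ attachment points of the bulk, the deepest pendant tree has depth $H^\star=\tfrac{\log(\eps^3 n)}{\log(1/\las)}+\Op(1/\eps)$ and no pendant tree is deeper than $H^\star$.

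\textbf{Phases 2--3 (regular growth, meeting up, assembly).} From size $M$ the exploration grows at per-step rate $\la(1-\alpha)=1+\eps-o(\eps)$, $\alpha=|\Gamma_{\le t}(v)|/n$; stopping at $N:=\eps^2 n\ll\eps n$ makes depletion negligible, and the near-critical analogues of the martingale/concentration estimates used before give that the time from $M$ to $N$ is $\tfrac{\log(N/M)}{\log\la}+\Op(1/\eps)$, so the total time for a surviving $v$ to reach size $N$ is $\tfrac{\log(\eps N)}{\log\la}+\Op(1/\eps)=\tfrac{\log(\eps^3 n)}{\log\la}+\Op(1/\eps)$ (plus the depth of $v$ below the bulk, if any). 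Feeding this into the meeting-up analysis of Sections~\ref{sec_const}--\ref{sec_inf} --- the distance between two giant vertices is, up to $\Op(1/\eps)$, the sum of the times their neighbourhoods take to become large within the bulk --- yields \eqref{deps}: for the upper bound every vertex is within $H^\star+\Op(1/\eps)$ of the bulk and then reaches size $N$ after a further $\tfrac{\log(\eps^3 n)}{\log\la}+\Op(1/\eps)$ steps, while two large neighbourhoods meet within $\Op(1/\eps)$ more steps; for the lower bound a second-moment argument produces a pair $u^\star,v^\star$ at pendant depth $H^\star-\Op(1/\eps)$ whose bulk-attachment points are $\tfrac{\log(\eps^3 n)}{\log\la}-\Op(1/\eps)$ apart, with the two pendant trees and the connecting bulk-geodesic essentially disjoint, forcing $d(u^\star,v^\star)\ge\tfrac{\log(\eps^3 n)}{\log\la}+2H^\star-\Op(1/\eps)$. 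The limiting law of the $\Op(1/\eps)$ correction (after scaling by $\eps$) is then a compound distribution built from the Gumbel-type limit for the renormalised maximal pendant depth together with the limiting laws of the renormalised martingale limits $W$ and of the renormalised bulk-crossing distance.

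\textbf{The main obstacle.} Everything hinges on Phase~1: the distributions of $\tau^+(v)$ and of the pendant-height statistics must be obtained to $\Op(1/\eps)$ accuracy \emph{uniformly} over all sequences $\eps=\eps(n)\to0$ with $\eps^3 n\to\infty$, which requires sharp quantitative control of the near-critical Poisson Galton--Watson process across the critical window $t\lesssim1/\eps$, the transition at $t\asymp1/\eps$, and the geometric regime, together with a coupling to the graph exploration accurate up to size $M$. A secondary delicate point is that the intermediate scale must be pinned down as $\Theta(\eps^2 n)$ \emph{exactly} (not merely up to $\eps^{o(1)}$ factors), since a factor $\eps^{\pm o(1)}$ there would corrupt the main term by $\eps^{-1}\log(1/\eps)$, which is not $\Op(1/\eps)$; this is where $\eps^3 n\to\infty$ does its real work --- ensuring $\log(\eps^3 n)\to\infty$ and $\eps^2 n,\eps^3 n\to\infty$ --- while $\eps<1/10$ keeps the near-critical expansions valid. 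Given these inputs, Phases~2 and~3 should be near-critical but conceptually routine variants of the arguments already developed for Theorems~\ref{th_const} and~\ref{th_inf}.
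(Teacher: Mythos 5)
Your outline follows the same three-phase architecture as the paper's proof, and you correctly identify the central near-critical branching process problem as the hard technical core (the paper's Theorem~\ref{YPt}, Corollary~\ref{cYt}, and Theorem~\ref{th19} establish precisely the tail and convergence estimates you need; the Yule-process scaling limit you invoke is exactly how the paper controls $\eps W$). So the Phase~1 plan, while a plan rather than a proof, is heading in the right direction.

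There is, however, a genuine gap in your Phase~3 assembly, which you flag as ``conceptually routine'' but which is not. For the upper bound, your proposal is to show whp every vertex is within $H^\star + \Op(1/\eps)$ of the bulk and then apply a first-moment bound over pairs. But conditioned on a pendant tree having height $\ge h\approx H^\star$, the expected number of vertices at depth $\ge h$ \emph{inside that tree} is $\Theta(\eps^{-2})$ (the subcritical process conditioned on survival has $\Theta(\eps^{-1})$ particles per generation for $\Theta(\eps^{-1})$ generations near the tip). So the expected number of \emph{pairs} $(x,y)$ with $d(x,y)\ge d_0$ is of order $\eps^{-4}$, and the naive first-moment estimate of $\Pr(\diam\ge d_0+K)$ only becomes $o(1)$ once $\las^K\ll\eps^4$, i.e.\ $K\gg \eps^{-1}\log(1/\eps)$. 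This yields $\Op(\eps^{-1}\log(1/\eps))$, not $\Op(1/\eps)$. Killing this spurious $\log(1/\eps)$ factor is exactly where the paper introduces the $(d,t)$-acceptability condition (Lemma~\ref{A2}): restricting to vertices that are, roughly, at maximal depth in their tree cuts the per-vertex probability by the missing factor $\Theta(\eps^2)$.

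Symmetrically, your lower-bound second-moment argument has a corresponding obstacle that the proposal does not confront. The diamond/strong-wedge trick that made the variance calculation clean in Theorem~\ref{th_const} costs a factor $\Theta(\eps^2)$ beyond what one can afford here, so one cannot simply condition on a unique deepest vertex in each of two disjoint trees and expect the second moment to close with $\Op(1/\eps)$ slack. The paper's workaround uses the strong wedge condition for the first factor of $\eps^2$ but then has to control the overlap term in the second moment by a separate argument about typical distances between attachment points in the $2$-core (Lemma~\ref{l2c}), using a randomised ``where does the tree attach'' device because the obvious ``pull the trees off the $2$-core and reattach at random'' coupling is hard to make rigorous. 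None of this is implied by the disjointness clause you gesture at (``the two pendant trees and the connecting bulk-geodesic essentially disjoint''); the genuine difficulty is that the unconditioned pair count has variance blowing up by $\eps^{-O(1)}$ factors unless one restricts to a suitable wedge-type event, and designing a wedge event with the right probability ($\Theta(\eps^2)$ given survival, not $\Theta(\eps^4)$) is the crux.

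A smaller issue: your proposed threshold $M=\eps^{-1}\log^2(\eps^3 n)$ is plausible for the coupling, but you claim ``exact'' coupling below $M$, which is not available; the paper instead proves (Lemma~\ref{spcpl}) that the relevant tree probabilities are asymptotically equal when $\eps|T|^2=o(n)$ and each generation is $O(n^{1/3})$, and works with $\omega=(\eps^3n)^{1/6}$ so that explorations up to size $\omega/\eps$ stay inside that regime. Your $M$ and the paper's $\omega/\eps$ are both in the admissible window, so this is a matter of bookkeeping rather than a gap, but ``exact coupling'' overstates what is achievable.
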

Our method in fact gives a description of the limiting distribution of
the final correction term (after rescaling); see Theorem~\ref{th:dist1}.

A weaker form of Theorem~\ref{th_eps} has been obtained independently by
Ding, Kim, Lubetzky and Peres~\cite{DKLP_anat,DKLP_diam}; see the Remark below.

In the rest of this section we briefly discuss the results above and their relationship to earlier
work.

Theorem~\ref{th_const} is best possible in the following sense: it is not hard to see
that the diameter cannot be concentrated on a
set of values with bounded size as $n\to\infty$.
Indeed, given any (labelled) graph $G$ with diameter $d$ and at least two isolated
vertices, let $G'$ be constructed from $G$ by taking a
 path $P$ of length
$d$ joining two vertices at maximal distance in $G$,
and adding an edge joining each end of $P$
to an isolated vertex. Each graph $G'$ constructed in this way contains
a unique pair of vertices at maximal distance $d+2$, and $G$ may be recovered uniquely from $G'$ by deleting
the (unique) edges incident with these vertices. Restricting our attention
to graphs $G$ with $\Theta(n)$ isolated vertices, the relation $(G,G')$
is thus $1$ to $\Theta(n^2)$. Since the probability of $G'$ in the model
$G(n,p)$, $p=\la/n$, is equal to the probability of $G$ multiplied by $p^2/(1-p)^2=\Theta(1/n^2)$, it follows easily
that for any $d$ we have
\[
 \Pr\bb{\diam(G(n,\la/n))=d+2} \ge \Theta(1)\Pr\bb{\diam(G(n,\la/n))=d} -o(1);
\]
the $o(1)$ term comes from the possibility that $G(n,\la/n)$ has 
fewer than $\Theta(n)$ isolated vertices. It follows that $\diam(G(n,\la/n))$
cannot be concentrated on a finite set of values.
In fact, our methods allow us to obtain the limiting distribution
of the $\Op(1)$ correction term in~\eqref{dform},
although this is rather complicated
to describe; we return to this briefly in Section~\ref{sec_dist}.

A much weaker form of Theorem~\ref{th_const}, with a $o(\log n)$
correction term, is a special case of a result of
Fernholz and Ramachandran~\cite{FR:diam} for random graphs with a given
degree sequence, and also of a result of Bollob\'as, Janson and Riordan~\cite[Section 14.2]{BJR}
for inhomogeneous random graphs with a finite number of vertex types.
We shall follow the ideas of~\cite{BJR} to some extent, although the present
simpler context allows us to take things much further, obtaining a much more
precise result. Earlier, Chung and Lu~\cite{ChungLu:diam} also studied $\diam(G(n,\la/n))$,
$\la>1$ constant, but their results were not strong enough to give the correct
asymptotic form. Indeed, they conjectured that, under suitable
conditions, the diameter is approximately $\log n/\log\la$, as one might initially expect.

For the subcritical case, which is much simpler,
{\L}uczak~\cite{Luczak:diam} proved very precise
results: he showed, for example, that if $\eps\to 0$ and $\eps^3n\to\infty$, then
the subcritical random graph $G=G(n,(1-\eps)/n)$ satisfies
\begin{equation}\label{Luczak}
 \diam(G) = \frac{ \log(2\eps^3n) +\Op(1)}{-\log(1-\eps)};
\end{equation}
see his Theorem 11(iii), and note that the exponent $2$ instead of $3$ appearing there
is a typographical error. (He also proved a simple formula for the limiting distribution
of the $\Op(1)$ term -- the probability that it exceeds a constant $\rho$ tends to
$1-\exp(-e^{-\rho})$ as $n\to\infty$; the limiting
distribution in the present supercritical case turns out to be much more complicated.)
{\L}uczak's results are effectively the last word on the subcritical case, which we shall
not discuss further.

Returning to constant $\la>1$, the lack of concentration on a finite number 
of points
contrasts with the case of random $d$-regular graphs studied by Bollob\'as and
Fernandez de la Vega~\cite{BFdlV}, 
who established concentration on a small set of values
in this case. Sanwalani and Wormald~\cite{SW} have recently shown two-point concentration.
(More precisely, they prove one-point concentration for almost all $n$, and
for the remaining $n$ find the probabilities of the two likely values within $o(1)$.)
Note that the diameter in this case
is simply $\log (n\log n)/\log(d-1) +\Op(1)$ for $d\ge 3$;  as we shall see, the behaviour
of the two models for this question is very different. Usually, $G(n,\la/n)$
is much simpler to study than a random regular graph, but here
there are additional complications corresponding to the $2\log n/\log(1/\las)$ term in \eqref{dform}.

Let us briefly mention a few related results for other
random graph models. Perhaps the earliest results
in this area are those of Burtin~\cite{Bu1,Bu2} and Bollob\'as~\cite{B}.
Turning to results determining the asymptotic diameter when
the average degree is constant, one of the first is the result of
Bollob\'as and Fernandez de la Vega~\cite{BFdlV} for $d$-regular random graphs mentioned
above; another is that of Bollob\'as and Chung~\cite{BC},
finding the asymptotic diameter of a cycle plus a random matching, which
is again logarithmic.
Later it was shown by `small subgraph conditioning' (see~\cite{W})
that
for such graphs any whp statements are essentially the same as for the
uniform model of random 3-regular graphs.  The same goes for a
variety of other random regular graphs constructed by superposing
random regular graphs of various types.
For a rather different model, namely
a precise version of the Barab\'asi--Albert `growth with preferential
attachment' model, Bollob\'as and Riordan~\cite{BRdiam} obtained a (slightly)
\emph{sub}logarithmic diameter, contradicting the logarithmic diameter
suggested by Barab\'asi, Albert and
Jeong~\cite{N401,BAJp2} (on the basis of computer experiments)
and Newman, Strogatz and Watts~\cite{nswpp} (on the basis of heuristics).

More recently, related results, often concerning the `typical' distance between
vertices, rather than the diameter, have been proved
by many people, for various models. A few examples are the results of
Chung and Lu~\cite{ChungLu:exp_dist,ChungLu:exp_dist2}, and van den
Esker, van der Hofstad, Hooghiemstra, van Mieghem and
Znamenski~\cite{EHHZ,HHM_fvar,HHZ_infvar}; for a discussion of
related work see~\cite{HHM_fvar}, for example.

The formula \eqref{dform} is easy to understand intuitively: 
typically, the size of the $d$-neighbourhood of a vertex (the set
of vertices at distance $d$) grows by a factor of $\la$ at
each step (i.e., as $d$ is increased by one).
Starting from two typical vertices, taking
$\log(\sqrt{n})/\log\la$ steps from each, the neighbourhoods reach size
about $\sqrt{n}$; at around this point the neighbourhoods are likely
to overlap, so the typical distance between vertices is $\log n/\log \la$.
The second term in \eqref{dform} comes from exceptional 
vertices whose neighbourhoods take some time to start expanding, or, equivalently,
from the few very longest trees attached to (typical vertices of) the \emph{2-core}
of $G(n,\la/n)$,
the maximal subgraph with no vertices of degree 0 or 1.
It is well known that the trees hanging off the 2-core of $G(n,\la/n)$
have roughly the distribution of the branching process
$\bp_{\las}$; hence, some of these
trees will have height roughly $\log n/\log(1/\las)$, and it turns out that
the diameter arises by considering two trees of (almost) maximal height
attached to vertices in the 2-core at (almost) typical distance.

Although we shall use the 2-core viewpoint later, its use has an intrinsic difficulty caused by the significant variation in the distances between
vertices in the 2-core. One can view the variation in the distance between two
random vertices of $G=G(n,\la/n)$
as coming from three sources: (i) variation in the distances to the 2-core,
(ii) variation in the times the neighbourhoods in the 2-core take to start expanding, and (iii)
variation in the time the neighbourhoods of the two vertices take to join up
once they have reached a certain size. 
An advantage of our approach is that it seamlessly integrates (i) and (ii),
by looking simply at neighbourhood growth in the whole graph $G$. 
Taking this viewpoint, the dual parameter $\las$ arises as follows:
let $X_t^+\subset X_t$
be the set of particles of $\bp_\la$ that survive (have descendants in all future
generations). Then $X_0^+$ contains the initial
particle with probability $s$, and is empty otherwise. Moreover, conditioning
on a particle being in $X_t^+$ is exactly the same as conditioning
on at least one its children surviving, so the number of surviving children
then has the distribution $Z=Z_\la$  
of a $\Po(s\la)$ random variable conditioned
to be at least $1$. Hence, $(X_t^+)$ is again a Galton--Watson branching
process, but now with offspring distribution $Z$, and $X_0^+$ either empty
or, with probability $s$, consisting of a single particle.
Note that 
\begin{equation}\label{plus}
 \Pr(Z=1) = \frac{s\la e^{-s\la}}{1-e^{-s\la}} = \frac{s\la(1-s)}s =\las,
\end{equation}
using \eqref{sdef} and \eqref{lasdef}.
Hence, the probability that $X_t^+$ consists of a single particle,
given that the whole process survives, is exactly $\las^t$.
Roughly speaking, this event corresponds to the branching process staying
`thin' for $t$ generations, i.e., the neighbourhood growth process taking
time $t$ to `get going'.
In the next section we shall prove a more precise version of this statement.

Turning to Theorem~\ref{th_eps},
the form of the diameter here  differs from what one might expect in the presence of the factors
$\eps^3$ inside the logarithms in the numerators.
Very loosely speaking, these factors turn out to be related to the fact that the branching
process survives with probability $\Theta(\eps)$, and then usually has size $\Theta(1/\eps)$
larger than its unconditional expected size, as well as to the fact
that, roughly speaking, it takes on the order of $1/\eps$ generations for anything much to happen;
we shall return to this at various points.
An alternative way of thinking about these factors is that the `interesting' structure
of $G(n,p)$ is captured by the {\em kernel}, the graph obtained from the 2-core by
suppressing vertices of degree 2.
The results of {\L}uczak~\cite{Lcyc} or alternatively 
Pittel and Wormald~\cite{PWio}
imply in particular that the number of vertices
in the kernel is asymptotically $8\eps^3 n/6$. 
\begin{remark}
In the first draft of this paper, we obtained a slightly weaker form of Theorem~\ref{th_eps},
giving the same conclusion but requiring an additional assumption,
that $\eps^3n$ grows at least as fast as an explicit {\em extremely} slowly growing function
of $n$ (essentially $\log^* n$, i.e., the minimum $k$ such that the $k$th iterated
logarithm of $n$ is less than $1$).  
This is a less restrictive assumption than what is common in related contexts, that $\eps^3 n$ is at least some power of $\log n$.
Since then,
Ding, Kim, Lubetzky and Peres~\cite{DKLP_anat,DKLP_diam}
have obtained a form of Theorem~\ref{th_eps} with a larger error
term (a multiplicative factor of $1+o(1)$), valid whenever $\eps^3n\to\infty$
and $\eps\to 0$; under these assumptions, $\log\la\sim \eps$
and $\log(1/\las)\sim\eps$, so the diameter is $(3+\littleop(1))\log(\eps^3n)/\eps$.
 Their approach, based around the 2-core and kernel,
is very different to ours.
Seeing this paper stimulated us to remove the unnecessary restriction on $\eps$; it turned
out that one simple observation (Lemma~\ref{noshort} below) was the main
missing ingredient.
Using this lemma, it became possible to simplify some
of our original arguments and, with a little further technical work, to extend them
to the entire weakly supercritical range.
\end{remark}

We remark also that {\L}uczak and Seierstad~\cite{LuczakSeierstad} have obtained a `process
version' of Theorem~\ref{th_eps}. This gives much weaker bounds on the diameter,
differing by a constant factor, but can be applied to the random graph process
to show (roughly speaking) that whp these bounds hold simultaneously for the entire
range of densities considered in Theorem~\ref{th_eps}.

In Theorem~\ref{th_eps}, the condition $\eps\le 1/10$ is imposed simply for convenience;
this may be weakened to $\eps=O(1)$ without problems. However, for $\eps=O(1)$
bounded away from zero one can instead apply Theorem~\ref{th_const}: it
is not hard to check that the constants implicit in the correction term vary smoothly with $\la$,
and so are bounded over any compact set of $\eps>0$.
For this reason, in proving Theorem~\ref{th_eps} we may assume that $\eps\to 0$
as $n\to\infty$; we shall do this whenever it is convenient.

On the other hand,
the condition $\eps n^{1/3}\to\infty$ is almost certainly necessary
for our method to give nontrivial information. If $\eps n^{1/3}$ is bounded,
then we are inside the `window' of the
phase transition, so $G(n,(1+\eps)/n)$ is qualitatively
similar in behaviour to $G(n,1/n)$, and the behaviour of the diameter
is much more complicated than outside the window. For one thing,
there is no longer a unique `giant' component that is much larger
than all other components. Also, the 2-core of each non-tree component
contains only a bounded number of cycles; to study the distribution
of the diameter accurately, one needs to study the distribution of the lengths
of these cycles, which is very different from the situation 
with supercritical graphs. 
Nachmias and Peres~\cite{NP} showed that
inside the window the diameter of the largest component
is $\Op(n^{1/3})$, with a corresponding lower
bound; more recently, Addario-Berry, Broutin and Goldschmidt~\cite{ABG}
have established convergence in distribution of the rescaled diameter, 
and given a (rather complicated) description of the limit in terms
of continuum random trees.

Finally, as in Theorem~\ref{th_const}, the $\Op(1/\eps)$ correction term in Theorem~\ref{th_eps}
is in some sense best possible. As noted earlier, our method gives a description
of the limiting distribution of this correction term;
see Section~\ref{sec_dist}.

In summary, the results of {\L}uczak~\cite{Luczak:diam} (below the critical window),
Addario-Berry, Broutin and Goldschmidt~\cite{ABG} (inside the window),
Theorem~\ref{th_eps} (above but average degree tending to $1$),
Theorem~\ref{th_const} (constant average degree),
Theorem~\ref{th_inf} (average degree tending to infinity slowly)
and Bollob\'as~\cite{B} (average degree tending to infinity quickly)
together establish tight bounds on the diameter
of $G(n,p)$ throughout the entire range of the parameters.

\section{The case $p=\la/n$, $\la>1$ constant}\label{sec_const}

In this section we shall prove Theorem~\ref{th_const}.
We start by recalling a basic fact about branching processes.

{From} standard branching process results (see, for example,
Athreya and Ney~\cite{AN}),
the martingale $|X_t|/\la^t$ converges almost surely to a random variable $Y=Y_\la$
whose distribution (which depends on $\la$)
is continuous except for mass $1-s$ at $0$, with strictly
positive density on ${\mathbb R}^+$.
Furthermore, $Y=0$ coincides (except possibly on a set of measure zero)
with the event that the branching process dies out.
Since almost sure convergence implies convergence in probability, a trivial
consequence of this is that, for $\la>1$ and $0<c_1<c_2$ all fixed,
\begin{equation}\label{ar}
 \inf_t \Pr\bb{c_1\la^t \le |X_t|\le c_2\la^t} > 0,
\end{equation}
where the infimum is over all $t\ge 1$ such that the interval $[c_1\la^t,c_2\la^t]$
contains an integer.
The following result indicates that unusually small populations in a given generation are typically due (at least, with a significant probability) to a branching process that stays essentially nonbranching (with only small `side branches') until a point where it branches at a typical rate.
\begin{lemma}\label{l1}
Let $\la>1$ be fixed. There are constants $c,C>0$ such that
for every $\omega\ge 2$ and $t\ge 1$ we have
\begin{equation}\label{l1eq}
 c\min\{\las^{t-t_1},1\} \le \Pr\bb{0<|X_t| < \omega} \le C\las^{t-t_1},
\end{equation}
where $t_1=\lfloor\log\omega/\log\la\rfloor$.
\end{lemma}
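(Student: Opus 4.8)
The plan is to analyse neighbourhood growth through the branching process $\bp_\la$, working throughout with its \emph{surviving} sub-process $(X_t^+)$: by \eqref{plus} this is again a Galton--Watson process, with offspring law $Z$ satisfying $\Pr(Z=1)=\las$ and $\E Z=\la$, which conditionally on survival starts from a single particle, and --- crucially --- has every particle producing at least one child, so that $|X_t^+|$ is nondecreasing in $t$. Set $t_1=\floor{\log\omega/\log\la}$, so $\la^{t_1}\le\omega<\la^{t_1+1}$. Note that when $t-t_1$ is smaller than a fixed constant the quantity $\las^{t-t_1}$ is bounded below and the upper inequality is trivial, and when $\omega$ is bounded so is $t_1$; these corners will be handled separately.

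\emph{Lower bound.} I would exhibit the event directly. Condition on $\bp_\la$ surviving (probability $s$); by \eqref{plus}, with $j:=t-t_1+C_0$, the conditional probability that $X^+$ consists of a single particle throughout generations $0,\dots,j$ equals $\las^{j}$. Given this, the subtree rooted at that surviving particle, conditioned on its survival, is a fresh copy of $\bp_\la$ conditioned to survive, so by \eqref{ar} its size $t-j=t_1-C_0$ generations later is positive and at most $c_2\la^{t_1-C_0}$ with probability bounded below; taking $C_0$ large forces $c_2\la^{t_1-C_0}<\omega$. The dead side-branches hanging off the thin initial segment are, under this conditioning, still governed by $\bp_{\las}$, and a geometric-series estimate shows their expected contribution at generation $t$ is $O(\las^{t_1-C_0})=o(1)$, hence $0$ with probability $1-o(1)$. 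Multiplying gives $\Pr(0<|X_t|<\omega)\ge c\,\las^{j}=\Theta(\las^{t-t_1})$. For $t\le t_1$ the required bound is a constant and follows from survival together with \eqref{ar}, and for bounded $\omega$ one uses instead $\Pr(|X_t|=1)\ge c\,\las^{t}$, obtained from \eqref{plus} and the same side-branch estimate (now bounding the probability that all side-branches die out by generation $t$).

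\emph{Upper bound.} Split according to whether $\bp_\la$ dies out. On extinction $|X_t|$ has the law of $|X_t^-|$, so this part contributes at most $\E[\,|X_t|\ind{\text{die}}\,]=(1-s)\las^{t}\le\las^{t-t_1}$. On survival $|X_t|\ge|X_t^+|$, so it remains to bound $p_t(\omega):=\Pr\bb{|X_t^+|<\omega\mid\text{survive}}$ by $O(\las^{t-t_1})$. Conditioning on the first generation of $(X_t^+)$ and using that a sum of $z\ge1$ independent copies of $|X_{t-1}^+|$, each at least $1$, is $<\omega$ only when every summand is $<\omega$, gives $p_t(\omega)\le\phi\bb{p_{t-1}(\omega)}$ with $\phi(x):=\E[x^Z]$. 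Since $\phi(0)=0$, $\phi'(0)=\Pr(Z=1)=\las$ by \eqref{plus}, and $\phi$ is strictly convex with $\phi(x)<x$ on $(0,1)$, one gets $\phi^{(k)}(y)\downarrow0$ with $\phi^{(k)}(y)\le C_y\las^{k}$ for every fixed $y<1$. Hence everything reduces to showing that the base term $p_{t_1+2}(\omega)$ is bounded away from $1$ uniformly in $\omega$; then $p_t(\omega)\le\phi^{(t-t_1-2)}(\delta)$ for $t\ge t_1+2$ and $p_t(\omega)\le 1=O(\las^{t-t_1})$ otherwise.

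\emph{The base estimate, and the main obstacle.} Let $W_m:=|X_m^+|\la^{-m}$, a nonnegative martingale under the conditional law with $\E W_m=1$; since $\Var(Z)<\infty$ it is bounded in $L^2$, say $\E W_m^2\le v_\la<\infty$. Because $\omega<\la^{t_1+1}$, the event $\{|X_{t_1+2}^+|<\omega\}$ forces $W_{t_1+2}<\omega\la^{-(t_1+2)}<\la^{-1}$, so Cauchy--Schwarz gives $1=\E W_{t_1+2}\le\la^{-1}+\sqrt{v_\la}\,\sqrt{\Pr(W_{t_1+2}\ge\la^{-1})}$, whence $p_{t_1+2}(\omega)\le 1-(1-\la^{-1})^2/v_\la=:\delta<1$. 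Feeding this into the recursion yields $p_t(\omega)\le C\las^{t-t_1}$, and adding the extinction bound completes the proof. The delicate point --- and really the only place where one has to think --- is the uniformity in $\omega$ just used: the threshold $\la^{-1}$ comes from $\omega<\la^{t_1+1}$ and the constant $v_\la$ from $\Var(Z)$, both independent of $\omega$, so a single $\delta<1$ serves for all $\omega$ and the entire $\omega$-dependence of $p_t(\omega)$ is produced by the number $t-t_1-2$ of iterations of $\phi$. Beyond that, I expect the fussiest bookkeeping to be in the lower-bound construction --- choosing $C_0$ so that the post-thin growth overshoots neither $\omega$ nor $1$, and disposing of the small-$\omega$ and small-$(t-t_1)$ corners.
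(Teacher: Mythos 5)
Your proof is correct, and the upper bound takes a genuinely different (and arguably cleaner) route than the paper's, so the comparison is worth recording. The paper bounds $r_t=\Pr(|X_t|<\omega\mid|X_t|>0)$ by a recursion driven by the set $S\subset X_1$ of children with descendants in $X_t$; because $|S|$ is Poisson with parameter $\la p_{t-1}$ rather than $\la s$, the recursion carries error terms $O(\la\las^t)$ that have to be tracked, and the base case is read off from \eqref{ar}. You instead pass to $|X_t^+|$ conditioned on survival, paying only the harmless inequality $\Pr(|X_t|<\omega,\,\text{survive})\le\Pr(|X_t^+|<\omega,\,\text{survive})$, and in exchange get the \emph{exact} recursion $p_t\le\phi(p_{t-1})$ with $\phi$ the probability generating function of $Z$: no finite-$t$ corrections at all, so the geometric decay $\phi^{(k)}(y)=\Theta_y(\las^k)$ follows from the standard local analysis of $\phi$ at its attracting fixed point $0$ (one should note explicitly that the needed convergence of $\prod_k\bigl(1+O(\phi^{(k)}(y))\bigr)$ uses the geometric decay of $\phi^{(k)}(y)$ below some fixed threshold). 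Your base estimate is also different and nicely self-contained: the $L^2$-boundedness of the martingale $W_m=|X_m^+|/\la^m$ plus $\omega<\la^{t_1+1}$ and Cauchy--Schwarz give $p_{t_1+2}(\omega)\le\delta<1$ uniformly, whereas the paper invokes \eqref{ar}. Your lower bound is essentially the paper's (thin initial segment of $X^+$ of length roughly $t-t_1$, then a conditioned subtree of controlled size, then suppression of the dead side branches via $\prod(1-\las^j)=\Theta(1)$); the only difference is that you tack on an offset $C_0$ so that \eqref{ar} forces the subtree size \emph{below} $\omega$, where the paper instead pins it between $\omega/2$ and $\omega-1$, and you correctly defer the small-$\omega$/small-$(t-t_1)$ corners to the direct bound $\Pr(|X_t|=1)\ge c\las^t$. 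Both the bookkeeping gaps you flag (choosing $C_0$, the corners) are filled by exactly the reductions you describe.
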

\begin{proof}
The lemma is essentially a statement about the asymptotics of $Y$
near $0$; this statement
follows, for example, from a result of Harris~\cite{Harris48}.
However, translating back to a statement about $X_t$ rather
than $Y$ would introduce an extra error term, corresponding
to the probability that $X_t/\la^t$ still differs from $Y$
by more than a constant factor when $X_t$ first exceeds
$\omega$, so we shall
give a direct proof.

We start by proving the upper bound. 
Conditioned on $\bp_\la=(X_t)_{t\ge 0}$ dying out, an event
of probability $1-s$, this process
has the distribution of the subcritical process $\bp_{\las}=(X_t^-)_{t\ge 0}$.
Hence,
\[
 \Pr\bb{|X_t|>0,\, \exists t':X_{t'}=\emptyset } 
 = (1-s) \Pr(|X_t^-|>0) \le (1-s)\E(|X_t^-|) = (1-s)\las^t.
\]
Let $p_t=\Pr(|X_t|>0)$. Then
\begin{equation}\label{pts}
 p_t= s+ \Pr\bb{|X_t|>0,\, \exists t':X_{t'}=\emptyset } =s+O(\las^t).
\end{equation}
Let us note for later that the implicit constant is independent of $\la$;
indeed, it may be taken to be $1$.

We may partition $X_1$, the set of children of the initial particle, into two sets:
the set $S$ consisting of those that have descendants $t-1$ generations
later (i.e., in $X_t$), and the set $X_1\setminus S$ of those that do not.
Since the probability that a particle in $X_1$ has one or more descendants
in $X_t$ is $p_{t-1}$, the size of $S$ has a Poisson distribution
with mean $\la p_{t-1}$.
Let us condition on $|X_t|>0$. Then the conditional distribution
of $|S|$ is that of a Poisson distribution with mean $\la p_{t-1}$
conditioned on being at least $1$, and we have 
\[
 \Pr\bb{|S|=1 \bigm| |X_t|>0} = \frac{\la p_{t-1} e^{-\la p_{t-1}}}{1-e^{-\la p_{t-1}}}
 = \frac{\la s e^{-\la s}}{1-e^{-\la s}}(1+O(\la\las^{t-1})) = \las+O(\la\las^t),
\]
using \eqref{pts} and \eqref{plus}. Note for later use that the implicit constant is independent of $\la$ provided $\la>1$ and $\la$ is bounded away from 1.

Let $r_t=\Pr(|X_t| < \omega \mid |X_t|>0)$. If $|X_t|<\omega$,
then every particle in $S$ has fewer than $\omega$ descendants in $X_t$.
Hence,
\begin{eqnarray}
 r_t &\le& \Pr\bb{|S|=1\bigm| |X_t|>0} r_{t-1} + \Pr\bb{|S|>1\bigm| |X_t|>0} r_{t-1}^2 \nonumber\\
 &=& (\las+O(\la\las^t)) r_{t-1}+(1-\las+O(\la\las^t)) r_{t-1}^2\nonumber\\
 &=& r_{t-1}(\las+(1-\las) r_{t-1}) +O(\la\las^t r_{t-1}).\label{rb1}
\end{eqnarray}
Setting $r_t'=r_t/\las^t$ and recalling that $\la$ is constant, we thus have
\begin{equation}\label{rb2}
 r_t'\le r_{t-1}' + \frac{1-\las}{\las}r_{t-1}r_{t-1}'+O(r_{t-1}).
\end{equation}

Using only the trivial inequality
$\Pr(0<|X_t|<\omega_1)\le \Pr(0<|X_t|<\omega)$ for $\omega_1<\omega$,
the upper bound in \eqref{l1eq} for $\omega$ at least some constant $\omega_0$
implies the same bound, with a different constant, for all $\omega\ge 2$.
Thus we may assume that $\omega$ is at least some large constant $\omega_0$,
and hence that $t_1$ is large.
We may also assume $t\ge t_1$.
By~\eqref{ar} we have $\Pr(|X_{t_1}|> \omega)\ge c_0$ for some constant $c_0>0$.
Hence $r_{t_1}\le 1-c_0$ is bounded away from $1$.
Choosing $\omega_0$ large enough, so $\las^t\le \las^{t_1}$ is small, the error term in~\eqn{rb1} can be assumed arbitrarily small relative
to $r_{t-1}$.
Using \eqref{rb1},
and noting that for $t>t_1$ we have $\las+(1-\las)r_{t-1}<\las+(1-\las)(1-c_0)<1$,
it then follows that $r_t$ decreases 
exponentially as $t$ increases from $t_1$, i.e., that there is a $c_1>0$ 
(depending only on $\la$, not on $\omega$) such that $r_{t_1+t}\le e^{-c_1t}$.
Hence, $\sum_{t\ge t_1} r_t$ is bounded (independently of $\omega$).
Using \eqref{rb2}, it follows that there is a constant $C_0$
such that for $t\ge t_1$ we have
$r_t'\le C_0 (r_{t_1}'+1)$. In other words,
\[
 r_t \le C_0\las^{t-t_1} r_{t_1} +C_0\las^t \le C_0(1+\las^{t_1})\las^{t-t_1}
\le 2C_0\las^{t-t_1} = O(\las^{t-t_1}).
\]
Since
\[
 \Pr\bb{0<|X_t|<\omega} \le \Pr\bb{|X_t|<\omega\bigm| |X_t|>0} =r_t,
\]
this completes the proof of the upper bound.

Turning to the lower bound, this is essentially trivial if $t\le t_1$:
in this case, $\Pr(0<|X_t|<\omega)$ is bounded away from 0 by~\eqref{ar}.
We may thus assume that $t> t_1$.
We shall prove the lower bound by considering the following much
more specific event $E$, the event that $|X_{t-t_1}^+|=1$, that
the unique particle $v$ of $X_{t-t_1}^+$ has between $1$ and $\omega-1$
descendants in $X_t$, and that no other particles of $X_{t-t_1}$ have
descendants in $X_t$. Clearly, if $E$ holds then $0<|X_t|<\omega$.

Recalling that $\bp^+=(X_t^+)$ is the set of particles
whose descendants survive forever, any such particle always
has at least one child by definition, and, by \eqref{plus},
has exactly one child with probability $\las$. Thus
\begin{equation}\label{onetrack}
 \Pr(|X_{t-t_1}^+|=1) = s\las^{t-t_1}.
\end{equation}
Given that $|X_{t-t_1}^+|=1$, the number $N_v$ of descendants in $X_t$ of the unique
particle $v$ in $X_{t-t_1}^+$ has the distribution of $|X_{t_1}|$ conditioned
on the whole process surviving.  From \eqref{ar}, 
the (unconditional) probability that $|X_{t_1}|$ is between $\omega/2$ and $\omega-1$, say,
is bounded away from zero, and the conditional probability that $\bp_\la$ survives given this
event is at least $s$. Thus 
\begin{multline*}
 \Pr\bb{N_v<\omega \bigm| |X_{t-t_1}^+|=1} =
\Pr\bb{|X_{t_1}|<\omega \bigm| \forall t: |X_t|>0} \\
 \ge 
\Pr\bb{|X_{t_1}|<\omega,\, \forall t: |X_t|>0}\ge c_2,
\end{multline*}
for some positive constant $c_2$.

It remains to exclude descendants in $X_t$
of other particles in $X_{t-t_1}$. By definition, these particles do not survive.
We may construct $\bp_\la$ as follows: first construct $\bp^+ = (X_t^+)_{t\ge 0}$. Then add
in the particles that die: for each particle in each set $X_r^+$, we must
add an independent copy of $\bp_{\las}$ rooted at this particle.

Given that $|X_{t-t_1}^+|=1$, we have $|X_r^+|=1$ for all $r\le t-t_1$.
The probability that the copy of $\bp_{\las}$ started at time $r$
survives to time $t$ is $\Pr(|X_{t-r}^-|>0) \le \las^{t-r}$.
Since the different copies are independent, the probability that
all die before time $t$ is at least $\prod_{r\le t-t_1} (1-\las^{t-r})$.
Now $\las<1$, so 
$\sum_{r\le t-t_1} \las^{t-r}=O(\las^{t_1}) =O(1)$, and 
$\prod_{r\le t-t_1}(1-\las^{t-r})\ge c_3$, for some $c_3>0$
depending only on $\la$.
Hence, 
\[
 \Pr\bb{0<|X_t|<\omega} \ge s\las^{t-t_1} c_2 c_3 = \Omega(\las^{t-t_1}),
\]
completing the proof of the lemma.
\end{proof}

The above lemma tells us virtually all we need to know about the branching process for the `early growth' part of the proof of
Theorem~\ref{th_const}. The next ingredient for this phase is a lemma
connecting the growth of neighbourhoods in the graph to the branching process.  The branching process model is most relevant if the growing neighbourhood of a vertex remains a tree. To be sure, almost all vertices do not lie on or near a short cycle. However, we cannot simply ignore the exceptional vertices, since a result about the diameter makes a statement
about {\em all} vertices, not just almost all. So we must be a little careful.

We deal with the problem of non-tree neighbourhoods as follows. Given a vertex $x$ of a graph $G$, let $\Ga_t(x)$
be the set of vertices at graph distance $t$ from $x$.
Let $\Gat(x)$
be the subgraph of $G$ induced by $\bigcup_{t'\le t} \Ga_{t'}(x)$, regarded
as a rooted graph with root $x$. We shall explore
the neighbourhoods $\Ga_t(x)$ in the following essentially standard way.
Fix once and for all an order on $V(G)$.
Having found $\Ga_t(x)$ (starting with $t=0$), go through the vertices
of $\Ga_t(x)$ one by one in the predetermined order.
For each vertex $v$ we expose
all edges from $v$ to vertices not yet reached in the exploration; this means we test each potential edge
{\em to an as yet unreached vertex} for its presence; any edges detected are called `uncovered.'
If we uncover an edge $vw$, we add $w$ to $\Ga_{t+1}(x)$.
Of course this process correctly identifies the sets $\Ga_t(x)$.
However,
it only uncovers certain edges: let $\Gatm(x)$ denote
the graph formed by the edges uncovered in our tests exploring
up to $\Ga_t(x)$. Then $\Gatm(x)$ is a tree: it is a spanning
tree in the graph $\Gat(x)$.

In the following results, $X_{\le t}$ denotes the union of generations $0$ to $t$
of the branching process $\bp_\la$, regarded as a rooted tree with root the initial
particle, and $\isom$ denotes isomorphism of rooted trees.

\begin{lemma}\label{cpl}
Let $\la>0$ be fixed. For any rooted tree $T$ with $|T|\le n/2$ we have
\[
 \Pr\bb{\Gatm(x)\isom T} = e^{O(|T|^2/n)}\Pr\bb{X_{\le t}\isom T}
\]
and
\[
 \Pr\bb{\Gat(x)\isom T}  = e^{O(|T|^2/n)}\Pr\bb{X_{\le t}\isom T},
\]
where the implicit constants depend only on $\la$.
\end{lemma}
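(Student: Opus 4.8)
The plan is to evaluate each of the three probabilities $\Pr(\Gatm(x)\isom T)$, $\Pr(\Gat(x)\isom T)$ and $\Pr(X_{\le t}\isom T)$ explicitly, up to a factor $e^{O(|T|^2/n)}$, showing that each equals $\la^{\,|T|-1}e^{-\la m}/|\mathrm{Aut}(T)|$, where $m$ is the number of vertices of $T$ at depth at most $t-1$ and $\mathrm{Aut}(T)$ is the automorphism group of $T$ as a rooted tree. We may assume $T$ has depth at most $t$ (otherwise all three probabilities vanish, since each of the three objects has depth at most $t$), and that $n$ is large (for bounded $n$ the hypothesis $|T|\le n/2$ forces $|T|$ to be bounded and every probability in sight is then $\Theta(1)$).

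First I would treat $\Gatm(x)$. Fix a set $S$ with $x\in S\subseteq[n]$ and $|S|=|T|$, and a rooted tree $\tau$ on $S$ with root $x$ and $\tau\isom T$; write $d_v$ for the number of children of a vertex $v$ in $\tau$. Tracing through the exploration: when we process a vertex $v$ of $\tau$ at depth $r\le t-1$, at which point $R_v$ vertices have been reached, we test the $n-R_v$ potential edges from $v$ to the currently unreached vertices, and the event $\{\Gatm(x)=\tau\}$ requires exactly the $d_v$ of these leading to the children of $v$ to be present and the other $n-R_v-d_v$ to be absent. Each potential edge is tested at most once, and on this event the set of tested pairs and the required outcome of each is determined by $\tau$ and the fixed vertex order; as distinct pairs are independent,
\[
 \Pr\bb{\Gatm(x)=\tau}=p^{\,|T|-1}(1-p)^{N(\tau)},\qquad N(\tau)=mn-(|T|-1)-\sum_v R_v ,
\]
the sum being over the vertices $v$ of $\tau$ of depth at most $t-1$, and where $\sum_v d_v=|T|-1$ since every non-root vertex of $T$ has a unique parent, of depth $\le t-1$. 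Since $1\le R_v\le|T|\le n/2$ and $p=\la/n$, one checks $(1-p)^{N(\tau)}=(1-p)^{mn}e^{O(|T|^2/n)}=e^{-\la m}e^{O(|T|^2/n)}$, uniformly over all such $\tau$. There are $\binom{n-1}{|T|-1}(|T|-1)!/|\mathrm{Aut}(T)|$ choices of the pair $(S,\tau)$, while $\tfrac{(n-1)!}{(n-|T|)!}\,p^{\,|T|-1}=\la^{\,|T|-1}\prod_{j=1}^{|T|-1}(1-j/n)=\la^{\,|T|-1}e^{O(|T|^2/n)}$, the hypothesis $|T|\le n/2$ keeping each factor bounded away from $0$. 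Summing over $\tau$ yields $\Pr(\Gatm(x)\isom T)=e^{O(|T|^2/n)}\la^{\,|T|-1}e^{-\la m}/|\mathrm{Aut}(T)|$.

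Next I would show $\Pr(X_{\le t}\isom T)=\la^{\,|T|-1}e^{-\la m}/|\mathrm{Aut}(T)|$ exactly, by induction on $t$; combined with the previous paragraph this gives the first identity. If the pendant subtrees of the root of $T$ have distinct isomorphism types $S_1,\dots,S_r$ with multiplicities $a_1,\dots,a_r$, so that the root has $k:=\sum_i a_i$ children, then conditioning on the $\Po(\la)$ number of children of the root of $\bp_\la$ being $k$ and on the multiset of their depth-$(t-1)$ subtrees matching that of $T$ gives
\[
 \Pr(X_{\le t}\isom T)=\frac{e^{-\la}\la^{k}}{k!}\cdot\frac{k!}{\prod_i a_i!}\,\prod_i\Pr\bb{X_{\le t-1}\isom S_i}^{a_i}.
\]
Substituting the inductive formula and using $|T|-1=\sum_i a_i|S_i|$, $\,m=1+\sum_i a_i m_{S_i}$, and the standard identity $|\mathrm{Aut}(T)|=\prod_i a_i!\,\prod_i|\mathrm{Aut}(S_i)|^{a_i}$ collapses this to the asserted value.

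Finally, for the induced neighbourhood, $\Gat(x)$ being isomorphic to a tree $T$ forces it to contain no chords, so $\{\Gat(x)=\tau\}=\{\Gatm(x)=\tau\}\cap\{\text{every pair inside }V(\tau)\text{ not tested during the exploration is absent}\}$. At most $\binom{|T|}{2}$ pairs are involved, and they are independent of the exploration, so $\Pr(\Gat(x)=\tau)=\Pr(\Gatm(x)=\tau)(1-p)^{O(|T|^2)}=\Pr(\Gatm(x)=\tau)\,e^{O(|T|^2/n)}$; summing over $\tau$ gives the second identity. All implied constants are absolute multiples of $1+\la+\la^2$, so depend only on $\la$. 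I expect the only genuine care to be the bookkeeping in the second paragraph — pinning down exactly which pairs the exploration tests, and checking that although $N(\tau)$ really does depend on the label-dependent processing order through the $R_v$, it is determined to within an additive $O(|T|^2)$ for every $\tau$, so that the automorphism counts on the two sides match after summation; everything else reduces to repeated use of $|T|\le n/2$ to keep the factors $1-j/n$ and $1-p$ bounded away from $0$.
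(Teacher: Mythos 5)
Your proposal is correct, but it takes a genuinely different route from the paper's. The paper's proof runs a step-by-step comparison: at each exploration step, the number of new children is $\Bi(n-r+a,\la/n)$ in the graph and $\Po(\la)$ in the branching process, and one directly bounds the \emph{ratio} $p_1/p_2=(n-r+a)_{(a)}n^{-a}(1-\la/n)^{-r}e^\la(1-\la/n)^n=e^{O(ar/n+r\la/n+\la^2/n)}$ of the two step-probabilities, then multiplies over steps so that the $\sum ar,\sum r\le|T|^2$ bound delivers the factor $e^{O(|T|^2/n)}$ without ever writing down either probability in closed form. You instead evaluate each of the three probabilities explicitly and show they all equal $\la^{|T|-1}e^{-\la m}/|\mathrm{Aut}(T)|$ up to $e^{O(|T|^2/n)}$ — your labelled-tree count $\binom{n-1}{|T|-1}(|T|-1)!/|\mathrm{Aut}(T)|$, the exact product formula with exponent $N(\tau)$, and the induction $|\mathrm{Aut}(T)|=\prod_i a_i!\,|\mathrm{Aut}(S_i)|^{a_i}$ on the branching-process side are all correct, and your bookkeeping of $R_v$ and of the falling factorial is sound under $|T|\le n/2$. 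The paper's ratio argument is shorter and sidesteps the automorphism accounting entirely (it never needs to know what either probability actually is), which is why the authors can dispose of it in a few lines; your route is longer but more self-contained, yields a clean closed form $\la^{|T|-1}e^{-\la m}/|\mathrm{Aut}(T)|$ that would be reusable if one later needed the actual value rather than just the ratio, and makes explicit the summation over labelled representatives $\tau$ that the paper leaves implicit in the phrase ``step-by-step coupling.'' The handling of the second identity (testing the $O(|T|^2)$ untested pairs) is essentially the same in both proofs.
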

\begin{proof}
This is well known and easy to prove. The first statement follows from the natural
step-by-step coupling between $\Gatm(x)$ and the branching process,
where each step investigates the children (of a vertex or a particle,
respectively). Suppose we have reached $r-a$ vertices in total
so far. Then the probabilities of finding $a$ vertices in the next step
are $p_1=\binom{n-r+a}{a}(\la/n)^a(1-\la/n)^{n-r}$
and $p_2=e^{-\la}\la^a/a!$
in the two models. The ratio of these probabilities
is
\[
 p_1/p_2 = (n-r+a)_{(a)}n^{-a} (1-\la/n)^{-r} e^\la(1-\la/n)^n
 = e^{O(ar/n+r\la/n + \la^2/n)},
\]
where $x_{(a)}=x(x-1)\cdots(x-a+1)$. The sum of $ar$ or $r$ over all vertices
in the tree is trivially at most $|T|^2$, so it follows that
\begin{equation}\label{cpl1}
 \frac{ \Pr\bb{\Gatm(x)\isom T} }{ \Pr\bb{X_{\le t}\isom T} }
 = \exp\bb{ O(|T|^2/n + \la|T|^2/n + \la^2|T|/n) }.
\end{equation}
Since $\la$ is fixed, this proves the first statement.
 
If $\Gat(x)\isom T$, then $\Gat(x)$ is a tree, so
$\Gatm(x)=\Gat(x)$.  Hence $\Gat(x)\isom T$ implies $\Gatm(x)\isom
T$. Given that $\Gatm\isom T$, the probability that none of the
untested edges between the $|T|$ vertices found is also present is
again $e^{O(|T|^2/n)}$. So the second statement follows from the
first.
\end{proof}

Using Lemmas~\ref{l1} and~\ref{cpl}, we can study the initial rate of
growth of the neighbourhoods of the vertices of $G(n,\la/n)$.  The
first step is to show that these neighbourhoods cannot stay small but
non-empty for too long.
The basic picture is that after about
\begin{equation}\label{t1def}
 t_1=\floor{\log\omega/\log\la}
\end{equation}
steps, we expect a typical vertex neighbourhood to expand to size
approximately $\omega$. It is very unlikely that there are any vertices in the graph whose
neighbourhoods expand to some reasonable size, say around $\log n$, and then fail to expand to
size $\omega$ in roughly the expected time from that point.
However, some unusual vertices take up to
\begin{equation}\label{t0def}
 t_0=\floor{\log n/\log(1/\las)}
\end{equation}
steps before their neighbourhoods expand significantly,
and so take this many more steps than usual
to reach size roughly $\omega$.

We argue this more precisely as follows.

Set $\omega=(\log n)^6$, say, and define $t_0$ and $t_1$ as above.
Let $K=K(n)$ tend to infinity slowly (for instance, slower than $\log \log n$).

For each vertex $x$, let $B_1(x)$ be the `bad' event that $1\le |\Ga_{t'}(x)|<\omega$
holds for all $0\le t'\le t=t_0+t_1+K$.
The event $B_1(x)$ is a disjoint union of events of the form
$\Gatm\isom T$, where each tree $T$ has size
at most $t\omega=o(\sqrt n)$.
Also, the corresponding union of the events $X_{\le t}\isom T$
is the event that $0<|X_{t'}|<\omega$ holds for all $t'\le t$.
Hence, by Lemma~\ref{cpl},
\begin{equation}\label{B1x}
 \Pr(B_1(x)) \sim \Pr\bb{ \forall t'\le t:0<|X_{t'}|<\omega}
 \le \Pr\bb{0<|X_t|<\omega} =O(\las^{t_0+K}),
\end{equation}
where the last step is from Lemma~\ref{l1}.

Let $B_1$ be the event that $B_1(x)$ holds for some $x$. Then
\begin{equation}\label{noB1}
 \Pr(B_1)\le n\Pr(B_1(x))=O(n\las^{t_0+K}) = O(\las^K)=o(1).
\end{equation}

We now move on to the `regular growth' part of the proof. That is, our next aim is to show that once the neighbourhoods of a vertex $x$ reach size $\omega$,
with very high probability they then grow at a predictable
rate until they reach size comparable with $n$.
We shall use the following convenient form of the Chernoff bounds on the binomial
distribution; see~\cite{Janson_conc}, for example.
\begin{lemma}\label{Chern}
Let $Y$ have a binomial distribution with parameters $n$ and $p$.
If $0\le\delta\le 1$ then
\[
 \Pr\bb{ |Y-np| \ge \delta np } \le 2e^{-\delta^2 np/3}.
\]
\noproof
\end{lemma}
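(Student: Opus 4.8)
The bound stated as Lemma~\ref{Chern} is a completely standard Chernoff-type estimate, and the plan is the usual exponential-moment (Bernstein--Chernoff) argument: bound the upper and lower tails of $Y$ separately, then combine them by a union bound, which is exactly what produces the factor $2$. Writing $Y=\sum_{i=1}^n Y_i$ as a sum of independent Bernoulli$(p)$ random variables, the key input is the moment generating function bound $\E\bb{e^{hY}}=\bb{1-p+pe^h}^n\le \exp\bb{np(e^h-1)}$, valid for every real $h$ by the inequality $1+x\le e^x$.

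For the upper tail, Markov's inequality applied to $e^{hY}$ with $h>0$ gives
\[
 \Pr\bb{Y\ge(1+\delta)np}\le e^{-h(1+\delta)np}\,\E\bb{e^{hY}}\le\exp\bb{np\bb{e^h-1-h(1+\delta)}},
\]
and choosing $h=\log(1+\delta)\ge0$ to minimise the right-hand side yields the classical estimate $\Pr\bb{Y\ge(1+\delta)np}\le\exp\bb{-np\,g(\delta)}$ with $g(\delta)=(1+\delta)\log(1+\delta)-\delta$. The lower tail is treated in exactly the same way, applying Markov to $e^{-hY}$ with $h>0$ and taking $h=-\log(1-\delta)$ (admissible for $\delta<1$; the case $\delta=1$ is trivial, since then the claimed event forces $Y=0$ and $(1-p)^n\le e^{-np}$). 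This gives $\Pr\bb{Y\le(1-\delta)np}\le\exp\bb{-np\,h_-(\delta)}$ with $h_-(\delta)=\delta+(1-\delta)\log(1-\delta)$.

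It then remains to verify the two elementary inequalities $g(\delta)\ge\delta^2/3$ and $h_-(\delta)\ge\delta^2/2$ for $0\le\delta\le1$; together with $h_-(\delta)\ge\delta^2/2\ge\delta^2/3$ and a union bound over the two tail events, these finish the proof. Each reduces to a derivative comparison starting from the common value $0$ at $\delta=0$: for $h_-$ one needs $h_-'(\delta)=-\log(1-\delta)\ge\delta$, immediate from the power series; for $g$ one needs $g'(\delta)=\log(1+\delta)\ge 2\delta/3$ on $[0,1]$, which holds because $\log(1+\delta)-2\delta/3$ vanishes at $0$, increases up to $\delta=1/2$ and then decreases, and is still positive at $\delta=1$ since $\log 2>2/3$. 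This is routine calculus and there is no real obstacle; the only point worth noting is that the constant $3$ in the exponent is forced by the \emph{upper} tail ($g(\delta)\ge\delta^2/2$ fails for $\delta$ near $1$, e.g.\ $g(1)=2\log2-1<1/2$), whereas the lower tail alone would permit the sharper constant $2$. Since nothing beyond independence of the $Y_i$ is used, the paper simply cites the result.
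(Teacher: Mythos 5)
Your proposal is correct and is the standard Chernoff--Bernstein exponential-moment argument (MGF bound, optimize $h$, then the elementary inequalities $g(\delta)\ge\delta^2/3$ and $h_-(\delta)\ge\delta^2/2$). The paper does not include a proof here — it simply cites Janson's survey~\cite{Janson_conc}, where the result is derived in essentially this way — so your argument matches the intended source.
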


Let $0<\delta<1/1000$ be an arbitrary (small) constant.
Let us say that a vertex $x$ has {\em regular large neighbourhoods}
if one of the following holds: either $|\Ga_t(x)|< \omega$ for all $t$,
or, setting $t^-=\min\{t: |\Ga_t(x)|\ge \omega\}$ and
$t^+=t^- + \log(n^{3/4}/\omega)/\log\la$, we have
\[
 (1-\delta) \la^{t-t^-+1}|\Ga_{t^--1}(x)|  \le  |\Ga_t(x)|  \le 
 (1+\delta) \la^{t-t^-+1}|\Ga_{t^--1}(x)|
\]
for $t^-\le t\le t^+$.
In other words, the neighbourhoods grow by almost exactly a factor of $\la$
at each step from just before the first time they reach size $\omega$
until they reach size around $n^{3/4}$.
Note that since we start from the last `small' neighbourhood $\Ga_{t^--1}(x)$, 
the growth condition above certainly implies that
\begin{equation}\label{cl}
 \frac{1-\delta}{1+\delta} \  \le \ \frac{ |\Ga_t(x)|}{\omega \la^{t-t^-}}
 \ \le\  \la(1+\delta) 
\end{equation}
holds for $t^-\le t\le t^+$.

Let $B_2(x)$ be the `bad' event that a given vertex $x$ of $G(n,\la/n)$
fails to have regular large neighbourhoods,
and $B_2=\bigcup_x B_2(x)$ the global bad event that not all vertices
have regular large neighbourhoods.
\begin{lemma}\label{reg}
For each fixed vertex $x$ of $G(n,\la/n)$ we have
$\Pr(B_2(x))=o(n^{-1})$. Thus $\Pr(B_2)=o(1)$.
\end{lemma}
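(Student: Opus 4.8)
The statement $\Pr(B_2(x))=o(n^{-1})$ is a union bound over the (at most $t^+-t^-+1 = O(\log n)$) steps in the `regular growth' phase, so it suffices to show that at each step $t$ in $[t^-,t^+]$, conditioned on the history up to generation $t-1$ and on $|\Ga_{t-1}(x)|$ lying in the appropriate range, the probability that $|\Ga_t(x)|$ deviates from $\la|\Ga_{t-1}(x)|$ by more than a $\delta$-fraction is $o(n^{-1}/\log n)$. First I would set up the exploration process as described before the lemma: having revealed $\Gat[t-1](x)$, we have found some number $R = R(t-1) \le \sum_{t'\le t^+}|\Ga_{t'}(x)|$ of vertices in total, and $\Ga_t(x)$ is obtained by testing, for each of the $|\Ga_{t-1}(x)|$ vertices in the current layer, each potential edge to one of the $n-R$ as-yet-unreached vertices. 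Conditioned on the history, $|\Ga_t(x)|$ is stochastically essentially a binomial random variable $\Bi\bb{(n-R)|\Ga_{t-1}(x)|, \la/n}$ — more precisely it is dominated above by $\Bi\bb{n|\Ga_{t-1}(x)|,\la/n}$ and below by $\Bi\bb{(n-R)|\Ga_{t-1}(x)|,\la/n}$, since the only subtlety is that the unreached set shrinks slightly as we process the layer, but never below $n-R$.

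The key quantitative point is that throughout the regular growth phase we have the a priori bound $|\Ga_{t-1}(x)| \ge \omega/\la = (\log n)^6/\la$ (for $t \ge t^-$, and for $t=t^-$ we use $|\Ga_{t^--1}(x)|\ge 1$, handling that first step separately or noting $|\Ga_{t^-}(x)|\ge\omega$), while $R \le (t^+-t^-+1)\cdot O(n^{3/4}) = O(n^{3/4}\log n) = o(n^{4/5})$ by the definition of $t^+$ and the upper part of the growth hypothesis (which we may assume inductively has held so far, since the first violation is what we are bounding). Hence $R/n = o(1)$, so the mean of the relevant binomial is $(1+o(1))\la|\Ga_{t-1}(x)|$, and more importantly it is at least $c(\log n)^6$ for a constant $c>0$. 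Applying the Chernoff bound of Lemma~\ref{Chern} with the constant $\delta$ (shrunk slightly to absorb the $o(1)$ multiplicative error in the mean) gives a failure probability at most $2\exp\bb{-\Omega(\delta^2 (\log n)^6)} = 2\exp\bb{-\Omega((\log n)^6)}$, which is $o(n^{-1}/\log n)$ with enormous room to spare. Summing over the $O(\log n)$ steps gives $\Pr(B_2(x)) = O(\log n)\exp\bb{-\Omega((\log n)^6)} = o(n^{-1})$; then $\Pr(B_2) \le n\Pr(B_2(x)) = o(1)$ as claimed.

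**Main obstacle.** The only real care needed is the conditioning and the inductive structure: one must make precise that `$x$ fails to have regular large neighbourhoods' means there is a \emph{first} step $t\in[t^-,t^+]$ at which the two-sided growth bound fails, and that up to that step all the a priori bounds ($|\Ga_{t-1}(x)|\ge\omega/\la$, total vertices reached $=o(n^{4/5})$, hence $R/n\to 0$) are available to feed into the Chernoff estimate for step $t$. This is the standard `stopping time / first bad step' device, and formally one writes $B_2(x)\subseteq\bigcup_{t=t^-}^{t^+} B_2(x,t)$ where $B_2(x,t)$ is the event that steps $t^-,\dots,t-1$ were all good but step $t$ is bad, and bounds each $\Pr(B_2(x,t))$ by the Chernoff argument above applied to the conditional distribution of $|\Ga_t(x)|$ given a good history. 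A minor technical wrinkle is that $t^-$ itself is a random time depending on the exploration, but since the exploration is revealed layer by layer, conditioning on $\{t^-=s\}$ together with the configuration of $\Gat[s-1](x)$ is legitimate and the binomial description of the next layer still holds; alternatively one fixes $s$ and unions over the $O(\log n)$ possible values of $t^-$, losing only another $\log n$ factor that the Chernoff bound easily absorbs. No step is genuinely hard; the content is entirely in the observation that $\omega=(\log n)^6$ makes the Chernoff exponent polylogarithmic, hence far smaller than $n^{-1}$.
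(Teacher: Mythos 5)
Your decomposition has a genuine gap. You write that $B_2(x)\subseteq\bigcup_{t=t^-}^{t^+} B_2(x,t)$, where $B_2(x,t)$ is the event that the per-step ratio $|\Ga_t(x)|/|\Ga_{t-1}(x)|$ first leaves $[\la(1-\delta),\la(1+\delta)]$ at step $t$, and you then apply Chernoff with the \emph{constant} $\delta$ (``shrunk slightly to absorb the $o(1)$ multiplicative error in the mean''). But $B_2(x)$ is defined via the \emph{cumulative} condition
\[
(1-\delta)\la^{t-t^-+1}|\Ga_{t^--1}(x)| \le |\Ga_t(x)| \le (1+\delta)\la^{t-t^-+1}|\Ga_{t^--1}(x)|,
\]
and this is not implied by all per-step ratios lying in $[\la(1-\delta),\la(1+\delta)]$: over $k=t^+-t^-+1=\Theta(\log n)$ steps, the compounded error is $(1\pm\delta)^k$, which is $n^{\pm\Theta(\delta)}$, not $1\pm\delta$. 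So the union bound you set up controls the wrong event, and the claimed inclusion $B_2(x)\subseteq\bigcup_t B_2(x,t)$ fails.

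The fix is easy and is exactly what the paper does: replace the constant $\delta$ per step with a per-step tolerance that shrinks, such as $(\log n)^{-2}$. Then $(1+O((\log n)^{-2}))^{O(\log n)}=1+O((\log n)^{-1})=1+o(1)$, which does land inside $(1\pm\delta)$ for $n$ large, and the Chernoff exponent becomes $\exp\bigl(-\Omega((\log n)^{-4}\cdot(\log n)^6)\bigr)=\exp\bigl(-\Omega((\log n)^2)\bigr)$, still comfortably $o(n^{-1}/\log n)$ after the union over steps. With that one change, the rest of your outline (stochastic domination by binomials, the stopping-time/first-bad-step bookkeeping, the $R=o(n)$ bound on vertices used up, the case $a_{t^--1}<\omega/(100\la)$, and conditioning on the random $t^-$) matches the paper's argument; you should also note, as the paper does, that the initial a priori bound $|\Ga_{t^--1}(x)|\ge\omega/(100\la)$ and $t^-=O(\log n)$ are supplied by the already-established bound on $\Pr(B_1(x))$ rather than assumed.
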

\begin{proof}
This is well known (c.f.\ Janson, {\L}uczak and Ruci\'nski~\cite[Section 5.2]{JLR}),
and essentially trivial from the Chernoff bounds (or Hoeffding's inequality);
we nevertheless give the details.
We explore the successive neighbourhoods of $x$ in $G(n,\la/n)$
in the usual way, writing $a_t$ for $|\Ga_t(x)|$.
Conditional on $a_0,a_1,\ldots,a_t$, the distribution
of $a_{t+1}$ is binomial with parameters $n-m$ and $p=1-(1-\la/n)^{a_t}$,
where $m=\sum_{t'\le t}a_{t'}$ and $p$ is 
the probability that one of the undiscovered vertices is adjacent to at least
one member of $\Ga_t(x)$.
Assuming that $m=O(n^{3/4})$, say,
we have $\E(a_{t+1}\mid a_0,\ldots,a_t) =\la a_t(1+O(n^{-1/4}))$.
It then follows from Lemma~\ref{Chern} that, conditional on $a_0,\ldots,a_t$,
if $a_t\ge \omega/(100\la)$ then we have
\[
 \Pr\left(\left| \frac{a_{t+1}}{a_t}-\la\right| \ge \frac{1}{(\log n)^2}\right)
 = e^{-\Omega( (\log n)^{-4}a_t)} = o(n^{-100}),
\]
using $\omega= (\log n)^6$. 
Similarly, if $a_t<\omega/(100\la)$ then $\Pr(a_{t+1} \ge \omega )\le n^{-100}$.

Let $t^-$ be the first $t$ with $a_t\ge \omega$, if such a $t$ exists.
We have already shown above that the probability that $0<a_t<\omega$
holds for all $t$ up to $t_0+t_1+K=O(\log n)$ is $o(n^{-1})$,
so with probability $1-o(n^{-1})$ either $t^-$ is undefined, in which
case there is nothing to prove, or $t^-=O(\log n)$, in which
case we have so far uncovered $O(t^-\omega)=o(n^{3/4})$ vertices.
{From} the estimates above, with very high probability $a_{t^--1}\ge \omega/(100\la)$,
and, from this point on, the ratios $a_{t+1}/a_t$ are within a factor $1+O((\log n)^{-2})$
of $\la$ until $a_t$ first exceeds $n^{3/4}$. It follows
that $x$ has regular large neighbourhoods with probability $1-o(n^{-1})$,
as claimed.
\end{proof}

Note that we took $\omega$ as large as $(\log n)^6$ just to simplify the estimates.
If we are a little more careful, a large constant times $\log n$ will in fact do:
significant deviations in the ratio $a_{t+1}/a_t$ are only likely
near the beginning, so we can bound these ratios above and below by sequences approaching
$1$ geometrically with high enough probability.

We now move onto the third phase of the proof, where we consider the   meeting up of neighbourhoods of different vertices and hence the distance between them. This still involves a careful look at the early development of neighbourhoods, since, from the second phase of the proof, we know that vertices with large close neighbourhoods will have large distant neighbourhoods. We treat the upper and lower bounds in the Theorem~\ref{th_const} separately.

\subsection{Upper bound}\label{upper fixed}
As above, set $\omega=(\log n)^6$, say,
and let $K=K(n)$ tend to infinity slowly.

For $x\in V(G)$ let $t_\omega(x)=\min\{t: |\Ga_t(x)| \ge \omega\}$,  
if this minimum exists; otherwise
$t_\omega(x)$ is undefined.
Note that if the event $B_1$ defined above
does not hold, then whenever $t_\omega(x)$ is defined,
we have $t_\omega(x)\le t_0+t_1+K$.

Set 
\[
 t_2=\lfloor \log(n/\omega^2)/\log \la\rfloor,
\]
and, for $x,y\in V(G)$,
let $E_{x,y,i,j}$ be the event that $t_\omega(x)= t_0+t_1-i$, $t_\omega(y)=t_0+t_1-j$,
and $d(x,y)\ge t_\omega(x)+i+t_\omega(y)+j+t_2+3K+c_0$ all hold, where $c_0>2$ is some constant. Our next aim is to bound the probability
of the event $E_{x,y,i,j}$ for given vertices $x$ and $y$ and given $i,j\ge -K$.

Recall that $B_2$ is the event that not all vertices
have regular large neighbourhoods. We claim that there is some $c>0$ (depending only on $\la$) such that
\begin{eqnarray}
 \Pr(E_{x,y,i,j}\setminus B_2)
 &\le& \las^{t_0-i}\las^{t_0-j} e^{-c \la^{3K+i+j}} +o(n^{-100}) \nonumber \\
 &=& O\bb{n^{-2}\las^{-i-j}e^{- c\la^{3K+i+j}}} +o(n^{-100}).\label{xyij}
\end{eqnarray}
First, arguing as in the proof of \eqref{B1x},
using Lemma~\ref{l1} and a version of Lemma~\ref{cpl} where we
start with two vertices and compare with two
copies of the branching process, we see that
\[
 \Pr\bb{ t_\omega(x)=t_0+t_1-i,\, t_\omega(y)=t_0+t_1-j,\,
  d(x,y)>t_\omega(x)+t_\omega(y) } =O(\las^{t_0-i}\las^{t_0-j}).
\]
Exploring the neighbourhoods of $x$ and $y$ in the obvious way, 
suppose we find that $t_\omega(x)=t_0+t_1-i$, $t_\omega(y)=t_0+t_1-j$,
and $d(x,y)>t_\omega(x)+t_\omega(y)$, i.e., our explorations have not yet met.
Set $\ell=\floor{(t_2+3K+i+j)/2}$. Suppose for the moment that
$\ell\le \ell_0=\log(n^{3/4}/\omega)/\log\la$.
Continuing the exploration of the two neighbourhoods a further $\ell$ steps in each case,
we may assume that with respect to the neighbourhoods of $x$ and $y$
that have been revealed so far, the regular large neighbourhood
condition has not yet been violated. (If it has been, the event $B_2$
must hold, and we are bounding the probability of an event contained
in the complement of $B_2$.)
Then
\[
 \min\{|\Ga_{t_\omega(x)+\ell}(x)|,\, |\Ga_{t_\omega(y)+\ell}(y)|\} \ge 0.99 \omega \la^\ell= \Omega\bb{\sqrt{n\la^{3K+i+j}}}.
\]
It may be that $d(x,y)\le t_\omega(x)+\ell+t_\omega(y)+\ell$, in which case we are done.
Otherwise, the edges between $\Ga_{t_\omega(x)+\ell}(x)$ and $\Ga_{t_\omega(y)+\ell}(y)$
have not yet been tested, so the chance that no such edge is present
is
\[
 (1-\la/n)^{ |\Ga_{t_\omega(x)+\ell}(x)| |\Ga_{t_\omega(y)+\ell}(y)| }
  \le e^{-(\la/n) \Omega(n\la^{3K+i+j})} \le e^{-c\la^{3K+i+j}},
\]
for some constant $c>0$. Multiplying by the $O(\las^{t_0-i}\las^{t_0-j})$ bound obtained above
gives \eqref{xyij} in this case.

If $\ell>\ell_0$, the argument is similar; this time, assuming $B_2$ does not hold only allows
us to control the sizes of the neighbourhoods for $\ell_0<\ell$ steps beyond $t_\omega(x)$
and $t_\omega(y)$. But by this time they reach size at least $n^{3/4}/2$, and the probability
that they do not join is at most $e^{-(\la/n)n^{3/2}/4}=o(n^{-100})$.

Let $B$ be the event that  
\begin{eqnarray*}
 \diam(G)\ge 2t_0+2t_1+t_2+3K +10 &\ge& 2\frac{\log\omega}{\log\la}+2\frac{\log n}{\log(1/\las)}
 + \frac{\log n-2\log\omega}{\log\la}+3K \\
 &=& 
 \frac{\log n}{\log\la} + 2\frac{\log n}{\log(1/\las)} +3K.
\end{eqnarray*}
Our aim is to prove that with $K\to\infty$ arbitrarily slowly, we have
$\Pr(B)=o(1)$; in order to do so, it suffices to show that $\Pr(B\setminus (B_1\cup B_2))=o(1)$.

Suppose that $B$ holds but $B_1\cup B_2$ does not, and let $x$ and $y$ be vertices at maximum distance.
Since $B_1$ does not hold, and $d(x,y)$ is so large, exploring successive neighbourhoods of $x$ and $y$,
these neighbourhoods both reach size at least $\omega$ before they meet. Hence
$E_{x,y,i,j}$ holds for some $i$ and $j$. Since
$B_1$ does not hold, $E_{x,y,i,j}$ can only hold if $i,j\ge -K$.
Hence, using \eqref{xyij},
\begin{multline*}
 \Pr(B\setminus (B_1\cup B_2)) 
\le \sum_{i,j\ge -K} \sum_{x,y\in V(G)} \Pr(E_{x,y,i,j}\setminus B_2) \\
 \le o(n^{-90})+ n^2\sum_{i,j\ge -K} n^{-2}O\left(\las^{-i-j}e^{-c\la^{3K+i+j}}\right),  \end{multline*}
which is $o(1)$ since
\[
\sum_{r\ge -2K} (r+2K+1)\las^{-r} e^{-c\la^{3K+r}} 
 =O(e^{-c\la^K})=o(1).
\]
This completes the proof of the upper bound.

\begin{remark}\label{rno1}
Note that one cannot prove the upper bound directly by the first moment method; a separate
argument excluding very long thin neighbourhoods (bounding the probability of $B_1$)
is needed. Indeed, it is not too hard to show that the estimates above are essentially tight.
Thus, if for some $r\ge K$ 
there happens to be a vertex $x$ with $t_\omega(x)=t_0+t_1+r$, say, an 
event of probability around $\las^r$, then
$x$ will be at distance roughly $d=2t_0+2t_1+t_2+K$ from many of the
roughly $(1/\las)^{r-K}$ vertices $y$ with $t_\omega(y)=t_0+t_1-r+K$. Since there are $\Theta(\log n)$
possible values of $r$, the expected number of pairs of vertices at distance $d$
will tend to infinity if $K\to\infty$ slowly enough.
\end{remark}

\subsection{Lower bound}\label{lower_fixed}

The idea of the lower bound is simple. Let $S$ be the set
of vertices $x$ with $t_\omega(x)\ge t_0+t_1-K$. Then,
from the arguments in the previous section, the expected
size of $S$ is roughly $(1/\las)^K$, which tends to infinity.
We would like to show that $|S|$ is large with high probability
using the second moment method. Since two vertices in $S$ are likely
to be far apart, the result will follow. There are two problems.
A minor one is that
the events that different vertices lie in $S$ are not
that close to independent: vertices in $S$ will usually be located in trees attached to the 2-core, and
$S$ roughly corresponds to the set of vertices at least a certain
distance from the 2-core. Although most trees
attached to the 2-core will contain no such vertices,
it turns out that, on average, each tree contributing
one or more such vertices contributes some constant number larger than $1$,
so $|S|$ is not well approximated
by a Poisson distribution. A more serious, related, problem is that to find vertices
at large distance we need to find vertices in $S$
whose short-range neighbourhoods do not overlap, i.e., 
vertices coming from different trees. We solve both these problems
by looking for vertices $x\in S$ satisfying an additional
condition, the {\em strong wedge condition}, that usually corresponds  to $x$
being the unique vertex in its tree at maximal distance from the 2-core.

Note that as we are now looking for a lower bound on the diameter, we do not need
to consider all promising pairs of vertices for our candidate vertices at large distance. We may thus impose additional conditions
as convenient, and our result will still be sharp enough as long as these
conditions are likely enough to be satisfied.
One such condition is that the neighbourhoods are trees up to a suitable
distance. 

Let $x\in V(G)$, and suppose that $\Gat(x)$ is a tree for some $t>0$.
The {\em weak/strong wedge condition}
holds from $x$ to $x_2\in \Gamma_t(x)$ if for every $z\ne x$
in the graph $\Gat(x)$,
the distance from $z$ to the closest vertex $y$
on the unique path from $x$ to $x_2$ is at most/strictly less than the
distance from $x$ to $y$. Note that either condition implies that 
the degree of $x$ in $G$ must be 1. In this section we shall always
work with the strong wedge condition;
the weak wedge condition will play a role in Section~\ref{sec_eps}.

Let $t_K$ denote $t_0-K$, where $K=K(n)\to\infty$ arbitrarily slowly, in particular with $K\le \log \log n$,
and let $W_x^0$ be the event that $\Gl{t_K}(x)$ is a tree with the following
properties:
there is a unique vertex $x_2$ at distance $t_K$ from $x$, and
the strong wedge condition holds from $x$ to $x_2$.
Let $W_x$ be the event that $W_x^0$ holds and $\Gl{t_K}(x)$
contains fewer than $\omega/2$ vertices,
where $\omega =(\log n)^6$ as before.

Note for later that if $W_x$ (or $W_x^0$) holds, then the tree  $\Gl{t_K}(x)$  consists of an $x$-$x_2$ path $P_x$ of length $t_K$ with a (possibly empty) set of trees attached
to each interior vertex, the height of each tree being strictly
less than the distance to the nearest endvertex of $P_x$.
(Thus $W_x^0$ is a sort of `diamond' condition. We will use a precise version of this terminology in the next section.) It follows
that the diameter of $\Gl{t_K}(x)$ is $t_K$, and that $x$ and $x_2$
are the unique pair of vertices of $\Gl{t_K}(x)$ at this distance.

Let $W^0$ and $W$ be the branching process events corresponding to $W_x^0$ and $W_x$, so $W^0$ is the branching process version of our diamond condition.
The event that $W$ holds is a disjoint union of events that
$X_{\le t_K}$ is one of certain trees with at most $\omega/2=o(n^{1/2})$
vertices, so by Lemma~\ref{cpl} we have
$\Pr(W_x)\sim\Pr(W)$.

Once the branching process reaches size $(\log n)^4$, it is very unlikely
ever to shrink down to size $1$, and in fact the probability that $W^0$ holds but one of the first
$t_K=t_0-K$ generations
has size at least $(\log n)^4$ is $o(n^{-100})$. (This follows from the proof of Lemma~\ref{reg},
but is much simpler.) 
Assuming this does not happen, the sum of sizes of the first $t_0-K$ generations is at most $t_0(\log n)^4=  O(\log^5 n)$. It follows that $\Pr(W^0\setminus W)=o(n^{-100})$,
so
\begin{equation}\label{WW0}
 \Pr(W)=\Pr(W^0)+o(n^{-100}).
\end{equation}

To calculate $\Pr(W^0)$, consider the event $W'$, that $W^0$ holds and the unique
particle in generation $t_0-K$ survives. 
Note that
\begin{equation}\label{WW'}
 \Pr(W') = s\Pr(W^0).
\end{equation}
If $W'$ holds, then $|X_t^+|=1$ for $t=t_0-K$ and hence for $t=0,1,\ldots,t_0-K$,
an event of probability $s\las^{t_0-K}$.
Conversely, constructing $\bp_\la$ as before by starting from $\bp^+$ and adding
in independent copies of $\bp_{\las}=(X_r^-)$ started at each particle,
$W'$ holds if and only if $|X_{t_0-K}^+|=1$ and, for $0\le t< t_0-K$,
the copy of $\bp_{\las}$ started at the unique particle of $X_t^+$
dies within $\min\{\max\{t,1\},t_0-K-t\}$ generations:
dying within $t_0-K-t$ generations 
ensures that $|X_{t_0-K}|=|X_{t_0-K}^+|=1$, and, for $t>0$, dying within
$t$ generations ensures that the strong wedge condition holds.
Let $d_t=\Pr(|X_t^-|=0)$ be the probability that the subcritical
process $\bp_{\las}$ dies within $t$ generations.
Then we have
\begin{multline}\label{PW'}
 \Pr(W')  = s\las^{t_0-K} d_1\prod_{t=1}^{t_0-K-1} d_{\min\{t,t_0-K-t\}} \\
 = s\las^{t_0-K}d_1 d_1^2d_2^2d_3^2\cdots d_{\floor{(t_0-K)/2}-1}^2 d_{\floor{(t_0-K)/2}}^\theta,
\end{multline}
where the exponent $\theta$ of the last factor is $1$ or $2$  depending on the parity of $t_0-K$.
As we shall see, the later factors in the product are essentially irrelevant.
Indeed,
\begin{equation}\label{dt}
 1-d_t = \Pr(|X_t^-|>0) \le \E(|X_t^-|) = \las^t,
\end{equation}
so $1-\las^t\le d_t\le 1$, and $-\log d_t=O(\las^t)$.
Since $\sum_t \las^t$ is convergent, we thus have $\prod_t d_t=\Theta(1)$,
so $\Pr(W')=\Theta(\las^{t_0-K})$ and, using \eqref{WW0} and \eqref{WW'},
\[
 \Pr(W)=\Pr(W^0)+o(n^{-100}) = s^{-1}\Pr(W') +o(n^{-100})= \Theta(\las^{t_0-K}).
\]
Since $\las^{t_0}$ is of order $1/n$ and $K\to\infty$, it follows that $n\Pr(W)\to\infty$,
and hence that $n\Pr(W_x)\to\infty$.

Recalling that $t_1=\floor{\log \omega/\log \la}$,
set $t=t_K=t_0-K$,
let $W_x^+$ be the event that
$W_x$ holds, $|\Gamma_{t+t_1}(x)|\ge \omega$, and
$|V(\Gl{t+t_1}(x))|<\omega^2$.
If $W_x$ holds, then 
exploring the neighbourhoods of $x$ to distance $t$ we have
by definition reached at most $\omega/2$ vertices.
Using~\eqn{ar}, it is easy to show that $\Pr(W_x^+\mid W_x)=\Theta(1)$, so
$\Pr(W_x^+)=\Theta(\Pr(W_x))$.

Let $N$ be the number of vertices $x$ for which $W_x^+$ holds,
so $\E(N)=n\Pr(W_x^+)=\Theta(n\Pr(W_x))\to \infty$. We shall use the second
moment method to show that $N$ is concentrated about its mean.
The argument is slightly more complicated than one might expect
(or hope for); while one can give simpler arguments that are very
plausible, we have so far failed to turn such an argument into
a rigorous proof. In fact, the argument we do present deals
with all issues of possible dependence with very little calculation.

Suppose that $x$ and $y$ are distinct vertices and that $W_x^+$
and $W_y^+$ both hold. Then $W_x$ and $W_y$ also hold. Our
immediate aim is to show that the subgraphs
$\Gat(x)$ and $\Gat(y)$ must be edge disjoint,
i.e., they can meet only if $x_2=y_2$, and then only at this one vertex.
We shall write $W_x \star W_y$ for the event
that $W_x$ and $W_y$ hold, and $\Gat(x)$ and $\Gat(y)$ 
are edge disjoint. In other words
$W_x\star W_y = W_x\cap W_y\cap \{d(x,y)\ge 2t\}$.
We define $W_x^+\star W_y^+$ similarly,
so
$W_x^+\star W_y^+ = W_x^+\cap W_y^+\cap \{d(x,y)\ge 2t+2t_1\}$.
(One must be careful here: when $W_x$ holds, $\Gat(x)$ is {\em not} a certificate
for this event in the sense of the van den Berg--Kesten box product~\cite{vdBK},
i.e., specifying that this particular subgraph is present as an induced subgraph
does not guarantee that $W_x$ holds. To guarantee $W_x$, one must
also certify that various edges are absent, from $\Gl{t-1}$ to vertices
outside $\Gat$; such certificates for $W_x$ and $W_y$ can never be disjoint,
so we cannot simply apply Reimer's Theorem~\cite{Reimer} to bound $\Pr(W_x\star W_y)$.)

Still assuming that $x$ and $y$ are distinct vertices such that $W_x^+$ and $W_y^+$ both
hold,
suppose first that $y$ lies strictly inside $\Gat(x)$, i.e.,
that $y\in V(\Gat(x))\setminus \{x_2\}$.
As noted earlier, since $W_x$ holds, $\Gat(x)$ has diameter $t$, and this diameter is realized
uniquely by $x$ and $x_2$. Thus the vertex $y_2$, which is at distance
$t$ from $y$, must lie outside $\Gat(x)$. But then the unique
$y$-$y_2$ path $P_y$ passes through $x_2$. Considering the vertex
$z$ where $P_y$ first meets $P_x$, the strong wedge condition for $x$
gives $d(y,z)<d(x,z)$. But the strong wedge condition for $y$
gives $d(x,z)<d(y,z)$, a contradiction.

We may thus assume that $y$ lies outside $V(\Gat(x))\setminus \{x_2\}$.
Suppose now that $y_2$ also lies outside this set, and that $y_2\ne x_2$.
Since $x_2$ is a cutvertex, it follows that all of $P_y$ is outside
$V(\Gat(x))\setminus \{x_2\}$.
If $\Gat(x)$ and $\Gat(y)$ meet, then, since $x_2$ is a cutvertex,
$x_2$ must be a vertex of $\Gat(y)$. Furthermore, since $\Gat(y)$
consists of $y_2$ plus a component of $G\setminus \{y_2\}$,
all of $\Gat(x)$ lies in
$\Gat(y)\setminus\{y_2\}$. In particular $x\in \Gat(y)\setminus\{y_2\}$
and we obtain a contradiction as above.

If $x_2=y_2$, then each of $\Gat(x)$ and $\Gat(y)$ is formed by $x_2$ together with a tree
component of $G-x_2$. Since each of $x$ and $y$ is the unique vertex at maximal distance
from $x_2=y_2$ within its tree, and $x\ne y$, these components are different, and so disjoint,
so $W_x\star W_y$ holds.

We may thus assume that, if $W_x^+\cap W_y^+$ holds but $W_x\star W_y$ fails,
then $y$ lies outside $V(\Gat(x))\setminus\{x_2\}$
but $y_2$ is inside. It is easy to check that in this
case $\Gat(x)\cup \Gat(y)$ forms a component of $G$ (and actually $y_2$ must lie on the path from $x$ to $x_2$).
Since $W_x^+$ holds,
this component (the component of $G$ containing $x$) has size at least $|\Ga_{t+t_1}(x)|\ge\omega$;
however, $W_x\cap W_y$ also holds, so 
it has size less than $2\omega/2$, a contradiction.

We have just shown that if $W_x^+\cap W_y^+$
holds, then so does $W_x\star W_y$.  
It follows that either $W_x^+\star W_y^+$ holds, or
$d(x_2,y_2)\le 2t_1$, implying $d(x,y)\le 2(t_0-K+t_1)$.
Thus, for the second moment,
\begin{eqnarray*}
 \E N^2-\E N 
&=& \sum_x \sum_{y\ne x} \Pr(W_x^+\cap W_y^+ ) \\
&=& \sum_x \sum_{y\ne x} \Pr(W_x^+\cap W_y^+\cap (W_x\star W_y)) \\
&\le& \sum_x \sum_{y\ne x} \Pr(W_x^+ \star W_y^+) +\Pr\bb{W_x\star W_y\cap \{d(x,y)\le 2(t_0-K+t_1)\}}.
\end{eqnarray*}
For the first term, we have $\Pr(W_x^+\star W_y^+)\sim \Pr(W_x^+)\Pr(W_y^+)$,
since testing the event $W_x^+$ uses up at most $\omega^2$ vertices, which does not affect the probability
of  $W_y^+$ significantly. (Alternatively, 
as before we may use Lemma~\ref{cpl} and a version
of this lemma where we start at two vertices.)

To handle the second term,
we use the following inequality, which we shall prove in a moment:
\begin{equation}\label{fewclose}
 \Pr\bb{W_x\star W_y\cap \{d(x,y)\le 2(t_0-K)+t_3-K'\}} =o(n^{-2}),
\end{equation}
where $K'=3K\log(1/\las)/\log \la$ and $t_3=\log n/\log\la$.
Assuming this, using the fact that
$2t_1\le t_3-K'$ for large $n$ if $K$ tends to infinity sufficiently slowly,
we have $\E N^2 \le \E N+(1+o(1))(\E N)^2 +n^2o(n^{-2})$.
Since $\E N\to\infty$, it follows that $\E N^2\sim (\E N)^2$,
so by Chebyshev's inequality $N$ is concentrated about its mean,
and in particular, $N\ge 2$ whp.

Set $d=2(t_0-K)+t_3-K'$, so
$d=\log n/\log \la+2\log n/\log(1/\las) -O(K)$.
With $K$ tending to infinity arbitrarily slowly, our aim
in this subsection is to prove that $\diam(G)\ge d$ holds whp.

Let $M$ be the number of pairs
of distinct vertices $x$, $y$ for which $W_x\star W_y\cap \{d(x,y)\le d\}$
holds. Using \eqref{fewclose} again, we have $\E M=o(1)$, so $M=0$ \whp.
Thus, whp, we have $N\ge 2$ and $M=0$. Then there are distinct vertices $x$, $y$ for which $W_x^+$
and $W_y^+$ hold. As shown above, it then follows
that $W_x\star W_y$ holds. Since $M=0$, we have $d(x,y)>d$.
From the classical results of Erd\H os and R\'enyi~\cite{ER60},
there is some constant $A>0$ such that whp exactly one component
of $G$, the `giant' component, contains more than $A\log n$ vertices.
Since (for $n$ large) $W_x^+$ implies that $x$ is in a component with at
least $\omega>A\log n$ vertices, whp any pair $x$, $y$ satisfying
the conditions above lies in the giant component, so $d<d(x,y)<\infty$,
and $\diam(G)>d$, as required.

It remains only to prove \eqref{fewclose}.
To do so, we explore the neighbourhoods of a given pair $x$, $y$ 
of vertices as usual, to test whether $W_x\star W_y$ holds.
If so, the possible edges between the
remaining vertices, including $x_2$ and $y_2$, have not yet been tested,
so each is present with
its original unconditional probability. Hence, given $W_x\star W_y$,
summing over all possible paths we see that
the probability that $d(x_2,y_2)\le \ell$
is at most
\[
 \sum_{k\le \ell} n^{k-1} (\la/n)^k = \sum_{k\le \ell}\la^k/n = O(\la^\ell/n)
\]
and
\begin{eqnarray*}
 \Pr\bb{W_x\star W_y \cap \{d(x,y)\le 2(t_0-K)+t_3-K')\}}
 &=& \Pr(W_x)\Pr(W_y) O(\la^{t_3-K'}/n)\\
 &=& O\bb{\las^{t_0-K}\las^{t_0-K} \la^{-K'}} \\
 &=& O(1/n^2)(1/\las)^{2K}\la^{-K'} \\
 &=& O(1/n^2)(1/\las)^{2K-3K} =o(n^{-2}),
\end{eqnarray*}
as required.

Combining the lower bound on the diameter we have just proved, and the upper
bound proved in \refSS{upper fixed}, we obtain
Theorem~\ref{th_const}.

\section{Average degree tending to infinity}\label{sec_inf}

In this section we shall prove Theorem~\ref{th_inf}. Throughout,
when we consider $G(n,\la/n)$ we assume that $\la=\la(n)\to\infty$
with $\la\le n^{1/1000}$.
For convenience, we always assume that $\la$ is larger
than some absolute constant $\la_0$, chosen so that
the various statements `provided $\la$ is large enough'
in what follows hold for $\la\ge \la_0$.
With $\la$ tending to infinity,
some aspects of the proof become easier than the $\la$ constant case,
whilst some become more difficult.

We retain the same basic plan of attack as for the case of $\la$ constant. One of the main problems is
that we cannot simply work with the time that the neighbourhoods of a
vertex take to reach a certain size $\omega$, since the first
neighbourhood larger than this may have size
anywhere from $\omega$ to around $\la\omega$; this
difference is too big for our later arguments.  Instead we will look
at the size of the neighbourhoods at a specific time. We could
consider sizes in certain ranges, but it turns out that we can simply
consider individual sizes, bounding the probability that a certain
neighbourhood has exactly a certain size $r$.  Roughly speaking,
as in the previous section, the
probability that the neighbourhoods of a vertex take $a$ generations
longer than usual to reach (or exceed) some given size turns out to be around $\las^a$,
where $\las<1$ is the dual branching process parameter, defined
by $\las e^{-\las}=\la e^{-\la}$.
This event corresponds to the (later)
neighbourhoods being a factor of $\la^a$ smaller than usual. So we study
for {\em real} parameters $a$ the probability that the neighbourhoods
are $\la^a$ smaller than usual, expressing this probability as a power
of $\las$.

Throughout this section it will be useful to bear in mind the asymptotic
formula
\begin{equation}\label{lasinf}
 \las =  \la e^{-\la} +O(\la^2 e^{-2\la}),
\end{equation}
which follows easily from $\las e^{-\las}=\la e^{-\la}$ and $\las<1$.
Note in particular that $\las$ is asymptotically smaller than any constant
negative power of $\la$.

\subsection{Branching process preliminaries}
We first give some lemmas describing the growth behaviour
of the branching process. 
\begin{lemma}\label{smoothlate}
Suppose $\la\ge 10$ and $0<\delta\le 1/2$. 
Given that $|X_r|=k\ge 1$,
with probability at least $1-e^{-c\delta^2 \la k}$ we have
$|X_t|/(\la^{t-r}k)\in [1-\delta,1+\delta]$ for all $t\ge r$,
where $c>0$ is an absolute constant.
\end{lemma}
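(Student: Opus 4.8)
The plan is to run a one-step Chernoff bound at each generation and union-bound over all $t\ge r$, with the key trick being that the ``bad budget'' $\delta$ should be spent unevenly: we allow a geometrically shrinking relative deviation at each successive generation, so that the union bound converges and we still land inside $[1-\delta,1+\delta]$. Concretely, write $Z_t=|X_t|$ and condition on $Z_r=k$. Given $Z_t=m$, the next generation $Z_{t+1}$ is a sum of $m$ i.i.d.\ $\Po(\la)$ variables, hence $\Po(\la m)$, so by the Poisson Chernoff bound $\Pr(|Z_{t+1}-\la m|\ge \eta \la m \mid Z_t=m)\le 2e^{-c_0\eta^2\la m}$ for $0<\eta\le 1$ and an absolute constant $c_0$. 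I will apply this with $\eta=\eta_j=\delta/(C 2^{j})$ at the $j$th step beyond $r$ (for a suitable absolute constant $C$ chosen so that $\sum_j \eta_j<\delta/2$, after taking logs), so that on the good event the running product of ratios $\prod_{i<j}(1+O(\eta_i))$ stays within $[1-\delta,1+\delta]$ of $\la^{j}$.

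The steps, in order: (1) Set up the filtration and the event $\mathcal{E}_j$ that $Z_{r+j}/(\la^{j}k)\in[1-\delta,1+\delta]$; it suffices to control, for each $j\ge 1$, the conditional probability of a large \emph{one-step} relative jump. (2) On $\mathcal{E}_{j-1}$ we have $Z_{r+j-1}\ge (1-\delta)\la^{j-1}k\ge \tfrac12\la^{j-1}k$, so the Chernoff exponent at step $j$ is at least $c_0\eta_j^2\la\cdot\tfrac12\la^{j-1}k = c_0'\,\delta^2\,4^{-j}\,\la^{j}k$. (3) Since $\la\ge 10$, we have $\la^{j}\ge 10^{j}$, so $4^{-j}\la^{j}\ge (10/4)^{j}\ge 1$ and in fact grows geometrically; hence the $j$th failure probability is at most $2\exp(-c_0'\delta^2\la k\,(5/2)^{j-1})$ — no wait, more carefully, at least $2\exp(-c_0'\delta^2\la^{j}k/4^{j})$, and because $(5/2)^{j}$ grows, summing over $j\ge1$ gives a geometric series bounded by $2e^{-c\delta^2\la k}/(1-e^{-c\delta^2\la k})$, which is $O(e^{-c\delta^2\la k})$, and after absorbing constants this is at most $e^{-c\delta^2\la k}$ for a new absolute $c$ (using $\la k\ge 10$ so the denominator is bounded below). (4) Conclude by the union bound: off an event of probability at most $e^{-c\delta^2\la k}$, all one-step jumps are controlled, and then a telescoping estimate $\prod_{i=1}^{j}(1+O(\eta_i))\in[1-\delta,1+\delta]$ (valid since $\sum\eta_i$ is a small multiple of $\delta\le 1/2$) gives $Z_t/(\la^{t-r}k)\in[1-\delta,1+\delta]$ for all $t\ge r$ simultaneously.

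The main obstacle — really the only nontrivial point — is bookkeeping the allocation of the deviation budget so that two things hold at once: the per-step exponents $c_0'\delta^2 4^{-j}\la^{j}k$ must still tend to infinity in $j$ (this is exactly where $\la\ge 10$, comfortably bigger than $4$, is used — any fixed $\la>4$ would do, and $\la\ge 10$ gives room to spare), and the accumulated multiplicative errors $\prod(1+\eta_j)$ must stay within the target window $[1-\delta,1+\delta]$ rather than just $[1-O(\delta),1+O(\delta)]$. Both are handled by picking the constant $C$ in $\eta_j=\delta 2^{-j}/C$ large enough; I would state $C$ explicitly (e.g.\ $C=10$) and check $\sum_{j\ge1}\eta_j=\delta/C<\delta/2$ and $e^{\sum\eta_j}\le 1+\delta$, $e^{-\sum\eta_j}\ge 1-\delta$ for $\delta\le 1/2$, which is immediate. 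Everything else is a routine Chernoff-plus-union-bound argument; no coupling with the graph is needed here since the statement is purely about $\bp_\la$.
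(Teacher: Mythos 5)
Your proposal is correct and takes essentially the same approach as the paper: the paper also allocates the deviation budget geometrically across generations (it uses $\delta/3^{t+1}$ where you use $\delta 2^{-j}/C$), applies a per-step Chernoff bound conditioned on the previous generation's size, and exploits $\la\ge 10$ so that the per-step exponents grow geometrically and the union bound is dominated by its first term. The only cosmetic differences are the geometric ratio ($3$ vs.\ $2$) and the paper's observation that once $\sum_t\delta/3^{t+1}=\delta/2$, the running product automatically stays within $[1-\delta,1+\delta]$.
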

\begin{proof}
We may assume without loss of generality that $r=0$. For $t\ge 0$, 
let $\rho_t=|X_{t+1}|/(\la |X_t|)$, and let $E_t$ be the event
that $|\rho_t-1| > \delta/3^{t+1}$; it suffices to prove
that $\Pr(\bigcup_t E_t)\le e^{-c\delta^2\la k}$.
Let $F_t$ be the event that $E_t$ holds but no $E_s$ holds, $s<t$,
so $\Pr(\bigcup_t E_t) = \sum \Pr(F_t)$. If no $E_s$ holds for $s<t$,
then $|X_t|\ge k\la^t\prod_{s<t} (1-\delta/3^{s+1})\ge k\la^t/10$.
Turning to $|X_{t+1}|$, conditional on $|X_t|$, by Lemma~\ref{Chern}
the probability that $\rho_t$ lies outside $[1-\delta/3^{t+1},1+\delta/3^{t+1}]$
is at most $2\exp(-c_0 \delta^2 9^{-t} \la |X_t|)$, for some $c_0>0$.
Hence 
$\Pr(F_t) \le 2\exp(-c_0 \delta^2 9^{-t} \la^{t+1} k/10)$,
and the result follows by summing this rapidly decreasing sequence.
\end{proof}


For $0\le a < 1$ define $g(a)=g(\la,a)$ by  $\las^{g(a)}=\Pr(Z\le \la^{1-a})$,
where $Z$ has a Poisson distribution with mean $\la$. Thus $\las^{g(a)}$ is the probability
that $Z$ is smaller than its mean by a factor of $\la^a$ or more.
Note that $g(a)$ is 
(weakly) increasing in $a$. Also, as $\la\to\infty$ we have
$\Pr(Z \le \la)\to 1/2$ and $\las\to 0$, so $g(0)=o(1)$.
%
A simple calculation shows that
$g(a)=1-o(1)$ for any fixed $0<a<1$.
Also, using \eqref{lasinf} we have 
$\Pr(Z\le 1)=(1+\la)e^{-\la}=\las e^{-\las}(1+1/\la)>\las$
for large enough $\la$,  
and so
\begin{equation}\label{g01}
 0\le g(a)<1
\end{equation}
for all $0\le a<1$.

Extend $g$ to the real line
by defining $g(x)=\floor{x}+g(x-\floor{x})$;
this gives an increasing function which, from \eqref{g01}, satisfies
\begin{equation}\label{g approx}
\floor{x}\le g(x) \le \floor{x}+1
\end{equation} 
for all $x$.
It is straightforward to check that for
any constant $b\ge 3$, say, if $n$ is large enough then
\begin{equation}\label{addb}
\las^{g(a-\log b/\log \la)}\ge \la^{b/4}\las^{g(a)}
\end{equation}
holds for all $a$.
Indeed, if $m\le a-\log b/\log\la,\,a<m+1$ for some integer $m$, then \eqref{addb}
decodes to a statement of the form $\Pr(Z\le bk)\ge \la^{b/4}\Pr(Z\le k)$,
where $1\le k\le \la/b$; the inequality is easily verified by considering,
for example, the ranges $k\ge \la/(10b)$, $\sqrt{\la}\le k\le \la/(10b)$,
and $1\le k\le \sqrt{\la}$.
On the other hand, if $a-\log b/\log\la<m\le a$ then it decodes
to $\las^{-1}\Pr(Z\le kb/\la) \ge \la^{b/4}\Pr(Z\le k)$, with $k<\la$ and $bk>\la$;
this is easily verified by considering the cases $k\ge 0.9\la$ and $k<0.9\la$, say.

We next give an analogue of the upper bound in Lemma~\ref{l1}; note that we do not
round $t_1$ to an integer.
\begin{lemma}\label{ia}
Suppose that $\omega\ge \la$ and that $t\ge 0$ is an integer.
Then for $\la$ at least some absolute constant, setting $t_1=\log\omega/\log\la$ we have
\[
 \Pr\bb{ 0<|X_t| < \omega/2 } \le 3 \las^{g(t-t_1)}.
\]
\end{lemma}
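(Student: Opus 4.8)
The plan is to control $q_t:=\Pr\bb{0<|X_t|<\omega/2}$ by iterating a one–step recursion, with the iteration seeded at the generation $t_0:=\ceil{t_1}$ by two direct estimates. For the recursion I would condition on $|X_1|=k$: writing $M_1,\dots,M_k$ for the (conditionally i.i.d., each distributed as $|X_{t-1}|$) numbers of descendants in generation $t$ of the $k$ children of the root, the event $\{0<|X_t|<\omega/2\}$ forces some $M_i$ to lie in $(0,\omega/2)$ while every $M_j<\omega/2$, so a union bound gives $\Pr\bb{0<|X_t|<\omega/2\mid|X_1|=k}\le k\,q_{t-1}\,\gamma^{k-1}$ with $\gamma:=\Pr\bb{|X_{t-1}|<\omega/2}$. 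Averaging over the $\Po(\la)$ law of $|X_1|$ gives $q_t\le\la e^{-\la(1-\gamma)}q_{t-1}$, and since $1-\gamma=\Pr\bb{|X_{t-1}|\ge\omega/2}\ge\Pr\bb{|X_{t-1}|>0}-q_{t-1}\ge s-q_{t-1}$ and $\la e^{-\la s}=\la(1-s)=\las$, I obtain the recursion $q_t\le\las\,q_{t-1}\,e^{\la q_{t-1}}$ for all $t\ge1$.

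\textbf{The base point.} Fix a small absolute constant $\delta$ with $c\delta^2\le\tfrac12$, where $c$ is the constant of Lemma~\ref{smoothlate}, and put $f_0:=t_0-t_1\in[0,1)$. If $t<t_0$ then $g(t-t_1)\le0$ by \eqref{g approx}, so $3\las^{g(t-t_1)}\ge1\ge\Pr\bb{0<|X_t|<\omega/2}$ and there is nothing to prove. At $t=t_0$ I would prove two bounds. Since $\la^{t_0}\ge\la^{t_1}=\omega$, Lemma~\ref{smoothlate} applied from generation $0$ gives $|X_{t_0}|\ge(1-\delta)\la^{t_0}\ge\omega/2$ with probability at least $1-e^{-c\delta^2\la}$, hence $q_{t_0}\le e^{-c\delta^2\la}$. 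On the other hand, if $0<|X_{t_0}|<\omega/2$ then at most $\tfrac{\omega/2}{(1-\delta)\la^{t_0-1}}=\tfrac{\la^{1-f_0}}{2(1-\delta)}\le\la^{1-f_0}$ of the children $v$ of the root have at least $(1-\delta)\la^{t_0-1}$ descendants in generation $t_0$; by Lemma~\ref{smoothlate} each $v$ does so with some probability $\beta\ge1-e^{-c\delta^2\la}$, and the set of such $v$ has a $\Po(\la\beta)$ law, so
\[
 q_{t_0}\ \le\ \Pr\bb{\Po(\la\beta)\le\floor{\la^{1-f_0}}}\ \le\ e^{\la(1-\beta)}\,\Pr\bb{Z\le\la^{1-f_0}}\ \le\ \tfrac32\las^{g(f_0)},
\]
using $(\la\beta)^i\le\la^i$ termwise, $e^{\la(1-\beta)}\le e^{\la e^{-c\delta^2\la}}\to1$, and the definition $\las^{g(f_0)}=\Pr\bb{Z\le\la^{1-f_0}}$.

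\textbf{The iteration.} Iterating the recursion down to $t_0$ gives, for $t\ge t_0$,
\[
 q_t\ \le\ \las^{\,t-t_0}\,q_{t_0}\,\exp\Bb{\la\sum_{t'=t_0}^{t-1}q_{t'}}.
\]
I would prove $q_t\le3\las^{g(t-t_1)}$ for all $t\ge t_0$ by strong induction; the base case $t=t_0$ is the second bound above (and $g(t_0-t_1)=g(f_0)$). For the exponential factor one uses $q_{t_0}\le e^{-c\delta^2\la}$, the inductive bound for $t_0<t'<t$, and the identity $g(t'-t_1)=(t'-t_0)+g(f_0)$, so that $\sum_{t'=t_0}^{t-1}q_{t'}\le e^{-c\delta^2\la}+3\las^{g(f_0)}\sum_{j\ge1}\las^{\,j}\le e^{-c\delta^2\la}+6\las$ and hence $\la\sum_{t'}q_{t'}\le\la e^{-c\delta^2\la}+6\la\las=:\eps_\la\to0$. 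Therefore $q_t\le\las^{\,t-t_0}\cdot\tfrac32\las^{g(f_0)}\cdot e^{\eps_\la}=\tfrac32e^{\eps_\la}\las^{g(f_0)+(t-t_0)}=\tfrac32e^{\eps_\la}\las^{g(t-t_1)}\le3\las^{g(t-t_1)}$ once $\la$ is large, closing the induction and the lemma.

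\textbf{The main obstacle.} The delicate point is the multiplicative error $e^{\la q_{t-1}}$ in the recursion. Near $t_1$ the target bound $3\las^{g(\cdot)}$ is only of constant order (indeed $\las^{g(f_0)}=\Pr\bb{Z\le\la^{1-f_0}}$ can be close to $\tfrac12$), so propagating that bound alone would let the per-step error reach $e^{\Theta(\la)}$, hence $e^{\Theta(\la\log n)}$ over the whole range. The resolution is the separate concentration estimate $q_{t_0}\le e^{-c\delta^2\la}$, which together with the geometric decay of the $q_{t'}$ forces $\la\sum_{t'\ge t_0}q_{t'}=o(1)$, so the accumulated error is only $1+o(1)$; this is why both bounds on $q_{t_0}$ are needed. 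The remaining work is routine: the Poisson comparison $\Pr\bb{\Po(\la\beta)\le j}\le e^{\la(1-\beta)}\Pr\bb{\Po(\la)\le j}$, the harmless passage from $\tfrac{\la^{1-f_0}}{2(1-\delta)}$ to $\la^{1-f_0}$ and from a fractional threshold to $\floor{\,\cdot\,}$, and the fact that $\la e^{-c\delta^2\la}$ and $\la\las$ tend to $0$ as $\la\to\infty$.
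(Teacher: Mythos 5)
Your proof is correct, and it takes a genuinely different route through the argument, though it shares the paper's high-level structure (establish a good base case near $\ceil{t_1}$, then iterate a one-step contraction). Several differences are worth noting. First, you work directly with the unconditional probability $q_t=\Pr\bb{0<|X_t|<\omega/2}$ rather than with the paper's $r_t=\Pr\bb{|X_t|<\omega/2\mid |X_t|>0}$; your recursion $q_t\le\las q_{t-1}e^{\la q_{t-1}}$ is obtained by a clean union bound over which child of the root contributes the surviving-but-small subtree, followed by the generating-function identity $\sum_k k\gamma^{k-1}\Pr(\Po(\la)=k)=\la e^{-\la(1-\gamma)}$; the paper instead partitions the children by whether they have descendants in generation $t$ and derives the recursion \eqref{rb1}/\eqref{rtlbig}, which carries an explicit $O(\la\las^t)$ error. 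Second, you seed the induction at $t_0=\ceil{t_1}$ with a single Poisson-thinning argument (counting children whose subtrees achieve at least $(1-\delta)\la^{t_0-1}$ descendants by Lemma~\ref{smoothlate} and comparing $\Po(\la\beta)$ to $\Po(\la)$), which replaces both of the paper's cases and, in particular, avoids the somewhat fiddly two-part estimates \eqref{clm1}/\eqref{clm2}. Third, you isolate very clearly the reason the argument needs two separate bounds at the base point: $q_{t_0}\le\frac32\las^{g(f_0)}$ supplies the right multiplicative prefactor, while $q_{t_0}\le e^{-c\delta^2\la}$ combined with the geometric decay keeps $\exp\bb{\la\sum_{t'}q_{t'}}=1+o(1)$. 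The paper achieves the same control by showing $r_{\ceil{t_1}+1}\le1.2\las$ and then tracking the product of per-step factors; your bookkeeping is arguably more transparent because the accumulated error is a single exponential. Both proofs rely essentially on the same external ingredient, Lemma~\ref{smoothlate}, and both need $\la\las\to0$ and $\la e^{-\Omega(\la)}\to0$ to absorb the errors, so the argument works uniformly for $\la$ above an absolute constant, as required.
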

\begin{proof}
Note first that if $t<t_1$,
then $g(t-t_1)\le 0$ by \eqref{g approx}, so the result
holds trivially. We may thus assume that $t\ge t_1$, so $t\ge\ceil{t_1}$.
\medskip

\noindent
{\em Case 1:\,} $t\ge\ceil{t_1}+1$.
\smallskip

Similar to the proof of Lemma~\ref{l1}, set 
$r_t=\Pr(|X_t|<\omega/2 \mid |X_t|>0)$.
Then  it suffices to show
that $r_t\le 3\las^{g(t-t_1)}$. 
We shall show in a moment that if $t=\ceil{t_1}+1$, then
\begin{equation}\label{12}
  \Pr\bb{ 0<|X_t| < \omega/2 } \le 1.1 \las^{g(t-t_1)}.
\end{equation}
Suppose for the moment that this holds.
Then by monotonicity of $g$ and the fact that $g(1)\ge 1$,
and since $\pr(|X_t|>0)\sim 1$, for such $t$ we have $r_t\le 1.2\las$ if $\la$
is at least some (absolute) constant.

As noted in the proof of Lemma~\ref{l1}, the implicit constant in all $O(\cdot)$
notation leading to \eqref{rb1}
may be taken to be absolute when $\la>1$ is bounded away from $1$,
so this bound applies with $\la$ growing as a function of $n$. In particular,
from \eqref{rb1}, we have for arbitrary $t\ge 1$ 
\begin{equation}\label{rtlbig}
 r_t \le r_{t-1}(\las + r_{t-1} +O(\la\las^t))
 = r_{t-1}(\las + r_{t-1} +o(\las^{t-1})),
\end{equation}
using $\la\las=o(1)$ for the last step, which follows from \eqref{lasinf}.

We may iterate \eqref{rtlbig}, with $\las$ sufficiently small in the following (as $\la$ can be assumed large). Beginning with   $t=\ceil{t_1}+1$, when $r_t\le 1.2\las$ from \eqref{12} and
 hence $r_{t+1}\le 2.7\las^2$ from \eqref{rtlbig}, we see that that $r_t$ decreases extremely rapidly: $r_t\le 3\las r_{t-1}$ for $t>\ceil{t_1}+1$.
Feeding the resulting bound $r_{\ceil{t_1}+k}\le (3\las)^k$, $k>1$, back into \eqref{rtlbig},
it follows that for $t>\ceil{t_1}+1$ we have
$r_t\le r_{t-1}\las(1+\eps_t)$ where the first error term $\eps_{\ceil{t_1}+2}$ is at most $1.3$
and later ones decrease extremely rapidly.
Since $\prod_t(1+\eps_t)\le 2.4$ for $\la$ large enough,
the result for Case~1 now follows from \eqref{12}.

It remains to prove \eqref{12}.
Assuming now that $t=\ceil{t_1}+1$, put $a=t-t_1-1$, so that $0\le a<1$. We claim that 
\begin{equation}\label{clm1}
\pr\bb{0<|X_2|\le \la^{1-a}}\sim \las^{1+g(a)}
\end{equation}
and that
\begin{equation}\label{clm2}
\pr\big(\eventand{|X_2|> \la^{1-a}}{|X_t| < \omega/2}\big) =o(\las^{1+g(a)}).
\end{equation}
Since $1+g(a)=g(t-t_1)$, these imply~\eqn{12}.

Note that $\pr(|X_1|=1) =\la e^{-\la}\sim \las$, and the probability that subsequently  $|X_2|\le \la^{1-a}$ is $\las^{g(a)}$ by definition of $g$. Thus,   
$$
\pr\big(\eventand{|X_1|=1}{|X_2|\le \la^{1-a}}\big) \sim \las^{1+g(a)}.
$$ 
On the other hand, conditioning on $|X_1|=k\ge 2$, the conditional
distribution of $|X_2|$ is Poisson $\Po(k\la)$. Since $a\ge 0$,
we may assume that $\la^{1-a}\le k\la/2$, and it follows that
there is an absolute constant $c_2>0$ such that for all $k\ge 2$,
$\pr(|X_2|\le \la^{1-a}\mid |X_1|=k) <e^{-c_2k\la}$.
Fixing $k_0>  3/c_2$, we have
\begin{multline*}
 \Pr\bb{|X_1|> k_0,\,|X_2|\le \la^{1-a}} \le
 \pr\big(|X_2|\le \la^{1-a} \bigm| |X_1|>k_0 \big)\\ \le e^{-3\la} = o(\las^2) = o\big(  \las^{1+g(a)}\big).
\end{multline*}

Turning to $2\le k\le k_0$, we have $\pr(|X_1|=k ) = O(\la^{k-1}\las)$.  
Suppose firstly that $g(a)<c_2$ as defined above.
Then $\pr\big((2\le |X_1|\le k_0) \wedge |X_2|\le \la^{1-a}\big)< O(\la^{k_0-1}\las)e^{-2c_2\la}=\las^{1+2c_2+o(1)}=o( \las^{1+g(a)})$. So we may assume that  $g(a)\ge c_2$. Then, noting that for $|X_2|\le \la^{1-a}$
to hold each particle in $X_1$ must have at most $\la^{1-a}$ children, we have
$$
\Pr \bb{|X_1|=k,\,|X_2|\le \la^{1-a}}=O(\la^{k-1}\las\las^{kg(a)})=o( \las^{1+g(a)}),
$$
since $\la\las^{g(a)}=\las^{g(a)-o(1)}=o(1)$.
Putting the pieces together, we have established~\eqn{clm1}.

The proof of~\eqn{clm2} is similar. Condition on $|X_2|=k$, where
$k>\la^{1-a}$. In the event that $|X_t| < \omega/2$, the average
number of descendants in $X_t$ of a particle in $X_2$ is less than
$\omega/(2\la^{1-a})$. However, we know that any
one such particle expects
$\la^{t-2}=\la^{t_1+a-1}=\omega/\la^{1-a}$ such descendants, and
applying Lemma~\ref{smoothlate}, we see that
$\Pr(|X_t|<\omega/2 \mid |X_2|=k)\le e^{-c_3k\la}$
for some $c_3>0$. Arguing as for the proof of~\eqn{clm1}, there exists
$k_1$ such that $\pr( |X_t|<\omega/2 \mid |X_2|>k_1 ) = o\big( \las^{1+g(a)}\big)$.

We are left with showing~\eqn{clm2} in the case that $\la^{1-a}<|X_2|\le k_1$,
which requires $|X_2|\ge 2$ since $a<1$. It is easy to
see that $\pr( |X_2|\le k_1)=\Theta(\la^{k_1-1}\las^2)=\las^{2-o(1)}$. Conditional upon this,
for $|X_t|<\omega/2$ to hold at least one particle in $X_2$ must have at most
half its expected number of descendants in $X_t$.
By Lemma~\ref{smoothlate} and the union bound, the conditional
probability of this is at most
$k_1e^{-c_3\la}=o(\las^{c_3/2})$. Hence
$$
 \pr( |X_2|\le k_1) \pr\bb{|X_t| < \omega/2 \bigm| \la^{1-a}<  |X_2|\le k_1}=o(\las^{2-o(1)+c_3/2})= o(\las^{2})  
$$
and we have~\eqn{clm2} since $1+g(a)\le 2$.
\medskip

\noindent
{\em Case 2:\,} $t=\ceil{t_1}$. 
\smallskip

In this case, setting $a=t-t_1\in [0,1)$,
we have $\Pr(0<|X_1|\le \la^{1-a})<\las^{g(a)}$ by definition of $g$.
Using this in place of \eqref{clm1}, it suffices to show
that $\Pr\big(\eventand{|X_1|> \la^{1-a}}{|X_t|<\omega/2}\big)=o(\las^{g(a)})$;
the proof is identical to that of \eqref{clm2}, apart from the notation.
\end{proof}


We next turn to the analogue of the lower bound in Lemma~\ref{l1}; as
there, we bound the probability of a rather specific event involving
extra conditions that will be needed in our lower bound
on the diameter.

We say  
that the branching process $(X_t)$ satisfies the {\em diamond
condition to generation $r$} if $|X_1|=1$, there is a unique particle $x_r$ in $X_r$,
and the chain $x_0x_1\cdots x_r$ of ancestors of $x_r$ is such that
any `side branches' starting from $x_i$ die within $\min\{i,r-i\}$ further
generations. For $r=0$ we interpret the diamond condition to hold vacuously.
\begin{lemma}\label{ib}
Let $t'\ge 0$ be an integer, and $0\le a<1$ a real number. Let
$F_0$ be the event that $|X_{t'}|=1$ and the diamond condition holds to generation $t'$,
and let $F_1$ be the event that $|X_{t'+1}| \le \la^{1-a}$.
Then as $\la\to\infty$ we have
\[
 \Pr(F_0\cap F_1) \sim \las^{g(t'+a)},
\]
uniformly in $t'$ and $a$. Furthermore, provided $\la$ is at least
some absolute constant, then for any $\omega\ge \la$ 
and $t\ge t_1=\log\omega/\log\la$ there is a $\rho$
with $\omega/3\le\rho\le 2\omega$ such that 
\[
 \Pr\bb{ F_0\cap F_1\cap\{ |X_t|=\rho\} } \ge \las^{g(t-t_1)}/(3\la\omega),
\]
where $F_0$ and $F_1$ are defined as above with $t'$ and $a$ the
integer and fractional parts of $t-t_1$, respectively.
\end{lemma}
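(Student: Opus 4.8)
The plan is to prove the two statements separately, but both hinge on one structural fact: the constraint defining $F_1$ concerns only the offspring of the unique particle $x_{t'}\in X_{t'}$ --- indeed $|X_{t'+1}|$ equals the number of children of $x_{t'}$, since $|X_{t'}|=1$ --- this number is a fresh $\Po(\la)$ variable independent of everything recorded by $F_0$, and hence
\[
 \Pr(F_1\mid F_0)=\Pr\bb{\Po(\la)\le\la^{1-a}}=\las^{g(a)}
\]
by the definition of $g$. So for the first statement it suffices to show that $\Pr(F_0)=(1-o(1))\las^{t'}$ uniformly in $t'$ (the case $t'=0$ is trivial: $F_0$ is then vacuous and $\Pr(F_0)=1=\las^{0}$). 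Granting this, $\Pr(F_0\cap F_1)=(1-o(1))\las^{t'+g(a)}=(1-o(1))\las^{g(t'+a)}$, using $g(t'+a)=t'+g(a)$ for integer $t'$, while the bound $\Pr(F_0\cap F_1)\le\las^{t'+g(a)}$ is immediate; this gives the claimed equivalence, uniformly in $t'$ and $a$.

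To estimate $\Pr(F_0)$ for $t'\ge1$ I would reuse the computation behind \eqref{PW'}. Conditioning on $x_{t'}$ surviving --- an event of probability $s$ given $F_0$, and independent of $F_0$ since the descendants of $x_{t'}$ form a fresh copy of $\bp_\la$ --- and then building $\bp_\la$ from $\bp^+$ by grafting independent copies of $\bp_{\las}$ at every particle of $\bp^+$, exactly as in \refSS{lower_fixed}, one obtains
\[
 \Pr(F_0)=s^{-1}\Pr\bb{F_0\cap\{x_{t'}\text{ survives}\}}=\las^{t'}\,d_1\prod_{j=1}^{t'-1}d_{\min\{j,\,t'-j\}},
\]
where $d_m=\Pr(|X_m^-|=0)$: here $\las^{t'}$ is the probability that the surviving spine has length exactly $t'$ (cf.\ \eqref{onetrack}), and the $d$-factors record that the relevant side copies of $\bp_{\las}$ die in time. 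The one place where $\la\to\infty$ matters is that now $\las\to0$: since $1-d_m\le\E|X_m^-|=\las^m$, so $-\log d_m=O(\las^m)$, and each $d_m$ occurs in the product with exponent at most $3$,
\[
 -\log\Bb{d_1\prod_{j=1}^{t'-1}d_{\min\{j,t'-j\}}}\le 3\sum_{m\ge1}(-\log d_m)=O\Bb{\sum_{m\ge1}\las^m}=O(\las),
\]
so the $d$-product is $e^{-O(\las)}=1-o(1)$ uniformly in $t'$. (For constant $\la$ this product is only $\Theta(1)$, which is one reason the correction term in Theorem~\ref{th_const} is genuinely spread out rather than concentrated.) This proves the first statement.

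For the second statement, write $t-t_1=t'+a$ with $t'=\floor{t-t_1}\ge0$, $a\in[0,1)$, and set $k_0=\floor{\la^{1-a}}$. Then $k_0\ge1$ (as $\la^{1-a}>1$) and $\tfrac12\la^{1-a}\le k_0\le\la^{1-a}\le\la$. Since the $\Po(\la)$ mass function is nondecreasing on $[0,\la]$,
\[
 \Pr\bb{|X_{t'+1}|=k_0\mid F_0}=\Pr\bb{\Po(\la)=k_0}\ge\frac{\Pr\bb{\Po(\la)\le\la^{1-a}}}{\la+1}=\frac{\las^{g(a)}}{\la+1},
\]
so by the first part $\Pr\bb{F_0\cap\{|X_{t'+1}|=k_0\}}\ge(1-o(1))\las^{t'+g(a)}/(\la+1)$, and this event lies inside $F_0\cap F_1$ since $k_0\le\la^{1-a}$. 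Conditionally on it, generations $t'+1,t'+2,\dots$ evolve as a copy of $\bp_\la$ started from $k_0\ge1$ particles, and, as $t-(t'+1)=t_1+a-1$, we have $\la^{t-t'-1}k_0=\omega k_0/\la^{1-a}\in[\omega/2,\omega]$. By Lemma~\ref{smoothlate} with a small absolute constant $\delta$ (say $\delta=1/10$), with conditional probability at least $1-e^{-c\delta^2\la k_0}\ge\tfrac12$ one has $|X_t|/(\la^{t-t'-1}k_0)\in[1-\delta,1+\delta]$, which confines $|X_t|$ to an interval $I\subseteq[\omega/3,2\omega]$ of length at most $2\delta\omega=\omega/5$, that is, to at most $2\omega/5$ integer values. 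Letting $\rho\in I$ be the most likely value of $|X_t|$, the pigeonhole principle yields
\[
 \Pr\bb{F_0\cap F_1\cap\{|X_t|=\rho\}}\ge(1-o(1))\,\frac{\las^{t'+g(a)}}{\la+1}\cdot\frac12\cdot\frac{5}{2\omega}\ \ge\ \frac{\las^{g(t-t_1)}}{3\la\omega}
\]
once $\la$, and hence $\omega\ge\la$, is large enough.

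The only real work is in the first statement: one must transcribe the diamond-condition bookkeeping of \refSS{lower_fixed} essentially verbatim (pinning down the exponents of the $d$-factors and the boundary behaviour at $x_0$ and $x_{t'}$, and checking that all implicit constants stay absolute as $\la$ grows, as already remarked in the proof of Lemma~\ref{l1}), and then make the single new observation that the $d$-product degenerates to $1-o(1)$ because $\sum_m\las^m\to0$. Given the first statement, the second is a routine pigeonhole, provided one keeps track of the numerical constants.
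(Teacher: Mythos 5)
Your proof is correct and follows essentially the same route as the paper: the first statement is obtained by transcribing the strong-wedge/diamond decomposition from \refSS{lower_fixed} (via $\bp^+$ and grafted copies of $\bp_{\las}$, exactly as in \eqref{PW'}) to get $\Pr(F_0)=\las^{t'}d_1\prod_{j=1}^{t'-1}d_{\min\{j,t'-j\}}$, observing that the $d$-product is $1-o(1)$ because $\sum_m\las^m\to0$ when $\la\to\infty$, and then using the unconditioned Poisson offspring of $x_{t'}$ to give $\Pr(F_1\mid F_0)=\las^{g(a)}$; the second statement is the same pigeonhole argument the paper uses, restricting to $|X_{t'+1}|=\floor{\la^{1-a}}$ (which costs a factor $O(1/\la)$), applying Lemma~\ref{smoothlate} to confine $|X_t|$ to an interval of $O(\omega)$ integers inside $[\omega/3,2\omega]$, and extracting one likely value $\rho$. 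The only deviation is cosmetic --- you exploit that $k_0$ is a \emph{fixed} value to get a slightly narrower pigeonhole interval ($\approx 2\delta\omega$ integers rather than the paper's looser $\approx\omega$), which merely gives a bit more slack in the final constant $3\la\omega$.
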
 
Note that $t_1$ is {\em not} rounded to an integer.
Essentially, the lemma says that the probability that $\bp_\la$ survives
but (after some time) is a factor $\la^x$ smaller than it should be
is around $\las^{g(x)}$. The second statement shows that
there is some specific size in a suitable range such that the probability
of hitting exactly this size is not much smaller.

\begin{proof}
The event $F_0$ is exactly the event $W^0$ referred to
in~\eqn{WW0}, but with $t_0-K$ replaced by $t'$. Using \eqref{WW'}
to translate \eqref{PW'} back in terms of  $W^0$, we have
$
 \las^{t'}\ge \Pr(F_0) \ge \las^{t'}d_1 d_1^2d_2^2d_3^2\cdots,
$
where $d_t=\Pr(|X_t^-|=0)$ is at least $1-\las^t$ from \eqref{dt}.
Since $\las\to 0$, it follows that
$\Pr(F_0)\sim \las^{t'}$.

Conditioning on $F_0$ says nothing about the descendants of the
unique particle $z$ in $X_{t'}$, so if $Z$ is Poisson with mean $\la$ then
\[
 \Pr(F_1\mid F_0) = \Pr(Z\le \la^{1-a}) = \las^{g(a)},
\]
where the last step is the definition of $g(a)$.
Since $\las^{t'}\las^{g(a)} = \las^{g(t'+a)}$, this proves the first statement.

Turning to the second statement, suppose that $\omega\ge\la$
and $t\ge t_1=\log\omega/\log\la$. Let $t'=\floor{t-t_1}$ and $a=t-t_1-t'$.
Let $F_1'$ be the event that $|X_{t'+1}|=\floor{\la^{1-a}}$. 
Noting that $x=\floor{\la^{1-a}}$ is the most likely value $x$ of $Z$ with
$x\le \la^{1-a}$, arguing as above we have
$\Pr(F_1'\mid F_0)\ge \las^{g(a)}/\la$, and hence
$
 \Pr(F_0\cap F_1') \ge \las^{g(t'+a)}/(2\la),
$
provided $\la$ is large enough.

Noting that $t-t'\ge t_1\ge 1$,
let $F_2$ be the event that the ratio $|X_t|/(\la^{t-t'-1}|X_{t'+1}|)$ is between
$9/10$ and $11/10$. Then by Lemma~\ref{smoothlate}
we have $\Pr(F_2\mid F_0\cap F_1')\to 1$, so
\[
 \Pr(F_0\cap F_1'\cap F_2) \ge \las^{g(t'+a)}/(3\la).
\]
Noting that $\la^{1-a}\la^{t-t'-1}=\la^{t-(t'+a)}=\la^{t-(t-t_1)}=\la^{t_1}=\omega$,
and that $\floor{\la^{1-a}}\ge \la^{1-a}/2$,
if $F_0\cap F_1'\cap F_2$ holds then so does the event $E_\rho=F_0\cap F_1\cap \{|X_t|=\rho\}$
for some $\rho$ between $9\omega/20$ and $11\omega/10$. So
there is some $\rho$ in this range for which $\Pr(E_\rho)\ge \las^{g(t-t_1)}/(3\la\omega)$,
as required.
\end{proof}

We also need an analogue of Lemma~\ref{cpl} without the assumption that $\la$ is fixed.
\begin{lemma}\label{cpl inf}
Let $\la=\la(n)$ satisfy $\la\le n^{1/10}$.
Then the estimates
\[
 \Pr\bb{\Gat(x)\isom T}  \sim \Pr\bb{\Gatm(x)\isom T} \sim \Pr\bb{X_{\le t}\isom T}
\]
hold uniformly over rooted trees $T$ with $|T|\le n^{2/5}$, where $t$ is the height of $T$.
\end{lemma}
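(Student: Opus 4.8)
The plan is to reprise the proof of Lemma~\ref{cpl} essentially verbatim, now keeping track of how the error terms depend on $\la$ and on $|T|$, and checking that they remain $o(1)$ uniformly under the hypotheses $\la\le n^{1/10}$ and $|T|\le n^{2/5}$. Write $t$ for the height of $T$. First I would run the natural step-by-step coupling between the exploration of $\Gatm(x)$ in $G(n,\la/n)$ and that of $X_{\le t}$ in $\bp_\la$: at each step a single vertex (respectively particle) has its not-yet-reached children revealed, and if $r-a$ vertices or particles have been reached before such a step and $a$ children are then found, the conditional probabilities of this in the two models are, as in the proof of Lemma~\ref{cpl},
\[
 p_1=\binom{n-r+a}{a}(\la/n)^a(1-\la/n)^{n-r},\qquad p_2=e^{-\la}\la^a/a!.
\]

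The one point that needs a little care is that the computation
\[
 p_1/p_2=(n-r+a)_{(a)}\,n^{-a}\,(1-\la/n)^{-r}\,e^\la(1-\la/n)^n=e^{O(ar/n+r\la/n+\la^2/n)}
\]
holds with an \emph{absolute} implicit constant, provided only that $\la/n\le 1/2$ and $r/n\le 1/2$ --- both of which are guaranteed here, since $r\le|T|\le n^{2/5}$ and $\la\le n^{1/10}$. This follows by expanding each of the three factors via $\log(1-x)=-x+O(x^2)$ for $|x|\le 1/2$ and absorbing the quadratic remainders (for instance $ar^2/n^2=(r/n)(ar/n)\le ar/n$). Summing these per-step contributions over the exploration --- there are at most $|T|$ steps, and the sum of $ar$ or of $r$ over all steps is trivially at most $|T|^2$ --- one obtains, exactly as in \eqref{cpl1},
\[
 \frac{\Pr\bb{\Gatm(x)\isom T}}{\Pr\bb{X_{\le t}\isom T}}=\exp\bb{O\bb{|T|^2/n+\la|T|^2/n+\la^2|T|/n}}.
\]
Here $\la|T|^2/n$ is the largest of the three error terms, and $\la|T|^2/n\le n^{1/10+4/5-1}=n^{-1/10}$, so the whole exponent is $O(n^{-1/10})$ uniformly over all $T$ with $|T|\le n^{2/5}$; hence $\Pr(\Gatm(x)\isom T)\sim\Pr(X_{\le t}\isom T)$ uniformly.

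Finally, since $\Gatm(x)$ is a spanning tree of $\Gat(x)$, the event $\{\Gat(x)\isom T\}$ is precisely $\{\Gatm(x)\isom T\}$ together with the event that none of the at most $\binom{|T|}{2}$ edges of $G(n,\la/n)$ joining two reached vertices but never tested during the exploration is present. These untested edges are independent of everything tested, hence of $\{\Gatm(x)\isom T\}$, so the conditional probability that they are all absent is at least $(1-\la/n)^{\binom{|T|}{2}}\ge 1-|T|^2\la/(2n)=1-O(n^{-1/10})=1-o(1)$, uniformly; this gives the first $\sim$. I do not expect a real obstacle here --- the argument is a routine adaptation of Lemma~\ref{cpl} --- and the only thing one must be slightly attentive to is that the constants in the $O(\cdot)$'s above are genuinely uniform in $\la$, which reduces to keeping $\la/n$ and $r/n$ bounded together with the elementary check that the dominant error term $\la|T|^2/n$ is $o(1)$ for $\la\le n^{1/10}$ and $|T|\le n^{2/5}$.
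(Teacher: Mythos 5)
Your proposal is correct and follows the same route the paper takes: the paper's own proof simply observes that the estimate~\eqref{cpl1} from Lemma~\ref{cpl} holds whenever $|T|,\la\le n/2$ with absolute implicit constants, plugs in $|T|\le n^{2/5}$ and $\la\le n^{1/10}$ to get $\exp(O(n^{-1/5}+n^{-1/10}+n^{-2/5}))=1+o(1)$, and for the second equivalence notes that the expected number of untested edges present is $O(\la|T|^2/n)=o(1)$. Your write-up spells out why the constants are absolute (the expansions of each factor require only $\la/n,\,r/n\le 1/2$), which the paper asserts more tersely, and your bound on the probability of no untested edge is the Markov-inequality version of the same $O(\la|T|^2/n)=o(1)$ estimate.
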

\begin{proof}
The proof is essentially identical to that of Lemma~\ref{cpl}. Indeed, the estimate
\eqref{cpl1} is valid assuming only that $|T|$, $\la \le n/2$, say; under
our present assumptions this estimate is $\exp(O(n^{-1/5}+n^{-1/10}+n^{-2/5}))=1+o(1)$.
As before, the result for $\Gat(x)$ follows, now noting that the expected
number of untested edges present is $O(\la|T|^2/n)=o(1)$.
\end{proof}
  
\subsection{Neighbourhoods in the graph and how they meet}
Our immediate  plan is to examine those vertices for which the breadth first search procedure takes an unusually long time to reach a `large' number of vertices. For convenience we choose `large' to mean around $\la^{10}$; since we assume $\la<n^{1/1000}$, say, $\la^{10}$ is much less than $n^{1/4}$. We do not attempt to optimise the power of $n$ giving the upper bound on $\la$.
We first work towards   a lemma that gives asymptotically the probability that two neighbourhoods  of size at least $\la^{9}/4$ have a certain distance between them. This will be needed in particular later when we make variance calculations in using the second moment method.

As in Section~\ref{sec_const}, set  
\[
 t_0=\floor{\log n/\log(1/\las)}.
\]
For $r\ge 1$, let $S_r$ be the set of vertices $x$ in the random graph with $|\Gamma_{t_0+10}(x)|=r$.

Lemmas~\ref{ib} and~\ref{ia}, in conjunction with Lemma~\ref{cpl inf}, give some information on the expected size of $S_r$, or, more precisely, on the size of unions of such sets
over $r$ in suitable ranges, though (as will be apparent in the argument below) the upper and lower bounds given by the lemmas can differ by a factor of $\la$ or more. 

We first consider the branching process.
For $r\ge \la$, 
setting $\omega=3r>2r$ in Lemma~\ref{ia} gives
\begin{equation}\label{ub}
\pr (0<|X_t|\le r)<3\las^{g(t-\log(3r)/\log \lambda)}.
\end{equation} 
Although we shall not use it, let us note that
in the other direction, with $\alpha$ constant and $\la$ large enough,
applying Lemma~\ref{ib} with $\omega = \frac12 \alpha r$
and then Lemma~\ref{smoothlate} gives
\begin{equation}\label{lb}
 \pr (0<|X_{t}| < \alpha r) \ge \frac12 \las^{g(t-\log (\alpha r / 2)/ \log \la)},
\end{equation}
provided the argument of $g$ is greater than $0$.

We will transfer the bounds above to the random graph using Lemma~\ref{cpl inf},
which shows that the corresponding random graph and branching process
events have asymptotically the same probability, provided there
are not too many vertices close to $x$, so that the trees used in
applying Lemma~\ref{cpl inf} are not too large. First, define
$\Gamma_{\le i}(x)=\bigcup_{j=0}^i \Gamma_j(x)$.

Let $\bone$ be the (`bad') set of vertices $x$ such that $|\Gamma_{\le t_0+10}(x)|>n^{1/4}$.
From \eqref{lasinf} we have $\log(1/\las)\sim \la$, which is much larger than $\log\la$,
so $\la^{t_0+10}=n^{o(1)}\la^{10}\le n^{1/8}$ if $n$ is large enough.
For fixed $k\ge 1$, the number of unlabelled rooted trees of height $t$
with exactly $k$ (non-root) leaves, all at distance $t$ from the root,
can be estimated by adding paths to leaves one at a time,
giving the crude upper bound $O(1)(t+1)^{k-1}$.
It is thus easily seen that 
for fixed $k$ we have $\E |\Gamma_{\le t}(x)|^k \le O(1)(t+1)^{k-1}\la^{tk}$.
With $t=t_0+10=O(\log n)$ and $k=20$, this gives 
$\E |\Gamma_{\le t_0+10}(x)|^{20} \le(\log n)^{O(1)}n^{2.5}=o(n^3)$. Thus Markov's inequality
gives
\begin{equation}\label{pbs}
 \E|\bone| \le n\Pr\bb{|\Gamma_{\le t_0+10}(x)|^{20} \ge n^5} = o(n^{-1}).
\end{equation}
A similar calculation shows that
\begin{equation}\label{pbb}
 \Pr(\boneb) = o(n^{-2}),
\end{equation}
where $\boneb$ is the branching process event corresponding to $\bone$.

Define $\mut_r$ to be the expected number of vertices $x$ in $S_r\setminus \bone$.
Applying Lemma~\ref{cpl inf} to each relevant tree,
which has at most $n^{1/4}$ vertices
by definition, and summing over $x$,
we have $\mut_r\le (1+o(1))n\Pr(|X_{t_0+10}|=r)$, so by \eqref{ub} we have
\begin{eqnarray}
\mut_r&<& n(3+o(1))\las^{g(t_0+10-\log(3r)/\log \lambda)} \nonumber\\
&<& (3+o(1))\las^{8-\log(3r)/\log \lambda } \label{Nub}
\end{eqnarray}
using~\eqn{g approx}.

We can similarly see easily that the union of the sets $S_r\setminus \bone$ over all $r<\la^{9}/4$ is \aas\ empty: setting $\omega=\la^9/2$ in Lemma~\ref{ia} gives
\begin{eqnarray*}
 \Pr\bb{ 0<|X_{t_0+10}| < \la^9/4} &\le& 3 \las^{g(t_0+1+\log 2/\log \la)}\\
 &\le& \frac{3}{n} \las^{g(\log 2/\log \la)}
\end{eqnarray*}
which is $3n^{-1}\Pr(\Po(\la)\le \la/2)=o(1/n)$.
Hence, using Lemma~\ref{cpl inf} again, $\sum_{r<\la^9/4} \mut_r =o(1)$.
Since $\E|\bone|=o(1)$, it follows that
\begin{equation}\label{small r}
 \bigcup_{1\le r\le \la^9/4} S_r = \emptyset \mbox{ whp.}
\end{equation}
Thus, we are interested in $S_r$ for $ r\ge \la^9/4$.

An annoying feature of the present situation is that with some small
probability, the size of $\Gamma_{i}(x)$ can `misbehave' for
$i>t_0+10$. Although there are \aas\ no vertices for which this
happens to a significant extent, we need to treat these vertices
separately.
Define $\ell(r)=\max\{0,\ceil{\log\big(2(\log^6 n)/r\big) / \log \la}\}$,
so $\ell(r)\ge 0$ is minimal subject to $r\la^{\ell(r)} \ge 2\log^6 n$.
Let $\btwo$ be the set of `bad' vertices $x$ with
the property that $|\Gamma_{t_0+10}(x)|\ge \la^9/4$, and
the `ratio error'
\begin{equation}\label{B2rat}
 \frac{|\Gamma_{t_0+10+i}(x)|}{\la^i |\Gamma_{t_0+10}(x)|} -1
\end{equation}
has absolute value at least $\la^{-2}$ for some $0\le i\le \ell(r)$.
We write $V_0=V\setminus (\bone\cup\btwo)$ for the set of `good' vertices.
\begin{lemma}\label{extended regular} 
\begin{description}
\item{(a)} $\ex|\btwo|= o(1)$.
\item{(b)} Conditional on two vertices $x$ and $y$ being in $S_{r_1}\cap V_0$ and $S_{r_2}\cap V_0$ respectively, where $ \la^{9}/4\le r_i\le n^{1/4}$ ($i=1$ and 2), and additionally conditional on $d(x,y)>2t_0+20+\ell(r_1)+\ell(r_2)$, we have
$$
\pr\big(d(x,y)>2t_0+20+k\big)   = \exp\bigg(-\frac{ r_1r_2}{n}\big(1+O(\la^{-2})\big)\sum_{i=1}^k\la^i \bigg) +o(n^{-3}) 
$$
for all $k>\ell(r_1)+\ell(r_2)$, where the constant implicit in the $O(\cdot)$ terms is uniform over all such $r_1$, $r_2$ and $k$.
\end{description}
\end{lemma}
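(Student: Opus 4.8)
The plan is to prove both parts by exploring the neighbourhoods of $x$ (and $y$) step by step, as in the proof of Lemma~\ref{reg}, and controlling the binomial increments with the Chernoff bound (Lemma~\ref{Chern}); the only new point is that we must carry the control out to a slightly later generation, namely up to $t_0+10+\ell(r)$, and keep track of the finer ratio error $\la^{-2}$ rather than a constant. For part~(a), fix a vertex $x$ and condition on $|\Gamma_{t_0+10}(x)|=r\ge\la^9/4$ and on the set of vertices reached so far (which, off the bad set $\bone$, numbers at most $n^{1/4}$). Conditional on $\Gamma_{t_0+10+i}(x)$, the size $|\Gamma_{t_0+10+i+1}(x)|$ is binomial with parameters $n-m$ and $p=1-(1-\la/n)^{a_i}$ where $a_i=|\Gamma_{t_0+10+i}(x)|$ and $m=O(n^{1/4})+\sum a_j$; so long as all the $a_j$ stay below, say, $2\log^6 n\cdot\la$ and $m=o(n^{1/2})$, we have $\E(a_{i+1}\mid\cdot)=\la a_i(1+O(\la^{-3}))$ for $\la\le n^{1/1000}$, and Lemma~\ref{Chern} with $\delta$ a suitable small multiple of $\la^{-2}$ gives that the ratio error in~\eqref{B2rat} exceeds $\la^{-2}$ with probability $e^{-\Omega(\la^{-4}a_i)}\le e^{-\Omega(\la^{-4}\cdot\la^9/4)}=e^{-\Omega(\la^5)}$. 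Summing over the at most $\ell(r)+1=O(\log n)$ values of $i$ and then over the $n$ choices of $x$ (and noting $\la\to\infty$) gives $\E|\btwo|\le n\cdot O(\log n)e^{-\Omega(\la^5)}=o(1)$, which is~(a). (One must also check that the event `$a_j$ stays below $2\log^6 n\cdot\la$ for $j\le\ell(r)$' is automatic given $x\notin\btwo$: indeed $\ell(r)$ was chosen exactly so that $r\la^{\ell(r)-1}<2\log^6 n$, and the ratio-error bound then keeps $a_{\ell(r)}\le 2\cdot 2\log^6 n\cdot\la$, which is fine.)

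For part~(b), condition on the events described: $x\in S_{r_1}\cap V_0$, $y\in S_{r_2}\cap V_0$, and $d(x,y)>2t_0+20+\ell(r_1)+\ell(r_2)$, i.e.\ the two explorations have not yet met when both have been run for $t_0+10+\ell(r_i)$ steps. Since $x,y\notin\btwo$, at that point $|\Gamma_{t_0+10+\ell(r_i)}(x)|=r_i\la^{\ell(r_i)}(1+O(\la^{-2}))$ — call these sizes $b_1,b_2$ — and since $x,y\notin\bone$ the total number of vertices reached is $O(n^{1/4})$, so essentially all potential edges between the two current frontiers are still untested. Now continue both explorations in lock-step for $k-\ell(r_1)-\ell(r_2)$ more combined steps (say $\ell_i$ more steps from $x$ respectively $y$, with $\ell_1+\ell_2=k-\ell(r_1)-\ell(r_2)$, chosen to keep the two sides balanced). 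The event $\{d(x,y)>2t_0+20+k\}$ is exactly the event that, at each combined step, no edge appears between the current $x$-frontier and the current $y$-frontier. If at step $i$ the frontier sizes are $c^{(x)}$ and $c^{(y)}$ with product $P_i$, the conditional probability of no such edge is $(1-\la/n)^{P_i}=\exp(-(\la/n)P_i(1+O(P_i/n)))=\exp(-(\la/n)P_i+o(n^{-3}))$, using $P_i\le n^{1/2}$ say (any frontier exceeding $n^{1/2}$ would cause the neighbourhoods to meet immediately except with probability $e^{-\Omega(n^{1/2})}=o(n^{-3})$, so we may stop there). Multiplying over the steps, the exponent is $-(\la/n)\sum_i P_i+o(n^{-3})$. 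Finally, using that from generation $t_0+10+\ell(r_i)$ onwards both neighbourhoods are large ($\ge 2\log^6 n$) and hence (by the Chernoff bound again, exactly as in Lemma~\ref{reg}, with error probability $o(n^{-3})$) grow by a factor $\la(1+O(\la^{-2}))$ at each step, one gets $P_i=b_1b_2\la^{2i-?}(1+O(\la^{-2}))$; writing the combined sum over the $k-\ell(r_1)-\ell(r_2)$ steps and substituting $b_1b_2=r_1r_2\la^{\ell(r_1)+\ell(r_2)}(1+O(\la^{-2}))$, the total becomes $\frac{r_1r_2}{n}(1+O(\la^{-2}))\sum_{i=1}^{k}\la^{i}$ (the initial $\ell(r_1)+\ell(r_2)$ terms of the geometric sum being supplied by the $\la^{\ell(r_i)}$ factors absorbed into $b_1b_2$). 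This gives the claimed formula, with all $O(\cdot)$ uniform since every estimate above depended only on $\la$ and on the lower bounds $r_i\ge\la^9/4$, $b_i\ge2\log^6 n$.

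The main obstacle is bookkeeping rather than conceptual: one has to run both explorations simultaneously and make sure the `not yet met' conditioning is handled cleanly (the frontiers are disjoint vertex sets, so the cross-edges between them are genuinely independent of everything exposed so far), and one has to be careful about how the two sides are interleaved so that the geometric sum $\sum_{i=1}^{k}\la^i$ comes out with the right index range — in particular verifying that starting the fine analysis only at generation $t_0+10+\ell(r_i)$ (where the neighbourhoods are safely $\ge2\log^6 n$) loses nothing, because the missing low-order terms $\la^1,\dots,\la^{\ell(r_1)+\ell(r_2)}$ of the sum are exactly accounted for by the $\la^{\ell(r_i)}$ growth already baked into $b_i$. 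A secondary technical point is ensuring the error term is genuinely $o(n^{-3})$ and not merely $o(n^{-2})$: this forces the `stop if a frontier exceeds $n^{1/2}$' device and the use of $P_i/n=o(n^{-3/2})$ in the expansion of $(1-\la/n)^{P_i}$, together with a union bound over the $O(\log n)$ steps. None of this requires anything beyond Lemmas~\ref{Chern},~\ref{smoothlate} and the definitions of $\bone$, $\btwo$, $\ell(r)$ already in place.
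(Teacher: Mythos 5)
Your overall plan for both parts matches the paper's (explore from $x$ and $y$, control binomial increments with Chernoff, multiply over cross-edge tests), but there are three genuine errors, two of which are fatal for general $\la$.

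\paragraph{(1) The union bound in (a) is too crude.} You compute the \emph{conditional} probability that the ratio error fails given $|\Gamma_{t_0+10}(x)|=r\ge\la^9/4$, and then multiply by $n$ (``sum over the $n$ choices of $x$''). But $\E|\btwo|=n\Pr(x\in\btwo)=n\sum_r\Pr(a_{t_0+10}=r)\Pr(\mbox{ratio error}\mid a_{t_0+10}=r)$, and the factor $n\Pr(a_{t_0+10}=r)$ is precisely $\mut_r=O(\las^{8-\log(3r)/\log\la})$ from \eqref{Nub}, which at $r\approx\la^9/4$ is roughly $\las^{-1}\approx e^{\la}$, far smaller than $n$. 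By multiplying by $n$ outright you get a bound $n\cdot O(\log n)\,e^{-\Omega(\la^5)}$, which is $o(1)$ only when $\la^5\gg\log n$; the lemma is needed for any $\la\to\infty$, including $\la=\log\log\log n$, for which your bound diverges. The fix — and the point of the paper's proof — is to use \eqref{Nub} in place of the trivial bound of one per vertex, giving $\E|\btwo\setminus\bone|\le\sum_r \mut_r\cdot e^{-\Omega(\sqrt{\la r})}$, and then to observe that each summand is $\las^{O(1)+\Omega(\sqrt{r/\la})}$, whose sum is $o(1)$.

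\paragraph{(2) A fixed Chernoff deviation $\delta\sim\la^{-2}$ does not control the \emph{cumulative} ratio error.} You claim that one application of Lemma~\ref{Chern} with $\delta$ a small multiple of $\la^{-2}$ controls the ratio error in \eqref{B2rat}; but \eqref{B2rat} is a cumulative quantity, $|\Gamma_{t_0+10+i}(x)|/(\la^i|\Gamma_{t_0+10}(x)|)-1$, and the per-step ratio errors multiply. With each step's ratio allowed to be $1\pm c\la^{-2}$, the cumulative error after $\ell(r)$ steps can be of order $\ell(r)\la^{-2}$, which exceeds $\la^{-2}$ whenever $\ell(r)\ge 2$. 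The paper's choice $\delta_t=(\la a_t)^{-1/4}$ (as in \eqref{chern}) is essential: since $a_t$ grows geometrically, $\sum_t(\la a_t)^{-1/4}$ is dominated by its first term, giving cumulative error $\le 2(\la a_{t_0+10})^{-1/4}\le 2(\la^{10}/4)^{-1/4}<\la^{-2}$ automatically, while the failure probability $e^{-\Omega(\sqrt{\la a_t})}$ is also small enough.

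\paragraph{(3) The stopping threshold $n^{1/2}$ in (b) is too early.} You stop when the product of frontier sizes reaches $n^{1/2}$ and claim the residual probability is $e^{-\Omega(n^{1/2})}=o(n^{-3})$. This is not so: the probability of no cross-edge given product $P$ is $(1-\la/n)^{P}\approx e^{-\la P/n}$, and with $P=n^{1/2}$ this is $e^{-\la n^{-1/2}}$, which is close to $1$, not small. The paper instead continues until one frontier reaches size $n^{3/5}$; since the exploration always extends the smaller frontier, the other is then at least about $n^{3/5}/\la$, so the product is $\ge n^{6/5}/\la^2\ge n^{11/10}$ using $\la\le n^{1/1000}$, and only then is $e^{-\la n^{1/10}}=o(n^{-3})$. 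The bound on $\la$ is genuinely used here, and without it (or the larger threshold) the $o(n^{-3})$ error term is not obtained.

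Minor points: the ``$\la^{2i-?}$'' indexing and the claim that the low-order terms $\sum_{i\le\ell(r_1)+\ell(r_2)}\la^i$ are ``supplied by the $\la^{\ell(r_i)}$ factors'' are hand-wavy but repairable (those terms contribute only $O(\log^{12}n/n)$ to the exponent, which is absorbed in the error); and $\E(a_{i+1})=\la a_i(1+O(\la^{-3}))$ is fine but needlessly weak. The three items above, however, are real gaps.
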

\begin{proof}
As in the proof of Lemma~\ref{reg}, we explore the successive neighbourhoods of a vertex. If $a_t$ denotes $|\Gamma_t(x)|$, then conditional on the part of the graph explored up to this point, and assuming that it contains at most $n^{2/3}$ vertices, $a_{t+1}$ is distributed as binomial with parameters $n-O(n^{2/3})$ and  $p=(\la a_t/n)(1+O(\la a_t/n))$.
The mean is $\la a_t(1+O(n^{-1/4}))$, so by Lemma~\ref{Chern} (a Chernoff bound) we have
\begin{equation}\label{chern}
\pr( |a_{t+1}- \la a_t|\le  (\la a_t)^{3/4}) = 1-e^{-\Omega(\sqrt{\la a_t})}.
\end{equation}

To prove (a), in view of \eqref{pbs} we only need to show that $\ex|\btwo\setminus\bone|
=o(1)$.  First explore the successive
neighbourhoods of any vertex $x$ up to $\Gamma_{t_0+10}(x)$. If the
cardinality of this set, $a_{t_0+10}$, is less than $\la^9/4$ or 
greater than $2\log^6 n$, or if $|\Gamma_{\le t_0+10}(x)|>n^{1/4}$, then $x$ is
certainly not in $\btwo\setminus\bone$.
Condition on the exploration so far assuming that none of these events hold, and that $a_{t_0+10}=r$, so
$\la^9/4\le r \le 2\log^6 n$. Next, continue exploring
a further $\ell(r)$ steps.  Provided the event in the left
side of~\eqn{chern} holds at each exploration step, the `relative
error' $| a_{t+1}/\la a_t -1|$ is at most $ (\la a_t)^{-1/4}$. In this
case,
$$
\left| \frac{a_{t_0+10+i}}{\la^i a_{t_0+10}} -1\right|< 2( \la a_{t_0+10})^{-1/4} ,
$$ which is less than $\la^{-2}$ since $a_{t_0+10}\ge
\la^9/4$. This implies $x\notin\btwo$. On the other hand, the probability
that the event in the left side of~\eqn{chern} fails to hold for at
least one of the relevant $t$ is at most $e^{-\Omega\big(\sqrt{ \la r}\big) }$,
 which is $\las^{ \Omega\left(\sqrt{ r/\la}\right)}$
since $\las>e^{-\la}$. The expected number of vertices $x\notin \bone$ with
$a_{t_0+10}=r$ is $O(\las^{8-\log(3r)/\log \la})$ by~\eqn{Nub}.
Multiplying these bounds together and summing over $r\ge \la^9/4$ gives $o(1)$, that is, $\ex|\btwo\setminus\bone|=o(1)$, as required.

We turn to (b). Let $ \la^{9}/4\le r_i\le n^{1/4}$ ($i=1$ and 2). Take any vertices $x$ and $y$, and explore the successive neighbourhoods  of each  up to distance $t_0+10+\ell(r_1)$ and $t_0+10+\ell(r_2)$ respectively. At this point, it is revealed whether these neighbourhoods are all disjoint, which is equivalent to $d(x,y)>2t_0+20+\ell(r_1)+\ell(r_2)$, and also (recalling that $V_0=V\setminus(\bone\cup\btwo)$) whether
$x \in V_0$ and $y \in V_0$. Condition on the event that all three of these hold. It follows from $x\notin\btwo$ that $|\Gamma_{ t_0+10+i}(x)|=r_1(1+O(\la^{-2}))\la^i$ for $0\le i \le \ell(r_1)$, and similarly for $y$.  

We next explore the further neighbourhoods of $x$ and $y$, each time choosing the smaller of the two for further exposure, until one of them has reached cardinality at least $n^{3/5}$, or until they meet, whichever happens first. Note that for all $r$ we have $r  \la^{\ell(r)}\ge 2\log^6 n$ by the definition of $\ell$.
Since $x\notin\btwo$, using the `error ratio' property in the definition of $B_2$ (see \eqref{B2rat}) it follows that $|\Gamma_{t_0+10+\ell(r_1)}(x)|\ge \log^6 n$,
and similarly for $y$.
So, by applying~\eqn{chern} and conditioning on non-failure at each step, we conclude that with probability at least $1-o(n^{-3})$, at each step
$$
|\Gamma_{ t_0+10+k}(x)|=r_1 \la^k (1+O(\la^{-2})),
$$ 
and similarly for $y$.
So we may assume this is the case each time.
{From} this, when the sum of the two distances is  $2t_0+20+k-1$, the product of the sizes of the neighbourhoods is $ r_1r_2 \la^{k-1}(1+O(\la^{-2}))$, and hence the probability of not joining in the next step is
$$ 
\exp\big(-r_1r_2 \la^{k-1}(1+O(\la^{-2}))\la/n\big).
$$
The result follows, as long as the probability that they do not meet by the time that one of the neighbourhoods has reached size $n^{3/5}$ is bounded above by $o(n^{-3})$. This must be the case since on the previous step, the neighbourhood that was extended must have had size at least $n^{3/5}/\la(1+o(1))$, so the product of sizes on the previous step must have been at least $n^{6/5}/\la^2(1+o(1))$ which is at least $n^{11/10}$ as $\la<n^{1/1000}$. Thus the probability of not joining on the last step was at most $\exp (-\la n^{1/10}(1+o(1))=o(n^{-3})$.
\end{proof}

We now turn to the proof of Theorem~\ref{th_inf}.
\begin{proof}[Proof of Theorem~\ref{th_inf}.]
Recall that $\la=\la(n)$ is some given function of $n$ satisfying $\la\to\infty$
and $\la\le n^{1/1000}$. All limits are as $n\to\infty$, or, equivalently, as $\la\to\infty$.
As usual, all inequalities we claim are required to hold only if $n$ (or $\la$) is
sufficiently large.

Our first aim is to estimate the probability of the event conditioned
on in Lemma~\ref{extended regular}(b).
Let  $\widehat P_r$ denote the probability that a given vertex is in $V_0\cap S_r$, and $\widehat P_{r_1,r_2}$ the probability that a given pair of distinct vertices $x$ and $y$ satisfy   $x\in V_0\cap S_{r_1}$, $y\in V_0\cap S_{r_2}$, and $d(x,y)>2t_0+20+\ell(r_1)+\ell(r_2)$.
Note that $x\in V_0\cap S_{r_1}$ iff the set of vertices at distance at
most $t_0+10+ \ell(r_1)$ from $x$ forms one of a specific set of graphs
with less than $n^{1/4}+O(\la(\log n)^6)=o(n^{1/3})$ vertices,
and $\widehat P_{r_1,r_2}$ counts configurations in which the explorations
from $x$ and $y$ are disjoint. Since each exploration `uses up'
$o(n^{1/3})$ vertices, it is easy to see (for example using
a version of Lemma~\ref{cpl inf} starting with two vertices) that
\begin{equation}\label{prodisgood}
\widehat P_{r_1,r_2} \sim \widehat P_{r_1}\widehat P_{r_2}.
\end{equation}

For any $r$, let $\muh_r$ denote $n\widehat P_r$, the expected size of $V_0\cap S_r$;
recall that $\mut_r=\E |S_r\setminus \bone|$, so
$\muh_r=\mut_r+o(1)$ by Lemma~\ref{extended regular}(a). Also, for integer
$k\ge 1$ define $\muh(r_1,r_2,k)$ to be  the expected   number of ordered pairs $(x,y)$ of vertices with $x\in V_0\cap S_{r_1}$, $y\in V_0\cap S_{r_2}$, and
$d(x,y)>2(t_0+10)+k$.
Since $V_0=V$ whp, the number of such pairs
essentially determines the diameter. From the above observations and Lemma~\ref{extended regular}(b),
\begin{equation}\label{Ehat}
\muh(r_1,r_2,k) \sim\muh_{r_1}\muh_{r_2}\left(\exp\Big(-r_1r_2 (1+O(\la^{-2})) \sum_{i=1}^k\la^i/n\Big)+o( n^{-3})\right)
\end{equation}
provided $k >\ell(r_1)+\ell(r_2)$ and $r_1$ and $r_2$ satisfy the constraints of Lemma~\ref{extended regular}.  
Note that we shall consider values of $k$ that are at least $\log n/\log\la-30$, which is larger than $2\ell(r)$ for any $r>0$.

Define $\mun_r=n\pr(|X_{t_0+10}|=r)$, which we shall analyse using Lemmas~\ref{ib} and~\ref{ia}. 
We claim that
\begin{equation}\label{P1}
\muh_r \le \mut_r\le \mun_r(1+o(1));
\end{equation}
indeed, the first inequality holds by definition. If $r>n^{1/4}$
then $\mut_r=0$; otherwise the second inequality follows from Lemma~\ref{cpl inf},
summing over the possible neighbourhoods of $x$.
In the other direction, although we shall not these bounds,
note that for $r\le n^{1/4}$ we have
$$
\muh_r \ge \mun_r(1+o(1))+o(1),
$$
since $\muh_r\ge \mut_r+o(1)$ by Lemma~\ref{extended regular}(a), and
$\mut_r\ge \mun_r(1+o(1))+o(1)$ from Lemma~\ref{cpl inf}, together with \eqref{pbb}.
 
We next show that
vertices in sets $V_0\cap S_r$ with $r> \la^{13.1}$ will not determine the diameter of the graph, for the reason that they join too quickly to all vertices under consideration: we claim that \aas\ all such vertices have distance at most $\log n/\log \la +2t_0-0.05$ from all other vertices; we will see later that {\aas} the diameter is greater than this. To establish this claim, without loss of generality consider only   
$r_1 \ge \la^{13.1}$ and $r_2\ge \la^9/4$. Note that the conditions on the $r_i$ in  Lemma~\ref{extended regular}(b) are so restrictive because it aims for a fairly accurate asymptotic estimate. In this case we only need to observe that if  $x\in V_0\cap S_{r}$ for $r=r_1$ or $r_2$,  by definition of $\btwo$,  $|\Gamma_{t_0+10+i}(x)|\sim \la^i r$ until the neighbourhoods
reach size at least $(\log n)^6$ (which they may do at $i=0$), and for larger neighbourhoods up to size $n^{2/3}$,~\eqn{chern} provides the same relation with probability at least $1-e^{-\Omega(\log ^3 n)}= 1-o(n^{-5})$.
Summing over all $O(n^2)$ pairs of vertices $x$ and $y$ gives
\begin{equation}\label{mrk}
\muh(r_1,r_2,k) \le \muh_{r_1}\muh_{r_2}\exp\Big(-(1+o(1))r_1r_2  \la^k/n\Big)  +o(n^{-3}),
\end{equation}
which is similar to~\eqn{Ehat} but does not have the same restrictions on $r_1$ and $r_2$.
For $k=\floor{ \log n/\log \la -20.05}$ we have 
 $r_1r_2\la^k/n>(r_1/\la^{13.1})(4r_2/\la^9) \la^{1.04}$. 
Now~\eqn{P1} and~\eqn{Nub}, together with $\las=e^{-\la +o(\la)}$ (see \eqref{lasinf}), give
$$
\muh_{r}=O(1)\exp\big( (1+o(1))\la (\log(3r)/\log \la-8)\big).
$$
Summing the resulting bound on $\muh(r_1,r_2,k)$ over all $r_1 \ge \la^{13.1}$ and $r_2\ge \la^9/4$
gives $o(1)$, as required to establish the claim.
(The key observation
is that when $r_1$ and $r_2$ take their minimum values, we have $\muh_{r_1}\muh_{r_2}=
\exp(O(\la))$, while the exponential factor in \eqref{mrk} is
at most $\exp(-\la^{1.04})$. When $r_1$ and $r_2$ increase, so does $\muh_{r_1}\muh_{r_2}$,
but the exponential factor decreases more than fast enough to compensate.)

Recalling~\eqn{small r}, let $R$ be the set of indices $r$, $\la^{9}/4\le r\le  \la^{13.1}$, for which $\muh_r>\la^{-14}$. Then, by the union bound, the expected number of vertices in all sets $V_0\cap S_r$ with $r$ in this range but not in $R$ is $o(1)$, i.e., there are \aas\ no such vertices. Since $V_0=V$ whp,
using the observation above about sets $S_r$ with $r>\la^{13.1}$ and~\eqref{small r}, we have
shown that
\begin{equation}\label{dmain}
 \diam(G) = \max_{(r_1,r_2)\in R^2} \max \{ d(x,y): x\in V_0\cap S_{r_1},\,y\in V_0\cap S_{r_2}\} \mbox{ whp.}
\end{equation}

It only remains to examine $r_1$ and $r_2$ in $R$.
Note that if $r\in R$ then $r\le \la^{13.1}$, so from~\eqn{Nub} and~\eqn{P1} we have
\begin{equation}\label{mribd}
 \muh_r<\las^{-6}<e^{6\la}.
\end{equation}
Let $k_0(r_1,r_2)$ denote the maximum $k$ such that $\muh(r_1,r_2,k)>\la^{-27}$.  (This number $k_0$ depends on $n$.)
Then  $\muh(r_1,r_2,k_0(r_1,r_2)+1)\le \la^{-27}$. Let $k_{\max}$ be the maximum value of $k_0$ over all pairs $(r_1,r_2)$ in $R^2$.
{From} \eqref{mrk}, \eqref{mribd} and the definition of $R$, it is easy to check that $\kmax=\log n/\log\la+O(1)$.
Setting $f(n,\la)=2(t_0+10)+\kmax$, to prove the first part of Theorem~\ref{th_inf}
we shall show that the diameter is \whp\ either $f(n,\la)$ or $f(n,\la)+1$.
Since $|R^2|=O(\la^{26.2})$, by the union bound, the expected number of pairs of vertices $x$ and $y$ counted in \eqref{dmain} at distance greater than $f(n,\la)+1$ is $o(1)$. Thus $\diam(G)\le f(n,\la)+1$ holds whp.

To see that the diameter is \aas\ at least $f(n,\la) = 2(t_0+10)+k_{\max}$
we shall look for vertices at this distance in suitable sets $S_{r_i}$.
Choose $(r_1,r_2)$ in $R^2$ with   $k_0(r_1,r_2) = k_{\max}$.  Note that $\muh(r_1,r_2,k_{\max})>\la^{-27}$. That is, from~\eqn{Ehat}, 
$$
\muh_{r_1}\muh_{r_2}\exp\Big(-r_1r_2 (1+O(\la^{-2})) \sum_{i=1}^{k_{\max}}\la^i/n\Big) + o(\muh_{r_1}\muh_{r_2}/n^3)>(1+o(1))\la^{-27}.
$$

By definition   $\muh_{r_i}\le n$, and $n^{-1}=o(\la^{-27})$, so
$$
\muh_{r_1}\muh_{r_2}\exp\Big(-r_1r_2 (1+O(\la^{-2})) \sum_{i=1}^{k_{\max}}\la^i/n\Big) >\la^{-27}(1+o(1)).
$$
Using \eqref{mribd} for $r=r_1$ and $r=r_2$,
it follows that
$$
\exp\Big(-r_1r_2 (1+O(\la^{-2})) \sum_{i=1}^{k_{\max}}\la^i/n\Big) > \la^{-28}e^{-12\la}
>e^{-13\la},
$$ 
if $n$ is large enough.
Taking logs and stopping the sum one step earlier, this gives
\begin{equation}\label{nottosmall}
-r_1r_2 (1+O(\la^{-2})) \sum_{i=1}^{k_{\max}-1}\la^i/n >  -13.
\end{equation}
Hence, by Lemma~\ref{extended regular}(b),
vertices $x$ and $y$ whose $(t_0+10)$-neighbourhoods have sizes $r_1$ 
and $r_2$ respectively have a significant (at least $e^{-13}+o(1)$) probability
of being at distance at least $2t_0+20+k_{\max}$. 
Although by design we expect a large number of pairs of such vertices $x$ and $y$, it is still possible that the expected number of possibilities for either $x$ or $y$ goes to 0! Our strategy is to consider vertices
with $|\Ga_{t_0+10}(\cdot)|$ around $2000r_i$, say, and show that this gives
us many vertices $x$ and $y$ to work with. We also impose certain extra conditions
on their neighbourhoods needed later.

For $i=1,2$, since $r_i$ is in $R$, we have $\muh_{r_i}>\la^{-14}$.
Now~\eqn{P1} shows that $\Pr(|X_{t_0+10}|=r) = \mun_{r_i}/n>(1+o(1))\la^{-14}/n$.
By \eqref{ub} it follows that
\begin{equation}\label{nlpre}
 \las^{g(t_0+10-\log(3r_i)/\log \lambda)}>(1/3+o(1))\la^{-14}/n.
\end{equation}
Let $\omega_i=1000r_i \le\la^{14}$.
By \eqref{addb} and \eqref{nlpre} we have
\begin{equation}\label{nl}
 \las^{g(t_0+10-\log \omega_i/\log \lambda)}>(1/3+o(1))\la^{250/3}\la^{-14}/n \ge \la^{40}/n,
\end{equation}
if $\la$ is large enough.
For $i=1,2$, applying Lemma~\ref{ib} with $\omega=\omega_i$ and $t=t_0+10$,
there is some $\rho_i$ with $\omega_i/3\le \rho_i\le 2\omega_i$
such that the event $F_0\cap F_1\cap\{|X_{t_0+10}|=\rho_i\}$ described
in Lemma~\ref{ib} has probability $\pi_i$ satisfying
\begin{equation}\label{pibig}
 \pi_i \ge \las^{g(t_0+10-\log \omega_i/\log \lambda)}/(3\la\omega_i) \ge \la^{39}/(3n\omega_i) \ge \la^{25}/n,
\end{equation}
using \eqref{nl}.
Let $\wtE_{\rho_i}(x)$ denote the event that $x\notin \bone$ and the neighbourhoods of $x$ up to distance
$t_0+10$ form a tree that, when viewed as a branching process, satisfies
the conditions $F_0\cap F_1\cap \{|X_{t_0+10}|=\rho_i\}$.
By \eqref{pbb} and Lemma~\ref{cpl inf}, we have $\Pr(\wtE_{\rho_i}(x)) \sim \pi_i+o(n^{-2})$.
Since $\pi_i$ is much larger than $n^{-2}$, it follows that
$
 \Pr(\wtE_{\rho_i}(x)) \sim \pi_i \ge \la^{25}/n.
$

Let $\Eri(x)$
be the event that $\wtE_{\rho_i}(x)$ holds and $x\in V_0$, so
the only additional condition is that $x\notin \btwo$.
Let $P_i=\Pr(\Eri(x))$. Since $\Pr(x\in\btwo)=o(1/n)$, we have
\begin{equation}\label{newerup}
 P_i\sim \Pr(\wtE_{\rho_i}(x)) \sim \pi_i \ge \la^{25}/n.
\end{equation}
 
Note also for later that, writing $t_i$ and $a_i$ for the integer and fractional
parts of $t_0+10-\log\omega_i/\log\la$, and writing
$F_0(x)$ for the event that the neighbourhoods
of $x$ satisfy the diamond condition to distance $t_i$ (corresponding to $F_0$ in Lemma~\ref{ib}),
then starting from the first statement of Lemma~\ref{ib} and arguing as above we have
\[
 \Pr\bb{F_0(x)\cap \{ |\Ga_{t_i+1}(x)| \le \la^{1-a_i} \}} \sim\las^{g(t_0+10-\log\omega_i/\log\la)}.
\]
Using the first inequality in \eqref{pibig} it follows that
\begin{equation}\label{nn}
 P_i \ge \la^{-15} \Pr\bb{F_0(x)\cap \{|\Ga_{t_i+1}(x)| \le \la^{1-a_i} \}}.
\end{equation}
In other words, once we have explored the neighbourhoods to the `branching vertex'
$x_0$, and found few neighbours in the next step, it is not that unlikely that $\Eri(x)$ holds.

Given distinct vertices $x$ and $y$, as in~\eqn{prodisgood} the probability that $E_{\rho_1}(x)$  
and $E_{\rho_2}(y)$
hold and $d(x,y)>2(t_0+10)+\ell(\rho_1)+\ell(\rho_2)$ is $(1+o(1))P_1P_2$.
Furthermore, conditional on this holding,
then by a variant of Lemma~\ref{extended regular} that simply
includes extra conditions on the neighbourhoods of a vertex up to distance $t_0+10$,
the conditional probability $P$ that $d(x,y)\ge 2(t_0+10)+\kmax$ satisfies
\begin{equation}\label{Pform}
 P = \exp\bigg(-\frac{\rho_1\rho_2}{n}(1+o(1))\sum_{i=1}^{\kmax-1}\la^i \bigg) +o(n^{-3}).
\end{equation}
Since $\rho_i\le 2\omega_i=2000 r_i$, using \eqref{nottosmall} shows
that $P\ge \exp(-O(1))$, so $P=\Theta(1)$.

Let us call an ordered pair $(x,y)$ a {\em regular far pair} if $E_{\rho_1}(x)$
and $E_{\rho_2}(y)$ hold, and $d(x,y)\ge 2(t_0+10)+\kmax$, and let
$N$ denote the number of regular far pairs; our aim is to show that $N\ge 1$ holds whp.
{From} \eqref{newerup} we have
$nP_1, nP_2\ge (1+o(1))\la^{25}\to\infty$, so
$$
 \E N \sim n^2P_1P_2P \to\infty.
$$
Unfortunately, we cannot use the trick from
Subsection~\ref{lower_fixed} to complete the proof: this trick, which
allowed us to avoid considering the second moment of the number of
{\em pairs} of vertices at large distance, needed $P\sim 1$. This will
in fact hold for almost all values of the parameters in the present setting, but not all. Moreover,  we now have less tolerance in the final estimate
of the diameter, and consequently less flexibility.
Instead we apply the second moment method directly to $N$.
In the arguments that follow we shall avoid using the fact
that $P=\Theta(1)$, using only
\begin{equation}\label{Pb}
 P\ge n^{-1/20},
\end{equation}
say; this will be useful later.

Let $M=\E(N^2)$ denote the expected number of pairs $((x,y),(z,w))$ of
regular far pairs; our aim is to show that $\E M\sim (\E N)^2$.
Note that the number of distinct vertices in $\{x,y,z,w\}$ may be $2$, $3$ or $4$.
The contribution to $M$ from sets with 2 distinct vertices is trivially
at most $2\E N=o((\E N)^2)$ (the factor $2$ arises only if $\rho_1=\rho_2$).
Let us leave aside the case of $3$ vertices, noting only that we expect
the contribution from pairs with $x=z$, say, to be asymptotically
\[
 nP_1 (nP_2)^2 P^2 \sim (\E N)^2 / (nP_1) = o((\E N)^2),
\]
since $nP_1\to\infty$.
The argument for the case of 4 distinct vertices that we shall now give
adapts easily to show this.

Let $M_0$ be the contribution to $M$ arising from sets of 4 distinct
vertices $\{x,y,z,w\}$ whose neighbourhoods up to distance
$t_0+10+\ell(\rho_i)$ are all disjoint, where $i=1$ or $2$ as appropriate.
To estimate $M_0$, explore from four distinct vertices, and
test whether the relevant events $\Eri(\cdot)$ hold with the neighbourhoods disjoint.
As in~\eqn{prodisgood}, this has probability $(1+o(1)) P_1^2P_2^2$.
Our aim is to bound from above the conditional probability that
$d(x,y), d(z,w)\ge 2(t_0+10)+\kmax$, showing that it is at most
$(1+o(1))P^2$. Since none of $x$, $y$, $z$, $w$ is in $\btwo$, the neighbourhoods
have already reached size at least $\log^6 n$.
{From} this point onwards, as before,
we may assume they grow at almost exactly the expected rate.
Note that we may
ignore events of conditional probability $o(n^{-1/10})=o(P^2)$, since we have already
conditioned on an event of probability $(1+o(1))P_1^2P_2^2$.

Since we stop the explorations when the neighbourhoods are no larger than $n^{3/5}$, say,
we may assume that any intersections between neighbourhoods are small,
involving at most a fraction $n^{-1/3}$ of the vertices in a neighbourhood.
Such small intersections do not materially affect the calculations
in Lemma~\ref{extended regular}(b), so the conditional probability
that $d(x,y),d(z,w)\ge 2(t_0+10)+\kmax$ is indeed $(1+o(1))P^2$.

It remains to deal with cases where some of the neighbourhoods meet within
distance $t_0+10+\ell(\rho_i)$ from the respective vertices.
As above just after~\eqn{nl}, let $t_i$ be the relevant parameter $t'$ in Lemma~\ref{ib},
where $i=1$ or $2$ depending on which vertex we consider.
Note that to have the property $\Eri(v)$, all our starting vertices $v$
must have the property that $\Gamma_{t_i}(v)$ contains a unique vertex $v_0$.
Also, within the tree up to this point, $v$ must be the unique vertex at maximal
distance from $v_0$,
 so our `diamond' condition holds. As in Subsection~\ref{lower_fixed}, it follows
that in a quadruple contributing to $M$, the neighbourhoods
cannot meet before the corresponding vertices $v_0$, so the minimum possible
distance between starting vertices is $t_i+t_j$.

Returning to the random graph without conditioning, let us explore the neighbourhoods of our 4 distinct vertices $x$, $y$, $z$, $w$
out to distance $t_i-1$ in each case, assuming these explorations
are disjoint, and that there are no edges between the final
sets (such an edge would give distance $t_i+t_j-1$). Furthermore, let us test for
each of these vertices $v$ {\em how many} neighbours $\Ga_{t_i-1}(v)$ has in the remaining
set $U$ of `unused' vertices, but not \emph{which} neighbours it has. If our quadruple
is to contribute, in each case there must be exactly one neighbour, $v_0$.
Now conditional on the information so far, the probability that $x_0=z_0$, say,
is exactly $1/|U|\sim 1/n$. If this happens, then going forwards, the remaining
calculations are exactly as if we had $x=z$ in the beginning. Summing
the corresponding contributions to $M$, the total from cases with $x\ne z$ but $x_0=z_0$
has an extra factor of $n$ from the choice of $z$ (compared to the
case $x=z$), but also an extra factor that is asymptotic to $1/n$ as noted above. (There
is also the extra factor of at most 1 from the condition on the neighbourhoods
of $z$ up to distance $t_i-1$; we can ignore this). In total,
the contribution here is at most that with $x=z$, which is $o((\E N)^2)$
as noted above. (The argument here is not circular; when considering here
the three-vertex case, a collision of this form reduces to the two-vertex
case.)

So we may assume that $x_0$, $y_0$, $z_0$ and $w_0$ are distinct.
Repeating the trick above, let us first test how many neighbours
each has among the unused vertices (not testing edges such as $x_0z_0$
for now). For our quadruple to contribute, by definition of $\wtE_{\rho_i}(\cdot)$ the numbers must be at
most $\la^{1-a_i}$ with $i=1,2$ as appropriate. Since there are
$n-O(n^{1/4})$ unused vertices, the probability of this happening
is very close to $\Pr(\Po(\la)\le \la^{1-a_i})$.
Using \eqref{nn},
it follows that the probability
that all our tests so far, for the relevant events $\wtE_{\rho_i}(\cdot)$, succeed is at most $\la^{61}P_1^2P_2^2$.
Hence, going forward, we may neglect any event of probability smaller than
$n^{-1/4} = o(\la^{-61})$, say.
So far we revealed the numbers of neighbours, which were all at most $\la$,
but not which vertices
they were. But the probability of a collision is $O(\la^2/n)=o(n^{-1/4})$,
which is negligible. Also, the probability of an edge between $x_0$ and $z_0$,
say, is $O(\la/n)=o(n^{-1/4})$.  Recall that any vertex
in a pair counted in $N$, or a quadruple in $M$ or $M_0$, has the property
$\Eri$ for some $i$ and is hence in $V_0=V(G)\setminus(\bone\cup\btwo)$.
Exploring further
up to distance $10+\ell(\rho_i)$ steps from each vertex $v_0$, where $i=1$ or 2 as appropriate,
assuming typical growth as we may, the probability that two neighbourhoods
meet, starting as they do with at most $\la$ neighbours of $v_0$,
is $O(\la^{20+\ell(\rho_i)+\ell(\rho_j)}/n)=o(n^{-1/4})$. 
So we may assume this does not happen,
and hence $M-M_0$ is negligible compared with $M$.

In summary, it follows that $M=\E(N^2)\sim n^4P_1^2P_2^2P^2 \sim (\E N)^2\to\infty$,
so the second moment method shows that $N\ge 1$ whp.
But then the diameter is at least $2(t_0+10)+\kmax$, completing the proof
of the first half of Theorem~\ref{th_inf}.
 
The second part of the theorem states that for `most' values of $n$
the diameter is almost determined, and gives a
formula. The general
exact formula is a bit complicated if we want to include all values of the parameters,
even restricting to those for which the diameter is almost determined.
In formulating Theorem~\ref{th_inf} we omitted some additional problematic
values of $n$, giving a much simpler formula.
One way to explain the source of the problematic cases is to observe that,
although the difference between the upper and lower bounds \eqn{ub} and \eqn{lb} is usually negligible, when the typical diameter is close to jumping to the next integer, the fact that these bounds do not exactly match becomes important.  

Writing $\fp{x}$ for $x-\floor{x}$, in proving the second part
of the theorem we may assume that
\begin{equation}\label{2c}
\begin{array}{ccccc}
  5 \eps &<&   \fp{\log n/\log\la}       &<& 1-5\eps,\\
  5 \eps &<&   \fp{\log n/\log(1/\las)}  &<& 1-5\eps,
\end{array}
\end{equation}
where $\eps$ is some positive constant, which we may take to be smaller than $1/10$.

Let us first consider some values
of $r$ that, as it will turn out, in many cases (i.e., for many values
of $n$) typically determine the diameter of the random graph.

Define $q_n$ to be the  infimum
of $q$ such that $n^{-1} > \las^{t_0 + g(q)}$.
{From} the definition of $g$, with $\la$ fixed and $q$ varying, $\las^{g(q)}$
jumps by a factor of at most $\la$ at each discontinuity. (With $X\sim \Po(\la)$,
the ratios $\Pr(X\le k+1)/\Pr(X\le k)$ are between $1$ and $\la$,
while the ratio $\las^{-1}\Pr(X\le 1)/\Pr(X<\la)$ is asymptotically $1/\Pr(X<\la)\sim 2$.)
Thus for large $n$
\begin{equation}\label{n-1}
n^{-1} = \las^{t_0 + g(q_n)}/\xi
\end{equation}
for some $\xi=\xi(n)$ between 1 and $\la$.
We call $n$ `normal' if  $ q_n< \eps  $ and $g(q_n)>4\la^{-\eps}$.
Taking logs in \eqref{n-1}, since $\xi=\las^{o(1)}$, while $t_0=\floor{\log n/\log(1/\las)}$,
we have $g(q_n)=\fp{\log n/\log(1/\las)}+o(1)$, so $g(q_n)\ge \eps\ge 4\la^{-\eps}$
if $n$ is large.
Since for any constant $0<a<1$ we have $g(a)\to 1$, while $g(q_n)\le 1-\eps$,
it follows that $q_n=o(1)$, so any (large enough) $n$ satisfying \eqref{2c} is normal.

Putting $t=t_0+10$ and $\omega=\la^{t_1}$ such that $t_1=10-q_n-\log 5/\log \la$ in Lemma~\ref{ia}, we find
$$
\pr(0<|X_{t_0+10}|< \la^{10-q_n}/10)\le 3\las^{g(t_0+q_n +\log 5/\log \la)}=3\las^{t_0+g( q_n +\log 5/\log \la)},
$$
which is at most
$3\las^{t_0+ g( q_n)}/\la^{5/4} =o(n^{-1})$
by~\eqn{addb} and \eqref{n-1}.
Hence, arguing as for \eqref{small r}, we only need to consider vertices
in $S_r$ with $r\ge \la^{10 -q_n}/10$.

Put  $b=\floor{\log n / \log \la +2q_n}$ and 
$\phi = \fp{\log n / \log \la +2q_n}$. Call $n$ `standard' if $3\eps < \phi<1-3\eps$. Since $q_n<\eps$ for normal $n$,
any $n$ satisfying \eqref{2c} is standard.

As noted above, for normal $n$ we only need to consider $r_1$ and $r_2$
at least $\la^{10-q_n-o(1) }$, and for such cases~\eqn{Ehat} gives
\begin{eqnarray*} 
\muh(r_1,r_2,b-18)&\le& (1+o(1))\muh_1\muh_{r_2}
\exp(-\la^{20-2q_n-o(1)+b-18-\log n/\log \la}+o(n^{-3}))\\ &=&
(1+o(1))\muh_{r_1} \muh_{r_2}\exp(-\la^{2-\phi-o(1)}+o(n^{-3})).  
\end{eqnarray*}
For standard $n$ the exponential above is at most
$\exp(-\la^{1+\eps-o(1)})+o(n^{-3})$. Hence for such $n$ the quantity
$\muh(r_1,r_2,b-18)$
goes to 0 quickly unless $\muh_{r_1}$ or $\muh_{r_2}$ is much
bigger than $e^ {100\la}$ say. From arguments as above, we know this
forces $r_1$ and $r_2$ to be much larger
than the typical values of around $\la^{10}$,
at least $\la^{100}$, say, and
then $\muh(r_1,r_2,b-18)$ is much smaller. Using the argument
that earlier permitted us to restrict parameters to the set $R$, such
cases can be neglected. Thus, \aas\ there are no vertices in sets
$S_{r_1}\cap V_0$, $S_{r_2}\cap V_0$ that have
distance greater than $2(t_0+10)+b-18$, for any $r_1$ or $r_2$. Hence
the diameter is at most $2t_0+b+2$ \aas\ for any $n$ satisfying~\eqref{2c}
(or indeed, though we won't need it, for any normal standard $n$).

Continuing with standard normal $n$, let $\omega=\la^{10}$. Then using~\eqn{n-1} and since $g(q_n)\ge 4\la^{-\eps}$, $\las^{-1}=e^{\la+O(\log\la)}$ and $\xi=e^{O(\log\la)}$,
\begin{eqnarray*}
 \las^{g(t_0+10-\log\omega/\log\la)} &=& \las^{t_0} \\
&=&\las^{-g(q_n)}\xi/ n\\
&>& \exp(4 \la^{1-\eps}+O(\log\la))/n\\
&>& \exp(3 \la^{1-\eps})/n.
\end{eqnarray*}
Since the final bound is larger than $\la^{40}/n$ if $\la$ is large enough,
the bound \eqref{nl} holds with $\omega_i=\omega$ for $i=1,2$.
The calculations down to \eqref{nn} go through as before,
now with $\rho_1=\rho_2=\rho$ and $\la^{10}/3\le\rho\le 2\la^{10}$.
This time we have $P_1=P_2\sim \pi_i \ge \exp(3 \la^{1-\eps})\la^{-O(1)}/n$,
using \eqref{pibig} and the bound above.

Writing $N$ for the number of pairs of
vertices with property $E_\rho$ at distance at least
$2(t_0+10)+b-18$, as before we have $\E N\sim (P_1n)^2 P$, with
\[
 P = \exp\bigg(-\frac{\rho^2}{n}(1+o(1))\sum_{i=1}^{b-19}\la^i \bigg) +o(n^{-3})
\]
in place of \eqref{Pform}.
Since $\rho\le 2\la^{10}$ we have
\[
 \log (1/P) \le (4+o(1)) \la^{20+b-19-\log n/\log \la} \sim  4\la^{1+2q_n-\phi} \le 4\la^{1-\eps}
\]
for normal standard $n$. Since $P_1n \ge\exp(3 \la^{1-\eps}-O(\log\la))$, we thus have $\E N\to\infty$.
The second moment argument goes through as before to show that \aas\ $N\ge 1$,
so the diameter is whp at least $2(t_0+10)+b-18$.
(Note that we still have~\eqn{Pb} since 
\eqref{2c} forces $\las$ to be much larger than $1/n$,
and hence $\la=O(\log n)$, so $\log(1/P)=o(\log n)$.)
Hence, from the upper bound shown above, the diameter of the graph is, for  normal standard $n$, \aas\ equal to 
$$
 2t_0+b+2 =  2\floor{\log n/\log(1/\las)}+\floor{\log n / \log \la +2q_n}+2.
$$
Using \eqref{2c} again, and recalling that $q_n<\eps$, this is
$$
 2\floor{\log n/\log(1/\las)}+\floor{\log n / \log \la}+2, 
$$
which is in turn exactly the diameter claimed in \eqref{normalform}, completing the proof of Theorem~\ref{th_inf}.
\end{proof}

\begin{remark}
With hindsight, it is easy to explain intuitively why the diameter
in the last case treated above is given by \eqref{normalform}.
Indeed, with $t_0=\floor{\log n/\log(1/\las)}$, the probability
that a given vertex has $|\Gamma_{t_0}(v)|=1$ is roughly $\las^{t_0}$, which is
significantly larger than $1/n$. On the other hand, typically no vertices
will have $|\Gamma_{t_0+1}(v)|=1$, or indeed $|\Gamma_{t_0+1}(v)|$ much smaller
than $\lambda$. So the diameter is likely to come from two of these `candidate' vertices
with $|\Gamma_{t_0}(v)|=1$.
Each of these has a unique vertex at distance $t_0$. Let us call such vertices {\em active}. Any given pair of active vertices will usually be at distance    $d=\ceil{\log n/\log\la}$ from each other.
However, there are usually many candidate vertices (at least $\las^{-\eps}$, which is roughly
$e^{\eps\la}$), and hence (not necessarily, but usually) about the same number of active vertices.  The expected number of paths of length $d$ joining two given active vertices is roughly $\la^d/n=\la^{1-f}$, so we might expect the probability that a given pair is at distance $d+1$
to be of order $\exp(-\la^{1-f})$, where $f$ is the fractional part of $\log n/\log\la$. 
The probability of no path of length $d+1$ is roughly $\exp(-\la^{2-f})$, which is
much smaller than the reciprocal of the number of pairs of candidate vertices.
So we expect the diameter to be $2t_0+d+1$ whp, as we have shown.
\end{remark}

\section{Just above the critical point}\label{sec_eps}

In this section we shall prove Theorem~\ref{th_eps},
which is the analogue of Theorem~\ref{th_const} for
$G(n,\la/n)$, where now $\la=1+\eps$ with $\eps=\eps(n)$ tending
to zero at a suitable rate.
Roughly speaking, we shall simply repeat
the arguments in Section~\ref{sec_const} more carefully; however,
there are many additional complications that we shall contend with
as we go.
As mentioned in the introduction,
we shall also prove a stronger result, describing the (normalized)
limiting distribution of the correction term; we postpone the somewhat
unpleasant statement of this result until Section~\ref{sec_dist}.

Throughout this section we write $\la$ for $1+\eps$, always assuming that $0<\eps<1/10$,
and often that $\eps=\eps(n)\to 0$. 
As before,
we write $\las$ for the unique solution $\las<1$ to $\las e^{-\las}=\la e^{-\la}$, so
\begin{equation}\label{lasasympt}
 \las = 1-\eps + \frac{2}{3}\eps^2 -\frac{4}{9}\eps^3 +O(\eps^4).
\end{equation}
Sometimes it will be convenient to note that
\begin{equation}\label{las1e}
 \las > 1-\eps
\end{equation}
for all $\eps>0$; this is easily seen using the fact that $\las e^{-\las}$ has positive derivative, and $(1-\eps) e^{-(1-\eps)}<\la e^{-\la}$.
As before we write
$s$ for the survival probability of the branching process $\bp_\la$, so (from \eqref{sdef}),
we have
\begin{equation}\label{sasympt}
 s = 2\eps +O(\eps^2).
\end{equation}
Note that as $\eps\to 0$ we have
\begin{equation}\label{lllas}
 \log(1/\las)\sim \eps \sim\log\la.
\end{equation}

The overall plan of the proof is as for the cases $\la$ constant and $\la\to\infty$.
We shall treat the second phase (regular growth) in \refSS{ss_meet} and the first phase, approximation by the branching process, in \refSS{ss_bptog}. To  be able to carry out the third phase, we still need to study
the distribution of the time the branching process takes to reach a large size.
We do this in \refSS{ss_bpslow},
and prove various other branching process lemmas we shall need in \refSS{ss_bp2}.
In \refSS{ss_2c} we consider the typical distances in the 2-core.
Finally, armed with all these results, we prove the lower bound on the diameter
in \refSS{ss_low}, and the upper bound in \refSS{ss_up}; this turns out
to be not as easy as one might expect, and
both proofs involve considerable re-examination of the first phase, the early growth of the neighbourhoods.

One complication
concerns the wedge condition used in Section~\ref{sec_const};
here this turns out to have probability $\Theta(\eps^3)$,
or $\Theta(\eps^2)$ if we condition on the vertex
being in the giant component. In Section~\ref{sec_const},
we used a much stronger `diamond' condition, that allowed
us to simply avoid dependence between the neighbourhoods
of the vertices we considered. Unfortunately, the diamond condition
corresponds roughly to two wedge conditions, and has
probability $\Theta(\eps^4)$ after conditioning
on being in the giant component. When $\eps\to 0$,
we cannot afford to give up a factor $\eps^2$ 
in the number of vertices we consider to develop neighbourhoods from in the third phase.

Except that \refSSs{ss_bpslow} and \ref{ss_bp2} belong together,
\refSSs{ss_meet} to \ref{ss_bp2} may be read in any order. We have
chosen the present order as the first two subsections are relatively
simple, and may be seen as motivating the extensive branching process analysis that follows.

Throughout we write $\La$ for $\eps^3 n$, and assume
that $\La\to\infty$.  
In particular, we allow ourselves
to assume that $\La$ is `sufficiently large' (i.e., larger than some implicit
constant) whenever this is convenient.
As noted in the introduction, in proving Theorem~\ref{th_eps} we may assume
that $\eps\to 0$; correspondingly, we shall assume without comment
that $\eps$ is `sufficiently small' whenever convenient.

In what follows we shall use standard results about the component
structure of $G(n,p)$ just above the phase transition; let us
recall these here. We write $C_i(G)$ for the number of vertices in the $i$th largest
component of a graph $G$.

\begin{theorem}\label{th_Gnp}
Let $\la=1+\eps$, where $\eps=\eps(n)>0$ satisfies $\eps\to 0$
and $\La=\eps^3 n\to\infty$, and let $s=s(\la)$ denote the survival
probability of $\bp_\la$.
Then
\begin{equation}\label{eq_above}
 C_1(G(n,\la/n)) = s n + \Op(\eps n/\sqrt{\La}),
\end{equation}
and
\[
 C_2(G(n,\la/n)) = \delta^{-1}\left(\log\La -\frac{5\log\log\La}{2} +\Op(1)\right),
\]
where
\[
 \delta = \la-1-\log\la =\eps^2/2 -\eps^3/3+O(\eps^4).
\]
\noproof
\end{theorem}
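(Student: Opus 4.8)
The plan is to assemble Theorem~\ref{th_Gnp} from tools already at hand --- the branching-process coupling of Lemma~\ref{cpl}, the duality principle, and known results for the subcritical model --- being a little careful with the (fairly precise) error terms; both statements are classical in spirit, so the content lies in the bookkeeping.

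For the first statement, the qualitative fact $C_1=sn+\littleop(n)$ is obtained in the usual way: explore the component of a fixed vertex $v$ and couple with $\bp_\la$ as in Lemma~\ref{cpl}; then $v$ lies in a `large' component (of order at least an auxiliary threshold $m$ with $1/\eps\ll m\ll n$) essentially iff the branching process survives, so the number $Z$ of vertices in large components satisfies $\E Z=(s+\littleop(1))n$, and a second-moment computation together with a routine sprinkling argument (any two components of order at least $m$ are joined by an edge whp, once $m$ is large enough) identifies the large components with a single giant, so $C_1=Z$ whp. The precise error term $\Op(\eps n/\sqrt\La)$ is, however, a central-limit-scale statement and cannot be read off from Lemma~\ref{cpl} alone: the giant has $\Theta(\eps n)\gg\sqrt n$ vertices, well beyond the range in which the branching-process approximation is accurate, so one must track the exploration walk past the branching-process regime, where the `finite-$n$' corrections matter. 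This is exactly the content of the exploration-walk (functional) central limit theorems of {\L}uczak~\cite{Lcyc} and Pittel and Wormald~\cite{PWio}, which show that $C_1-sn$ has, after rescaling, a Gaussian limit with standard deviation of order $\sqrt{s(1-s)n}/(1-\las)\asymp\sqrt{\eps n}/\eps=\eps n/\sqrt\La$; quoting this gives the first statement.

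For the second statement I would use duality. Conditioning on the giant, the graph induced on the $N:=n-C_1=(1-s+\littleop(1))n$ vertices outside it is distributed as $G(N,\la/n)=G\bb{N,(\las+\littleop(1))/N}$ conditioned on the typical event that it contains no unusually large component; to the accuracy needed this is simply the subcritical random graph $G(N,(1-\mu)/N)$ with $\mu:=1-\las=\eps+O(\eps^2)$. For a subcritical random graph with $\mu\to0$ and $\mu^3N\to\infty$ one has the standard estimate
\[
C_1\big(G(N,(1-\mu)/N)\big)=\frac1{I(\mu)}\Bb{\log(\mu^3 N)-\tfrac52\log\log(\mu^3N)+\Op(1)},\qquad I(\mu):=-\mu-\log(1-\mu),
\]
proved by a first- and second-moment argument on the number of tree components of each order: using the Borel distribution and Stirling, the expected number of components of order exactly $k$ is $(1+\littleop(1))\,Ne^{-I(\mu)k}/(\sqrt{2\pi}\,k^{5/2})$, so the expected number of order at least $k$ is of order $Ne^{-I(\mu)k}/(I(\mu)k^{5/2})$; setting this $\asymp1$ and iterating yields $I(\mu)C_1=\log(\mu^3N)-\tfrac52\log\log(\mu^3N)+O(1)$ (it is essential to keep $\log(I(\mu)C_1)$ rather than replacing it prematurely by $\log\log N$ --- this is precisely how the $\log(1/\mu)$ terms cancel), non-tree components being negligible because cycles are rare in the subcritical regime, and concentration coming from the second moment. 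It then remains only to translate: $\mu^3N=\eps^3n\,(1+O(\eps))=\La(1+\littleop(1))$, so $\log(\mu^3N)=\log\La+\littleop(1)$ and $\log\log(\mu^3N)=\log\log\La+\littleop(1)$, and crucially
\[
I(\mu)=\las-1-\log\las=\la-1-\log\la=\delta,
\]
the middle equality being immediate from $\la e^{-\la}=\las e^{-\las}$. This gives $C_2=\delta^{-1}\bb{\log\La-\tfrac52\log\log\La+\Op(1)}$, as claimed.

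The main obstacle is the precision of the error terms rather than the qualitative statements. For part~(i) this means reproducing or quoting the central-limit-scale analysis of the exploration process, since no truncated branching-process argument reaches the required accuracy. For part~(ii) the delicate points are making the duality rigorous --- controlling the effect on component-size distributions of conditioning on (equivalently, deleting) the giant --- and the bookkeeping of the $\log\log$ correction, which is sensitive both to the lower-order behaviour of $I(\mu)$ and to how fast $\La$ grows relative to $n$.
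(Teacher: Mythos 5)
The paper does not prove Theorem~\ref{th_Gnp}; it is stated with ``\noproof'' and attributed to {\L}uczak~\cite{Luczak_near} (with the $C_2$ formula corrected) and to Bollob\'as--Riordan~\cite{rgpb}, which gives a branching-process-based proof. So there is no in-paper proof to compare your sketch against, only the cited sources.

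That said, your outline is essentially the argument in~\cite{rgpb} and is sound. For~\eqref{eq_above} you correctly identify that the branching-process coupling of Lemma~\ref{cpl} alone cannot reach the precision $\Op(\eps n/\sqrt\La)$ (which is the CLT scale $\sqrt{n/\eps}$), and that one must appeal to the central-limit analysis of the exploration walk; your formula $\sqrt{s(1-s)n}/(1-\las)\asymp\sqrt{n/\eps}=\eps n/\sqrt\La$ is the right standard deviation. For $C_2$ the duality reduction to a subcritical $G(N,(1-\mu)/N)$ with $\mu=1-\las\sim\eps$, the first/second-moment tree count with exponential rate $I(\mu)=-\mu-\log(1-\mu)$, and the identity $I(\mu)=\las-1-\log\las=\la-1-\log\la=\delta$ coming from $\la e^{-\la}=\las e^{-\las}$ are all correct, as is the bookkeeping $\mu^3 N=\La(1+o(1))$ so $\log(\mu^3N)=\log\La+o(1)$. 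You are right that the genuinely delicate points are making the duality conditioning rigorous and the precision of the $\tfrac52\log\log$ correction; a full treatment of both is in~\cite{rgpb}, which is why the paper simply cites it. As a proof \emph{proposal}, what you have written is the correct roadmap and identifies the right sources for the hard steps; as a self-contained proof it would require filling in the CLT for the exploration walk and the careful conditioning-on-the-giant argument, both of which are substantial but standard.
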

This result, which extends results
of Bollob\'as~\cite{BB_evol,BB:RG1} by removing
a logarithmic lower bound on $\La$ from the conditions,
is essentially due to \L uczak~\cite{Luczak_near}. Note, however,
that the actual formula for $C_2$ given in~\cite{Luczak_near}
is incorrect; see the discussion in Bollob\'as and Riordan~\cite[Section 3.4]{rgpb},
where a proof of the above result based on branching processes is given.

Formally, by the {\em giant component} of $G=G(n,\la/n)$ we mean the component
$C_1$ with the most vertices (chosen according to any rule if there is a tie).
Recalling from~\eqn{sasympt} that $s\sim 2\eps$, under the conditions of Theorem~\ref{th_Gnp}
we have
\begin{equation}\label{C1}
 |C_1|=(2+\littleop(1))\eps n.
\end{equation}

\subsection{Large neighbourhoods and meeting in the middle}\label{ss_meet}

In this subsection we show that whp once the neighbourhoods of a
vertex become large, they grow at the expected rate until reaching
size $\sqrt{\eps n}\log \La$, say. Showing this is not quite as simple as proving Lemma~\ref{reg},
since when $\eps$ is small, even when the neighbourhoods are fairly large,
the expected increase in size from one step to the next may still be 
smaller than the standard deviation. Hence it may well happen that $\Gamma_t(x)$
is smaller than $\Gamma_{t-1}(x)$ for some $t$. However, this is unlikely
to happen for many consecutive $t$.

We shall start by proving a corresponding growth result for a
Galton--Watson branching process. It may well be that a similar
result exists in the literature, but we have not found it;
the key point is the dependence of the bounds on the parameters
of the branching process. The general theme here and throughout
this section is that the behaviour of the branching process
is only `regular' once it reaches sizes larger than $1/\eps$,
and that it is best seen on time scales on the order of $1/\eps$, the typical time required for a constant factor change in the size of a generation.

Given parameters $\mu=1+\eps$ and $n$,
consider a Galton--Watson branching process $(Z_t)_{t\ge 0}$
starting with a fixed number $N_0$ of particles, 
in which each particle has a binomial number
of children in the next generation, with parameters $n$ and $\mu/n$.
Let $N_t=|Z_t|$ denote the number of particles in generation $t$.
\begin{lemma}\label{bpgrow}
Let $0<\eps,\delta<1$ and $n$ be given, and define $(N_t)$ as above, with $\mu=1+\eps$.
Writing $\omega$ for $\eps N_0$, the probability that 
\begin{equation}\label{abc}
 \left(1-\delta\right) N_0\mu^t \le N_t \le
  \left(1+\delta\right) N_0\mu^t
\end{equation}
holds for all $t\ge 0$ is at least $1-O(e^{-c_0\delta^2\omega})$,
where $c_0>0$ is an absolute constant, and the implicit constant in
the $O(\cdot)$ notation is absolute.
\end{lemma}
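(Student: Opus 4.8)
The plan is to control the Doob martingale $M_t=N_t/\mu^t$ and show that it stays inside $[(1-\delta)N_0,(1+\delta)N_0]$ for all $t$ with the stated probability. The scale $\omega=\eps N_0$ is forced on us because the total predictable quadratic variation of $M$ is of order $N_0/\eps$, and the lemma is precisely the statement that the target half-width $\delta N_0$ beats this noise by a factor $\gtrsim\delta^2\omega$.

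First I would record the elementary inputs. Writing $\mathcal F_t=\sigma(N_0,\dots,N_t)$, conditional on $\mathcal F_t$ the next generation $N_{t+1}$ is distributed as $\Bi(nN_t,\mu/n)$; hence $\E[N_{t+1}\mid\mathcal F_t]=\mu N_t$, so $M_t$ is a martingale, and from $(1+p(e^\theta-1))^m\le\exp(mp(e^\theta-1))$ we get the conditional cumulant bound $\log\E[e^{\theta(N_{t+1}-\mu N_t)}\mid\mathcal F_t]\le\mu N_t(e^\theta-1-\theta)\le C\mu N_t\theta^2$ for $|\theta|\le1$, with $C$ an absolute constant. Dividing the increment by $\mu^{t+1}$, this says $\log\E[e^{\lambda(M_{t+1}-M_t)}\mid\mathcal F_t]\le\psi_t(\lambda):=C\lambda^2\mu^{-2t-1}N_t$ whenever $|\lambda|\le\mu^{t+1}$, in particular for all $|\lambda|\le1$.

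Next I introduce the stopping time $\tau$, the first $t\ge1$ at which \eqref{abc} fails; the lemma is the assertion $\Pr(\tau<\infty)=O(e^{-c_0\delta^2\omega})$, and we may assume $\delta^2\omega$ is large, as otherwise there is nothing to prove. The crucial observation is that up to time $\tau$ the martingale is forced to be regular: for $s<\tau$ we have $N_s<2N_0\mu^s$ by \eqref{abc}, so $\psi_s(\lambda)\le2C\lambda^2N_0\mu^{-s-1}$, and therefore $\sum_{s<\tau}\psi_s(\lambda)\le2C\lambda^2N_0/(\mu-1)=2C\lambda^2N_0/\eps$ — a \emph{deterministic} bound, and the only place where the factor $1/\eps$, and hence the scale $\omega$, enters. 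Using the defining property of $\psi_t$, one checks in the usual way that $S_t:=\exp\bigl(\lambda M_{t\wedge\tau}-\sum_{s<t\wedge\tau}\psi_s(\lambda)\bigr)$ is a nonnegative supermartingale with $\E S_0=e^{\lambda N_0}$ for any $|\lambda|\le1$. If $\tau<\infty$ then at time $\tau$ either $M_\tau>(1+\delta)N_0$ or $M_\tau<(1-\delta)N_0$. In the first case Doob's maximal inequality applied to $S$ gives a bound $\exp(-\lambda\delta N_0+2C\lambda^2N_0/\eps)$ valid for every $\lambda\in(0,1]$; the choice $\lambda=\delta\eps/(4C)$ makes this $\exp(-\delta^2\omega/(8C))$, and the second case is identical with $-\lambda$ in place of $\lambda$. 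Summing the two contributions gives $\Pr(\tau<\infty)\le2e^{-c_0\delta^2\omega}$ with $c_0=1/(8C)$, which is the lemma.

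The main obstacle — and the reason a naive step-by-step union bound via Lemma~\ref{Chern} fails — is that when $\mu=1+\eps$ is close to $1$ the per-generation relative fluctuations cannot be taken to decay geometrically in $t$, since the generation sizes change only by a constant factor over $\Theta(1/\eps)$ steps; so the variances of \emph{all} generations have to be pooled, and the martingale $M_t$ does this automatically, while the truncation at $\tau$ keeps its predictable quadratic variation deterministically bounded. The one technical wrinkle is that the increments of $M$ are not bounded, only conditionally sub-exponential with an $O(1)$ scale uniformly in $t$ (this is why the cumulant bound above is restricted to $|\lambda|\le1$), so the argument must use the exponential-supermartingale (Bernstein/Freedman-type) form rather than Azuma's inequality; one just has to check, as above, that at the optimal $\lambda$ the correction term $2C\lambda^2N_0/\eps$ is comparable to, but does not swamp, the drift term $\lambda\delta N_0$, so that the net exponent is still $\Theta(\delta^2\omega)$.
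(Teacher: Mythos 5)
Your proof is correct, and it takes a genuinely different route from the paper. The paper writes $N_t = N_0 + B_{S_t} - S_t$ with $B_i = \sum_{j\le i}A_j$ a sum of iid binomials and $S_t = \sum_{s<t}N_s$, then introduces a barrier function $\delta_t$ growing from $0$ to $\delta$ and shows that a first barrier crossing at time $t$ forces $B_{S_t}-\E B_{S_t}$ to exceed a deterministic threshold $g(S_t)$; it then controls $\Pr\bigl(\bigcup_i\{B_i-\E B_i\ge g(i)\}\bigr)$ via a doubling trick (the events $G_k$ and $G_k'$) so that the union bound degrades only to a geometric sum of Chernoff bounds. You instead work directly with the scaled martingale $M_t=N_t/\mu^t$, bound its conditional cumulant generating function using the binomial MGF, stop the process at the first failure time $\tau$, and observe that prior to $\tau$ the predictable quadratic-variation-type term $\sum_{s<\tau}\psi_s(\lambda)$ is deterministically $O(\lambda^2N_0/\eps)$; then $\exp(\lambda M_{t\wedge\tau}-\sum_{s<t\wedge\tau}\psi_s(\lambda))$ is a nonnegative supermartingale and Ville's inequality, with $\lambda=\Theta(\delta\eps)$, gives the $e^{-\Omega(\delta^2\omega)}$ bound (the $\lambda<0$ variant handles the lower tail). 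Both arguments pool the per-generation variance over $\Theta(1/\eps)$ generations, which is where $\omega=\eps N_0$ appears; the essential difference is that the paper reduces to a deterministic-time random walk and avoids stopping times, whereas you invoke a Freedman/Bernstein-type supermartingale inequality which absorbs the stopping time directly. Your approach is noticeably more compact; the paper's is more self-contained in that it uses only the elementary Chernoff bound (Lemma~\ref{Chern}) rather than the exponential supermartingale machinery.

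One small point worth being precise about: the inequality you invoke, $\Pr(\sup_t S_t\ge a)\le \E S_0/a$ for a nonnegative supermartingale, is Ville's inequality (or the supermartingale maximal inequality), not Doob's submartingale maximal inequality, though the slip is harmless. Also, strictly one should note $\tau\ge 1$ always (the bound at $t=0$ is trivial), and that $S_t$ is frozen at $S_\tau$ for $t\ge\tau$ so that $\sup_t S_t\ge S_\tau$, which you implicitly use; both are easy to check.
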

\begin{proof}
We may and shall assume that $\delta^2\omega\ge 100$, say;
otherwise, there is nothing to prove.

We may construct $(Z_t)$ in small steps in the following
standard way: let $A_1,A_2,\ldots$
be independent binomial $\Bi(n,\mu/n)$ random variables.
As we construct the process, we number the particles in order
of the time they are born; we start by numbering the particles
of $Z_0$ with $1,2,\ldots,N_0$ in any order. To define $(Z_t)$,
simply take $A_i$ to be the number of children of the $i$th particle.
Writing $S_t$ for $\sum_{t'< t} N_{t'}$, we then have
\begin{equation}\label{Nt}
 N_t = N_0 + \sum_{i\le S_t} (A_i-1) = N_0+B_{S_t}-S_t,
\end{equation}
where $B_i=\sum_{j\le i}A_j$.

For $t\ge -1/\eps$ set
\[
 \delta_t = \frac{\eps\delta}{8}\int_{s=-1/\eps}^t \mu^{-s/4} \dd s,
\]
and set $\delta_t=0$ if $t<-1/\eps$.
Note that $\delta_t$ is an increasing function of $t$, with
\[
 0\le \delta_t \le \frac{\eps\delta}{8} \frac{4}{\log\mu} \mu^{1/(4\eps)} \le
  \delta,
\]
using $(1+\eps)^{1/(4\eps)}<e^{\eps/(4\eps)}=e^{1/4}$ and $\eps/\log\mu = \eps/\log(1+\eps)\le
1/\log 2$.

The key property of $\delta_t$ is that if $t\ge0$ and $r=t-1/\eps$,
then
\begin{equation}\label{tr}
 \delta_t - \delta_r \ge (t-r) \frac{\eps\delta}{8} \mu^{-t/4}
 = \delta \mu^{-t/4}/8.
\end{equation}
For $t\ge 1$, let $E_t$ be the event that $N_t> (1+\delta_t)N_0\mu^t$ holds
but $N_s\le (1+\delta_s)N_0\mu^s$ for all $0\le s<t$. 
Suppose that the upper bound in \eqref{abc} fails for some $t$.
Then $N_t>(1+\delta_t)N_0\mu^t$ for this $t$, and it follows that
one of the events $E_t$ holds. 

Suppose that $E_t$ holds for some $t\ge 0$.
Set $r=t-1/\eps$, and, for convenience, set
$N_s=\mu^sN_0$ for all negative integers $s$, so $\sum_{s<0}N_s=N_0/\eps$.
Then, with all sums starting at $-\infty$ unless otherwise indicated,
\begin{eqnarray*}
 S_t+N_0/\eps &=& \sum_{s<t} N_s \le \sum_{s<t} (1+\delta_s)\mu^s N_0  \\
 &\le& \sum_{s<r} (1+\delta_r) \mu^s N_0 + \sum_{r\le s<t} (1+\delta_t) \mu^s N_0\\
 &=& \sum_{s<t} (1+\delta_t)\mu^s N_0 -
  \sum_{s<r}(\delta_t-\delta_r) \mu^s N_0 \\
 &=& \frac{N_0}{\eps}\left( (1+\delta_t)\mu^t - (\delta_t-\delta_r)\mu^{\ceil{r}}\right) \\ 
 &<& \frac{N_0}{\eps}\left( (1+\delta_t)\mu^t - (\delta_t-\delta_r)\frac{\mu^t}{4} \right),
 \end{eqnarray*}
since $\mu^{t-\ceil{r}}=(1+\eps)^{\floor{1/\eps}}\le (1+\eps)^{1/\eps}<e<4$.

For each fixed $i$, let $f(i)=(1+\eps)i$ denote the expectation of $B_i=\sum_{j=1}^i A_j$.
{From} the above, we have
\[
 f(S_t)-S_t +N_0 =\eps S_t +N_0= \eps (S_t+N_0/\eps) \le  N_0(1+\delta_t) \mu^t
 -N_0 (\delta_t-\delta_r) \mu^t/4.
\]
On the other hand, since $E_t$ holds we have
$N_t> (1+\delta_t) \mu^t N_0$, so from \eqref{Nt} it follows that
\[
 B_{S_t} -S_t+N_0 = N_t > (1+\delta_t)\mu^t N_0.
\]
Combining the two equations above, using \eqref{tr}, and recalling that $N_0=\omega/\eps$,
we see that
\begin{equation}\label{dev}
 B_{S_t} - f(S_t) >  N_0 (\delta_t-\delta_r) \mu^t/4 \ge 
 N_0 (\delta \mu^{-t/4}/8) \mu^t/4 = \delta\omega\eps^{-1}\mu^{3t/4}/32.
\end{equation}
On the other hand, from the bound on $S_t+N_0/\eps$ above we have, very crudely,
\begin{equation}\label{stup}
 S_t \le \frac{N_0}{\eps} (1+\delta_t) \mu^t \le 2\omega\eps^{-2}\mu^t.
\end{equation}
{From} \eqref{dev} and \eqref{stup} it follows that $B_{S_t}-f(S_t)\ge g(S_t)$, where
\[
 g(i) = \max\left\{\delta\omega\eps^{-1}/32,i^{3/4}\delta\omega^{1/4}\eps^{1/2}/60\right\}.
\]
Let $F_i$ be the event that $B_i-f(i) = B_i-\E B_i\ge g(i)$.
We have shown that if one of the events $E_t$ holds, then so does
one of the events $F_i$. At this point we could simply bound
the probability of the union of the $F_i$ by the sum of their probabilities,
but as they are highly dependent, this is rather inefficient.

Let $T=\ceil{\omega/\eps^2}$, noting that $T\ge 100$ (when $\eps$ is
small enough)
and $T< 2\omega/\eps^2$.
For $k=0,1,2,\ldots$, let $G_k$ be the event $\bigcup_{kT<i\le (k+1)T} F_i$,
so
\[
 \Pr\Bigl(\bigcup_{t\ge 1} E_t\Bigr) \le  \Pr\Bigl(\bigcup_{i\ge 1} F_i\Bigr) = 
 \Pr\Bigl(\bigcup_{k\ge 0} G_k\Bigr) \le \sum_{k=0}^\infty \Pr(G_k).
\]
Finally, let $G_k'$ be the event that $B_{(k+2)T}-\E B_{(k+2)T}\ge g(kT)$.
Let us estimate $\Pr(G_k'\mid G_k)$. We test whether $G_k$ holds by examining
each $B_i$ in turn, stopping at the first $i>kT$ for which $F_i$ holds.
Suppose $G_k$ does hold, and that we stop at $i=i'$, so
$kT<i'\le (k+1)T$. Recalling that $B_i=\sum_{j\le i}A_j$, where the $A_j$ are independent
with distribution $\Bi(n,\mu/n)$, we have not yet examined any $A_j$, $j>i'$.
Hence the conditional distribution of $\Delta =B_{(k+2)T}-B_{i'}=\sum_{i'<j\le (k+2)T}A_j$
is just its unconditional distribution, which is binomial with mean $(1+\eps)((k+2)T-i')\ge T\ge 100$.
It is easy to check (for example from the Berry--Ess{\'e}en Theorem) that 
this binomial distribution is well approximated by a normal distribution, and in particular, $\Delta$ exceeds its mean 
with probability at least $1/3$.
But when this happens,
\[
 B_{(k+2)T}-\E B_{(k+2)T} = B_{i'}-\E B_{i'} + \Delta-\E\Delta \ge B_{i'}-\E B_{i'} \ge g(i') \ge g(kT),
\]
since we are assuming $F_{i'}$ holds, and $g(\cdot)$ is non-decreasing.
Thus, given $G_k$, the event $G_k'$ holds with probability at least $1/3$.
Hence $\Pr(G_k')\ge \Pr(G_k)/3$, so $\Pr(G_k)\le 3\Pr(G_k')$.

Now $G_0'$ is the event that $B_{2T}$, a variable with binomial distribution with mean
$\mu_0=(1+\eps)2T \le 4T\le 8\omega\eps^{-2}$,
exceeds its mean by at least $x_0=\delta\omega\eps^{-1}/32$.
Since $x_0\le \mu_0$, Lemma~\ref{Chern} applies, and we see that
$\Pr(G_0')\le 2\exp(-x_0^2/(3\mu_0))\le 2\exp(-\delta^2\omega/24576)$.

For $k\ge 1$, $G_k'$ is the event that $B_{(k+2)T}$, which has a binomial
distribution with mean $\mu_k=(1+\eps)(k+2)T\le 12k\omega\eps^{-2}$,
exceeds
its mean by $x_k=g(kT)\ge g(k\omega\eps^{-2})\ge k^{3/4}\omega\eps^{-1}\delta/60$.
Since $x_k\le \mu_k$, by Lemma~\ref{Chern} we have $\Pr(G_k')\le 2\exp(-c_0k^{1/2}\delta^2\omega)$
for some absolute constant $c_0>0$.
Hence, reducing $c_0$ if necessary,
\[
 \Pr\Bigl(\bigcup_t E_t\Bigr) \le 3\sum_k\Pr(G_k') \le 2e^{-c_0\delta^2\omega} 
  + \sum_{k\ge 1} 2e^{-c_0k^{1/2}\delta^2\omega} = O\bb{e^{-c_0\delta^2\omega}},
\]
recalling that $\delta^2\omega\ge 100$.
As noted above, if the upper bound in \eqref{abc} fails,
then some $E_t$ holds, so we have proved that the upper
bound holds with the required probability.

The argument for the lower bound is almost identical. 
Let $E_t'$ be the event that $N_t< (1-\delta_t)N_0\mu^t$ holds
but $N_s\ge (1-\delta_s)N_0\mu^s$ for all $s<t$.
Changing signs in the argument above, we see that if $E_t'$ holds
then the equivalent of \eqref{dev} holds, namely
\begin{equation}\label{devn}
 B_{S_t} - f(S_t) \le - \delta\omega\eps^{-1}\mu^{3t/4}/32.
\end{equation}
The proof of~\eqref{stup} only assumed $N_s\le (1+\delta_s)N_0\mu^s$ for $s<t$,
which we now know to be true with the required probability.
If \eqref{stup} does hold, then \eqref{devn}
implies that $B_i -f(i)\le -g(i)$ holds for some $i$.
We may bound the probability of this event just as for $F_i$ above,
completing the proof.
\end{proof}

Turning to the graph, Lemma~\ref{bpgrow} enables us to prove
the required growth result.
Our
choice of the parameters here is somewhat arbitrary, but will be useful later. Recall that $V(G)$ denotes the vertex set of a graph $G$, and $\Gamma_r(x)$ the set of
vertices at graph distance $r$ from a vertex $x$.
\begin{lemma}\label{ggrow}
Let $\eps=\eps(n)\le 1$ satisfy $\La=\eps^3 n\to\infty$.
Set $\la=1+\eps$, $\omega=\La^{1/6}$, and
$M=\sqrt{\omega \eps n}$.
For $x\in V(G(n,\la/n))$
and $r\ge 0$, let $E_{x,r}$ be the event that
\[
 (1-2\omega^{-1/3})\la^t|\Gamma_r(x)|
 \le |\Gamma_{r+t}(x)|
 \le   (1+\omega^{-1/3})\la^t|\Gamma_r(x)|
\]
holds for $0\le t\le T=\log(\eps M/\omega)/\log \la$.
Then, for some absolute constant $c_0$,
\[
 \Pr\bb{ E_{x,r} \,\big|\, |\Ga_0(x)|,\ldots,|\Ga_r(x)|} \ge 1-O\bb{\exp(-c_0\omega^{1/3})}
 = 1-o(\La^{-100})
\]
whenever $\omega/\eps \le |\Ga_r(x)|\le 2\omega/\eps$ and
$\sum_{r'\le r}|\Ga_r(x)|\le n^{2/3}$.
\end{lemma}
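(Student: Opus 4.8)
The plan is to run the breadth-first exploration of $\Ga_{r+t}(x)$ for $t\ge 0$ and to sandwich the resulting size process between two Galton--Watson processes to which Lemma~\ref{bpgrow} applies. Write $a_t=|\Ga_{r+t}(x)|$ and $m_t=|V(\Gl{r+t}(x))|=\sum_{r'\le r+t}|\Ga_{r'}(x)|$, so the hypotheses read $a_0\in[\omega/\eps,2\omega/\eps]$ and $m_0\le n^{2/3}$, with $m_0$ a function of $|\Ga_0(x)|,\dots,|\Ga_r(x)|$. First I would record the elementary exploration fact: conditionally on everything revealed up to the discovery of $\Ga_{r+t}(x)$ — in particular on $a_t$ and on the set $V(\Gl{r+t}(x))$ of the $m_t$ reached vertices — no edge from $\Ga_{r+t}(x)$ to an unreached vertex has yet been tested, so each of the $n-m_t$ unreached vertices lies in $\Ga_{r+t+1}(x)$ independently with probability $p_t=1-(1-\la/n)^{a_t}$; hence $(a_t)_{t\ge0}$ is a Markov chain whose one-step law depends only on $(a_t,\ m_0+\sum_{1\le s\le t}a_s)$, and it suffices to bound $\Pr(E_{x,r})$ for each admissible $(a_0,m_0)$. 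Throughout I would use the numerology $\eps M/\omega=\La^{5/12}$ (immediate from $\omega=\La^{1/6}$, $M=\sqrt{\omega\eps n}$, $\La=\eps^3n$), so $\la^T=\La^{5/12}$ and $T=\tfrac{5}{12}\log\La/\log\la\le\log\La/(2\eps)$; together with $\eps n^{1/3}=\La^{1/3}$ this gives $Tn^{-1/3}\le\log\La/(2\La^{1/3})$, which for $\La$ large is at most $\tfrac14\omega^{-1/3}$. This last estimate is what makes the argument work for every $\La\to\infty$.

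For the upper bound I would couple $(a_t)$ with a Galton--Watson process $(N^+_t)$ started from $a_0$ particles with independent $\Bi(n,\la/n)$ offspring: since $1-(1-\la/n)^{a_t}\le a_t\la/n$, giving particle $i$ a $\Bi(n,\la/n)$ child-count built, for $i\le a_t$, from the indicators of edges from the $i$th frontier vertex to unreached vertices together with $m_t$ fresh $\Bi(1,\la/n)$ coins, and using that each unreached vertex is reached at most once, realises both on one space with $a_t\le N^+_t$ for all $t$. Lemma~\ref{bpgrow} with $\mu=\la$, $N_0=a_0$, $\delta=\omega^{-1/3}$ (so $\delta^2\eps N_0\ge\omega^{1/3}$) then gives, with probability $1-O(\exp(-c_0\omega^{1/3}))$, that $a_t\le N^+_t\le(1+\omega^{-1/3})a_0\la^t$ for all $t\ge0$ — the upper inequality in $E_{x,r}$. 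Call this event $\mathcal U$, and let $\tau=\min\{t:a_t>(1+\omega^{-1/3})a_0\la^t\}$, so $\mathcal U=\{\tau=\infty\}$. On $\{t<\tau\}\cap\{t\le T\}$, from $a_0\le2\omega/\eps$ and $\la^t\le\la^T=\eps M/\omega$ we get $a_t\le4M$, hence $m_t\le m_0+\sum_{1\le s\le t}2a_0\la^s\le n^{2/3}+8M/\eps\le2n^{2/3}$, using $M/\eps=\La^{-1/12}n^{2/3}=o(n^{2/3})$.

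For the lower bound I would set $n'=n-3n^{2/3}$ and $\mu^-=\la(1-4n^{-1/3})=1+\eps'$ with $\eps'\sim\eps$; by the previous paragraph, on $\{t<\tau\}\cap\{t\le T\}$ the one-step conditional mean per particle is $(n-m_t)p_t/a_t\ge\la(1-m_t/n)(1-(a_t-1)\la/(2n))\ge\la(1-3n^{-1/3})\ge\mu^-$. I would couple $(a_t)$ from below with a Galton--Watson process $(N^-_t)$, $N^-_0=a_0$, with offspring law $\Bi(n',\mu^-/n')$; Lemma~\ref{bpgrow} and its proof apply verbatim to this offspring law (replace $\mu,\eps,n$ there by $\mu^-,\eps',n'$), its ``$\omega$'' being $\eps'a_0\ge\omega/2$. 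The coupling: process the $N^-_t\ (\le a_t)$ particles in order, match particle $j$ to a distinct $v_j\in\Ga_{r+t}(x)$, reveal the untested edges from $v_j$ to the first $n'$ not-yet-claimed unreached vertices (at least $n'$ remain, since $n-m_t-4M\ge n-3n^{2/3}=n'$), and independently keep each discovered neighbour with probability $(\mu^-n)/(\la n')\le1$; the kept vertices are the children of $j$, are declared claimed, lie in $\Ga_{r+t+1}(x)$, are disjoint across $j$, and have exactly the $\Bi(n',\mu^-/n')$ law independently, so $N^-_{t+1}\le a_{t+1}$. For $t\ge\tau$ let $(N^-_t)$ evolve autonomously, so $(N^-_t)_{t\ge0}$ is a genuine Galton--Watson process and $a_t\ge N^-_t$ for $0\le t\le T$ on $\mathcal U$. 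Lemma~\ref{bpgrow} now gives $N^-_t\ge(1-\omega^{-1/3})a_0(\mu^-)^t$ for all $t$ with probability $1-O(\exp(-c_0'\omega^{1/3}))$; and $(\mu^-)^t\ge\la^t(1-4n^{-1/3})^t\ge\la^t(1-4Tn^{-1/3})\ge\la^t(1-\omega^{-1/3})$ for $0\le t\le T$, so on the intersection with $\mathcal U$ we obtain $a_t\ge(1-\omega^{-1/3})^2a_0\la^t\ge(1-2\omega^{-1/3})a_0\la^t$ for $0\le t\le T$. A union bound over the two failure events gives $\Pr(E_{x,r}\mid a_0,m_0)\ge1-O(\exp(-c_0''\omega^{1/3}))$ uniformly in admissible $(a_0,m_0)$, hence the claimed conditional bound; and since $\omega^{1/3}=\La^{1/18}$ this is $1-o(\La^{-100})$.

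The step I expect to be the main obstacle is the estimate $Tn^{-1/3}=o(\omega^{-1/3})$. The vertex-depletion loss at step $t$ is of order $m_t/n$, which does not decay and can be as large as $n^{2/3}/n=n^{-1/3}$ (the hypothesis permits $m_0$ up to $n^{2/3}$), so the cumulative multiplicative drift over the $T$ steps is of order $Tn^{-1/3}$ and must be absorbed into the gap between the $(1+\omega^{-1/3})$ and $(1-2\omega^{-1/3})$ factors. This works only because $\eps M/\omega=\La^{5/12}$ makes $T$ of order $\log\La/\eps$ rather than $\log n/\eps$, so that $Tn^{-1/3}=O(\log\La/\La^{1/3})=O(\La^{-1/18})$ once $\La$ is large. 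A lesser nuisance is keeping the lower coupling honest under depletion and under non-monotone early growth; the latter costs nothing, since Lemma~\ref{bpgrow} was built precisely to tolerate generations that temporarily shrink.
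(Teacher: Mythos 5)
Your proof is correct and takes essentially the same route as the paper: dominate the neighbourhood growth from above by a $\Bi(n,\la/n)$ Galton--Watson process and from below by a $\Bi(n',\cdot)$ process with $n'=n-O(n^{2/3})$, apply Lemma~\ref{bpgrow} to both, and absorb the cumulative depletion loss of order $Tn^{-1/3}$ into the gap between $(1+\omega^{-1/3})$ and $(1-2\omega^{-1/3})$ via the key numerology $T=O(\eps^{-1}\log\La)$ and $n^{-1/3}=\eps/\La^{1/3}$. The only differences are cosmetic: you spell out the two couplings in more detail (an extra thinning step to land exactly on $\Bi(n',\mu^-/n')$, rather than just reading $\Bi(n',\la/n)$ as $\Bi(n',\mu/n')$ with $\mu=\la n'/n$ as the paper does), and your intermediate constant in $T\le\log\La/(2\eps)$ needs $\eps\le 1/3$ rather than $\eps\le 1$, but this affects only absolute constants and not the conclusion.
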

In other words, once we reach size $\omega/\eps$ in the neighbourhood
exploration, provided we have not so far used up too many vertices,
the neighbourhoods grow at the expected rate until they reach
size approximately $M$. Note that if $\La=\eps^3n\ge (\log n)^{20}$, then the error term in the form
$O(\exp(-c_0\omega^{1/3}))$ is $o(n^{-100})$, i.e., utterly negligible.

\begin{proof}
Condition on the result of the exploration up to step $r$, assuming
that we find between $\omega/\eps$ and $2\omega/\eps$
vertices in the last generation
and have seen at most $n^{2/3}$ vertices so far.
Let $N_t'=|\Ga_{r+t}(x)|$. The (conditional) distribution
of the process $(N_t')_{t\ge 0}$ is very similar to that of $(N_t)$:
the only difference is that each vertex gives rise to a binomial
$\Bi(m,\la/n)$ number of children in the next generation,
where $m$ is the number of vertices not seen so far.

For the upper bound on the neighbourhood sizes, we simply note
that $m\le n$, so $(N_t')$ is stochastically dominated by $(N_t)$.
The result thus follows immediately from Lemma~\ref{bpgrow}.

For the lower bound, set $n'=n-2n^{2/3}$. Note that if the
upper bound holds, which it does with probability $1-O(\exp(-c_0\omega^{1/3}))$,
then by time $T$ we have used at most $n^{2/3}+10M/\eps\le 2n^{2/3}$
vertices, so we still have at least $n'$ left.
For times $t$ by which we have used up at most $2n^{2/3}$ vertices, the process
$(N_t')$ stochastically dominates a process $(N_t'')$ in which each
particle has $\Bi(n',\la/n)$ children. This binomial
has mean $\mu=\la n'/n=(1+\eps)(1-2n^{-1/3})$. Applying Lemma~\ref{bpgrow} again,
it follows that with probability $1-O(\exp(-c_0\omega^{1/3}))$ we have
$
 |\Gamma_{r+t}(x)|\ge (1-\omega^{-1/3})\mu^t|\Gamma_r(x)|
$
for $0\le t\le T$.
Since $T=\log(\eps^{3/2}n^{1/2}\omega^{-1/2})/\log\la \le \log(\La^{1/2})/(\eps/2) = \eps^{-1}\log\La
\le n^{1/3}/\omega$, we have
$\mu^t/\la^t \ge 1-3/\omega$ for $t\le T$, so the lower bound
follows.
\end{proof}

\begin{remark}\label{Rgt}
Let us note that, while the various constants can certainly be improved, 
Lemmas~\ref{bpgrow} and~\ref{ggrow} are tight in several ways. Firstly, since the survival
probability of the branching process $\bp_\la=(X_t)$ is of order $\eps$,
if we start from a neighbourhood $\Ga_r(x)$ of size $a/\eps$,
the neighbourhood exploration process will die quickly with
probability $e^{-\Theta(a)}$. Hence, in order to make it very likely
that the neighbourhoods grow at the right rate, we certainly
need $|\Ga_r(x)|$ to be much larger than $1/\eps$.
In other words, neighbourhoods are only `large' over size $\omega/\eps$,
for some $\omega\to\infty$.
Similarly, it can be seen that the form $\exp(-\Omega(\delta^2\omega))$
of the error bound in Lemma~\ref{bpgrow} is best possible.

Finally, when $\eps$ is close to the lower end of the range we
consider, we cannot extend Lemma~\ref{ggrow} to growth much
beyond size $\sqrt{\eps n}$; shortly beyond this point,
the number of vertices `used up' --- which is larger than $\sqrt{n/\eps}=\sqrt{\eps n}/\eps$ since about $1/\eps$ generations are roughly the same size $\sqrt{\eps n}$ --- is sufficient to slow the growth
appreciably. Fortunately, neighbourhoods of two different vertices
are likely to join up when they have size around $\sqrt{\eps n}$,
as we shall now see. The basic explanation for this is that the probability of the $\sqrt{n/\eps}$ vertices seen near one vertex being distinct from the $\sqrt{n\eps}$ at a given distance from the other vertex becomes small. (It is misleading to consider separately each of the  $\sqrt{n/\eps}$ vertices `near' one vertex being distinct from  $\sqrt{n/\eps}$ vertices `near' the other, since these events do not have the required independence.) The fact that neighbourhoods typically join up when they
each have size $\sqrt{\eps n}$ explains one factor of $\eps$ 
in the first log in \eqref{deps}. 

\end{remark}
 
For $x\in V(G)$ and $a>0$, let $t_a(x)$ denote the smallest
$r$ for which $|\Ga_r(x)|\ge a$, if such an $r$ exists;
otherwise $t_a(x)$ is undefined.
The following simple lemma captures
the observation that, for large $a$, we are unlikely to `overshoot' our
cutoff $a$ by too much.
\begin{lemma}\label{nojg}
Let $\la=1+\eps$ and fix an integer $a$, a vertex $x$ and $\delta>0$. Given that $t_a(x)$ is defined, the probability that $|\Ga_{t_a(x)}(x)|$ exceeds $(1+\delta)(1+\eps)a$
is $e^{-\Omega(\delta^2 a)}$, where the implicit constant is absolute.
\end{lemma}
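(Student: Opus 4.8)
The plan is to reduce the assertion to a large‑deviation estimate for the binomial distribution, applied at the last generation before the neighbourhoods reach size $a$. Write $\tau=t_a(x)$. The claim is vacuous when $a=1$ (then $\tau=0$ and $|\Ga_0(x)|=1\le(1+\delta)\la$), so assume $a\ge2$, whence $\tau\ge1$. Explore the neighbourhoods of $x$ in the usual breadth‑first way and condition on the result of this exploration out to generation $\tau-1$; then $b:=|\Ga_{\tau-1}(x)|\le a-1$ and the number $m$ of as‑yet‑unexplored vertices (necessarily $\ge a$) are determined. Each unexplored vertex lies in $\Ga_\tau(x)$ independently with probability $q=1-(1-\la/n)^b\le b\la/n$, so conditional on this exploration the law of $|\Ga_\tau(x)|$ is that of $W$ restricted to $\{W\ge a\}$ (the restriction because $\tau$ is the first time the size reaches $a$), where $W\sim\Bi(m,q)$ has mean $\mu=mq\le b\la\le(a-1)\la$. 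Taking expectations over the exploration, it therefore suffices to prove
\[
 \Pr\bb{W>(1+\delta)\la a\mid W\ge a}=e^{-\Omega(\delta^2 a)}
\]
uniformly over all $m\ge a$ and all $q\in(0,1)$ with $mq\le(a-1)\la$.

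Write $L=(1+\delta)\la a$. Then $L-\mu\ge(1+\delta)\la a-(a-1)\la=\la(\delta a+1)\ge\delta a$, so $\mu<L$ and $(L-\mu)/\mu\ge\delta$. By a standard Chernoff bound, $\Pr(W>L)\le\Pr(W\ge L)\le e^{-\mu\,h((L-\mu)/\mu)}$, where $h(t)=(1+t)\log(1+t)-t\ge\tfrac13\min(t,t^2)$ for $t\ge 0$; since $\mu\cdot(L-\mu)/\mu=L-\mu\ge\delta a$ and $(L-\mu)/\mu\ge\delta$, this gives $\Pr(W>L)\le e^{-\delta^2a/3}$. For the denominator, if $\mu\ge a$ then $\Pr(W\ge a)\ge\Pr(W\ge\floor{\mu})\ge\tfrac12$ since the median of a binomial is within $1$ of its mean, and we are done. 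If $\mu<a$ then $a$ lies above the mean; here a crude bound on $\Pr(W\ge a)$ will not do, and the key point is to estimate $\Pr(W\ge a)$ from below by a local (Stirling / local‑central‑limit) computation, which combined with a matching refined upper bound for $\Pr(W>L)$ produces a ratio controlled by $e^{-(D(L)-D(a))}$ up to a benign polynomial factor, where $D(x)=x\log(x/\mu)-x+\mu$ and $D(L)-D(a)=\int_a^L\log(x/\mu)\dd x\ge\int_a^L\log(x/a)\dd x=a\,h((L/a)-1)\ge a\,h(\delta)\ge\tfrac13\delta^2a$ (using $\mu\le a$). This yields the bound $e^{-\Omega(\delta^2a)}$ whenever $\delta^2a$ is at least a constant multiple of $\log a$.

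Finally one must dispose of the range $\delta^2a=O(\log a)$, where the estimate above may be vacuous; since the asserted bound carries essentially no content once $\delta^2a$ is bounded, it is enough to show $\Pr(W>L\mid W\ge a)\le1-\Omega(\delta^2a)$ in this range. This follows by combining the refined local estimate of the previous paragraph with the elementary inequality (valid when $\mu<a$, the ratios $\Pr(W=k+1)/\Pr(W=k)$ being decreasing in $k$)
\[
 \Pr(W=a\mid W\ge a)\ \ge\ 1-\frac{(m-a)q}{(a+1)(1-q)}\ =\ \frac{a+1-\mu-q}{(a+1)(1-q)}\ >\ \frac1{a+1},
\]
together with the observation that $|\Ga_\tau(x)|>L$ forces $|\Ga_\tau(x)|\ge a+1$, after a short check. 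Putting the cases together establishes the binomial estimate and hence the lemma. I expect the main obstacle to be exactly the lower bound on $\Pr(W\ge a)$ when the mean $\mu$ lies only $o(\delta a)$ below $a$: a naive Chernoff‑style bound on $\Pr(W>L)/\Pr(W\ge a)$ is then far too lossy, and one genuinely needs a local (Bahadur–Rao type) estimate supplying the small prefactor in the binomial upper tail, with a little extra bookkeeping for the degenerate sub‑range where $(1+\delta)\la a<a+1$.
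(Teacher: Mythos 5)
Your reduction matches the paper's: condition on the exploration out to generation $\tau-1$, so that $W:=|\Ga_\tau(x)|$ is binomial with parameters $(n-m)$ and $q\le b\la/n$, and the conditional law of $|\Ga_\tau(x)|$ is that of $W$ given $W\ge a$, with $\mu=\E W\le(a-1)\la$. The divergence happens at the key step. The paper's (terse) argument is: the conditional tail $\Pr(W>L\mid W\ge a)$ is non-decreasing in $q$ for fixed $m$ (a consequence of log-concavity of the binomial pmf: $\frac{\partial}{\partial q}\log\Pr(W\ge x)=\frac{1}{q(1-q)}(\E(W\mid W\ge x)-mq)$ is non-decreasing in $x$), so one may take $s=a-1$, i.e.\ $\mu=(a-1)\la\ge a-1$, whence $a-\mu\le 1$ and $\Pr(W\ge a)$ is bounded below by an absolute constant (the threshold $a$ is within $O(1)$ of the mean). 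The bound then follows from Chernoff on $\Pr(W>L)$ alone, and no local/Bahadur--Rao machinery is needed.

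You instead try to prove the bound uniformly over all $\mu\le(a-1)\la$ without this monotonicity reduction. This leads you into the $\mu<a$ case and a Bahadur--Rao-type refinement, which you then split according to $\delta^2a\gtrless\log a$. There is a genuine gap in the sub-case $\delta^2a=O(\log a)$: the claimed bound $e^{-\Omega(\delta^2 a)}$ is then polynomially small in $a$ (once $\delta^2a$ is, say, a small constant times $\log a$), while your fallback estimate $\Pr(W>L\mid W\ge a)\le 1-\Omega(\delta^2a)$ gives nothing of that strength (indeed it becomes vacuous as soon as $\delta^2a\ge1$). Moreover the worry that prompted this case split is misplaced: writing out the local estimates, the prefactor in $\Pr(W>L)/\Pr(W\ge a)$ is essentially $\frac{(1-\mu/a)}{(1-\mu/L)}\sqrt{a/L}$, which is \emph{smaller} than $1$ in the regime in question, so it never hurts. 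The cleanest repair of your argument is to insert the monotonicity observation at the start: reduce to $\mu=(a-1)\la$, then $\Pr(W\ge a)=\Omega(1)$ with an absolute constant, and your Chernoff estimate on the numerator already gives $e^{-\Omega(\delta^2a)}$.
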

\begin{proof}
The event that $t_a(x)$ is defined may be written as a disjoint union of events of the form
$E=\{t_a(x)=t,\, |\Ga_{\le t-1}(x)|=m,\, |\Ga_{t-1}(x)|=s\}$, where $0<s<a$.
Let us condition on one such event. Given that $|\Ga_{\le t-1}(x)|=m$ and $|\Ga_{t-1}(x)|=s$,
the distribution of $|\Ga_t(x)|$ is binomial with parameters $n-m$ and $1-(1-\la/n)^s\le s\la/n$.
Hence, given $E$, the conditional distribution of $|\Ga_t(x)|$ is that of a binomial distribution
with mean at most $s\la=(1+\eps)s<(1+\eps)a$ conditioned to be at least $a$. It
is easy to check that the probability that such a distribution exceeds $(1+\delta)(1+\eps)a$
is maximal when $s$ is maximal, and is then (from the Chernoff bounds) of the form $e^{-\Omega(\delta^2 a)}$.
\end{proof}

We now turn to the time neighbourhoods take to meet having reached some `reasonably large' size.

\begin{lemma}\label{meet}
Let $\eps=\eps(n)$ and $\la=1+\eps$ be such that $\eps\to 0$
and $\La=\eps^3 n\to\infty$.
Set $\omega=\La^{1/6}$, and $t_2=\log(\eps^3 n/\omega^2)/\log \la$.
Let $x$ and $y$ be two vertices of $G(n,\la/n)$.
Writing $E$ for the event that
$t_{\omega/\eps}(x)=r_1$, $t_{\omega/\eps}(y)=r_2$, and 
the graphs $\Gl{r_1}(x)$ and $\Gl{r_2}(y)$ each contain
at most $n^{2/3}$ vertices and are disjoint, we have
\begin{equation}\label{jb}
 \Pr\bb{d(x,y)\ge r_1+r_2+t_2+a\bigm| E} = e^{-(1+o(1))\la^a} +O(e^{-c_0\omega^{1/3}})
 =  e^{-(1+o(1))\la^a} +o(\La^{-10})
\end{equation}
for any function $a=a(n)\ge -t_2/2$, and
\[
 \Pr\bb{d(x,y)\le r_1+r_2+t_2-K\bigm| E} =o(1)
\]
whenever $K=K(n)$ is such that $\eps K\to\infty$.
\end{lemma}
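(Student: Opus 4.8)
The plan for the lower bound on $d(x,y)$ is to combine Lemma~\ref{ggrow} (regular growth once neighbourhoods become large), Lemma~\ref{nojg} (the size cutoff is not badly overshot), and a first moment estimate over the \emph{midpoint} of a hypothetical short $x$--$y$ path. Note first that, since $E$ forces the trees $\Gl{r_1}(x)$ and $\Gl{r_2}(y)$ to be vertex-disjoint, one has $d(x,y)>r_1+r_2$ on $E$ (a shortest $x$--$y$ path of length at most $r_1+r_2$ would pass through a common vertex of the two trees), so the assertion is trivial when $K\ge t_2$. Hence we may assume from now on that $1\le m:=t_2-K<t_2$, with $\eps K\to\infty$.

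The first step is to condition on $E$, which has revealed the disjoint trees $\Gl{r_1}(x)$ and $\Gl{r_2}(y)$, and to continue the neighbourhood exploration from $x$ out to $\Gl{r_1+c}(x)$ and then from $y$ out to $\Gl{r_2+c}(y)$, where $c=\ceil{m/2}$, recording a \emph{meeting} whenever an edge from the current $y$-frontier reaches a vertex already claimed by the exploration from $x$. Since $\omega=\La^{1/6}\to\infty$, Lemma~\ref{nojg} (with a fixed small $\delta$) gives $|\Ga_{r_1}(x)|,|\Ga_{r_2}(y)|\le 2\omega/\eps$ outside an event of probability $e^{-\Omega(\omega/\eps)}=o(1)$; the bound $\sum_{r'\le r_1}|\Ga_{r'}(x)|\le n^{2/3}$ is part of $E$; and $c=\ceil{m/2}\le\ceil{t_2/2}<T:=\log(\eps M/\omega)/\log\la$ (with $M=\sqrt{\omega\eps n}$) once $\La$ is large. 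Hence Lemma~\ref{ggrow} applies to both $x$ and $y$ and shows that, outside a further event of probability $o(\La^{-100})$,
\[
 |\Ga_{r_1+i}(x)|\le (1+o(1))(\omega/\eps)\la^i \quad\text{and}\quad |\Ga_{r_2+i}(y)|\le (1+o(1))(\omega/\eps)\la^i
\]
for all $0\le i\le c$. Write $\mathcal G$ for the intersection of these good events, so that $\Pr(\mathcal G\mid E)=1-o(1)$; here the constant in the bound $n^{2/3}$ is harmlessly enlarged to accommodate the vertices of both explorations.

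Now the first moment estimate: suppose $d(x,y)=r_1+r_2+k$ for some $1\le k\le m$. A vertex $v$ on a shortest $x$--$y$ path with $d(x,v)=r_1+\ceil{k/2}$ then satisfies $v\in\Ga_{r_1+\ceil{k/2}}(x)\cap\Ga_{r_2+\lfloor k/2\rfloor}(y)$, so there is an as yet untested edge joining the shell $\Ga_{r_1+\ceil{k/2}}(x)$ to the shell $\Ga_{r_2+\lfloor k/2\rfloor-1}(y)$. The crucial point is that each value of $k$ forces a \emph{single} pair of levels, so the union over the possible distances $k$ costs no extra factor. On $\mathcal G$ there are $O((\omega/\eps)^2\la^{k-1})$ such potential edges, each present with probability $\la/n$, so a union bound gives
\[
 \Pr\bb{d(x,y)\le r_1+r_2+m \bigm| E}\le \Pr(\mathcal G^c\mid E)+\sum_{k=1}^{m}O\!\left(\frac{(\omega/\eps)^2\la^{k}}{n}\right).
\]
The geometric sum contributes a factor $\la/(\la-1)=\la/\eps$, which exactly cancels the surplus power of $1/\eps$: since $\la^{t_2}=\eps^3 n/\omega^2$ we have $\la^{m}=(\eps^3 n/\omega^2)\la^{-K}$, whence $\sum_{k\le m}O((\omega/\eps)^2\la^k/n)=O\bb{\omega^2\la^{m+1}/(\eps^3 n)}=O(\la^{1-K})=O(e^{-\eps K/2})=o(1)$, using $\log\la\ge\eps/2$ and $\eps K\to\infty$. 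Combined with $\Pr(\mathcal G^c\mid E)=o(1)$, this is the claim.

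The only real obstacle---beyond the routine bookkeeping of which potential edges have already been tested during the two-source exploration, which follows the pattern of Section~\ref{sec_const}---is ensuring that the union over possible meeting distances does not lose a factor of order $t_2\approx \eps^{-1}\log\La$: this is precisely what the midpoint argument buys, by pinning down how $k$ splits between the two neighbourhoods, and it is why the weak hypothesis $\eps K\to\infty$ suffices rather than something like $\eps K\gg\log\log n$.
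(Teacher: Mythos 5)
You have proved only the \emph{second} display of the lemma, not the first. The lemma makes two assertions: the precise two-sided estimate $\Pr(d(x,y)\ge r_1+r_2+t_2+a\mid E)=e^{-(1+o(1))\la^a}+O(e^{-c_0\omega^{1/3}})$, and then the consequence that $\Pr(d(x,y)\le r_1+r_2+t_2-K\mid E)=o(1)$. Your argument addresses only the latter, and does so by a one-sided (upper-bound) union argument: you bound the shell sizes from above via Lemma~\ref{nojg} and Lemma~\ref{ggrow}, count potential cross-edges, and sum a geometric series. This never touches the main estimate, which needs two-sided control of the shell sizes (both upper and lower bounds from Lemma~\ref{ggrow}) so that the number of cross-edges tested by step $t_2+a$ is $(1+o(1))\omega^2\eps^{-3}\la^{t_2+a}$, giving the $e^{-(1+o(1))\la^a}$ main term, together with the additive $O(e^{-c_0\omega^{1/3}})$ error from the growth lemma failing. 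The paper in fact proves the first display and then observes that the second follows from it in one line (take $a=-K$, note $\la^{-K}\to0$ since $\eps K\to\infty$); you have instead given a standalone proof of the easier corollary and left the heart of the lemma unaddressed.

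Your argument for the second display is essentially sound, but the exploration bookkeeping as you phrase it (``fully explore $x$ out to $r_1+c$, then $y$'') makes it harder to assert cleanly that the cross-edges you enumerate at level $(r_1+\lceil k/2\rceil,\,r_2+\lfloor k/2\rfloor-1)$ are fresh, since extending $x$ all the way first exposes edges from the $x$-shells to all then-unseen vertices, including those that will later turn out to be in $y$'s tree. The paper's alternating exploration (extend whichever side is behind, and test the current cross-edge slab at each step) avoids this ambiguity: at each step the cross-edges between the two current frontiers really are untested. Your midpoint observation is the same idea, but the interleaved exploration is what lets it be stated without caveats, and it is also what produces the \emph{exact} count of tested edges needed for the $(1+o(1))$ in the first display. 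I would therefore redo the argument with the alternating exploration, prove the first display (upper and lower shell bounds from Lemma~\ref{ggrow}, Lemma~\ref{nojg} for the overshoot, and the computation of $\sum_{j\le t_2+a}|\Ga_{\cdot}(x)||\Ga_{\cdot}(y)|\sim\omega^2\eps^{-3}\la^{t_2+a}$), and then deduce the second display as the paper does.
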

\begin{proof}
It suffices to prove the first statement: since $\log\la=\Theta(\eps)$,
if $\eps K\to\infty$ then $K\log\la \to\infty$,
so $\la^{-K}\to 0$, and the second statement follows
immediately from the first.
In proving \eqref{jb}, we may assume that $a \le \amax=\log\omega/(2\log \la)$:
otherwise, $\la^a\ge \omega^{1/2}$, and the additive error term in \eqref{jb},
which is independent of $a$,
dominates the main term. 

We explore the neighbourhoods of $x$ and $y$ in the usual way,
initially stopping each exploration when we first reach a
neighbourhood of size greater than $\omega/\eps$.
At this point, the conditions of the theorem allow us to
assume that we have used up at most $n^{2/3}$ vertices in each exploration,
and that the explorations, having taken $r_1$ steps from $x$ and $r_2$ steps from $y$, have not met.
By Lemma~\ref{nojg}, with conditional probability at least $1-e^{-\Omega(\omega)}$ the last generation in each exploration
has size at most $(1+\sqrt{\eps})\omega/\eps\sim \omega/\eps$.

We now continue both explorations. At the start of step $i$, $i=0,1,2,\ldots$,
we have explored the neighbourhoods of $x$ out to distance $r_1+\ceil{i/2}$
and those of $y$ out to distance $r_2+\floor{i/2}$.
During step $i$, we first test whether any of the
$|\Ga_{r_1+\ceil{i/2}}(x)||\Ga_{r_2+\floor{i/2}}(y)|$ `cross-edges' between
these two neighbourhoods is present. If so, $d(x,y)=r_1+r_2+i+1$,
and we stop. Otherwise, we uncover the next neighbourhood of either $x$ or $y$
as appropriate and continue, stopping if we have found no cross-edge by step
$t_2+a$, in which case $d(x,y)>r_1+r_2+t_2+a$.

After $t_2+\amax$ steps as above, each extending either $x$'s or $y$'s neighbourhood, the
typical size of the neighbourhood of $x$ or $y$ reached is
$(\omega/\eps) \la^{(t_2+\amax)/2} = \omega^{1/4}\sqrt{\eps n}$.
In particular, this size is much less than the quantity $M$ defined
in Lemma~\ref{ggrow}. Hence, by Lemma~\ref{ggrow}, we may assume
that
\[
 |\Ga_{r_k+j}(x_k)| \sim \la^j|\Ga_{r_k}(x_k)| \sim \la^j\omega/\eps
\]
for $k=1,2$ and all $j\le (t_2+\amax)/2$, where $x_1=x$ and $x_2=y$.
Furthermore, the error terms, which are factors
of the form $(1+O(\sqrt{\eps})+O(\omega^{-1/3}))$,
are uniform in $j$.

It follows that at step $i$ we test $(1+o(1)) \la^i(\omega/\eps)^2$ potential
cross-edges, and that by any step $i\ge t_2/2$ we have tested in total
\[
 (1+o(1))\sum_{j=0}^i \la^j(\omega/\eps)^2 \sim (\omega/\eps)^2\sum_{j=-\infty}^i \la^j
 \sim (\omega/\eps)^2 \eps^{-1} \la^i
\]
potential cross-edges. (The bound $i\ge t_2/2$ is used for convenience only,
to allow us to approximate the sum from $j=0$ by the sum from $j=-\infty$.)

Since each cross-edge tested is present with its original unconditional probability
of $\la/n\sim 1/n$,
it follows that up to a $O(e^{-c_0\omega^{1/3}})$ error term (from the conclusion of Lemma~\ref{ggrow}
not holding, etc), the probability that the explorations do not meet by step $t_2+a$
is
\[
 p_{\ge a} = (1-\la/n)^{(1+o(1))\omega^2\eps^{-3}\la^{t_2+a}}.
\]
Since
\[
 \log(1/p_{\ge a})  \sim (1/n) \omega^2\eps^{-3}\la^{t_2}\la^a
 = \la^a,
\]
the proof is complete.
\end{proof}

Roughly speaking, Lemma~\ref{meet} tells us that once the neighbourhoods
of two vertices reach a decent size, $\omega/\eps$, then whp these neighbourhoods
then meet within $O(1/\eps)$ steps of `when they should', which is after an extra $t_2$ steps. To study the diameter
of $G(n,\la/n)$,
we shall need the full strength of the bound  actually proved.  Note
that there is variation of order $1/\eps$ in the actual time taken to meet,
as may be seen from \eqref{jb}, where 
any $a=O(1/\eps)$ gives a probability bounded away from $0$ and $1$.

In the light of Lemma~\ref{meet}, as in the case of $\la$ constant or $\la\to\infty$,
the key to understanding the diameter of $G(n,\la/n)$ is understanding
the distribution of the time taken until the neighbourhoods
of a vertex reach a reasonable size (in this case $\omega/\eps$);
this will be our aim in the next few subsections.
We shall take $\omega=\omega(n)=\La^{1/6}$, but there is in fact a wide flexibility in the choice of the function $\omega=\omega(n)$: 
the requirements in what follows are that $\omega$ is at least a certain power
of $\log\La$, and at most a certain power of $\La$. If $\La$ is large
enough to allow $\omega/\log n\to\infty$, then many arguments simplify;
we shall not assume this, however.

In the remainder of this section we explain why Lemma~\ref{meet} already gives us the {\em typical} distance
between vertices in the giant component,
if $\La=\eps^3n$ is at least $(\log n)^{20}$, say.
Indeed, the neighbourhoods of a random 
vertex of $G=G(n,\la/n)$ behave much like the branching process $\bp_\la=(X_t)_{t\ge 0}$,
at least to start with. Roughly speaking, a vertex is in the giant component
if and only if the corresponding branching process survives, which it
does with probability $s\sim 2\eps$. So we will be interested in 
the expected size  of $|X_t|$  conditioned on the process surviving.
\begin{lemma}\label{l:X}
Let $\surv$ be the event that $\bp_\la$ survives. Then
\[
 \E(|X_t| \mid \surv) = \frac{\la^t-(1-s)\las^t}{s},
\]
which is asymptotically $\la^t/s\sim \la^t/(2\eps)$ if $\eps\to0$ and $\eps t\to\infty$.
\end{lemma}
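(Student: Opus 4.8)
The plan is to use the standard decomposition of $\bp_\la$ into its surviving and dying parts, so that everything reduces to two facts already recalled in Section~\ref{sec_intro}: that $\E(|X_t|)=\la^t$ (the offspring mean is $\la$, so $|X_t|/\la^t$ is a martingale), and that conditioning on extinction turns $\bp_\la$ into the subcritical Poisson Galton--Watson process $\bp_{\las}=(X_t^-)$ with dual parameter $\las$ defined via \eqref{lasdef}. In particular, $\E(|X_t^-|)=\las^t$, so $\E(|X_t|\mid \bp_\la \text{ dies out})=\las^t$.

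Next I would split the unconditional expectation according to whether $\bp_\la$ survives (probability $s$) or dies out (probability $1-s$):
\[
 \la^t=\E(|X_t|)=s\,\E(|X_t|\mid\surv)+(1-s)\,\E(|X_t|\mid \text{extinction})=s\,\E(|X_t|\mid\surv)+(1-s)\las^t.
\]
Solving for the conditional expectation gives exactly $\E(|X_t|\mid\surv)=(\la^t-(1-s)\las^t)/s$, which is the first assertion.

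For the asymptotic statement, assume $\eps\to 0$ and $\eps t\to\infty$. From \eqref{lasasympt} and $\la=1+\eps$ we have $\las/\la=1-2\eps+O(\eps^2)$, hence $(\las/\la)^t=\exp\bb{-2\eps t(1+o(1))}\to 0$; therefore $(1-s)\las^t\le\las^t=o(\la^t)$, and the formula becomes $\la^t(1+o(1))/s$. Since $s\sim 2\eps$ by \eqref{sasympt}, this is $\sim\la^t/s\sim\la^t/(2\eps)$, as claimed.

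There is no genuine obstacle here: the argument is a two-line identity plus a routine asymptotic check. The only place the hypotheses enter is in showing that the dual term $(1-s)\las^t$ is negligible compared with $\la^t$, which is precisely where $\eps t\to\infty$ is used (without it $\las^t/\la^t$ need not tend to $0$, and the $\las^t$ term must be retained).
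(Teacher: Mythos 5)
Your proof is correct and follows essentially the same route as the paper: decompose $\E|X_t|$ according to survival versus extinction, use $\E|X_t|=\la^t$ and the fact that the extinct process is $\bp_{\las}$ (so $\E(|X_t|\mid\surv^{\cc})=\las^t$), and solve. The paper writes the decomposition with indicator functions rather than the law of total expectation, but the algebra and the asymptotic step (using $\eps t\to\infty$ to kill $(\las/\la)^t$, then $s\sim 2\eps$) are identical.
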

\begin{proof}
Writing $\ind{A}$ for the indicator function of an event $A$, we have
\[
 \E(|X_t|\ind{\surv}) = \E(|X_t|) - \E(|X_t|\ind{\surv^\cc})
 = \la^t - (1-s)\E(|X_t|\mid \surv^\cc) = \la^t-(1-s)\las^t,
\]
since the distribution of $\bp_\la$ conditioned on $\surv^\cc$ is that of $\bp_{\las}$.
The result follows.
\end{proof}
  
It is not hard to see that the `typical' size
of $|X_t|$ given $\surv$
is of the same order as the expected size; we shall give some precise results on this later.
Hence, for most vertices in the giant component, their neighbourhoods
take time $\log\omega/\log\la$ to reach size $\omega/\eps$,
so the typical distance is $2\log\omega/\log\la +t_2=\log(\eps^3 n)/\log\la$.
More precisely, one can check that the distance between two random vertices
of the giant component is $\log(\eps^3 n)/\log\la +\Op(1/\eps)$; we shall
not give the details. The rest of the proof of Theorem~\ref{th_eps} essentially shows that the other term in the formula in that theorem accounts for vertices whose neighbourhoods take an abnormally long time to start growing large.

\subsection{Branching process to graph}\label{ss_bptog}

At some point, we need to compare the probabilities of events defined
in terms of our random graph $G=G(n,\la/n)$ with events in the branching
process. It turns out that we have to consider events involving
trees of height $\Theta(\log\La/\log \la)=\Theta(\eps^{-1}\log\La)$,
recalling that $\La=\eps^3 n$,
with (it will turn out) up to around $1/\eps$ vertices at each distance from the root.
For this reason, we need to consider trees with at least $\Theta(\eps^{-2})$
vertices. If $\eps$ is smaller than $n^{-1/4}$, then we cannot simply 
extend Lemma~\ref{cpl} to cover such trees using the same proof,
since the error terms $|T|^2/n$ would be too large.

Fortunately, it is easy to prove a result that applies for the trees 
we need. Although this is in some sense a coupling result, the obvious
coupling between $\Gatm(x)$ and $\bp_\la$ fails here. This obvious coupling
is based on the fact that a $\Po(\la)$ and a $\Po(\la(1-\delta))$ distribution
can naturally be coupled to agree with probability at least $1-\la\delta$. In fact,
much better couplings are possible.
Recall that $X_{\le t}$ denotes the first $t$ generations of $\bp_\la$, seen
as a rooted tree.

\begin{lemma}\label{spcpl}
Let $\la=1+\eps$, where $\eps=\eps(n)=O(1)$.
Let $\delta(n)$ be any function with $\delta>0$ and $\delta\to 0$ as $n\to\infty$.
Let $t=t(n)\ge 0$ and let $T=T(n)$ be a rooted tree of height $t$ with $\eps|T|^2\le \delta n$, each generation of size at most $n^{1/3}$,
and $|T|\le \delta n^{2/3}$. Then
\[
 \Pr\bb{\Gatm(x)\isom T} \sim \Pr\bb{X_{\le t}\isom T}
\]
and
\[
 \Pr\bb{\Gat(x)\isom T} \sim \Pr\bb{X_{\le t}\isom T},
\]
where the asymptotics is uniform over all such sequences $T(n)$.
\end{lemma}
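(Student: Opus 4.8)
Rather than constructing a coupling (the naive one loses a factor $e^{\Theta(\la|T|^2/n)}$, which is not $1+o(1)$ here), the plan is to estimate the ratio $\Pr\bb{\Gatm(x)\isom T}/\Pr\bb{X_{\le t}\isom T}$ directly, revisiting the step-by-step exploration argument used for Lemma~\ref{cpl} but keeping track of the leading-order terms rather than accepting the crude bound $\exp(O(|T|^2/n))$. As there, explore $\Gatm(x)$ in breadth-first order; when a vertex $v$ of the emerging tree is processed and its children are added, write $r_v$ for the total number of vertices reached at that moment and $a_v$ for the number of children of $v$ in $T$. The conditional probability of this step is $p_1^{(v)}=\binom{n-r_v+a_v}{a_v}(\la/n)^{a_v}(1-\la/n)^{n-r_v}$, against $p_2^{(v)}=e^{-\la}\la^{a_v}/a_v!$ in $\bp_\la$, so, as in the proof of Lemma~\ref{cpl},
\[
 \frac{\Pr\bb{\Gatm(x)\isom T}}{\Pr\bb{X_{\le t}\isom T}}=\exp\Bigl(\sum_{v\in V(T)}\log(p_1^{(v)}/p_2^{(v)})\Bigr),
\]
and it suffices to show the exponent tends to $0$, uniformly over the admissible $T$. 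Expanding the logarithms (legitimate since $r_v,a_v\le|T|\le\delta n^{2/3}$ and $\la=O(1)$) one finds $\log(p_1^{(v)}/p_2^{(v)})=r_v(\la-a_v)/n$ plus lower-order terms; writing $n_i$ for the number of vertices of $T$ at distance $i$ from the root and using $\sum_v a_v=|T|-1$ and $\sum_v a_v^2\le(\max_i n_i)|T|\le n^{1/3}|T|$, these lower-order terms sum to $O(n^{1/3}|T|/n)+O(|T|^3/n^2)=O(\delta)\to0$.

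The delicate contribution is $\frac1n\sum_v r_v(\la-a_v)$, and this is the main obstacle. Since in breadth-first order $r_v=1+\sum_{w\preceq v}a_w$, a short summation by parts gives
\[
 \sum_v r_v(\la-a_v)=\la\sum_v r_v-\tfrac12(|T|-1)^2+O\bigl(|T|+\textstyle\sum_v a_v^2\bigr).
\]
Writing $\la=1+\eps$, the piece $\frac{\eps}{n}\sum_v r_v\le\eps|T|^2/n\le\delta$ is controlled by the hypothesis $\eps|T|^2\le\delta n$. For the piece $\frac1n\bigl(\sum_v r_v-\tfrac12|T|^2\bigr)$ the key point is that a vertex $v$ in generation $i$ satisfies $n_{\le i}\le r_v\le n_{\le i}+n_{i+1}$ (with $n_{\le i}=n_0+\dots+n_i$), so $\tfrac12|T|^2\le\sum_v r_v\le\sum_i n_i n_{\le i}+\sum_i n_i n_{i+1}\le\tfrac12|T|^2+\tfrac32\sum_i n_i^2\le\tfrac12|T|^2+\tfrac32 n^{1/3}|T|$, whence $\frac1n\bigl(\sum_v r_v-\tfrac12|T|^2\bigr)=O(n^{1/3}|T|/n)=O(\delta)$. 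This cancellation is the heart of the lemma: $\la\sum_v r_v$ and $\tfrac12(|T|-1)^2$ are separately of order $|T|^2$, far too large to bound individually, but their difference is only of order $\eps|T|^2+(\max_i n_i)|T|$, and this is precisely why both non-trivial hypotheses ($\eps|T|^2\le\delta n$ and $n_i\le n^{1/3}$) are needed. Combining with the previous paragraph proves the first estimate, uniformly over all admissible $T$.

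For the second estimate, observe that $\Gat(x)$ and $\Gatm(x)$ have the same vertex set (the exploration locates every vertex within distance $t$ of $x$) and differ exactly when some potential edge among these $|T|$ vertices that was never tested in the exploration happens to be present. In a breadth-first exploration every such untested pair lies within a single generation or between two consecutive generations, so there are at most $\sum_i\binom{n_i}{2}+\sum_i n_i n_{i+1}=O\bigl(\sum_i n_i^2\bigr)=O(n^{1/3}|T|)$ of them, each present independently of the event $\{\Gatm(x)\isom T\}$ with probability $\la/n$; hence $\Pr\bb{\Gat(x)\ne\Gatm(x)\mid\Gatm(x)\isom T}=O(n^{1/3}|T|/n)=O(\delta)\to0$, and the second estimate follows from the first. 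Once the two $\Theta(|T|^2)$ pieces in the display above are paired rather than estimated separately, the rest is routine control of lower-order terms.
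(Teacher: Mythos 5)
Your proof is correct and follows essentially the same strategy as the paper's: compute $\Pr(\Gatm(x)\isom T)/\Pr(X_{\le t}\isom T)$ as a product of per-step ratios, extract the leading term $\sum_v r_v(\la-a_v)/n$, and show that the two $\Theta(|T|^2)$ pieces cancel using exactly the two hypotheses $\eps|T|^2\le\delta n$ and $\max_i n_i\le n^{1/3}$. The only difference is in how the cancellation is organized: the paper rewrites $\sum_i u_id_i$ as $\sum_j u_{j'}$ (sum over parents) and then telescopes $\sum u_i-\sum u_{j'}$ into increments $u_i-u_{i'}$ that are each at most twice the maximal generation size, whereas you compute $\sum_v r_v$ and $\sum_v r_va_v$ separately, showing each equals $\tfrac12|T|^2+O(n^{1/3}|T|)$ — equivalent arithmetic, just bookkept differently.
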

\begin{proof}
Rather than couple, we simply calculate directly; it is convenient to order
the vertices first. When constructing $\bp_\la$ starting from $X_0$,
let us number the particles $1,2,3,\ldots$ in the order they appear, so the initial particle
is particle $1$, and test particles in numerical order to see how many children they have.
We number the vertices uncovered in the neighbourhood exploration process
by which we find $\Gatm(x)$ analogously, this time using any (deterministic or random) rule
to decide in which order to number the children of a vertex.

For each numbering $\Ts$ of $T$ that can arise in such an exploration,
let $E_1(\Ts)$ be the event that $X_{\le t}$ is isomorphic to
$\Ts$ with the labels matching. Then $\{X_{\le T}\isom T\}$ is the disjoint union
of the events $E_1(\Ts)$, where $\Ts$ runs over all numberings of $T$; note that these events are equiprobable. Similarly, let $E_2(\Ts)$
be the event that $\Gatm(x)\isom \Ts$ with labels matching, so $\{\Gatm(x)\isom T\}$
is the disjoint union of the $E_2(\Ts)$. Fix one particular numbering $\Ts$.
Since the probabilities of $E_1(\Ts)$ and
$E_2(\Ts)$ do not depend on the numbering, it suffices to show that
$\Pr(E_1(\Ts))\sim \Pr(E_2(\Ts))$.

Let $r$ be the number of vertices of $T$
at distance $t$ from the root, and $m=|T|$ the total number of vertices.
For $1\le i\le m-r$, let $d_i$ denote the number
of children in $T$ of the $i$th vertex.
Now $E_1(\Ts)$ is simply the event that for $i=1,\ldots,m-r$, the $i$th particle
of the branching process has exactly $d_i$ children. Thus,
\[
 \Pr(E_1(\Ts)) = \prod_{i=1}^{m-r} \frac{\la^{d_i}}{d_i!}e^{-\la}.
\]
Similarly, $E_2(\Ts)$ is the event that for every $i$, when exploring the neighbours
of the $i$th vertex reached, we find exactly $d_i$ new neighbours.
Let $u_i=1+\sum_{j<i} d_j$ denote the number of vertices already `used' (reached)
at the point that we look for new neighbours of the $i$th vertex.
Then 
\[
 \Pr(E_2(\Ts)) = \prod_{i=1}^{m-r} \Pr\bb{\Bi(n-u_i,\la/n)=d_i}
 = \prod_{i=1}^{m-r} \frac{(n-u_i)_{(d_i)}}{d_i!}(\la/n)^{d_i}(1-\la/n)^{n-u_i-d_i}.
\]
Hence,
\[
 \rho = \frac{\Pr(E_2(\Ts))}{\Pr(E_1(\Ts))} =
  \prod_{i=1}^{m-r} \frac{(n-u_i)_{(d_i)}}{n^{d_i}} \frac{(1-\la/n)^{n-u_i-d_i}}{e^{-\la}}.
\]
Since $n-u_i\ge n/2$ and $d_i$ is bounded by $n^{1/3}$, we have
$n-u_i-j=(n-u_i)e^{O(n^{-2/3})}$ for $0\le j\le d_i$,
so
\begin{eqnarray*}
 \log\left(\frac{(n-u_i)_{(d_i)}}{n^{d_i}}\right) 
 &=& \log\left(\frac{(n-u_i)^{d_i}}{n^{d_i}}\right) +O(n^{-2/3}d_i) \\
 &=&  -\frac{u_id_i}{n} +O(d_i(u_i/n)^2) + O(n^{-2/3}d_i)
 =  -\frac{u_id_i}{n} + O(n^{-2/3}d_i),
\end{eqnarray*}
using $u_i\le |T| \le n^{2/3}$ in the last step.
Also,
\begin{eqnarray*}
 (n-u_i-d_i)\log(1-\la/n) &=& (n-u_i-d_i)(-\la/n +O(1/n^2)) \\ 
 &=&  -\la +u_i\la/n +O(d_i/n)+O(n^{-1}).
\end{eqnarray*}
Hence,
\[
 \log\rho = \sum_{i=1}^{m-r} \left(u_i\frac{\la-d_i}{n} +O(n^{-2/3}d_i)+O(n^{-1})\right)
 = o(1)+\sum_{i=1}^{m-r} u_i\frac{\la-d_i}{n},
\]
using $\sum_i d_i=m-1=o(n^{2/3})$.

Now $\la=1+\eps$, and $\sum u_i\le m^2$. By assumption $\eps m^2=o(n)$, so
\[
 \log\rho = o(1)+\sum_{i=1}^{m-r} u_i\frac{1-d_i}{n} = 
 o(1) + \sum_{i=1}^{m-r} u_i\frac{1}{n} -  \sum_{i=1}^{m-r} u_i\frac{d_i}{n}.
\]
We can rewrite the final sum as $\sum_{i=1}^{m-r} \sum_j u_i/n$,
where $j$ runs over the children of $i$.
Each $j$ in the range $2$ up to $m$ appears exactly once
in the double sum,
so the sum is equal to $\sum_{j=2}^m u_{j'}/n$, where $j'$ is the parent
of $j$.
For any vertex $j$, the vertex $j'$ is in the generation before $j$,
so $u_j-u_{j'}$ is at most twice the maximum number of vertices in a generation.
We have assumed this maximum is at most $n^{1/3}$, so $|u_j-u_{j'}|\le 2n^{1/3}$.
Hence,
\begin{eqnarray*}
 \log\rho 
 &=& o(1) + \sum_{i=1}^{m-r}\frac{u_i}{n} - \sum_{i=2}^m \frac{u_{i'}}n \\
&=& o(1)+ \frac{u_1}{n} + \sum_{i=2}^{m-r}\frac{u_i-u_{i'}}{n} - \sum_{i=m-r+1}^m \frac{u_{i'}}{n} \\
 &=& o(1) + o(1) +\sum_i O(n^{-2/3}) -O(rm/n) = o(1)+O(r m /n) = o(1),
\end{eqnarray*}
and the first statement follows.

For the second, it suffices to prove that $\Pr\bb{\Gat(x)\isom T \bigm| \Gatm(x)\isom T}\sim 1$.
But this is immediate since there are at most $2n^{1/3}|T|=o(n)$ extra edges that we must test.
\end{proof}

For any fixed $k$, Lemma~\ref{spcpl} extends to $k$ starting vertices and $k$ trees,
with virtually the same proof.
\begin{lemma}\label{spcplk}
Fix $k\ge 2$. Let $\la=1+\eps$, where $\eps=\eps(n)=O(1)$.
Let $\delta(n)$ be any function with $\delta>0$ and $\delta\to 0$ as $n\to\infty$.
Let $T_1,\ldots,T_k$ be rooted trees, with $\eps|T_i|^2\le \delta n$,
each generation of $T_i$ of size at most $n^{1/3}$, and $|T_i|\le \delta n^{2/3}$.
Given distinct vertices $x_1,\ldots,x_k$ of $G=G(n,\la/n)$, let
$E=E(x_1,\ldots,x_k,T_1,\ldots,T_k)$ denote the event that
$\Gl{t_i}(x_i)\isom T_i$ for $1\le i\le k$, and $d(x_i,x_j)> t_i+t_j$ for $1\le i<j\le k$,
where $t_i$ is the height of $T$.
Then
\[
 \Pr(E)  \sim \prod_{i=1}^k \Pr\bb{X_{\le t_i}\isom T_i},
\]
where the asymptotics is uniform over all choices of $T_1,\ldots,T_k$.\noproof
\end{lemma}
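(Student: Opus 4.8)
The plan is to repeat the proof of Lemma~\ref{spcpl} essentially verbatim, the only genuinely new ingredient being that one keeps a single \emph{global} count of the vertices used across all $k$ neighbourhood explorations. Fix distinct vertices $x_1,\dots,x_k$ and trees $T_1,\dots,T_k$ satisfying the hypotheses, and explore the neighbourhoods of $x_1,\dots,x_k$ one after another in the standard breadth-first way: first uncover $\Gl{t_1}(x_1)$, then $\Gl{t_2}(x_2)$, and so on, numbering \emph{all} uncovered vertices globally $1,2,3,\dots$ in the order they are reached (breadth-first within each tree, children of a given vertex in any order). The requirement $d(x_i,x_j)>t_i+t_j$ in the definition of $E$ exactly says that the $k$ explorations are vertex-disjoint; consequently, at every step of every exploration we only ever test potential edges to vertices not yet reached, so the probability of the relevant configuration factorizes cleanly, and the only interaction between the explorations enters through the running total $u_i$ of vertices used so far.

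First I would pass to a fixed admissible numbering $\Ts$ of the concatenated forest, exactly as in Lemma~\ref{spcpl}. Both $\bigcap_i\{X_{\le t_i}\isom T_i\}$ and the spanning-tree analogue $E^0$ of $E$ (that is, $\bigcap_i\{\Glm{t_i}(x_i)\isom T_i\}$ together with the disjointness conditions $d(x_i,x_j)>t_i+t_j$) decompose into disjoint unions over such numberings, and within each model the corresponding events are equiprobable; so it suffices to compare $\Pr(E_1(\Ts))=\prod_{i=1}^{m-r}\la^{d_i}e^{-\la}/d_i!$ with $\Pr(E_2(\Ts))=\prod_{i=1}^{m-r}\Pr(\Bi(n-u_i,\la/n)=d_i)$, where $m=\sum_\ell|T_\ell|$, $r=\sum_\ell r_\ell$ with $r_\ell$ the size of the top generation of $T_\ell$, $d_i$ is the number of children in the forest of the $i$th vertex, and $u_i$ is the total number of vertices uncovered (across all explorations) at the instant vertex $i$ is expanded. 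The ratio $\rho=\Pr(E_2(\Ts))/\Pr(E_1(\Ts))$ then has precisely the product form of Lemma~\ref{spcpl}, namely $\rho=\prod_{i=1}^{m-r}\frac{(n-u_i)_{(d_i)}}{n^{d_i}}\cdot\frac{(1-\la/n)^{n-u_i-d_i}}{e^{-\la}}$.

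Next I would run the same Taylor expansion of $\log\rho$ as in Lemma~\ref{spcpl}. Using $u_i\le m\le k\delta n^{2/3}=o(n^{2/3})$ and $d_i\le n^{1/3}$ one gets $\log\rho=o(1)+\sum_i u_i(\la-d_i)/n$ exactly as there. The term $\eps\sum_i u_i/n$ is at most $\eps m^2/n$, and here the new point is to bound $m^2=(\sum_\ell|T_\ell|)^2\le k\sum_\ell|T_\ell|^2$ by Cauchy--Schwarz; combined with $\eps|T_\ell|^2\le\delta n$ and $k$ fixed this gives $\eps m^2/n\le k^2\delta=o(1)$. For $\sum_i u_i(1-d_i)/n$ one performs the same telescoping, now carried out separately inside each of the $k$ trees: the $k$ root terms contribute $O(km/n)=o(1)$; each within-generation jump $u_i-u_{p(i)}$ is $O(n^{1/3})$ (the parent of a generation-$g$ vertex lies in generation $g-1$, so between expanding it and expanding the vertex one uncovers at most two full generations), and these jumps sum to $O(mn^{-2/3})=O(\delta)=o(1)$; and the leaf terms contribute $O(rm/n)$, which is $o(1)$ since $r\le kn^{1/3}$ (each tree's top generation has size at most $n^{1/3}$) and $m\le k\delta n^{2/3}$. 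Hence $\log\rho=o(1)$, with all implied constants depending only on $k$, proving $\Pr(E^0)\sim\prod_i\Pr(X_{\le t_i}\isom T_i)$ uniformly over admissible tuples. Finally, $E\subseteq E^0$ (an induced neighbourhood isomorphic to a tree is already equal to its spanning tree), and given $E^0$ the number of further potential edges whose absence must be verified to obtain $E$ — edges within, or between, the top generations of the $T_\ell$ — is $O(k^2n^{2/3})=o(n)$, so $\Pr(E\mid E^0)\to1$ and the claimed asymptotic for $\Pr(E)$ follows.

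There is no real obstacle here — the paper itself flags ``virtually the same proof'' — so the whole task is careful bookkeeping: verifying that the global used-vertex count still satisfies $u_i\le m$ with $m=o(n^{2/3})$ and that $\eps m^2=o(n)$ (this is where Cauchy--Schwarz and the fixedness of $k$ are used), and that the per-tree telescoping leaves only boundary terms that are $o(1)$ thanks to the hypothesis that every generation, in particular each top generation $r_\ell$, has size at most $n^{1/3}$. I would also state explicitly that all $O(\cdot)$ and $o(\cdot)$ bounds are uniform over the admissible tuples $(T_1,\dots,T_k)$, since that uniformity is part of the assertion.
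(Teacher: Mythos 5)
Your approach is exactly the adaptation the paper alludes to, and the two genuinely new ingredients you identify (Cauchy--Schwarz to keep $\eps m^2/n$ small, and per-tree telescoping bounded using the generation-size hypothesis) are right and are the heart of the bookkeeping. The paper itself does not write out a proof, saying only that Lemma~\ref{spcplk} follows by ``adapting the proof of Lemma~\ref{spcpl} in the obvious ways'' or ``by simple calculations'' from Lemma~\ref{spcpl}, so there is no paper proof to compare against; your argument is the natural one.

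Two places are stated imprecisely, though neither affects the conclusion. First, the global-pool exploration you describe (testing only globally-unreached vertices when expanding a vertex) yields an event that is strictly \emph{weaker} than $E^0$: the explorations can appear to produce the right forest while an untested edge joins an inner (level $<t_\ell$) vertex of a later tree $T_\ell$ to a \emph{top-generation} vertex of an earlier tree $T_{\ell'}$ --- such an edge is never tested because both endpoints are reached before the edge is examined, yet its presence makes the two $\Gl{t_i}$ sets intersect, so $E^0$ fails. To recover $\Pr(E^0)$ from $\prod_i\Pr(\Bi(n-u_i,\la/n)=d_i)$ you must additionally require these edges to be absent; there are at most $\sum_{\ell'<\ell}|T_\ell|\,r_{\ell'}\le m\cdot kn^{1/3}\le k^2\delta n=o(n)$ of them (using the top-generation bound $r_{\ell'}\le n^{1/3}$), so the correction is a $(1-\la/n)^{o(n)}=1-o(1)$ factor and the asymptotic survives --- but this step should be stated. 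Second, in the last step your count of ``further potential edges whose absence must be verified to obtain $E$'' as $O(k^2n^{2/3})$ (pairs of top-generation vertices) is not the right accounting. What needs verifying, per tree, is exactly the collection of untested edges between already-reached vertices of that tree, which as in Lemma~\ref{spcpl} is $O(n^{1/3}|T_\ell|)$ for each $\ell$, summing to $O(n^{1/3}m)=O(k\delta n)$. This is $o(n)$, so $\Pr(E\mid E^0)\to 1$ still holds, but the count $O(k^2 n^{2/3})$ is neither the right quantity nor an upper bound for it when $\delta n^{1/3}\to\infty$. With these two points tidied up, the proof is complete.
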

In other words, the event that the $t_i$-neighbourhood of each $x_i$ is isomorphic
to $T_i$, and these neighbourhoods are disjoint, has asymptotically the probability
suggested by independent branching processes. One can prove Lemma~\ref{spcplk} 
by adapting the proof of Lemma~\ref{spcpl} in the obvious ways. Alternatively,
it follows from Lemma~\ref{spcpl} by simple calculations.

\subsection{Slow initial growth: the branching process}\label{ss_bpslow}

In this subsection we study the probability that the branching process $\bp_\la$
survives, but takes much longer than usual to reach generations
of some large size. One might expect the results we need to be in the branching
process literature, and perhaps they are. However, we have not found them.
The key point is that here $\la$ is variable, tending down to $1$ from above,
so results for fixed $\la$ are not of much use. Furthermore, although there
is a natural scaling limit as $\la\to 1$ from above (described below),
results about this limit are not directly applicable either: we
wish to consider events of probability around $1/n$, and this probability
tends to $0$ as $\la\to 1$. In other words, we need explicit bounds on the
rate of convergence of some properties of the branching process as $\la\to 1$.
Fortunately, as is often the case, the branching process results we need
are not hard to prove directly.

With $\la=1+\eps>1$ fixed for the moment, let $\bp_\la=(X_t)_{t\ge 0}$
and $\bp_\la^+=(X_t^+)_{t\ge 0}$ be defined as before,
so $X_t^+\subset X_t$ is the set of particles in $X_t$ which have descendants
in all future generations.
Recall that $\bp_\la^+$ is again a Galton--Watson branching process,
with $|X_0^+|=1$ or $0$ depending on whether $\bp_\la$ survives,
and with offspring distribution $Z_\la$. Here, as before, $Z_\la$
denotes the distribution of a Poisson $\Po(s\la)$ random variable
conditioned to be at least $1$, where $s=s(\la)$ is the probability that
$\bp_\la$ survives forever.

{From} standard results (see Athreya and Ney~\cite{AN}, for example),
we have $|X_t|/\la^t\to Y=Y_\la$ a.s.,
and $|X_t^+|/\la^t\to Y^+=Y_\la^+$ a.s., for some random variables
$Y_\la$ and $Y_\la^+$. Our first (standard, trivial) observation
is that $Y_\la$ and $Y_\la^+$ coincide up to a constant factor.

\begin{lemma}\label{YYp}
We have $Y_\la^+=sY_\la$ a.s.
\end{lemma}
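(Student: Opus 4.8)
The plan is to exhibit an explicit coupling of $\bp_\la$ with $\bp_\la^+$ that makes the identity $Y_\la^+ = sY_\la$ transparent, using the standard decomposition of $\bp_\la$ into its `spine' of surviving particles $\bp_\la^+$ together with independent copies of the subcritical dual process $\bp_{\las}$ hung off each surviving particle. Concretely, construct $\bp_\la$ by first constructing $\bp_\la^+ = (X_t^+)$ (a Galton--Watson process with offspring distribution $Z_\la$), and then, for each particle $v \in X_r^+$, attaching an independent copy of $\bp_{\las}$ rooted at $v$; this is exactly the construction already used in the proof of Lemma~\ref{l1} and in deriving \eqref{PW'}. Under this construction we have the exact decomposition
\[
 |X_t| = \sum_{v \in X_r^+,\ r \le t} |D_v(t)|,
\]
where $D_v(t)$ is the set of generation-$t$ descendants, within the attached dual copy, of the surviving particle $v$ born at generation $r$ (with the convention that $v$ itself is counted, as the generation-$r$ member of its own dual copy). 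Hence $|X_t| = \sum_{r \le t} \sum_{v \in X_r^+} |X^{(v)}_{t-r}|$, where the $X^{(v)}_\bullet$ are i.i.d.\ copies of $\bp_{\las}$.

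First I would handle the case where $\bp_\la$ dies out, which has probability $1-s$: then $Y_\la = 0$ a.s.\ (this is the standard fact, recalled in Section~\ref{sec_const}, that $\{Y_\la = 0\}$ coincides up to a null set with extinction), and also $X_t^+ = \emptyset$ for all $t$, so $Y_\la^+ = 0$; thus $Y_\la^+ = sY_\la$ holds trivially on this event. So I may condition on survival, i.e.\ on $|X_0^+| = 1$. Next I would divide \eqref{last} by $\la^t$ and let $t \to \infty$. Since $\bp_{\las}$ is subcritical, $\E(|X^{(v)}_{t-r}|) = \las^{t-r}$, and $\sum_{r} \las^{-r}$ applied to a fixed $X_r^+$ contribution is summable; more carefully, for each fixed $r$ we have $\la^{-t}\sum_{v \in X_r^+} |X^{(v)}_{t-r}| = \la^{-r}|X_r^+| \cdot \la^{-(t-r)}\sum_{v}|X^{(v)}_{t-r}|$, and the inner average over the $|X_r^+|$ i.i.d.\ subcritical copies tends to $0$ a.s.\ as $t\to\infty$ (each term $\la^{-(t-r)}|X^{(v)}_{t-r}| \to 0$ a.s.\ since $\las < \la$), while $\la^{-r}|X_r^+| \to Y_\la^+$ a.s. The contribution from $r$ close to $t$ must be controlled by a tail estimate: the expected total mass of dual copies started in the last, say, $S$ generations is $O(\sum_{r=t-S}^{t}\las^{t-r}|X_r^+|)$, and dividing by $\la^t$ and using $|X_r^+| = O(\la^r)$ in probability (or a.s.\ up to subpolynomial factors) makes this $O(\las^S \cdot \text{bounded})$, which tends to $0$ as $S \to \infty$ uniformly in $t$.

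The point, then, is that all the mass of $|X_t|/\la^t$ in the limit comes from the single generation-$0$ root of its own dual copy being `large', which never happens, so actually one gets a cleaner statement: $|X_t| = |X_t^+| + (\text{lower order})$, giving $Y_\la = Y_\la^+$ directly? No --- this is wrong, and identifying why is the main obstacle. The resolution is that each particle $v \in X_r^+$ itself is the root of a dual copy, and summing $|X_r^+|$ many independent copies of the subcritical $\bp_{\las}$ evaluated at time $t-r$ does \emph{not} become negligible when $r$ is proportional to $t$: for $r = t - O(1)$ there are $\Theta(\la^t)$ surviving particles, each contributing an $O(1)$-in-expectation dual population, so the contribution is $\Theta(\la^t)$. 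The correct way to take the limit is therefore to fix the \emph{number of trailing generations}: write $|X_t| = \sum_{r = 0}^{t} \la^{-r}|X_r^+| \cdot \big(\la^{-(t-r)}\sum_{v \in X_r^+}|X^{(v)}_{t-r}|\big)\cdot\la^t$ is awkward; instead reindex by $j = t-r$ and use that $\la^{-t}\sum_{v\in X_{t-j}^+}|X^{(v)}_j|$, conditioned on $X_{t-j}^+$, has conditional mean $\la^{-t}|X_{t-j}^+|\las^j \to Y_\la^+ \la^{-j}\las^j$ and vanishing conditional variance (a law-of-large-numbers effect, as $|X_{t-j}^+|\to\infty$). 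Summing over $j \ge 0$ and using dominated convergence (justified by the geometric tail above) gives
\[
 Y_\la = Y_\la^+ \sum_{j \ge 0} (\las/\la)^j \, \E\big(|X_j^-| \,\big|\, \text{the root of the dual copy, which survives nothing}\big),
\]
and one checks the constant equals $1/s$ by a direct generating-function computation: $\E(|X_t|\mid\surv) = (\la^t - (1-s)\las^t)/s$ by Lemma~\ref{l:X}, so dividing by $\la^t$ and letting $t\to\infty$ gives $\E(Y_\la \mid \surv) = 1/s$, while $\E(Y_\la^+\mid \surv) = 1$ (standard, as $|X_t^+|/\la^t$ is a mean-one martingale on survival), forcing the proportionality constant to be exactly $1/s$; combined with the a.s.\ proportionality established above (the ratio $Y_\la/Y_\la^+$ is a.s.\ constant because it is a tail-measurable functional of the i.i.d.\ dual copies, by Kolmogorov's 0--1 law applied appropriately, or simply because $Y_\la^+$ determines $Y_\la$ via the displayed formula whose random ingredients average out), this yields $Y_\la = Y_\la^+/s$ a.s. The main obstacle is thus being careful about the trailing-generations contribution and justifying the interchange of limit and sum; everything else is bookkeeping with standard branching-process facts already cited in the paper.
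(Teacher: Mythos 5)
Your proposal is substantively correct but takes a genuinely different route from the paper. You make the backbone-plus-dual decomposition explicit, writing $|X_t| = \sum_{j\ge0}\sum_{v\in X_{t-j}^+}|X^{(v)}_j|$ where the $X^{(v)}_\cdot$ are i.i.d.\ copies of $\bp_{\las}$, and then argue: for each fixed $j$ a law of large numbers (along the exploding backbone population $|X_{t-j}^+|$) gives $\la^{-t}\sum_{v\in X_{t-j}^+}|X^{(v)}_j|\to Y_\la^+(\las/\la)^j$ a.s.\ on survival; summing the geometric series gives $\sum_j(\las/\la)^j = \la/(\la-\las) = 1/s$; and an interchange of limit and sum is justified by tail control (cleanly: the truncated sums give $Y_\la\ge Y_\la^+/s$ and matching $\E(Y_\la\mid\surv)=1/s=\E(Y_\la^+/s\mid\surv)$ forces equality). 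The paper's proof is far shorter: it stops at the first time $\tau$ that $|X_\tau|\ge N$, observes that for $N$ large both $|X_\tau|/\la^\tau$ and $|X_\tau^+|/\la^\tau$ are within $\delta$ of their a.s.\ limits with probability $1-\delta$, and applies Chernoff to the binomial $|X_\tau^+|\sim\Bi(|X_\tau|,s)$ to get $|X_\tau^+|/|X_\tau|=(1\pm\delta)s$; no infinite sums, no LLN along a random index set, no expectation matching. Your route is more explicit and pins down the proportionality constant directly from the geometric series, but needs more care to make rigorous, and two intermediate statements are wrong (though inessential to your final argument): first, the displayed formula $Y_\la = Y_\la^+\sum_j(\las/\la)^j\,\E(|X_j^-|\mid\cdots)$ double-counts the $\las^j$, since $\E(|X_j^-|)=\las^j$; the factor $(\las/\la)^j$ should just be $\la^{-j}$. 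Second, the tail estimate is stated in the wrong direction: dividing $\sum_{r=t-S}^t\las^{t-r}|X_r^+|$ by $\la^t$ gives $\approx Y_\la^+\sum_{j=0}^S(\las/\la)^j$, which is $\Theta(1)$ and \emph{increases} to $Y_\la^+/s$ as $S\to\infty$, not $O(\las^S)$; the genuine small tail is the contribution from the \emph{first} $t-S$ generations (i.e.\ $j>S$), which is $O((\las/\la)^S)$ in conditional expectation.
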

\begin{proof}
Fix $\delta>0$, and let $N=N(\delta)$ be a suitably chosen large integer.
{From} standard results, with probability 1, either $\bp_\la$ dies out,
or there is some minimal $t$ with $|X_t|\ge N$.
Choosing $N$ large enough, at this time $t$ the inequalities 
$||X_t|/\la^t-Y|\le \delta$ and
$||X_t^+|/\la^t-Y^+|\le \delta$ hold with probability at least $1-\delta$.
But $t$ is a stopping time, so given $t$ and $|X_t|$, each particle in $X_t$ survives
independently with probability $s$, and from the Chernoff bounds, provided $N$
was chosen large enough,
the ratio between $|X_t^+|$ and $|X_t|$ is within a factor $1\pm \delta$ of $s$
with probability at least $1-\delta$.

Hence, with probability at least $1-3\delta$ either $Y_\la^+=Y_\la=0$, or
$Y_\la^+ = (1+O(\delta))s Y_\la+O(\delta)$.
The result follows by letting $\delta\to 0$.
\end{proof}

Our ultimate aim is to estimate $\Pr(0<|X_t|<\omega)$ in the range of parameters
where this probability is very small (around $1/(\eps^2n)$, it will turn out).
Essentially, this reduces
to estimating the lower tail of $Y$; in the light of Lemma~\ref{YYp}, we may
study $Y^+$ instead. This turns out to be easier, since $(X_t^+)$ is in
some sense `better behaved' than $(X_t)$ when $\eps\to 0$.

When studying $\bp_\la^+=(X_t^+)$ it makes sense to condition on the event that $X_0^+$ is
non-empty, i.e., that $\bp_\la$ survives. Let us write
$\bpP_\la = (\XP_t)_{t\ge 0}$ for the conditioned process, i.e., a Galton--Watson
process with offspring distribution $Z_\la$ started with a single particle.
Let $\YP$ denote  $\lim_{t\to\infty} |\XP_t|/\la^t$, which exists a.s.
Thus $\YP$ is simply $Y^+$ conditioned on $Y^+>0$, up to a set of measure 0.
By standard results, $\YP$ is a continuous random variable
with strictly positive density on $(0,\infty)$.

It turns out that we will need both upper and lower tail bounds on $\YP=\YP_\la$.
The dependence of these bounds on $\eps=\la-1$ is very important. We start with
the upper tail.

\begin{lemma}\label{etail}
There is an absolute constant $c>0$ such that for any $1<\la<2$ and any $x>0$ we have
$\Pr(\YP_\la>x)\le 2e^{-cx}$.
\end{lemma}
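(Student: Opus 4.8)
The plan is to prove the exponential upper tail bound for $\YP_\la$ by exploiting the branching property in the form of a distributional fixed-point equation, combined with a moment-generating-function (Laplace transform) argument that is uniform in $\la\in(1,2)$. Write $\phi(u)=\E(e^{-u\YP_\la})$ for the Laplace transform of $\YP_\la$ (which is well-defined for $u\ge 0$); the upper tail bound $\Pr(\YP_\la>x)\le 2e^{-cx}$ will follow from a bound of the form $\phi(-u)=\E(e^{u\YP_\la})<\infty$ for $u$ up to some absolute constant $c$, via Markov's inequality applied to $e^{u\YP_\la}$. So the real goal is: there is an absolute $c>0$ such that $\E(e^{c\YP_\la})\le 2$ for all $1<\la<2$.

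First I would set up the fixed-point equation. Conditioning on the first generation of $\bpP_\la$, which has the $Z_\la$ offspring distribution (a $\Po(s\la)$ variable conditioned to be at least $1$), and using that the normalized limits of the independent subtrees are i.i.d.\ copies of $\YP_\la$, one gets $\YP_\la \overset{d}{=} \la^{-1}\sum_{i=1}^{Z_\la}\YP_\la^{(i)}$ with the $\YP_\la^{(i)}$ i.i.d.\ copies of $\YP_\la$ independent of $Z_\la$. Hence, writing $M(u)=\E(e^{u\YP_\la})$, we obtain the functional equation $M(u)=\E\bigl(M(u/\la)^{Z_\la}\bigr)=G_{Z_\la}\!\bigl(M(u/\la)\bigr)$, where $G_{Z_\la}$ is the probability generating function of $Z_\la$. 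Since $Z_\la$ is a conditioned Poisson, $G_{Z_\la}(z)=\dfrac{e^{s\la z}-1}{e^{s\la}-1}$. The strategy is then to show by a bootstrap/continuity argument that $M(u)\le 1+2u$, say, for all $0\le u\le c$, uniformly in $\la$: plug the ansatz $M(u/\la)\le 1+2u/\la$ into the right-hand side, use $s\la = -\log(1-s)\le 2s$ (valid for $s$ bounded away from $1$, which holds since $\la<2$) and the elementary inequality $e^{s\la z}-1 \le (e^{s\la}-1)e^{s\la(z-1)}\cdot(\text{something})$ — more precisely estimate $G_{Z_\la}(1+\eta)$ for small $\eta>0$ by a first-order Taylor expansion with controlled remainder, getting $G_{Z_\la}(1+\eta) \le 1 + \eta\, G_{Z_\la}'(1)(1+O(\eta))$ with $G_{Z_\la}'(1)=\E(Z_\la)=s\la/(1-e^{-s\la})=\la$ (this last identity is exactly the statement that the mean offspring of $\bp_\la^+$ is $\la$, consistent with $|X_t^+|/\la^t$ converging). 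Thus $M(u)\le 1 + (2u/\la)\cdot\la\cdot(1+O(u)) = 1+2u(1+O(u))$, which closes the induction for $u$ below an absolute constant, using that $M$ is finite near $0$ and right-continuous there (standard, since $\YP_\la$ has all the relevant integrability for small exponent — one can first prove finiteness of some positive-exponent MGF crudely, e.g.\ from the fact that $\bpP_\la$ is stochastically dominated in an appropriate sense, or just note $M$ is finite on a neighbourhood of $0$ because $\YP_\la$ has finite mean $1$ and the generating-function recursion propagates finiteness). Then $\Pr(\YP_\la > x) \le M(c)e^{-cx} \le 2e^{-cx}$.

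The main obstacle I anticipate is making the self-improvement step genuinely uniform in $\la$ as $\la\downarrow 1$, i.e.\ as $s\downarrow 0$. When $s$ is small, $G_{Z_\la}'(1)=\la$ is close to $1$ but $G_{Z_\la}''(1)=\E(Z_\la(Z_\la-1))= s\la^2\cdot\frac{1+\ldots}{\ldots}$ — one must check this second derivative (controlling the Taylor remainder in $G_{Z_\la}(1+\eta)$) stays bounded, which it does since $G_{Z_\la}''(1)=s\la^3/(1-e^{-s\la}) - \text{(lower order)} = O(\la^2) = O(1)$ for $\la<2$; the conditioning on being at least $1$ is harmless because it only removes the atom at $0$ and rescales by $1/(1-e^{-s\la})=1+O(s)$. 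The delicate point is that the contraction factor in the recursion is $1/\la = 1/(1+\eps)$, which is only barely less than $1$ when $\eps$ is small, so a naive iteration loses nothing per step; the fix is that we are not iterating to a fixed point but verifying a self-consistent a priori bound $M(u)\le 1+2u$ in one shot, for which the relevant inequality is $G_{Z_\la}(1+2u/\la)\le 1+2u$, i.e.\ (to first order) $\la\cdot(2u/\la) = 2u$ with a second-order correction of size $O(u^2)$ that is absorbed provided $u\le c$ for a suitably small absolute $c$ — no iteration needed, and hence no accumulation of the weak contraction. Once this is in place the $2e^{-cx}$ bound is immediate, and I would finally remark that the bound is stated for $x>0$ and is trivially true for $x$ near $0$ since the right-hand side exceeds $1$ there.
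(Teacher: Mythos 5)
Your overall plan is essentially the paper's: bound $\E(e^{\theta\YP_\la})$ uniformly in $\la\in(1,2)$ via the offspring generating function of $Z_\la$, then apply Markov's inequality. You also correctly compute $G_{Z_\la}'(1)=\la$. But there is a genuine gap in the ``one-shot'' bootstrap you propose, and your diagnosis of the obstacle is right while your proposed fix is not.

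The inequality you want, $G_{Z_\la}(1+2u/\la)\le 1+2u$, is \emph{false}: the Taylor expansion gives $G_{Z_\la}(1+\eta)=1+\la\eta+\tfrac12 G_{Z_\la}''(1)\eta^2+\cdots$, and with $\eta=2u/\la$ the linear term is exactly $2u$, so the right-hand side is $1+2u+(\text{positive quadratic term})$. The ansatz $M(u)\le 1+2u$ therefore does \emph{not} self-propagate in a single step, and no choice of the constant $c$ fixes this since the error is on the wrong side. Relatedly, your estimate $G_{Z_\la}''(1)=O(1)$ misses the key point that $G_{Z_\la}''(1)=s\la^2=O(\eps)$ (using $1-e^{-s\la}=s$); it is this extra factor of $\eps$, and not merely smallness of $u$, that makes the argument close.

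The paper works with the finite process $\XP_t$ rather than $\YP_\la$ directly and does an honest induction from generation $0$ upward. Setting $y_r=\E(e^{(\theta/\la^{t-r})|\XP_r|/\la^r})-1$, the recursion gives $y_{r+1}\le \la y_r(1+C_1\eps y_r)$, so that $y_s\le y_0\,\la^s\exp\bigl(C_1\eps\sum_{r<s}y_r\bigr)$. The induction hypothesis $y_r\le 2\theta/\la^{t-r}$ makes $\sum_r y_r$ a geometric series summing to $O(\theta/\eps)$, so the accumulated multiplicative error is $\exp(O(\theta))$, \emph{uniformly in $\eps$}, because the $\eps$ in front cancels the $1/\eps$ from the geometric sum. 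Finally, the initial value satisfies $y_0\le(\theta/\la^t)(1+\theta/\la^t)$, roughly half the inductive bound $2\theta/\la^t$, and this factor-of-two slack absorbs the bounded accumulated error. One then passes to the limit $t\to\infty$ via Fatou. Your fixed-point approach with $M(u)$ and $\YP_\la$ can be salvaged in a parallel way, but you must unwind the recursion all the way down to $u/\la^k\to 0$ (where $M(u)/u\to 1$), again using $s\sum_j\la^{-j}=O(1)$; the iteration cannot be avoided, and the claim that it can is the gap.
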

\begin{proof}
Recall that $Z_\la$ denotes a Poisson distribution with mean $s\la$ conditioned to be at least
$1$, where $s=s(\la)$ is the positive solution to $1-s=e^{-\la s}$.
Set
\[
 f_\la(x) = \E(x^{Z_\la}) = \sum_{k\ge 1} x^k \frac{(s\la)^ke^{-s\la}}{k!(1-e^{-s\la})}
 = \frac{(e^{xs\la}-1)e^{-s\la}}{s} = \frac{e^{(x-1)s\la}-e^{-s\la}}{s}.
\]
Note that $f_\la(1)=1$, and, expanding about $x=1$, we have
\[
 f_\la(x) = 1 +\la(x-1)+\frac{s\la^2}{2}(x-1)^2+\cdots = 1+\la(x-1) +O(s\la^2(x-1)^2),
\]
provided $s\la(x-1)$ is bounded.
More precisely, recalling that $s\sim 2\eps$ as $\la\to 1$, and that $\la\le 2$,
it is easy to check that if $0\le x\le 2$, say, then we have
\begin{equation}\label{fexp}
 f_\la(x) \le 1+\la(x-1) + C_1\eps(x-1)^2
\end{equation}
for some absolute constant $C_1$, which we shall take to be at least $1$.

Suppressing the dependence on $\la$ in the notation,
for $t\ge 0$ let $g_t(\theta)=\E(e^{\theta |\XP_t|/\la^t})-1$.
Since $|\XP_0|$ is always $1$, we have $g_0(\theta)=e^{\theta}-1=\theta+O(\theta^2)$
for $\theta$ bounded; in particular,
\begin{equation}\label{m0}
 g_0(\theta) \le \theta + \theta^2
\end{equation}
if $\theta\le 1$.

Given $N=|\XP_1|$,
the conditional distribution of $|\XP_{t+1}|$ is simply the sum of $N$
independent copies of $|\XP_t|$, so
\begin{multline*}
 g_{t+1}(\theta) = \E\bb{ \E(e^{\theta |\XP_{t+1}|/\la^{t+1}} \mid N) } -1
  = \E\bb{ \E(e^{\theta |\XP_t|/\la^{t+1}})^N } -1 \\
 = \E\bb{ (1+g_t(\theta/\la))^N }-1 = f_\la( 1+g_t(\theta/\la) ) -1,
\end{multline*}
since $N=|\XP_1|\sim Z_\la$.
With $\theta$ and $t$ fixed,
set $y_r=g_r(\theta/\la^{t-r})$, so $y_t=g_t(\theta)$,
$y_0=g_0(\theta/\la^t)$, and
$y_{r+1} = f_\la(1+y_r)-1$ for $0\le r\le t-1$.
{F}rom \eqref{fexp}, if $y_r\le 1$, then
\begin{equation}\label{yrat}
 y_{r+1} \le \la y_r + C_1\eps y_r^2 \le \la y_r(1+C_1\eps y_r).
\end{equation}
Suppose $\theta\le 1/(100C_1)\le 1/100$. Then we claim that
\begin{equation}\label{yrih}
 y_r\le 2\theta/\la^{t-r}
\end{equation}
holds for $r=0,1,\ldots,t$.
This is certainly true for $r=0$, since $y_0=g_0(\theta/\la^t)
\le (\theta/\la^t) (1+\theta/\la^t) \le 2\theta/\la^t$.
If \eqref{yrih} holds for $r=0,1,\ldots,s-1$, then in particular
$y_r\le 1$ for $r< s$, so from
\eqref{m0} and \eqref{yrat} we have
\[
 y_s = y_0\prod_{r<s}\frac{y_{r+1}}{y_r} \le \frac{\theta}{\la^t} (1+\theta/\la^t)
 \la^s\prod_{r<s} (1+C_1\eps y_r)
 \le \frac{\theta}{\la^{t-s}} \exp\left(\frac{\theta}{\la^t}+C_1\eps\sum_{r<s} y_r\right).
\]
Using \eqref{yrih} for $r<s$, we have
$\sum_{r<s} y_r \le \sum_{0\le r\le t} 2\theta/\la^{t-r} \le 2\theta\sum_{r\ge0}\la^{-r}
= 2\theta(1+\eps)/\eps$.
Since $\theta\le 1/(100C_1)$, \eqref{yrih} for $r=s$ follows, completing the
proof of \eqref{yrih} by induction.

Setting $r=t$ in \eqref{yrih}, we have in particular that
$y_t=g_t(\theta)\le 1/(50C_1)\le 1$ for $\theta\le \theta_0=1/(100C_1)$.
Hence the moment generating functions $\E(e^{\theta|\XP_t|/\la^t})$
are uniformly bounded by $2$ for all $1<\la\le 2$ and all $\theta\le \theta_0$.

With $\la$ fixed, we have $|\XP_t|/\la^t\to \YP$ a.s. By Fatou's Lemma,
it follows that
\[
 \E\bb{ e^{\theta \YP} } \le \liminf_{t\to\infty} \E\bb{ e^{\theta|\XP_t|/\la_t} }
 = \liminf_{t\to\infty} g_t(\theta) \le 2.
\]
Applying Markov's inequality, it follows that for any $x$ we have
$\Pr(\YP\ge x) \le 2e^{-\theta x}$, completing the proof of the lemma.
\end{proof}

Our main application of the upper tail bound above is to show that the sum
of many independent copies of $\YP$ is tightly concentrated.

\begin{lemma}\label{econc}
Let $c$, $A$ and $\delta$ be positive constants. There is a constant
$\alpha=\alpha(c,A,\delta)>0$
such that, if $Z$ is any random variable satisfying the tail
bound $\Pr(|Z-\E Z|>x)\le Ae^{-cx}$ for all $x>0$, and $S_n$
is the sum of $n$ independent copies of $Z$, then
\begin{equation*}
 \Pr\bb{|S_n/n -\E Z| \ge \delta} \le e^{-\alpha n}
\end{equation*}
for all $n\ge 1$.
\end{lemma}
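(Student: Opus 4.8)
The plan is a routine Chernoff / moment generating function argument; the only point that needs a little care is to keep the constants in the final bound depending on $A$, $c$ and $\delta$ alone, and not on any finer feature of the law of $Z$. Write $W=Z-\E Z$, so that $\E W=0$ and $\Pr(|W|>x)\le Ae^{-cx}$ for all $x>0$ (in particular $\E|Z|<\infty$, so $\E Z$ is well defined). Since $\{|S_n/n-\E Z|\ge\delta\}$ is contained in the union of $\{S_n-n\E Z\ge\delta n\}$ and $\{S_n-n\E Z\le-\delta n\}$, it suffices to bound these two events separately and add.

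The first step is to control $M(\theta):=\E(e^{\theta W})$ near $\theta=0$: I claim there is a constant $B=B(A,c)>0$, bounded by $16A/c^2$, with $M(\theta)\le e^{B\theta^2/2}$ for all $|\theta|\le c/2$. Using $\E W=0$, write $M(\theta)=1+\E(e^{\theta W}-1-\theta W)$, and apply the elementary inequality $|e^u-1-u|\le\tfrac12 u^2e^{|u|}$ with $u=\theta W$ to get $M(\theta)\le 1+\tfrac{\theta^2}{2}\E\bb{W^2e^{|\theta|\,|W|}}\le 1+\tfrac{\theta^2}{2}\E\bb{W^2e^{c|W|/2}}$ for $|\theta|\le c/2$. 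The quantity $B:=\E(W^2e^{c|W|/2})$ is finite and at most $16A/c^2$: writing it as $\E(h(|W|))$ with $h(v)=v^2e^{cv/2}$ increasing and $h(0)=0$, the layer-cake formula gives $\E(h(|W|))=\int_0^\infty h'(v)\Pr(|W|>v)\,dv\le A\int_0^\infty\bb{2v+\tfrac{c}{2}v^2}e^{-cv/2}\,dv=16A/c^2$. Hence $M(\theta)\le 1+B\theta^2/2\le e^{B\theta^2/2}$, as claimed; this also shows $M(\theta)<\infty$ throughout $|\theta|\le c/2$.

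Next, by independence of the $n$ summands, $\E\bb{e^{\theta(S_n-n\E Z)}}=M(\theta)^n\le e^{nB\theta^2/2}$ for $|\theta|\le c/2$, so Markov's inequality gives, for $0<\theta\le c/2$,
\[
 \Pr\bb{S_n-n\E Z\ge\delta n}\le e^{-\theta\delta n}\,e^{nB\theta^2/2}=\exp\bb{-n(\theta\delta-B\theta^2/2)}.
\]
Taking $\theta=\min\{\delta/B,\,c/2\}$ makes the exponent $-\beta n$ for some $\beta=\beta(A,c,\delta)>0$: if $\delta/B\le c/2$ then $\beta=\delta^2/(2B)$, and otherwise $\beta=c\delta/2-Bc^2/8>c\delta/4>0$, using $\delta>Bc/2$ in that case. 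The symmetric choice of $-\theta$ in place of $\theta$ gives the same bound for $\Pr(S_n-n\E Z\le-\delta n)$; adding the two yields $\Pr\bb{|S_n/n-\E Z|\ge\delta}\le 2e^{-\beta n}$, which is at most $e^{-\alpha n}$ for a suitable $\alpha=\alpha(A,c,\delta)>0$ once $n$ is large enough, and hence (after shrinking $\alpha$) throughout the range of $n$ in which we apply the lemma.

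I do not expect a genuine obstacle here: the content is entirely standard. The only mildly delicate issue is the uniformity of the constants over \emph{all} laws satisfying the hypothesis, and this is precisely what the explicit layer-cake estimate $B\le16A/c^2$ delivers. One could alternatively invoke Cram\'er's theorem — the exponential tail hypothesis guarantees a cumulant generating function finite in a neighbourhood of the origin — but the direct computation above is self-contained and makes the dependence of $\alpha$ on $A$, $c$, $\delta$ transparent.
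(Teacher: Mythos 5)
Your proof is correct and follows essentially the same route as the paper's: control the moment generating function of the centred increment near the origin, then apply the exponential Markov inequality with a small positive $\theta$. The difference is cosmetic: the paper works with the shifted function $\phi(\theta)=\E\, e^{\theta(Z-\mu-\delta)}$ and uses a Taylor expansion at $0$ (where $\phi(0)=1$, $\phi'(0)=-\delta$, $\phi''$ bounded) to see that $\phi(\theta)<1-c'\delta^2$ for a small $\theta$ of order $\delta$, whereas you bound $\E\, e^{\theta W}\le e^{B\theta^2/2}$ directly with an explicit $B\le 16A/c^2$ from the layer-cake formula; this is a small gain in explicitness rather than a new idea. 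One minor caution: the closing ``shrink $\alpha$'' remark does not truly absorb the factor $2$ for small $n$ (indeed there are laws satisfying the hypothesis for which $\Pr(|Z-\E Z|\ge\delta)=1$, so the bound with $n=1$ fails for any $\alpha>0$), but the paper's own write-up elides the same point, and it is harmless in all the applications, which take $n$ large.
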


\begin{proof}
For $|\theta|<c$,
let $\phi(\theta)=\ex(e^{\theta(Z-\mu -\delta)})$ where $\mu=\ex Z$;
the tail bound on $Z$ implies that $\phi(\theta)$ is finite.
Then $\phi(0)=1$,  $\phi'(0)=-\delta$, and 
$$
\phi''(\theta) = \ex\bb{(Z-\mu-\delta)^2e^{\theta(Z-\mu-\delta)}},
$$
which is bounded by a constant due to the tail bound. Hence
there are positive constants $c''$ (which we may take smaller than $c/\delta$) and $c'$ such that
$\phi(\theta)<1-c'\delta^2$ when $\theta=c''\delta$. Now with $Z_1,\ldots,Z_n$ independent
copies of $Z$ and $S_n=\sum Z_i$, we have
\begin{eqnarray*}
\pr(S_n \ge  n(\mu +\delta)) 
&\le&  \ex e^{\theta  S_n} e^{-\theta(\mu +\delta)n}\\
&=&  \ex e^{\sum_i \theta (Z_i  -\mu -\delta) }\\
&=&\phi(\theta)^n\quad\mbox{by independence of the $Z_i$}\\
&<& (1-c'\delta^2)^n < e^{-c'\delta^2n}.
\end{eqnarray*}
An exponential upper bound on $\pr(S_n \le n(\mu -\delta))$ is obtained   by considering $\hat\phi(\theta) =\ex(e^{\theta( \mu -\delta-Z)})$ similarly.
\end{proof}
 
Using Lemma~\ref{econc} it is easy to show that up to an error probability
that is exponentially small in $\omega$, the martingale $|\XP_t|/\la^t$
has essentially converged to its (almost sure) limit $\YP$ by the time 
that $|\XP_t|$ first reaches size $\omega$. As before, it is crucial
that the concentration we obtain is uniform in $\la$ as $\la\downto 1$.

\begin{lemma}\label{converged+}
Let $0<\delta<1$, $1<\la \le 2$ and $\omega\ge 1$ be given,
let $t_\omega=\min\{t:|\XP_t|\ge \omega\}$,
whenever this is defined,
and let $E$ be the event that $|\XP_t|/\la^t$ is within
a factor $1\pm \delta$ of $\YP$
for all $t\ge t_\omega$.

Then $t_\omega$ is defined with probability 1, and
$\Pr(E)=1-e^{-\Omega(\omega)}$, where the implicit constant depends on
$\delta$ but not on $\la$.
\end{lemma}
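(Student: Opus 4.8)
The plan is to dispose of two easy points and then reduce the statement to a clean martingale concentration estimate that holds for all times at once. First, $t_\omega$ is finite almost surely: since $|\XP_t|/\la^t\to\YP$ a.s.\ and $\YP>0$ a.s., we have $|\XP_t|\to\infty$ a.s., so $|\XP_t|\ge\omega$ eventually. Next, condition on the (measurable) event $\{t_\omega=t,\ |\XP_t|=N\}$ for some $t\ge0$ and integer $N\ge\omega$. Conditionally, $(V_s)_{s\ge0}:=(|\XP_{t+s}|)_{s\ge0}$ is a Galton--Watson process with offspring law $Z_\la$ started from $N$ particles, $W_s:=\la^{-s}V_s$ is a nonnegative martingale with $W_0=N$ and $W_s\to W_\infty$ a.s., and $\YP=\la^{-t}W_\infty$. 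Since the factor $\la^{-t}$ cancels, the event $E$ becomes $\{W_s\in(1\pm\delta)W_\infty\ \text{for all}\ s\ge0\}$. Hence it suffices to prove, uniformly over $1<\la\le2$, $N\ge1$ and $t\ge0$, that for each fixed $\eta\in(0,1)$
\begin{equation}\label{eq:Wconc-plan}
 \Pr\bb{W_s\in(1\pm\eta)N\ \text{for all}\ 0\le s\le\infty}\ \ge\ 1-e^{-c(\eta)N}
\end{equation}
with $c(\eta)>0$ depending on $\eta$ but not on $\la$; taking $\eta=\delta/3$ (so that $\tfrac{1+\eta}{1-\eta}\le1+\delta$ and $\tfrac{1-\eta}{1+\eta}\ge1-\delta$) and recalling $N\ge\omega$ then yields the lemma. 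Note that $W_\infty\in[(1-\eta)N,(1+\eta)N]$ automatically whenever $W_s$ lies there for all finite $s$, since $W_s\to W_\infty$.

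The key input is an estimate on the moment generating function of $Z_\la$. Writing $\psi(u)=\E(e^{uZ_\la})$, a routine expansion of the cumulant generating function $\log\psi$ --- using $s=s(\la)=\Theta(\eps)$, so that $Z_\la$ has mean exactly $\la$ while all its central moments are $O(\eps)$ (this can also be read off from \eqref{fexp}) --- gives absolute constants $C,u_0>0$ with
\begin{equation}\label{eq:psi-plan}
 |\log\psi(u)-\la u|\ \le\ C\eps u^2\qquad(|u|\le u_0).
\end{equation}
It is crucial that the coefficient of $u^2$ here is $O(\eps)$: this is exactly what cancels the factor $1/\eps$ coming from the geometric series $\sum_s\la^{-s}$ below, and it is the reason the bound \eqref{eq:Wconc-plan} is uniform in $\la$ as $\la\downto1$. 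Since $\la^{-(s+1)}\le1$, \eqref{eq:psi-plan} gives, for every $\theta\in(0,u_0]$ and $s\ge0$,
\begin{equation}\label{eq:step-plan}
 \E\bb{e^{\pm\theta W_{s+1}}\bigm|\mathcal F_s}\ =\ \psi\bb{\pm\theta\la^{-(s+1)}}^{V_s}\ \le\ \exp\bb{\pm\theta W_s+C\eps\theta^2\la^{-s-2}W_s},
\end{equation}
using $\la^{-s}V_s=W_s$ and $\la^{-2(s+1)}V_s=\la^{-s-2}W_s$.

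Now I would run an exponential-supermartingale argument based on \eqref{eq:step-plan}. Set $\beta_s=2CN\eps\theta^2\sum_{s'<s}\la^{-s'-2}$, a deterministic increasing sequence with $\beta_\infty\le 2CN\eps\theta^2/(\la(\la-1))=2CN\theta^2/\la\le2CN\theta^2$ (this is where $\sum_{s'}\la^{-s'}\sim1/\eps$ meets the $\eps$ in \eqref{eq:step-plan}). For the upper tail let $\tau=\inf\{s:W_s>(1+\eta)N\}$; then $W_s\le2N$ for $s<\tau$, so by \eqref{eq:step-plan} the process $Y_s=\exp(\theta W_{s\wedge\tau}-\beta_s)$ is a supermartingale, whence $\Pr(\tau\le s)\exp(\theta(1+\eta)N-2CN\theta^2)\le\E Y_s\le\E Y_0=e^{\theta N}$; choosing $\theta=\min(u_0,\eta/(4C))$ and letting $s\to\infty$ gives $\Pr(\tau<\infty)\le e^{-\theta\eta N/2}$. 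For the lower tail, let $\sigma=\inf\{s:W_s>2N\}$ (so $\Pr(\sigma<\infty)\le e^{-\Omega(N)}$ by the upper bound applied with $\eta=1$) and $\tilde\tau=\inf\{s:W_s<(1-\eta)N\}$; on $\{s<\tilde\tau\wedge\sigma\}$ we have $(1-\eta)N\le W_s\le2N$, so $\exp(-\theta W_{s\wedge\tilde\tau\wedge\sigma}-\beta_s)$ is a supermartingale and the same computation yields $\Pr(\tilde\tau<\sigma)\le e^{-\theta\eta N/2}$, hence $\Pr(\tilde\tau<\infty)\le e^{-\theta\eta N/2}+e^{-\Omega(N)}$. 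Combining the two tails proves \eqref{eq:Wconc-plan} with, say, $c(\eta)=\tfrac13\min(\theta\eta,\,\theta^{(1)})$ where $\theta^{(1)}=\min(u_0,1/(4C))$, which depends only on $\eta$. (One could alternatively obtain the $s=\infty$ case --- concentration of $W_\infty=\sum\YP^{(j)}$ --- directly from Lemma~\ref{econc}, but here it comes for free as a limit.)

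I expect the main obstacle to be precisely what the supermartingale argument is designed to overcome: getting a concentration bound valid for \emph{all} generations $s$ simultaneously and \emph{uniformly} as $\la\downto1$. The naive split --- a union bound over the early generations together with a second-moment bound for the later ones --- fails when $\eps$ is extremely small, because the number of ``early'' generations (those in which $W_s$ has not yet essentially converged) is of order $\eps^{-1}\log\La$, which can greatly exceed $e^{\omega}$. The martingale argument sidesteps this entirely, and its $\la$-uniformity rests on the cancellation between the geometric factor $\sum_s\la^{-s}\sim\eps^{-1}$ and the $O(\eps)$ conditional variance of $Z_\la$ recorded in \eqref{eq:psi-plan}.
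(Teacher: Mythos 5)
Your proof is correct, and it takes a genuinely different route from the paper's. The paper fixes the stopping time $t_\omega$, writes $\YP=\la^{-t_\omega}\sum_{i=1}^{m}\YP_i$ for i.i.d.\ copies $\YP_i$ of $\YP$, and applies the tail bound of Lemma~\ref{etail} and the Chernoff-type concentration of Lemma~\ref{econc} to that sum; it then rules out a large deviation of $|\XP_t|/\la^t$ at some \emph{later} time by repeating the i.i.d.\ decomposition at a second stopping time $t_1$ (the events $B_-$, $B_+$). Your argument instead works with the rescaled process $W_s=|\XP_{t_\omega+s}|/\la^s$ directly and runs a single exponential-supermartingale bound that controls $W_s$ uniformly in $s$ in one shot, with the concentration of $W_\infty$ coming for free as a limit; the two stopping times of the paper are replaced by the stopped-process trick with $\tau,\tilde\tau,\sigma$. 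The key structural observation is the same in both cases -- the second cumulant of $Z_\la$ is $\Theta(\eps)$, which precisely cancels the $\Theta(\eps^{-1})$ geometric sum $\sum_s\la^{-s}$ -- but in the paper this cancellation is absorbed into the $\la$-uniform constant of Lemma~\ref{etail}, whereas you make it explicit in the bound $\beta_\infty\le 2CN\theta^2$. Your route is more self-contained (it bypasses Lemma~\ref{econc} and the i.i.d.\ decomposition at a second stopping time), at the modest cost of re-deriving an MGF estimate for $Z_\la$ that is closely parallel to what the paper already proves en route to Lemma~\ref{etail}. One tiny point of care you handle correctly: when bounding the lower tail you stop the supermartingale at $\sigma=\inf\{s:W_s>2N\}$ as well as at $\tilde\tau$, since the supermartingale increment bound needs $W_s\le 2N$; the paper's corresponding manoeuvre is to bound $\sum\YP_i$ by $\sum_{i\le\floor{m_0}}\YP_i$ before invoking Lemma~\ref{econc}.
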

\begin{proof}
The sequence $|\XP_t|$ is non-decreasing, and increases with probability
bounded away from zero (at least $\Pr(Z_\la>1)$) at each step, so $\XP_t\to\infty$
a.s., and $t_\omega$ is indeed defined with probability $1$.

Let $A$ be the event
\[
 A= \left\{ (1-\delta/10) |\XP_{t_\omega}|/\la^{t_\omega} 
 \le \YP
 \le (1+\delta/10) |\XP_{t_\omega}|/\la^{t_\omega}
 \right\}.
\]
Our first aim is to show that $A$ is very likely to hold.
Let us condition on the event $t_\omega=t$, where $t\ge 0$,
and also on $|\XP_t|$. Since $t_\omega$ is a stopping time, given
that $t_\omega=t$ and $|\XP_t|=m$, the descendants of the $m\ge \omega$
particles in $\XP_t$ form independent copies of the original process.
Let $n_{t',i}$ denote the number of descendants in generation $t'$
of the $i$th particle in $\XP_t$. Then for each $i$ we have
$n_{t',i}/\la^{t'-t}\to \YP_i$ a.s., where the $\YP_i$ are independent
and have the distribution of $\YP$.
It follows that $\YP=\sum_{i=1}^m \YP_i / \la^{t_\omega}$ a.s.

Now $\YP$ has mean $1$, and $m\ge \omega$.
Applying  Lemmas~\ref{etail} and~\ref{econc}, we see that
\[
\Pr\left( \left|\frac{1}{m}\sum_{i=1}^m\YP_i -1\right|\ge \delta/10 \right) 
=e^{-\Omega(m)}= e^{-\Omega(\omega)},
\]
so $\Pr(A)=1-e^{-\Omega(\omega)}$.

Let $B_-$ be the event that $t_\omega$ is defined, and there is some
$t>t_\omega$ for which $|\XP_t|/|\XP_{t_\omega}| \le (1-\delta/2) \la^{t-t_\omega}$.
If $B_-$ holds, let $t_1$ be the first such time.
Then $t_1$ (which is not always defined) is again a stopping time so, arguing as above,
given that $t_1=t$ and $|\XP_{t_1}|=m$, we have
$\YP=\sum_{i=1}^m \YP_i/ \la^{t_1}$, where the $\YP_i$ are iid with the distribution
of $\YP$. This also holds if we condition on the entire history up to time
$t_1$, and in particular on $t_\omega$ and $r=|\XP_{t_\omega}|$.

By definition of $B_-$ we have $m\le m_0=(1-\delta/2)\la^{t-t_\omega} r$,
so recalling that the $\YP_i$ are independent copies of $\YP$,
\[
 \Pr\left( \sum_{i=1}^m \YP_i \ge (1+\delta/10)m_0 \right) 
 \le 
 \Pr\left( \sum_{i=1}^{\floor{m_0}} \YP_i \ge (1+\delta/10)m_0 \right) 
 = e^{-\Omega(m_0)} = e^{-\Omega(\omega)},
\]
using Lemma~\ref{econc} and the fact that $\la^{t-t_\omega} r\ge r\ge \omega$
for the last step. If $B_-$ and $A$ both hold, then the event appearing on the left
above also holds, so we have shown that $\Pr(A\mid B_-)=e^{-\Omega(\omega)}$.
Hence, $\Pr(A\cap B_-)\le \Pr(A\mid B_-)=e^{-\Omega(\omega)}$.

Define $B_+$ to be the event that there is some $t>t_\omega$
for which  $|\XP_t|/|\XP_{t_\omega}| \ge (1+\delta/2) \la^{t-t_\omega}$.
A similar but simpler argument shows that $\Pr(A\cap B_+)=e^{-\Omega(\omega)}$.
Hence with probability $1-e^{-\Omega(\omega)}$  the event $A$ holds, while
neither $B_-$ nor $B_+$ does, and the lemma follows.
\end{proof}

Returning to the original branching process $\bp_\la=(X_t)$, recall that
this survives with probability $s=s(\la)=\Theta(\eps)$, where $\la=1+\eps$.
Recall also that $X_t/\la^t\to Y$ a.s., where by standard
results $\E(Y)=1$, and $Y=0$ if and only if the process
dies out, so $\Pr(Y\ne 0)=s$. Also,  recalling that $\YP$ has the distribution
of $Y^+$ conditioned on $Y^+>0$,   Lemma~\ref{YYp} implies that
the distribution of $sY$ given that $Y\ne 0$ is exactly the distribution
of $\YP$. 

The next lemma will be similar to Lemma~\ref{converged+}, but concerning $\bp_\la=(X_t)$.
This will lead us to consider the sum
$S_N$ of $N$ independent copies $Y_i$ of $Y$, for $\omega$ large and $N\ge \omega/\eps$.
Given $0<\delta<1$,
from concentration of the binomial distribution,
with probability $1-e^{-\Omega(\omega)}$
the number $M$ of $i$ with $Y_i\ne 0$ is within a factor $1\pm \delta$ of its
mean $sN=\Omega(\omega)$.
Conditional on $M$, the variable $sS_N$ is the sum of $M$ independent 
copies of $sY$ each conditioned to be positive,
or equivalently of $M$ independent copies of $\YP$,
so by Lemma~\ref{econc}, with probability $1-e^{-\Omega(M)}$
this sum is within a factor $1\pm \delta$ of
its mean $M$. It follows that with probability $1-e^{-\Omega(\omega)}$
we have $|S_N/N-1|\le 3\delta$, say.
Using this fact in place of concentration of the sum of $\omega$
copies of $\YP$, the proof of Lemma~\ref{converged+} gives the following result, which is more or less a sharpening of Lemma~\ref{l:X}. 
Recall that $Y=\lim_{t\to\infty}|X_t|/\la^t$.

\begin{lemma}\label{converged}
Let $\delta>0$, $1<\la \le 2$ and $\omega\ge 1$ be given, and set $\eps=\la-1$.
Let $t_{\omega/\eps}=\min\{t:|X_t|\ge \omega/\eps\}$,
whenever this is defined, let $\tdef$ be the event that $t_{\omega/\eps}$
is defined, 
let $E\subset \tdef$ be the event that $|X_t|/\la^t$ is within a factor
$1\pm \delta$ of $Y$ for all $t\ge t_{\omega/\eps}$, and
let $\surv=\{\forall t:|X_t|>0\}$
be the event that the process survives.

Then $\Pr(E\mid \tdef)=1-e^{-\Omega(\omega)}$, where the implicit constant depends on
$\delta$ but not on $\la$.
Furthermore, $\Pr(\surv\mid\tdef)=1-e^{-\Omega(\omega)}$
and $\Pr(\tdef\setminus E)=O(\eps e^{-\Omega(\omega)})$.
\end{lemma}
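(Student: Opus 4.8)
The plan is to run the proof of Lemma~\ref{converged+} almost verbatim, making three substitutions: the conditioned process $\bpP_\la=(\XP_t)$ is replaced by $\bp_\la=(X_t)$, the size threshold $\omega$ by $\omega/\eps$, and the concentration of a sum of $\omega$ independent copies of $\YP$ by the concentration statement for $S_N$, the sum of $N\ge\omega/\eps$ independent copies of $Y$, established in the paragraph preceding the lemma (namely $\Pr(|S_N/N-1|\ge\delta)=e^{-\Omega(\omega)}$, uniformly in $\la$). Write $\tau=t_{\omega/\eps}$ and condition on $\tdef$, on $\tau=t$ and on $|X_\tau|=m$, so $m\ge\omega/\eps$. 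Since $\tau$ is a stopping time, the $m$ particles of $X_\tau$ spawn independent copies of $\bp_\la$, so $Y=\la^{-\tau}S_m$ with $S_m$ a sum of $m$ independent copies of $Y$; as $\E Y=1$, the $S_N$-fact shows that the event $A$ that $Y$ lies within a factor $1\pm\delta/10$ of $|X_\tau|/\la^\tau$ satisfies $\Pr(A\mid\tdef)=1-e^{-\Omega(\omega)}$.

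Next, exactly as in Lemma~\ref{converged+}, let $B_-$ (respectively $B_+$) be the event that $\tdef$ holds and there is some $t>\tau$ with $|X_t|/|X_\tau|\le(1-\delta/2)\la^{t-\tau}$ (respectively $\ge(1+\delta/2)\la^{t-\tau}$), and let $t_1$ be the first such $t$. Conditioning on the history up to the stopping time $t_1$, again $Y=\la^{-t_1}S_{m'}$ with $m'=|X_{t_1}|$ and, on $B_-$, with $m'\le m_0:=(1-\delta/2)\la^{t_1-\tau}|X_\tau|\ge\omega/(2\eps)$. On $A$ one has $Y\ge(1-\delta/10)|X_\tau|/\la^\tau$, which forces $S_{m'}\ge(1+\delta/3)m_0$ (since $(1-\delta/10)/(1-\delta/2)\ge1+\delta/3$ for $\delta<1$); as $S_N$ is non-decreasing in $N$ and $m'\le m_0$, this puts us in the event $\{S_{\lfloor m_0\rfloor}\ge(1+\delta/3)\lfloor m_0\rfloor\}$, which by the $S_N$-fact (now with $N=\lfloor m_0\rfloor=\Omega(\omega/\eps)$) has probability $e^{-\Omega(\omega)}$. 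Hence $\Pr(A\cap B_-\mid\tdef)=e^{-\Omega(\omega)}$, and $B_+$ is handled by the symmetric, easier argument. Since the usual arithmetic with the $\delta$'s gives $A\cap B_-^\cc\cap B_+^\cc\subseteq E$, we conclude $\Pr(E^\cc\mid\tdef)\le\Pr(A^\cc\mid\tdef)+\Pr(A\cap B_-\mid\tdef)+\Pr(A\cap B_+\mid\tdef)=e^{-\Omega(\omega)}$, the first assertion.

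The remaining two assertions are short. Conditioning again on $\tdef$, $\tau=t$ and $|X_\tau|=m\ge\omega/\eps$, the process fails to survive only if all $m$ subtrees die out, an event of probability $(1-s)^m\le e^{-sm}\le e^{-\Omega(\omega)}$, using $s=\Theta(\eps)$ from~\eqref{sasympt}; this gives $\Pr(\surv\mid\tdef)=1-e^{-\Omega(\omega)}$. For the last assertion, recall that $Y>0$ almost surely on $\surv$, so $|X_t|\to\infty$ there and $\surv\subseteq\tdef$ up to a null set; hence $\Pr(\tdef)=\Pr(\surv)/\Pr(\surv\mid\tdef)=\Theta(\eps)/(1-e^{-\Omega(\omega)})=O(\eps)$ for $\omega\ge1$, and therefore $\Pr(\tdef\setminus E)=\Pr(\tdef)\Pr(E^\cc\mid\tdef)=O\bb{\eps e^{-\Omega(\omega)}}$.

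The only point needing care beyond the proof of Lemma~\ref{converged+} is the treatment of $B_-$: unlike $\bpP_\la$, the unconditioned process $\bp_\la$ may be on the verge of extinction at the dip time $t_1$, so $m'=|X_{t_1}|$ need not be large and one cannot invoke concentration of $S_{m'}$ directly. The resolution, as above, is to use $A$ to bound $S_{m'}$ below by a definite multiple of the large quantity $m_0$ (which is determined by the history up to $t_1$), and then to exploit monotonicity of $S_N$ in $N$ to reduce to a single concentration estimate at $N=\lfloor m_0\rfloor$. Everything else is bookkeeping: propagating the $\delta$-factors through the inequalities and checking that all implicit constants depend only on $\delta$, which holds because Lemmas~\ref{Chern}, \ref{etail} and~\ref{econc}, and hence the $S_N$-fact, are uniform over $1<\la\le2$.
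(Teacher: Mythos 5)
Your proof is correct and follows the same route the paper intends: condition on the stopping time $\tau=t_{\omega/\eps}$ and $|X_\tau|=m$, decompose $Y=\la^{-\tau}S_m$ into a sum of $m$ independent copies of $Y$, invoke the $S_N$-concentration established in the paragraph preceding the lemma (which combines the Chernoff bound on the number of nonzero $Y_i$ with Lemma~\ref{econc} applied to $\YP$), and then repeat the $B_\pm$ dichotomy from Lemma~\ref{converged+}, with the second and third assertions disposed of by the $(1-s)^m$ observation and by $\Pr(\tdef)=O(\eps)$. This is precisely the content of the paper's one-sentence proof, which simply says to rerun the argument of Lemma~\ref{converged+} with those substitutions. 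The one point you flag as novel---that $m'=|X_{t_1}|$ need not be large in the unconditioned process---is a genuine difference from $\bpP_\la$ (which is monotone), but the resolution you adopt, bounding $S_{m'}$ by $S_{\lfloor m_0\rfloor}$ via monotonicity of $S_N$ in $N$ and applying concentration at $N=\lfloor m_0\rfloor$, is exactly the device already present in the paper's proof of Lemma~\ref{converged+}, so it carries over without modification; your remark is a correct and useful clarification of why that device is now essential rather than merely convenient.
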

\begin{proof}
The first statement follows by modifying the proof of Lemma~\ref{converged+} as described
above. The second is an immediate consequence (and also easy to verify directly).
It implies in particular that $\Pr(\surv)/\Pr(\tdef)$ is bounded below,
so $\Pr(\tdef)=O(\Pr(\surv))=O(\eps)$. The final statement then follows from the first.
\end{proof}

Lemma~\ref{converged} tells us that for $\omega$ large enough,
the probability that the branching process takes much longer than
expected to reach size $\omega/\eps$ is essentially determined
by the tail of the distribution of $Y=Y_\la$ near $0$. 
Lemma~\ref{converged+} will be useful in studying this tail indirectly.

Writing $\RP$ for the set of non-negative reals,
let $(\Y_t)_{t\in \RP}$ be a standard Yule process. Thus
$\Y_0$ consists of a single particle, and each particle in the process
survives forever and gives rise to children according to a Poisson process
with rate 1, independently of the other particles and of the history.
Note that $|\Y_t|$ is a (random) non-decreasing function of $t$,
and that $\E(|\Y_t|)=e^t$.
It is well known that $\lim_{t\to\infty} |\Y_t|/e^t$
exists with probability~1 (see, for example,~\cite[Section III.7]{AN});
we denote this (random) limit by $W$.

It is not hard to see that as $\la$ decreases to $1$, the suitably
rescaled process $\bpP_\la$ converges in some sense to $(\Y_t)$.
All we shall need is a very weak result of this form.

\begin{lemma}\label{yule}
Let $T>0$ be fixed. As $\la=1+\eps$ tends to 1 from above, the distribution
of $|\XP_{\floor{T/\eps}}|$ converges to that of $|\Y_T|$.
\end{lemma}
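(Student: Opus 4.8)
The plan is to prove convergence in distribution by showing convergence of the associated probability generating functions (or, equivalently, by a direct coupling argument); I will sketch the generating-function route since the recursion for $\bpP_\la$ is already set up in the proof of Lemma~\ref{etail}. Fix $T>0$ and write $n_\eps=\floor{T/\eps}$. For $|x|\le 1$ let $h_t^{(\eps)}(x)=\E(x^{|\XP_t|})$ be the generating function of the $t$th generation of $\bpP_\la$, and let $h_t(x)=\E(x^{|\Y_t|})$ be that of the Yule process at time $t$; recall $h_t(x)=x e^{-t}/(1-(1-e^{-t})x)$, the generating function of a geometric distribution with mean $e^t$. It suffices to show $h_{n_\eps}^{(\eps)}(x)\to h_T(x)$ for each fixed $x\in[0,1)$.

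First I would record the one-step recursion. As in the proof of Lemma~\ref{etail}, conditioning on the first generation gives $h_{t+1}^{(\eps)}(x)=f_\la(h_t^{(\eps)}(x))$, where $f_\la(x)=\bb{e^{(x-1)s\la}-e^{-s\la}}/s$ is the generating function of $Z_\la$, and $h_0^{(\eps)}(x)=x$. The key estimate is the Taylor expansion of $f_\la$ about $x=1$: writing $u=x-1$ and using $s=2\eps+O(\eps^2)$, $\la=1+\eps$,
\[
 f_\la(1+u) = 1 + \la u + \frac{s\la^2}{2}u^2 + O(s^2 u^3)
 = 1 + u + \eps\bb{u + u^2} + O(\eps^2 (|u|+|u|^2+|u|^3)),
\]
uniformly for $u$ in any fixed bounded set. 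Thus, setting $v_t = h_t^{(\eps)}(x)-1$ (which stays in $[-1,0]$, so all error terms are uniformly controlled), we have $v_{t+1} = v_t + \eps(v_t+v_t^2) + O(\eps^2)$, i.e. the iteration approximately solves the autonomous ODE $v' = v+v^2$ on the time scale $t\mapsto \eps t$. The continuous solution with $v(0)=u_0:=x-1$ is exactly $v(\tau) = u_0 e^{\tau}/(1-u_0(e^\tau-1))$, and evaluating at $\tau=T$ and adding $1$ recovers $h_T(x)$ (one checks $1+v(T)=xe^{-T}/(1-(1-e^{-T})x)$ after the substitution $u_0=x-1$). So the content of the proof is a standard Euler-scheme estimate: the discrete iteration $v_{t+1}=v_t+\eps\,G(v_t)+O(\eps^2)$, run for $n_\eps=\floor{T/\eps}$ steps with $G(v)=v+v^2$ Lipschitz on the compact invariant interval $[-1,0]$, stays within $O(\eps)$ of the ODE solution $v(\eps t)$; hence $v_{n_\eps}\to v(T)$ and $h_{n_\eps}^{(\eps)}(x)\to h_T(x)$ as $\eps\to0$. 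Since this holds for every $x\in[0,1)$ and the limit is a genuine probability generating function (that of a geometric law), convergence of generating functions on $[0,1)$ yields convergence in distribution of $|\XP_{\floor{T/\eps}}|$ to $|\Y_T|$.

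The only mildly delicate point is the uniform control of the accumulated error over $n_\eps=\Theta(1/\eps)$ steps: each step contributes an $O(\eps^2)$ additive error and an $O(\eps)$ multiplicative distortion relative to the ODE flow, and one must check these do not blow up. This is routine because $G$ is Lipschitz with a fixed constant on $[-1,0]$ and the flow map of $v'=G(v)$ over time $T$ is Lipschitz there, so a discrete Gr\"onwall argument gives total error $O(\eps)\cdot e^{O(T)} = O_\eps(1)$; the invariance of $[-1,0]$ (immediate since $0\le h_t^{(\eps)}(x)\le 1$ for $x\in[0,1]$) is what keeps all the $O(\cdot)$ constants absolute. One could alternatively avoid generating functions entirely and couple $\bpP_\la$ with a Yule process directly, embedding each discrete generation step as a time-$\eps$ slice of a continuous-time branching process and controlling the discrepancy between the $\Bi$-type offspring mechanism of $\XP$ and the Poisson splitting of $\Y$; but the generating-function computation above is shorter and self-contained given what is already in the paper. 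I expect the write-up to be no more than a few lines once the expansion of $f_\la$ is in hand.
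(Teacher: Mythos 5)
Your proof is correct, but it takes a genuinely different route from the paper's. The paper's proof is a direct coupling: it snapshots the Yule process at time increments of $\eps$, observes that the one-step offspring law of these snapshots agrees with $Z_\la$ up to an $O(\eps^2)$ total-variation error, and concludes by a union bound over the $O(1/\eps)$ steps that the two processes can be coupled to agree on an event of probability $1-O(\eps)$. Your approach works instead with probability generating functions, observing that $h_{t+1}^{(\eps)}=f_\la\circ h_t^{(\eps)}$ together with the expansion $f_\la(1+u)=1+u+\eps(u+u^2)+O(\eps^2)$ turns the iteration into an Euler scheme for $v'=v+v^2$ on time scale $\eps t$; the geometric law of $|\Y_T|$ then appears as the explicit solution of that Bernoulli ODE. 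Both arguments are valid and about the same length. The coupling argument is more elementary and is the one most naturally suggested by the surrounding text (and it delivers an actual coupling, which is sometimes more robust to reuse); the generating-function argument is more analytic, exploits machinery already introduced in the proof of Lemma~\ref{etail}, and makes the limit law transparent rather than importing it from the literature. One small inaccuracy in your write-up: the single-step expansion should be
\[
 f_\la(1+u)=\la(u+1)+\tfrac{s\la^2}{2}(u^2-1)+O(s^2),
\]
uniformly for $u\in[-1,0]$, where the two ``extra'' constants $\la$ and $-\tfrac{s\la^2}{2}$ combine with $f_\la(1)=1$ to give the constant $1+O(\eps^2)$; your displayed form $1+\la u+\tfrac{s\la^2}{2}u^2+O(s^2u^3)$ is imprecise about the constant term and about where the $u$-dependence of the error enters, though the final form $1+u+\eps(u+u^2)+O(\eps^2)$ that you actually use is correct. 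With that cleaned up, the Euler/Gr\"onwall estimate over $\floor{T/\eps}$ steps is routine exactly as you describe, and convergence of the generating functions on $[0,1)$ to the bona fide generating function $xe^{-T}/(1-(1-e^{-T})x)$ gives convergence in distribution.
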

\begin{proof}
We take snapshots of $(\Y_t)$ at times separated by $\eps$, i.e.,
consider $Y_n=\Y_{n\eps}$, $n=0,1,\ldots,T$.
Each particle $x$ in $Y_n$ always survives to $Y_{n+1}$, has no children
in $Y_{n+1}$ with probability $\Pr(\Po(\eps)=0)=e^{-\eps}=1-\eps+O(\eps^2)$,
and has exactly
one child in $Y_{n+1}$ with probability $\Pr(\Po(\eps)=1)=\eps+O(\eps^2)$.
Furthermore, the probability that this child (if it exists) has children
of its own by time $(n+1)\eps$ is $O(\eps)$. Hence, the number $Z'$ of descendants
of $x$ in $Y_{n+1}$ is $1$ with probability $1-\eps+O(\eps^2)$, two with probability
$\eps+O(\eps^2)$ and three or more with probability $O(\eps^2)$.
Hence $Z'$ and $Z_\la$, the offspring distribution in $\bpP_\la$, can be
coupled to agree with probability $1-O(\eps^2)$.

Using the independence properties of $\bpP_\la$ and of $(\Y_t)$, it follows
that these processes can be coupled so that the event
$E=\{ |\XP_n|=|Y_n|,\, n=0,1,\ldots,\floor{T/\eps}\}$
{\em fails} to hold with probability at most
\[
 O(\eps^2)\sum_{n\le T/\eps}\E(|Y_n|)= O(\eps^2) \sum_{n\le T/\eps} e^{\eps n}
 = O(\eps) e^T = O(\eps).
\]
Since $\Y_{\eps \floor{T/\eps}} = \Y_T$ with probability $1-O(\eps)$, the result follows.
\end{proof}

\begin{corollary}\label{Ycor}
As $\la=1+\eps$ tends to $1$ from above, $\YP_\la$ converges in distribution to $W$.
\end{corollary}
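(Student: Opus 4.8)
The plan is to compare $\YP_\la$ with the finite-time quantity $|\XP_N|/\la^N$, where $N=N(\eps)=\floor{T/\eps}$ for a large fixed constant $T$, then invoke Lemma~\ref{yule} to pass to the Yule process, and finally let $T\to\infty$ using the a.s.\ convergence $|\Y_t|/e^t\to W$.

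First I would fix $\delta>0$ small and $\omega\ge 1$ large and apply Lemma~\ref{converged+}: with probability $1-e^{-\Omega(\omega)}$, uniformly over $1<\la\le 2$, the ratio $|\XP_t|/\la^t$ lies within a factor $1\pm\delta$ of $\YP_\la$ for all $t\ge t_\omega:=\min\{t:|\XP_t|\ge\omega\}$. To use this at $t=N$ I need $t_\omega\le N$ with high probability, \emph{uniformly in small $\eps$}. Since $|\XP_t|$ is non-decreasing, $\{t_\omega>N\}=\{|\XP_N|<\omega\}$, and by Lemma~\ref{yule} the distribution of $|\XP_N|$ converges to that of $|\Y_T|$, while the Yule population $|\Y_T|$ is geometric with parameter $e^{-T}$, so $\Pr(|\Y_T|<\omega)\le(\omega-1)e^{-T}\to 0$ as $T\to\infty$. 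Hence, choosing $T$ large (depending on $\omega$) and then $\eps$ small, $\Pr(t_\omega>N)$ is as small as desired. Consequently, off an event of small probability, $|\XP_N|/\la^N$ and $\YP_\la$ agree up to a factor $1\pm\delta$; Lemma~\ref{etail} gives the uniform tail bound $\Pr(\YP_\la>x)\le 2e^{-cx}$, which converts this multiplicative closeness into additive closeness at the relevant scale (on $\{\YP_\la\le L\}$, a factor $1\pm\delta$ moves the value by at most $\delta L$).

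For fixed $T$ we have $\la^N=(1+\eps)^{\floor{T/\eps}}\to e^T$ as $\eps\to 0$, and Lemma~\ref{yule} gives $|\XP_N|\Rightarrow|\Y_T|$, hence $|\XP_N|/\la^N\Rightarrow|\Y_T|/e^T$ as $\eps\to0$; and $|\Y_T|/e^T\to W$ a.s.\ as $T\to\infty$, so $|\Y_T|/e^T\Rightarrow W$. Assembling these, for any bounded uniformly continuous $f$ one bounds $|\E f(\YP_\la)-\E f(|\XP_N|/\la^N)|$ by $2\|f\|_\infty$ times the exceptional probability ($2e^{-cL}+\Pr(t_\omega>N)+e^{-\Omega(\omega)}$) plus the modulus of continuity of $f$ evaluated at $\delta L$; then $\E f(|\XP_N|/\la^N)\to\E f(|\Y_T|/e^T)$ as $\eps\to0$; and $\E f(|\Y_T|/e^T)\to\E f(W)$ as $T\to\infty$ by bounded convergence. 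Letting $\eps\to0$, then $T\to\infty$, then $L\to\infty$, $\omega\to\infty$ and $\delta\to0$ in this order gives $\E f(\YP_\la)\to\E f(W)$, i.e.\ $\YP_\la\Rightarrow W$.

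The main obstacle is the uniformity in $\eps$: Lemma~\ref{yule} is only a fixed-time statement, so to bridge from $|\XP_N|/\la^N$ (with $N\asymp 1/\eps$) to the true martingale limit $\YP_\la$ one must use Lemma~\ref{converged+}, and for that one needs to know that the time $t_\omega$ to reach a fixed large size $\omega$ is $O(1/\eps)$ with high probability, uniformly as $\eps\to0$. This is exactly what the monotonicity of $|\XP_t|$ together with Lemma~\ref{yule} (applied with $T$ large relative to $\omega$) supplies, so no genuinely new estimate is required; the remaining work is purely the bookkeeping of interchanging the limits $\eps\to0$ and $T\to\infty$.
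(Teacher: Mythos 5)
Your argument is correct and follows essentially the same route as the paper's proof: both hinge on combining Lemma~\ref{yule} (to transfer the snapshot at time $\floor{T/\eps}$ to the Yule process), Lemma~\ref{converged+} (to show the martingale has essentially converged once $|\XP_t|\ge\omega$, uniformly in $\la$), and the a.s.\ convergence $|\Y_T|/e^T\to W$, with the same crucial observation that $|\XP_t|$ is non-decreasing so that $\{t_\omega>N\}=\{|\XP_N|<\omega\}$ can be controlled via $\Pr(|\Y_T|<\omega)$ for $T$ large. The only cosmetic differences are that the paper phrases the conclusion as an explicit coupling (agreement up to a $1\pm O(\delta)$ factor with probability $1-O(\delta)$) rather than via test functions and modulus-of-continuity bounds, and uses $|\Y_t|\to\infty$ a.s.\ in place of the geometric law of $|\Y_T|$; also take care that in your final interchange of limits $\delta$ must be sent to $0$ fast enough relative to $L\to\infty$ so that $\delta L\to 0$ (choosing $L$ before $\delta$, not after, fixes this).
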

\begin{proof}
Fix $\delta>0$. It suffices to show that for $\eps$ sufficiently small
we can couple $\YP_\la$ and $W$ so that they agree within a factor of $1+O(\delta)$
with probability $1-O(\delta)$.

Let $\omega$ be a constant to be chosen below, depending on $\delta$ but not on $\eps$.
Since $|\Y_t|\to\infty$ with probability $1$, there is some $T$
such that $\Pr(|\Y_T|<\omega) \le \delta$.
{From} Lemma~\ref{yule}, if $\eps$ is sufficiently small,
then we may couple $\bpP_\la$ and $(\Y_t)$ so
that with probability at least $1-\delta$ we have $|\Y_T|=|\XP_{\floor{T/\eps}}|$.
Then with probability at least $1-2\delta$ we have $|\Y_T|=|\XP_{\floor{T/\eps}}|\ge \omega$.

Let $n=\floor{T/\eps}$. Applying Lemma~\ref{converged+}, it follows that
if $\omega$ is chosen large enough (depending only on $\delta$, not on $\eps$),
then with probability at least $1-3\delta$ the limit $\YP_\la$
is within a factor $1\pm \delta$ of $|\XP_n|/\la^n$.
A similar result holds for $(\Y_t)$. (Indeed,
since $|\Y_t|/e^t\to W$ a.s., there must be some constant $T'$ 
such that with probability $1-\delta$ we have $|\Y_t|/e^t$ within a factor
$1\pm \delta$ of $W$ for all $t\ge T'$.)
In particular, if $\omega$ is large enough, then with probability $1-\delta$ the ratio
$|\Y_T|/e^T$ is within a factor of $(1\pm\delta)$ of $W$.

Putting the pieces together, and noting that $\la^n=(1+\eps)^{\floor{T/\eps}}=e^T+O(\eps)$,
for $\eps$ small enough the quantities $\YP_\la$, $|\XP_n|/\la^n$, $|\XP_n|/e^T$,
$|\Y_T|/e^T$ and $W$ agree up to factors of $1+O(\delta)$ with probability $1-O(\delta)$,
completing the proof.
\end{proof}

It is well known, and not hard to check, that the (positive)
random variable $W$ associated
to the Yule process has an exponential distribution with mean $1$;
this is an exercise in~\cite{AN}, for example.
In particular,
\begin{equation}\label{Yt}
 \Pr(W\le x) = 1-e^{-x} \sim x
\end{equation}
as $x\to 0$ from above.
We are now ready to prove our bound on the lower tail of $\YP_\la$.

\begin{theorem}\label{YPt}
Let $\la=1+\eps$. As $\eps$ and $x$ tend to $0$ from above we have
\[ 
 \Pr(\YP_\la \le x) \sim x^{\log(1/\las)/\log\la}.
\]
\end{theorem}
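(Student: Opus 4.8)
The plan is to exploit the self-similarity of $\bpP_\la$. Conditioning on the first generation, $\YP_\la$ has the distribution of $\la^{-1}\bigl(\YP^{(1)}+\cdots+\YP^{(Z_\la)}\bigr)$, where the $\YP^{(i)}$ are independent copies of $\YP_\la$, independent of the offspring variable $Z_\la$. Writing $\phi(x)=\Pr(\YP_\la\le x)$ and using $\Pr(Z_\la=1)=\las$ from \eqref{plus}, this gives
\[
 \phi(x)=\las\,\phi(\la x)+\rho(x),\qquad
 \rho(x):=\sum_{\ell\ge 2}\Pr(Z_\la=\ell)\,\Pr\bb{\YP^{(1)}+\cdots+\YP^{(\ell)}\le\la x}\ \ge\ 0.
\]
Set $\alpha=\alpha(\la)=\log(1/\las)/\log\la$, so that $\la^{\alpha}=1/\las$ and, by \eqref{lllas}, $\alpha\to 1$ as $\eps\to 0$; the assertion is that $L(x):=x^{-\alpha}\phi(x)\to 1$ in the joint limit $\eps,x\to 0$. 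Dividing the recursion by $x^{\alpha}$ gives $L(x)=L(\la x)+x^{-\alpha}\rho(x)$, with $x^{-\alpha}\rho(x)\ge 0$, so $L(\la x)\le L(x)$; iterating, $L(x)=L(\la^{k}x)+\sum_{j=0}^{k-1}(\la^{j}x)^{-\alpha}\rho(\la^{j}x)$ for every $k\ge 1$. The point is that the ``homogeneous'' term $L(\la^{k}x)$ already has the correct exponent built in, and the $\rho$-terms form an error to be controlled.

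For the lower bound I drop the $\rho$-terms: $L(x)\ge L(\la^{k}x)$. Given $\eta>0$, first fix a small constant $m=m(\eta)$ with $(1-e^{-m})/m>1-\eta/8$. By Corollary~\ref{Ycor}, $\phi(m)\to 1-e^{-m}$ as $\eps\to 0$, and since also $\alpha\to1$, for $\eps$ small $L(m)=m^{1-\alpha}\,\phi(m)/m>1-\eta/2$. Choosing $k=\lceil\log(m/x)/\log\la\rceil$, so $m\le\la^{k}x<\la m$, and using that $\phi$ is non-decreasing, $L(\la^{k}x)\ge(\la m)^{-\alpha}\phi(m)=\las\,L(m)>1-\eta$ for $\eps$ small. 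Hence $L(x)\ge 1-\eta$ whenever $\eps$ is small and $0<x<m$, so $\liminf L(x)\ge 1$.

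For the upper bound I estimate $\rho$. For $\ell\ge 2$, $\Pr(\YP^{(1)}+\cdots+\YP^{(\ell)}\le\la x)\le\Pr(\YP^{(1)}+\YP^{(2)}\le\la x)\le\phi(\la x)^{2}$, so $\rho(x)\le(1-\las)\phi(\la x)^{2}$; using $1-\las<\eps$ (see \eqref{las1e}) and $\la^{2\alpha}=\las^{-2}$, this becomes $x^{-\alpha}\rho(x)\le\kappa\,x^{\alpha}L(\la x)^{2}$ with $\kappa:=(1-\las)\las^{-2}=O(\eps)$. Fix $m=m(\eta)$ small, choose $k$ with $m/\la<\la^{k}x\le m$ (so $\la^{j}x\le m$ for $0\le j\le k$), and feed this into the iterated identity: if $L\le 2$ on $[x,m]$ then
\[
 L(x)\le L(\la^{k}x)+4\kappa\sum_{j=0}^{k-1}(\la^{j}x)^{\alpha}
 \le L(\la^{k}x)+\frac{4\kappa}{\la^{\alpha}-1}(\la^{k}x)^{\alpha}
 \le L(\la^{k}x)+8m^{\alpha},
\]
since $\kappa/(\la^{\alpha}-1)=\las^{-1}\le 2$; and $L(\la^{k}x)\le\las^{-1}L(m)$, which by Corollary~\ref{Ycor} and $\alpha\to1$ is $1+o(1)$ for fixed $m$ as $\eps\to0$. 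The a priori bound $L\le 2$ on $(0,m]$ needed here comes from a bootstrap: $L$ is continuous on $(0,\infty)$, $L(m)<2$ once $m$ and $\eps$ are small, and the displayed inequality shows that $L\le 2$ on $[y,m]$ in fact forces $L(y)\le 3/2$; hence $\{y\in(0,m]:L\le 2\ \text{on}\ [y,m]\}$ is nonempty, closed and relatively open, so equals $(0,m]$. Feeding this back gives $L(x)\le 1+o(1)+8m^{\alpha}$ for all $0<x<m$; letting $\eps\to 0$ and then $m\to 0$ yields $\limsup L(x)\le 1$. Together with the lower bound, $\phi(x)\sim x^{\alpha}$.

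I expect the upper bound to be the main obstacle. The ``branching'' contributions $\rho$ are spread over about $\log(1/x)/\eps$ generations, and a soft a priori estimate for the lower tail of $\YP_\la$ (for instance via Lemma~\ref{converged+} together with a binomial tail bound) has the wrong exponent and is too weak to control their sum; it is the self-improving bootstrap above that closes the gap. One must also be careful about uniformity: the matching scale $m$ is chosen first, and only then is $\eps$ sent to $0$, so that Corollary~\ref{Ycor} (and $\alpha\to1$, $\las\to1$) are invoked only at the single fixed point $m$ or for fixed auxiliary quantities.
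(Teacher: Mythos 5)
Your proof is correct and follows essentially the same route as the paper: both condition on the first generation to get $\Pr(\YP_\la\le x)=\las\Pr(\YP_\la\le\la x)+(\text{non-negative tail from }Z_\la\ge 2)$, anchor at a fixed small scale via Corollary~\ref{Ycor} (where $\YP_\la\to W$ in distribution and $\Pr(W\le m)\sim m$), and iterate this one-step bound to transport the estimate down to $x$. The only difference is bookkeeping in the upper bound: the paper iterates $p_i=\Pr(\YP_\la\le x'/\la^i)$ directly and uses $p_0\le 1/10$ to bound the multiplicative error factor $\exp((\las^{-1}-1)\sum p_j)<2$ by induction, whereas you rescale to $L(x)=x^{-\alpha}\phi(x)$, sum the additive errors as a geometric series, and supply the required a priori bound $L\le 2$ via a continuity bootstrap — the same self-improvement in a different presentation.
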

Note that we make no assumption on the relative rates at which $\eps$ and $x$
tend to zero. With $x$ fixed, the result would be immediate from \eqref{Yt} and
Corollary~\ref{Ycor}.

\begin{proof}
Let $\delta>0$ be given. We must show that there are constants $x_0=x_0(\delta)$
and $\eps_0=\eps_0(\delta)$ such that for all $0<\eps<\eps_0$ and $0<x<x_0$
we have
$\Pr(\YP_\la \le x) = e^{O(\delta)} x^{\log(1/\las)/\log\la}$,
where the implicit constant is absolute.

By \eqref{Yt}, there is an $x_1>0$ such that for all $x\le x_1$ we have
\begin{equation}\label{Yconv}
 e^{-\delta} \le \Pr(W\le x)/x \le e^{\delta}.
\end{equation}
Fix such an $x_1$, and set $x_0=\min\{x_1,\delta\}$.

Trivially, for $(1-\delta)x_0\le x\le x_0$ and any $\la$, we have
%
\[
 \Pr\bb{\YP_\la\le(1-\delta)x_0} \le \Pr(\YP_\la\le x)\le \Pr(\YP_\la\le x_0).
\]
As $\eps\to 0$, from Corollary~\ref{Ycor}, for any constant $a$
we have $\Pr(\YP_\la \le a)\to \Pr(W\le a)$. Applying this with $a=x_0$ and $a=(1-\delta)x_0$,
it follows that there is an $\eps_0$ such that
\begin{equation}\label{inint}
 e^{-\delta}\Pr\bb{W\le (1-\delta)x_0} \le \Pr(\YP_\la \le x) \le e^\delta \Pr(W\le x_0)
\end{equation}
for all $\eps\le\eps_0$ and all $x$ in the interval $I=[(1-\delta)x_0,x_0]$.
We may and shall assume that $\eps_0<1/10$, say.
Since $\log(1/\las)/\log\la\to 1$ as $\eps\to 0$ (see \eqref{lllas}),
reducing $\eps_0$ if necessary, we have $x^{\log(1/\las)/\log\la}
 =e^{O(\delta)}x$
uniformly in $x\in I$ and $\eps\le\eps_0$.
Using \eqref{Yconv} and \eqref{inint}, it follows that
\begin{equation}\label{inI}
 \Pr(\YP_\la \le x) = e^{O(\delta)}  x^{\log(1/\las)/\log\la}
\end{equation}
for all $x\in I$ and $\eps\le \eps_0$, where the implicit constant is absolute.

At this point we return to the definition of $\YP_\la$ in terms of $(\XP_t)$.
Recall that $Z=Z_\la$ is a Poisson distribution with parameter $s\la$ conditioned
on being non-zero, and that $\E(Z)=\la$ and $\Pr(Z=1)=\las$.
Now $\YP_\la$ has the distribution of the sum of $Z$ independent copies $Y_1,\ldots,Y_Z$
of $\YP_\la/\la$. Hence, for
any $x$,
\[
 \Pr(\YP_\la\le x) \ge \Pr(Z=1,\, \YP_\la\le x) = \Pr(Z=1)\Pr(Y_1 \le x)
 = \las \Pr(\YP_\la \le \la x).
\]
Given any $x\le x_0$, there is some non-negative integer $r$ such that $x'=x \la^r$ lies in $I$.
{F}rom the inequality above it follows that $\Pr(\YP_\la \le x) \ge \las^r \Pr(\YP_\la\le x')$.
Applying \eqref{inI} to bound the second probability, 
and noting that $\las^r = \exp((-r)\log(1/\las))=\exp(\log(1/\las)\log(x/x')/\log\la)$,
it follows
that
\[
 \Pr(\YP_\la \le x)\ge (x/x')^{\log(1/\las)/\log\la} e^{O(\delta)} (x')^{\log(1/\las)/\log\la}
 = e^{O(\delta)}  x^{\log(1/\las)/\log\la},
\]
completing the proof of the lower bound.

For the upper bound we use the inequality
\begin{eqnarray*}
 \Pr(\YP_\la \le x) &=& \Pr\bb{Z=1,\, \YP_\la\le x} + \Pr\bb{Z\ge 2,\, \YP_\la\le x} \\
 &\le& \Pr(Z=1)\Pr(Y_1\le x) +\Pr(Z\ge 2)\Pr(Y_1+Y_2\le x) \\
 &\le& \Pr(Z=1)\Pr(\YP_\la\le \la x) +\Pr(Z\ge 2)\Pr(\YP_\la\le \la x)^2 \\
 &=& \las \Pr(\YP_\la \le \la x) \left(1+\frac{1-\las}{\las}\Pr(\YP_\la\le \la x)\right).
\end{eqnarray*}
Given $x\le x_0$, as before there is a non-negative integer $r$ such
that $x'=x\la^r\in I$. For $0\le i\le r$ let $x_i=x'/\la^i$,
so $x_0=x'$ and $x_r=x$. Let $p_i=\Pr(\YP_\la\le x_i)$, so
\[
 p_{i+1}\le \las p_i(1+\las^{-1}(1-\las)p_i)
\]
and hence, by induction,
\begin{equation}\label{pip0}
 p_i \le \las^i p_0 \prod_{j<i}(1+(\las^{-1}-1)p_j) \le \las^ip_0\exp\Biggl((\las^{-1}-1)\sum_{j<i}p_j\Biggr).
\end{equation}
Now $p_0=\Pr(\YP_\la<x')$ and $x'\in I$, so,
recalling that $x_0\le\delta$, we have $p_0=O(\delta)$.
Thus $p_0\le 1/10$, say, if we assume $\delta$ is small, which we may.
It follows by induction on $i$ that $p_i\le 2\las^i p_0\le \las^i/5$.
Indeed, if this holds for $j<i$, then the term inside the exponential
in \eqref{pip0} is at most
\[ 
 (\las^{-1}-1) \sum_{j<i} \las^j/5 \le \las^{-1}(1-\las)\sum_{j=0}^\infty \las^j/5 = \las^{-1}/5 \le 1/4,
\]
and $e^{1/4}<2$.
Plugging $p_j\le 2\las^j p_0$ back into \eqref{pip0}, we see that
$p_i\le \las^ip_0\exp(3p_0)=\las^i p_0 e^{O(\delta)}$.
Calculating as for the lower bound as above, this establishes the required upper bound.
\end{proof}

\begin{remark}  
The method used above shows that for $\la$ and $x$ bounded above,
$\Pr(\YP_\la\le x)$ is within a factor $C$ of $x^{\log(1/\las)/\log\la}$, where
$C$ depends only on the bounds we assume on $\la$ and $x$.
For $\la$ constant and $x\to 0$, this is a standard result.
Perhaps surprisingly, the conclusion of Theorem~\ref{YPt} does not hold in this case:
as pointed out to us by Svante Janson,
the limiting behaviour of $\Pr(\YP_\la \le x)/x^{\log(1/\las)/\log\la}$
as $x\to 0$ is oscillatory. One period corresponds to changing $x$
by a factor of $\la$, and the tail probability by a factor of $1/\las$.
\end{remark}

In the light of Lemma~\ref{YYp} and the fact that $\YP_\la$ is just $Y_\la^+$ conditioned
on being non-zero, an event of probability $s=s(\la)\sim 2\eps$ by~\eqn{sasympt}, Theorem~\ref{YPt}
has the following corollary.

\begin{corollary}\label{cYt}
Let $\la=1+\eps$. As $\eps$ and $x$ tend to $0$ from above we have
\[ 
 \Pr\bb{0<Y_\la \le x/\eps} \sim 4\eps x^{\log(1/\las)/\log\la}.
\]
\end{corollary}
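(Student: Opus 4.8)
The plan is to deduce the corollary directly from Theorem~\ref{YPt} by unwinding the relation $Y_\la^+=sY_\la$ supplied by Lemma~\ref{YYp}. First I would use that $s=s(\la)>0$: since $Y_\la^+=sY_\la$ almost surely, the events $\fp{0<Y_\la\le x/\eps}$ and $\fp{0<Y_\la^+\le sx/\eps}$ coincide up to a null set. Recalling that $\Pr(Y_\la^+>0)=\Pr(Y_\la\ne 0)=s$, and that conditioned on $Y_\la^+>0$ the variable $Y_\la^+$ has the distribution of the continuous random variable $\YP_\la$ (so there is no atom to worry about), this gives
\[
 \Pr\bb{0<Y_\la\le x/\eps} = \Pr\bb{0<Y_\la^+\le sx/\eps} = s\,\Pr\bb{\YP_\la\le sx/\eps}.
\]

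Next I would apply Theorem~\ref{YPt} with its parameter taken to be $y=sx/\eps$. By \eqref{sasympt} we have $s\sim 2\eps$, hence $y\sim 2x\to 0$, and $\eps\to 0$ by hypothesis; since Theorem~\ref{YPt} imposes no condition on the relative rates of its two variables, it applies and yields
\[
 \Pr\bb{\YP_\la\le sx/\eps} \sim (sx/\eps)^{\log(1/\las)/\log\la}
  = (s/\eps)^{\log(1/\las)/\log\la}\, x^{\log(1/\las)/\log\la}.
\]
Now $s/\eps\to 2$ by \eqref{sasympt} and $\log(1/\las)/\log\la\to 1$ by \eqref{lllas}, so $(s/\eps)^{\log(1/\las)/\log\la}=\exp\bb{(\log(1/\las)/\log\la)\log(s/\eps)}\to 2$. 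Therefore $\Pr(\YP_\la\le sx/\eps)\sim 2x^{\log(1/\las)/\log\la}$, and multiplying by $s\sim 2\eps$ gives $\Pr(0<Y_\la\le x/\eps)\sim 4\eps\, x^{\log(1/\las)/\log\la}$, as claimed.

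I do not expect any genuine obstacle here: the corollary is essentially a bookkeeping consequence of Theorem~\ref{YPt} once the branching-process identities are invoked. The only point needing mild care is tracking the two independent limits $\eps\to 0$ and $x\to 0$ and verifying that the substituted argument $y=sx/\eps$ remains in the regime covered by Theorem~\ref{YPt}; this works precisely because $s/\eps$ stays bounded away from $0$ and $\infty$. I would also note, echoing the remark following Theorem~\ref{YPt}, that the exponent $\log(1/\las)/\log\la$ cannot be replaced by $1$ in general, as it depends on $\eps$; the clean limiting constant $4$ appears only because Theorem~\ref{YPt} is stated with exactly this exponent.
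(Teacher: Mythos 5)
Your proposal matches the paper's proof essentially verbatim: both invoke Lemma~\ref{YYp} to rewrite the event in terms of $Y_\la^+$, condition on $Y_\la^+>0$ to pass to $\YP_\la$, then apply Theorem~\ref{YPt} with argument $sx/\eps$ and use $s\sim 2\eps$ together with $\log(1/\las)/\log\la\to 1$ to extract the constant $4$. Your extra remark that $\YP_\la$ is continuous (so no atom at the boundary) is a small courtesy the paper leaves implicit, but otherwise the arguments coincide.
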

\begin{proof}
Let $s=s(\la)$ denote the survival probability of $\bp_\la$.
Then
\begin{multline*}
 \Pr\bb{ 0<Y_\la \le x/\eps} = \Pr\bb{ 0<Y_\la^+ \le sx/\eps} \\
  = \Pr(Y_\la^+>0)\Pr\bb{Y_\la^+\le sx/\eps\mid Y_\la^+>0}
 = s \Pr\bb{\YP_\la \le sx/\eps},
\end{multline*}
where the first step is from Lemma~\ref{YYp} and the rest are from the definitions.
Applying Theorem~\ref{YPt},
and using once again $s\sim 2\eps$ and $\log(1/\las)/\log\la\sim 1$,
it follows that
\begin{multline*}
 \Pr(0<Y_\la \le x/\eps) \sim s (sx/\eps)^{\log(1/\las)/\log\la}\\
 \sim 2\eps\bb{(2+o(1))x}^{\log(1/\las)/\log\la}
 \sim 4\eps x^{\log(1/\las)/\log\la},
\end{multline*}
as claimed.
\end{proof}

In turn, Corollary~\ref{cYt} and Lemma~\ref{converged} will give us the required estimate
on the probability that the branching process $\bp_\la=(X_t)_{t\ge 0}$ takes
a long time to begin to have a large population.

Before turning to our tail bound, let us make a simple observation; the proof 
is analogous to, but simpler than, that of Lemma~\ref{nojg}, so we omit it.
\begin{lemma}\label{noj}
Let $\la=1+\eps$, $M\ge 1$ and $\delta>0$, 
and let $t_M=\min\{t:|X_t|\ge M\}$, whenever this is defined.
Given that $t_M$ is defined, the probability that $|X_{t_M}|$ exceeds $(1+\delta)(1+\eps)M$
is $e^{-\Omega(\delta^2 M)}$,
where the implicit constant is absolute.\noproof
\end{lemma}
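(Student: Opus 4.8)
The plan is to follow the proof of Lemma~\ref{nojg}, which is in fact slightly shorter here since we work directly with the branching process rather than with neighbourhoods in $G$. We may assume $0<\delta\le 1$ (for $\delta$ bounded away from $0$ there is nothing to prove, and larger $\delta$ only makes the event in question less likely), and we write $\mu=M\la=(1+\eps)M$, so that the quantity to be bounded is $\Pr\bb{|X_{t_M}|>(1+\delta)\mu \bigm| t_M\text{ defined}}$.

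First I would decompose the event that $t_M$ is defined as a disjoint union, over the generation $t\ge 0$ at which size $M$ is first reached and over the value $s=|X_{t-1}|$ of the previous generation (together with the earlier history). When $t=0$ one has $|X_0|=1\ge M$, forcing $M=1$ and $|X_{t_M}|=1\le(1+\delta)\mu$, so that case contributes nothing and we may take $t\ge 1$; then $1\le s<M$. On such an event $E$, the Markov property of $\bp_\la$ gives that the conditional distribution of $|X_t|$ is that of a $\Po(s\la)$ random variable conditioned to be at least $M$. As in the proof of Lemma~\ref{nojg}, it is routine to check that, for $M$ fixed, the probability that such a conditioned variable exceeds $(1+\delta)\mu$ is non-decreasing in $s$ (this is the statement that the worst case is ``$s$ as large as possible''; formally it follows from the monotone likelihood ratio ordering of the Poisson family, which is preserved under conditioning on an event of the form $\{X\ge M\}$). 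Hence this probability is at most its value with $s$ replaced by $M$, namely
\[
 q:=\Pr\bb{\Po(\mu)>(1+\delta)\mu \bigm| \Po(\mu)\ge M}.
\]
Since $q$ is a uniform upper bound over all the events $E$, averaging over the decomposition gives $\Pr\bb{|X_{t_M}|>(1+\delta)\mu \bigm| t_M\text{ defined}}\le q$.

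It then remains to bound $q$. For the denominator, $\mu=(1+\eps)M\ge M$, so $M$ lies at or below the mean of $\Po(\mu)$, and a standard estimate for the Poisson distribution gives $\Pr(\Po(\mu)\ge M)\ge c_0$ for an absolute constant $c_0>0$, uniformly in $M\ge 1$ and $0<\eps<1/10$. For the numerator, note that $(1+\delta)\mu=(1+\delta)(1+\eps)M$, and since $0<\delta\le 1$ and $\mu\ge M\ge 1$, the Chernoff bound for the Poisson distribution (obtained e.g.\ as a limit of Lemma~\ref{Chern}, or directly) yields $\Pr(\Po(\mu)>(1+\delta)\mu)\le e^{-\Omega(\delta^2\mu)}\le e^{-\Omega(\delta^2 M)}$. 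Combining, $q\le c_0^{-1}e^{-\Omega(\delta^2 M)}=e^{-\Omega(\delta^2 M)}$ with an absolute implicit constant, which is exactly the claimed bound.

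There is no serious obstacle here; the only point requiring a (routine) check is the monotonicity in $s$ — that conditioning a $\Po(s\la)$ variable on being at least $M$ cannot overshoot $(1+\delta)(1+\eps)M$ with larger probability when $s$ is small. Intuitively this is clear: if $s\la$ is well below $M$, the conditioning already forces a large upward deviation, and pushing further out to $(1+\delta)M$ then costs an extra factor that is exponentially small in $\delta M$, hence \emph{smaller} than the $e^{-\Omega(\delta^2 M)}$ arising in the near-mean case $s\approx M$. This is precisely what the phrase ``maximal when $s$ is maximal'' encapsulates in the proof of Lemma~\ref{nojg}, and the monotone likelihood ratio property of the Poisson family makes it precise.
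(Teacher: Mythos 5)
Your proof is correct and follows exactly the approach the paper intends: the paper explicitly omits the proof of Lemma~\ref{noj}, noting it is ``analogous to, but simpler than, that of Lemma~\ref{nojg},'' and you have carried out precisely that analogue for the branching process (decomposing over the generation $t$ and the size $s=|X_{t-1}|$, identifying the conditional law of $|X_t|$ as a conditioned Poisson, invoking the worst-case $s$, and applying a Poisson Chernoff bound together with the observation that the denominator $\Pr(\Po(\mu)\ge M)$ with $\mu\ge M\ge 1$ is bounded below by an absolute constant). Your appeal to the monotone likelihood ratio ordering of the Poisson family being preserved under conditioning on $\{X\ge M\}$ is a clean way to make rigorous the phrase ``maximal when $s$ is maximal'' that the paper leaves as ``easy to check'' in Lemma~\ref{nojg}; the only slightly loose spot is the parenthetical justifying the reduction to $\delta\le1$, but this restriction is tacitly present in the paper too via Lemma~\ref{Chern} and is harmless in every application.
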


In the following result, $t_{\omega/\eps}$ denotes $\min\{t: |X_t|\ge \omega/\eps\}$,
whenever this is defined.
\begin{theorem}\label{th19}
Let $\la=1+\eps$, and suppose that $\eps=\eps(n)\to 0$,
$\omega=\omega(n)\to\infty$,
and $t=t(n)$ satisfy $t\le 100\log\omega/\log \la$
and $\eps t\to\infty$. Then, with $t_1=\log\omega/\log\la$, we have
$$
 \Pr(t_{\omega/\eps} > t_1+t) \sim 4\eps\las^t,
$$
$$
 \Pr\bb{ 0<|X_r|< \omega/\eps,\, 0\le r\le t_1+t} \sim 4\eps\las^t,
$$
$$
 \Pr\bb{ 0<|X_{t_1+t}|< \omega/\eps} \sim 4\eps\las^t 
$$
and
$$
\Pr\bb{ (X_r)\mbox{ survives and }0<|X_{t_1+t}|< \omega/\eps }\sim 4\eps\las^t.
$$
\end{theorem}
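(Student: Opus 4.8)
The plan is to prove all four statements simultaneously, reducing to the estimate of Corollary~\ref{cYt} on the lower tail of $Y_\la$ near $0$ via the near-convergence established in Lemma~\ref{converged}. First I would observe that the four events are ``almost the same'': the event $\{t_{\omega/\eps}>t_1+t\}$ is exactly the event in the second display, $\{0<|X_r|<\omega/\eps$ for all $0\le r\le t_1+t\}$, since $|X_r|$ can only reach $\omega/\eps$ for the first time at some $r$, and once it dies ($|X_r|=0$) it stays dead; and the second event differs from $\{0<|X_{t_1+t}|<\omega/\eps\}$ only by histories in which $|X_r|$ reaches size $\omega/\eps$ at some $r<t_1+t$ and then drops back below $\omega/\eps$ (while staying positive) by time $t_1+t$. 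By Lemma~\ref{converged}, conditional on $\tdef$ (the event $t_{\omega/\eps}$ is defined, i.e. $|X_r|$ reaches $\omega/\eps$) the martingale $|X_r|/\la^r$ stays within $1\pm\delta$ of its limit $Y$ for all $r\ge t_{\omega/\eps}$, with probability $1-e^{-\Omega(\omega)}$; in particular, once the process reaches $\omega/\eps$ it is extremely unlikely ever to drop back below, so the discrepancies between the first three events have probability $o(\eps\las^t)$ (this uses $\Pr(\tdef)=O(\eps)$ together with the factor $e^{-\Omega(\omega)}$, and $\eps\las^t$ is at least a fixed positive power of $\omega$ by the hypothesis $t\le 100\log\omega/\log\la$ and $\las=1-\Theta(\eps)$, which I would spell out). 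The fourth event is contained in the second (``surviving'' is a further restriction), and the second is contained, up to the same negligible error, in ``$|X_{t_1+t}|<\omega/\eps$ and the process survives'' once we argue below that non-surviving histories contribute negligibly; so all four probabilities agree up to a relative $1+o(1)$ and it suffices to estimate any one of them, say $\Pr(t_{\omega/\eps}>t_1+t)$.

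Now I would estimate $\Pr(t_{\omega/\eps}>t_1+t)$. Split according to $\surv$. On $\surv^\cc$ the process is distributed as the subcritical $\bp_{\las}$, and $\Pr(\surv^\cc,\, 0<|X_{t_1+t}|)=(1-s)\Pr(|X^-_{t_1+t}|>0)\le \las^{t_1+t}=\omega\las^t$. Hmm — this bound is too weak by a factor $\omega/\eps$; the point is that we need $|X_{t_1+t}^-|>0$ \emph{and} the trajectory to have stayed below $\omega/\eps$, but for a subcritical process of expected size $\las^{t_1+t}=\omega\las^t$, conditioning it to be alive at time $t_1+t$ makes its size $\Theta(1)$, hence it never gets near $\omega/\eps$ anyway. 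So $\Pr(\surv^\cc,\,0<|X_{t_1+t}|<\omega/\eps)=(1-s)\Pr(0<|X^-_{t_1+t}|) = (1+o(1))(1-s)\las^{t_1+t}$; but $\las^{t_1+t}=\las^{\log\omega/\log\la}\las^t$, and since $\las=1-\Theta(\eps)$ and $\log\la=\Theta(\eps)$ we have $\las^{\log\omega/\log\la}=\omega^{\log\las/\log\la}=\omega^{-1+o(1)}$, so this contribution is $\omega^{-1+o(1)}\las^t$, which is $o(\eps\las^t)$ provided $\eps\gg\omega^{-1+o(1)}$. This last point needs care and is a place where the exact hypotheses enter; I would reconcile it by noting that the dominant contribution must come from $\surv$, and handle the subcritical tail by the cruder bound together with a monotonicity argument, or simply by noting (as in the proof of Lemma~\ref{l1}) that dying histories are subsumed into a single extra error term. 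On $\surv$, the event $\{t_{\omega/\eps}>t_1+t\}$ says the surviving process is a factor roughly $\omega/(\eps\la^{t_1+t})=1/(\eps\la^t)$, i.e.\ $\la^{-t}$ times $\omega^{o(1)}/\eps$ below its typical size $\la^{t_1+t}$; by Lemma~\ref{converged} this is, up to $1\pm\delta$ and an $e^{-\Omega(\omega)}$ additive error, the same as $\{0<Y_\la\le c\omega/(\eps\la^t)\}$ for $c$ in a bounded range, plus one has to undo the ``overshoot'' issue (Lemma~\ref{noj}: the first generation reaching $\omega/\eps$ has size $(1+o(1))\omega/\eps$ with high probability), which only affects constants. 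Then Corollary~\ref{cYt}, applied with $x$ of the form $c\omega/\la^t$ (which tends to $0$ because $t\ge \eps t/\eps\to\infty$ forces $\la^t\to\infty$ faster than $\omega$ — here I again invoke $t\le 100\log\omega/\log\la$ to keep $x$ bounded below by a power of nothing problematic, and in fact $x\to 0$ since... wait, $\la^t$ could be comparable to $\omega$; I need $\omega/\la^t\to 0$, which holds iff $t>t_1+\text{(something growing)}$, not guaranteed).

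Let me instead organise it cleanly: write $t=\tau$, and note $\omega/(\eps\la^{t_1+t})$ — the target threshold for $Y$ — equals $(1/\eps)\cdot\la^{-t}$ up to $\omega^{o(1)}$ factors; call this $z/\eps$ with $z=\la^{-t+o(\log\omega/\log\la)}$. We need $z\to 0$: since $\log\la=\Theta(\eps)$ and $\eps t\to\infty$, we have $\la^{-t}=e^{-\Theta(\eps t)}\to 0$, and the $\omega^{o(1)}$ correction is $e^{o(\log\omega)}=e^{o(\eps t)\cdot(\log\omega/(\eps t))}$; using $t\le 100\log\omega/\log\la$ i.e.\ $\log\omega\ge \eps t/100\cdot(1+o(1))$... actually I want the reverse, $\log\omega = o(\eps t)$, which is \emph{not} implied. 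So I must be more careful: when $t$ is only a constant multiple of $t_1=\log\omega/\log\la$, the threshold $z$ need not go to $0$, and then I should use the remark after Corollary~\ref{cYt} / Theorem~\ref{YPt} (the estimate is within a bounded factor for bounded $x$), or rather use Corollary~\ref{cYt} directly with $x$ bounded away from $0$ and $\infty$, where it still gives $\Pr(0<Y_\la\le x/\eps)=(1+o(1))4\eps x^{\log(1/\las)/\log\la}$ as long as $\eps\to0$ (the corollary is stated for $x\to0$, but its proof via Theorem~\ref{YPt} works whenever $x$ is bounded; I would either re-derive this or restrict attention to the regime where $x\to0$ and note the other regime is handled by the cruder version, since the hypothesis $t\le 100\log\omega/\log\la$ precisely bounds how large $z$ can be). Assembling, $\Pr(t_{\omega/\eps}>t_1+t)=\Pr(\surv)\cdot(1+o(1))\Pr(0<Y_\la\le c\omega/(\eps\la^t)\mid\surv)+o(\eps\las^t)$; but it's cleaner to write $\Pr(\surv,\,t_{\omega/\eps}>t_1+t)=(1+o(1))\Pr(0<Y_\la\le \omega/(\eps\la^{t_1+t}))$ and plug $x=\omega/\la^{t_1+t}=\la^{-t}$ (the $\omega$ and the $\la^{t_1}$ combine, since $\la^{t_1}=\la^{\log\omega/\log\la}=\omega$, so $x=\omega/\la^{t_1+t}\cdot$ — no: $\omega/\la^{t_1+t}=\omega/(\omega\la^t)=\la^{-t}$, good, clean!) into Corollary~\ref{cYt}: $\Pr(0<Y_\la\le\la^{-t}/\eps)=(1+o(1))4\eps(\la^{-t})^{\log(1/\las)/\log\la}=(1+o(1))4\eps\las^t$, since $(\la^{-t})^{\log(1/\las)/\log\la}=e^{-t\log\la\cdot\log(1/\las)/\log\la}=e^{-t\log(1/\las)}=\las^t$. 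This is exactly the claimed asymptotic.

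The main obstacle I anticipate is \emph{not} this final algebra but the careful bookkeeping of error terms: showing that all the ``exceptional'' histories — the subcritical survivors reaching size $<\omega/\eps$, the overshoot beyond $\omega/\eps$, and the histories that reach $\omega/\eps$ and drop back — together contribute $o(\eps\las^t)$. This requires knowing that $\eps\las^t$ is not too small, which is where $t\le 100\log\omega/\log\la$ is used (it forces $\las^t\ge\las^{100\log\omega/\log\la}=\omega^{-100(1+o(1))\log(1/\las)/\log\la}=\omega^{-100+o(1)}$, a fixed power of $\omega$), so that an additive error of size $e^{-\Omega(\omega)}$ or $\omega^{-1+o(1)}$ is genuinely negligible compared with the main term. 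A secondary subtlety is ensuring the argument of $Y_\la$ in the application of Corollary~\ref{cYt} is in the range where that corollary applies; as noted, the clean identity $\omega/\la^{t_1+t}=\la^{-t}$ and the hypothesis $\eps t\to\infty$ make $x=\la^{-t}\to 0$, so Corollary~\ref{cYt} applies verbatim and no appeal to the bounded-$x$ version is needed after all.
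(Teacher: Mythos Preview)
Your approach is the paper's: reduce to the lower tail of $Y_\la$ via Lemma~\ref{converged} and Lemma~\ref{noj}, then apply Corollary~\ref{cYt} with $x=\la^{-t}$. The clean cancellation $\omega/\la^{t_1+t}=\la^{-t}$ and the computation $(\la^{-t})^{\log(1/\las)/\log\la}=\las^t$ are exactly how the paper organises the main term, and your observation that $\eps\las^t\ge\eps\,\omega^{-O(1)}$ so that an $O(\eps e^{-\Omega(\omega)})$ error is negligible is also what the paper uses.

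There is, however, one genuine gap: the non-surviving contribution. First, the events in the first two displays are \emph{not} literally equal under the paper's convention: $\{t_{\omega/\eps}>t_1+t\}$ means $t_{\omega/\eps}$ is defined and exceeds $t_1+t$, whereas the second event also includes histories alive at time $t_1+t$ that later die without ever reaching $\omega/\eps$. That discrepancy is precisely $\{\surv^\cc,\,|X_{t_1+t}|>0\}$ (up to histories of negligible probability), so you must bound this by $o(\eps\las^t)$. Your estimate $\Pr(|X^-_{t_1+t}|>0)\sim\las^{t_1+t}$ is wrong, and the crude Markov bound $\Pr(|X^-_{t_1+t}|>0)\le\las^{t_1+t}=\omega^{-1+o(1)}\las^t$ is only $o(\eps\las^t)$ when $\eps\,\omega^{1-o(1)}\to\infty$, which the hypotheses do not impose (take $\omega=\log(1/\eps)$, say). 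The fix is Lemma~\ref{lsurv}: for $a=\Omega(1/\eps)$ one has $\Pr(|X^-_a|>0)=\Theta(\eps\las^a)$, and the extra factor of $\eps$ gives
\[
 \Pr\bb{\surv^\cc,\,|X_{t_1+t}|>0}=\Theta(\eps\las^{t_1+t})=\Theta\bb{\eps\las^t\,\omega^{-1+o(1)}}=o(\eps\las^t),
\]
which is exactly what the paper invokes at this step. With this in place your argument goes through.
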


\begin{proof}
We first show for any fixed $\delta>0$ we have
\begin{equation}\label{bda}
 \Pr\bb{t_{\omega/\eps} > t_1+t} =(1+O(\delta))4\eps\las^t + O(\eps e^{-\Omega(\omega)}),
\end{equation} 
\begin{equation}\label{bdb}
 \Pr\bb{ 0<|X_r|< \omega/\eps,\,0\le r\le t_1+t} = (1+O(\delta))4\eps\las^t + O(\eps e^{-\Omega(\omega)}),
\end{equation}
\begin{equation}\label{neweq0}
\Pr\bb{ 0<|X_{t_1+t}|< \omega/\eps }=(1+O(\delta)) 4\eps\las^t + O(\eps e^{-\Omega(\omega)}) 
 \end{equation}
 and
\begin{equation}\label{neweq}
\Pr\bb{ (X_r)\mbox{ survives and }0<|X_{t_1+t}|< \omega/\eps }=(1+O(\delta)) 4\eps\las^t + O(\eps e^{-\Omega(\omega)}).
 \end{equation}

Note that the events considered in \eqref{bda} and \eqref{bdb} are not quite the same: by the event
$A=\{t_{\omega/\eps}> t_1+t\}$
we mean the event that $t_{\omega/\eps}$ is defined and greater than $t_1+t$;
this certainly implies the event considered in \eqref{bdb}, but the latter may also hold
with $T=t_{\omega/\eps}$ undefined. 
Let $\surv$ be the event that the process survives,
noting that 
\begin{equation}\label{smid}
  \Pr(\surv\mid \{t_{\omega/\eps}\hbox{ is defined}\}) \ge 1-(1-s)^{\omega/\eps} =1-e^{-\Omega(\omega)}.
\end{equation}
In particular, for large $n$ this conditional probability is at least $1/2$, so the probability
that $T$ is defined is at most $2s=O(\eps)$.

Let $B_1$ be the `bad' event that $T=t_{\omega/\eps}$ is defined and
there is an $r\ge T$ with $|X_r|/\la^r$ outside the interval $(1\pm \delta)Y_\la$.
By Lemma~\ref{converged}, we have $\Pr(B_1\mid T\hbox{ defined})=e^{-\Omega(\omega)}$,
so $\Pr(B_1)=O(\eps e^{-\Omega(\omega)})$.
Let $B_2$ be the event that $T$ is defined and $|X_T|\ge (1+\delta)\omega/\eps$.
If $\eps$ is small enough, which we may assume, then $(1+\delta)\ge (1+\eps)(1+\delta/2)$,
and from Lemma~\ref{noj} we have $\Pr(B_2)=e^{-\Omega(\omega/\eps)} =O(\eps e^{-\Omega(\omega)})$.

Suppose that $B_1$ does not hold. Then if $t_{\omega/\eps}$ is defined, the process survives.
Thus, off $B_1$, the event that $T$ is
defined coincides with $\surv$ and hence with the event $Y_\la>0$.
Moreover, off $B_1\cup B_2$, whenever $Y_\la>0$
we have $Y_\la= (1+O(\delta)) |X_T|/\la^T= (1+O(\delta))(\omega/\eps)/\la^T$.
Thus, off $B_1\cup B_2$, for all sufficiently large constants $a$, $b>0$, 
 \begin{description}
 \item{(i)} $  Y_\la >  (1+a\delta)(\omega/\eps)/\la^{t_1+t}=(1+a\delta)/(\eps\la^t)$ implies    $T\le t_1+t$, and
 \item{(ii)}  $0<Y_\la \le (1-b\delta)/(\eps\la^t)$ implies $T> t_1+t$.
 \end{description}
Since $\eps t\to\infty$ we have $1/\la^t\to 0$.
Thus using Corollary~\ref{cYt} to bound the probabilities
 of the events on the left in (i) and (ii) above,
and recalling
that $\Pr(B_1\cup B_2)=O(\eps e^{-\Omega(\omega)})$, we obtain the 
bound \eqref{bda}.

To deduce \eqref{bdb}, it suffices to show that the probability that the indicated
event holds but $T$ is undefined is $o(\eps\las^t)$. Recall that up to probability 0 events,
if $\surv$ holds, then $T$ is defined.
So it suffices to bound the probability that $|X_{t_1+t}|>0$ but $\surv$ does not hold.
Now 
\[
 \Pr( |X_{t_1+t}|>0,\, \surv^\cc) = \Pr(\surv^\cc) \Pr(|X_{t_1+t}|>0\mid \surv^\cc) 
= (1-s) \Pr(|X_{t_1+t}^-|>0) \sim \Pr(|X_{t_1+t}^-|>0),
\]
where $(X_r^-)$ is the process conditioned on dying out, which has the distribution of $\bp_{\las}$.
As we shall see shortly (see Lemma~\ref{lsurv}),
$\Pr(|X_a^-|>0) = \Theta( \eps \las^a)$ as $\eps\to 0$ and $a\to\infty$ with $a=\Omega(1/\eps)$.
Since $\log\la=\Theta(\eps)$, we have $t_1+t\ge t_1=\Omega(1/\eps)$, so
\[
 \Pr( |X_{t_1+t}|>0,\, \surv^\cc) = \Theta( \eps \las^{t_1+t}) = \Theta(\eps\las^t(1/\omega)^{\log(1/\las)/\log\la})
 =o(\eps\las^t),
\]
using $\log(1/\las)/\log\la\sim 1$ (see \eqref{lllas})
and $\omega\to\infty$ for the last step. So~\eqn{bdb} follows.

To see that~\eqn{neweq0} holds, note that we may extend the
implication (i) above to imply $|X_{t_1+t}|>\omega/\eps$.
%
Also (ii) can trivially be extended
to imply $|X_{t_1+t}|<\omega/\eps$. Again applying
Corollary~\ref{cYt} gives~\eqn{neweq0}. Now~\eqn{neweq} also follows
since survival coincides with $Y_\lambda>0$.

To deduce that the various statements in the theorem hold, observe that under the
assumptions given on $\eps$, $\omega$ and $t$ but with
$\delta>0$ fixed, we have $\las^t=\omega^{-O(1)}$, while any function
that is $e^{-\Omega(\omega)}$ decreases
faster than any power of $\omega$, so the probabilities
in \eqref{bda}--\eqref{neweq} are all asymptotically
 $(1+O(\delta))4\eps\las^t$. Since $\delta>0$ is arbitrary, the same conclusion follows for $\delta\to 0$ slowly enough. 
\end{proof}
 
Theorem~\ref{th19} is the analogue of Lemma~\ref{l1}, giving (in the relevant range)
the distribution of the time the branching process takes to grow to a certain size.
It will turn out, however, that we need several further results about the branching process.

\subsection{Further branching process lemmas}\label{ss_bp2}

Theorem~\ref{th19} gives good bounds on the probability that the branching process
grows more slowly than expected. It will turn out that we also need a bound on the probability
that it grows faster. Such a bound is immediate from
Lemmas~\ref{YYp}, \ref{etail} and \ref{converged}.
However (to handle the case where $\eps^3 n$ grows slowly),  when $\eps t$ is small
we shall need a bound that is stronger than the one obtained this way.
This is easy to obtain directly using moment generating functions as in the proof of Lemma~\ref{etail}. Note that we study $(|X_t|)$ here rather than $(|\XP_t|)$.

\begin{lemma}\label{fast}
Suppose that $0<\eps<1/10$ and $\eps t<1/10$. Then
for all $N\ge 20t$ we have
\[
 \Pr(|X_t|\ge N)\le t^{-1}e^{-N/(20t)}.
\]
\end{lemma}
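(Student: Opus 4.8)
The plan is to use an exponential (Cram\'er--Chernoff) bound, sharpened by the observation that since $\la=1+\eps$ with $\eps t<1/10$ the process $(X_t)$ is only mildly supercritical on the time scale $t$, so its probability $q_t:=\Pr(|X_t|\ge1)$ of being alive at generation $t$ is $\Theta(1/t)$, as for a near-critical process, rather than $\Theta(1)$; this is exactly where the factor $t^{-1}$ in the bound will come from. Write $f(s)=e^{\la(s-1)}$ for the offspring generating function of $(X_t)$ and $f_t$ for its $t$-fold iterate (with $f_0(s)=s$), so that $f_t(s)=\E(s^{|X_t|})$ wherever the right side is finite. Since for $N\ge1$ we have $\ind{|X_t|\ge N}\le e^{-\theta N}e^{\theta|X_t|}\ind{|X_t|\ge1}$ for any $\theta\ge0$, taking expectations gives the basic inequality
\[
 \Pr(|X_t|\ge N)\ \le\ e^{-\theta N}\bb{f_t(e^\theta)-f_t(0)}\ =\ e^{-\theta N}\bb{y_t+q_t},
\]
where $y_t:=f_t(e^\theta)-1$ and $q_t:=1-f_t(0)=\Pr(|X_t|\ge1)$. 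The rest of the proof is bounding $y_t$ and $q_t$.

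First I would control $y_t$ for $\theta=1/(5t)$. Putting $y_k=f_k(e^\theta)-1$, the relation $f_{k+1}=f\circ f_k$ gives $y_{k+1}=e^{\la y_k}-1$; using the elementary bound $e^z-1\le z+z^2$ for $0\le z\le1$ one gets $y_{k+1}\le\la y_k(1+\la y_k)$ whenever $\la y_k\le1$, while $y_0=e^\theta-1\le\theta(1+\theta)$. Iterating and using $\la^k\le\la^t\le e^{\eps t}\le e^{1/10}<1.11$, a short induction on $k$ (with $\sum_{j\le k}y_j\le(k+1)/(2t)\le1/2$ along the way, since $k+1\le t$) shows that $\theta\le1/(5t)$ forces $y_k\le1/(2t)$ for all $0\le k\le t$; in particular $y_t\le1/(2t)$ (and $f_t(e^\theta)$ is finite, as needed above).

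Then I would control $q_t$. From $f_{t+1}(0)=f(f_t(0))$ we get $q_{t+1}=1-e^{-\la q_t}$ with $q_1=1-e^{-\la}<1$. Since $1-e^{-x}\le x$, induction gives $q_t\le(1+\eps)^{t-1}\le e^{\eps t}<1.11$ for all $t\ge1$, so $\la q_t<3/2$; and on $[0,3/2]$ one has $1-e^{-x}\le x-\tfrac{x^2}{2}+\tfrac{x^3}{6}\le x-\tfrac{x^2}{4}$, whence
\[
 q_{t+1}\ \le\ \la q_t-\tfrac{(\la q_t)^2}{4}\ \le\ (1+\eps)q_t-\tfrac{q_t^2}{4}.
\]
The map $x\mapsto(1+\eps)x-x^2/4$ is increasing on $[0,2(1+\eps)]\supseteq[0,1.11]$, so inserting the inductive hypothesis $q_t\le5/t$ (and using $q_t<1.11$ for the few small $t$ with $5/t>2(1+\eps)$), together with $\eps(t+1)=\eps t+\eps<1/5$, yields $q_{t+1}\le5/(t+1)$. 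Hence $q_t\le5/t$ for every $t\ge1$.

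Finally, taking $\theta=1/(5t)$ in the basic inequality and inserting $y_t\le1/(2t)$, $q_t\le5/t$ gives
\[
 \Pr(|X_t|\ge N)\ \le\ \bb{\tfrac{5}{t}+\tfrac{1}{2t}}e^{-N/(5t)}\ =\ \frac{1}{t}\cdot\frac{11}{2}\,e^{-N/(5t)}\ \le\ \frac{1}{t}\,e^{-N/(20t)},
\]
the last step since $N\ge20t$ forces $e^{N/(5t)-N/(20t)}=e^{3N/(20t)}\ge e^{3}>11/2$. The main obstacle I anticipate is purely bookkeeping of the constants so that the three ingredients interlock: the exponential factor $e^{-\theta N}$, the $\Theta(1/t)$ coming from near-critical survival, and the fact that one is given only $N\ge20t$ (not $N\gg t$). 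In particular one must check that the generating-function iteration controls $y_t$ all the way out to $\theta t$ of constant order (rather than only for $\theta\to0$), and that the induction $q_t\le5/t$ is not destroyed by the mildly supercritical drift — and both of these are exactly where the hypotheses $\eps t<1/10$ and $\eps<1/10$ enter.
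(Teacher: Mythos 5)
Your argument is correct, and at its core it is the same as the paper's: both control $m_t(\theta)-1$, where $m_t(\theta)=\E e^{\theta|X_t|}$, by the iteration $m_{r+1}-1=e^{\la(m_r-1)}-1\le\la(m_r-1)\bb{1+O(\la(m_r-1))}$ with $\theta=\Theta(1/t)$ (you take $\theta=1/(5t)$, the paper $1/(20t)$; this is cosmetic), and both use the hypotheses $\eps<1/10$, $\eps t<1/10$ only to keep the iterated error factor $O(1)$. The routes diverge at the Chernoff step. The paper applies Markov's inequality to the non-negative variable $e^{\theta|X_t|}-1$: on $\{|X_t|\ge N\}$ one has $e^{\theta|X_t|}-1\ge e^{\theta N}-1$, so $\Pr(|X_t|\ge N)\le g_t/(e^{\theta N}-1)$ with $g_t=m_t(\theta)-1\le 1/(10t)$, and the factor $1/t$ is immediate from $g_0\approx\theta$. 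You instead use $\ind{|X_t|\ge N}\le e^{-\theta N}e^{\theta|X_t|}\ind{|X_t|\ge 1}$, obtaining $\Pr(|X_t|\ge N)\le e^{-\theta N}(y_t+q_t)$ with $q_t=\Pr(|X_t|\ge1)$, which forces a separate proof that $q_t=O(1/t)$ from the recursion $q_{t+1}=1-e^{-\la q_t}$. That extra survival estimate is correct (it is the near-critical Kolmogorov-type bound, cf.\ Lemma~\ref{lsurv}) and your reading of the $1/t$ factor as ``near-critical survival'' is a nice interpretation, but the paper's choice of Markov variable makes it unnecessary. Both proofs are valid; yours trades one slightly cleverer application of Markov for one extra (but illuminating) lemma.
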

\begin{proof}
Let $m_r(\theta)=\E e^{\theta |X_r|}$ be the moment generating function of $|X_r|$,
so $m_0(\theta)=e^{\theta}\le 1+2\theta$ for $\theta<1/2$.
We have 
\[
 m_{r+1}(\theta) = \E(m_r(\theta)^{|X_1|}) = e^{\la(m_r(\theta)-1)} \le 1+ \la(m_r(\theta)-1)+ 2\la^2(m_r(\theta)-1)^2
\]
as long as $\la(m_r(\theta)-1)\le 3/2$. Let $g_r=m_r(1/(20t))-1$, noting that $g_0\le 2/(20t)=1/(10t)$.
Then, as long as $g_r\le 2/5$, we have
\[
 g_{r+1} \le \la g_r+2\la^2g_r^2 \le g_r(1+\eps+3g_r) \le g_r\exp(\eps+3g_r).
\]
We claim that
for $r\le t\le \eps^{-1}/10$ we have
\[
 g_r \le g_0 \exp\Bigl(\eps r+3\sum_{i<r} g_i\Bigr) \le g_0\exp\bb{1/10+3/10} < 2g_0 \le 1/(10t).
\]
The proof is by induction using the final bound $g_i<1/(10t)$ for $i<r$ to establish the second inequality.
Hence,
\[
 \E \bb{e^{|X_t|/(20t)}} =1+g_t \le 1+1/(10t).
\]
Applying Markov's inequality to $e^{|X_t|/(20t)}-1$, which is always
non-negative, it follows that
\[
 \Pr( |X_t| \ge N ) \le \frac{1}{10t}\bb{e^{N/(20t)}-1}^{-1} \le 
 \frac{1}{5t}e^{-N/(20t)}
\]
whenever $N\ge 20t$, as required.
\end{proof}

We next turn to various events associated to the subcritical branching process $\bp_{\las}=(X_t^-)$.
We start by estimating the probability that the process
survives to time $t$, as well as a derived quantity associated to the wedge condition.
If our aim is just to prove Theorem~\ref{th_eps}, then a considerably simpler
form of the following lemma will do. However, we shall prove
a more precise result useful also when it comes to studying the distribution.

\begin{lemma}\label{lsurv}
Let $\eps\to 0$ and set $s_t=s_t(\eps)=\Pr(|X_t^-|>0)$.
Then for $t=o(1/\eps)$ we have $s_t\sim 2/t$, while for $t\ge \eps^{-2/3}$
we have $s_t\sim \frac{2\eps}{\las^{-t}-1}$.
In particular, if $t=\Omega(1/\eps)$, then $s_t=\Theta(\eps \las^t)$,
and if $\eps t\to\infty$, then $s_t\sim 2\eps \las^t$.
Furthermore,
\[
 \prod_{t=1}^\infty (1-s_t) \sim \gamma_0\eps^2
\]
for some constant $\gamma_0>0$.
\end{lemma}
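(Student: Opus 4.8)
The plan is to reduce the whole lemma to a single estimate, using a telescoping identity for the product. The $\Po(\las)$ Galton--Watson process $\bp_{\las}$ obeys the standard extinction recursion: conditioning on the first generation, with $s_0:=\Pr(|X_0^-|>0)=1$, one gets $1-s_{t+1}=\E\bigl[(1-s_t)^{\Po(\las)}\bigr]=e^{-\las s_t}$, i.e.\ $s_{t+1}=1-e^{-\las s_t}$, so $s_t\downarrow0$ since $\las<1$. As $1-s_t=e^{-\las s_{t-1}}$, the product telescopes:
\[
 \prod_{t=1}^{T}(1-s_t)=\exp\Bigl(-\las\sum_{r=0}^{T-1}s_r\Bigr)=\exp\Bigl(-\las\bigl(1+\sum_{r=1}^{T-1}s_r\bigr)\Bigr),
\]
and $1+\sum_{r\ge1}s_r=\sum_{r\ge0}\Pr(|X_r^-|>0)=\E\tau$, where $\tau$ is the extinction time of $\bp_{\las}$. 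Thus, once $\sum_t s_t$ is shown to converge, $\prod_{t\ge1}(1-s_t)=e^{-\las\E\tau}$, and since $\las=1-\eps+O(\eps^2)$ the final assertion becomes equivalent to $\E\tau=2\log(1/\eps)+c_0+o(1)$ for an absolute constant $c_0$: then $\las\E\tau=2\log(1/\eps)+c_0+o(1)$ (the cross term is $O(\eps\log(1/\eps))=o(1)$), so $\prod_{t\ge1}(1-s_t)=e^{-c_0}\eps^2(1+o(1))=:\gamma_0\eps^2(1+o(1))$ with $\gamma_0>0$. The argument that produces $\E\tau$ will, along the way, also yield the ``in particular'' asymptotics for the individual $s_t$.

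The engine is the recursion for $v_t:=1/s_t$. Expanding $s_{t+1}=\las s_t-\tfrac12\las^2 s_t^2+O(s_t^3)$ yields $v_{t+1}=\las^{-1}v_t+\tfrac12+O(s_t)$, valid once $s_t$ is small --- and $s_t$ is small past some \emph{absolute} $t_0$, since $\bp_{\las}$ is stochastically dominated by a critical $\Po(1)$ branching process, for which $s_t=O(1/t)$. Solving the affine recursion from $t_0$ onwards, $v_t=\tfrac{\las}{2(1-\las)}(\las^{-t}-1)+w_t$, where $w_t$ satisfies $w_{t+1}=\las^{-1}w_t+O(s_t)$ with bounded initial value; a short bootstrap --- use $s_t=O(1/t)$ first to get $s_t=O(\eps\las^t)$ for $t\gtrsim 1/\eps$, then feed this back --- gives $\las^t w_t=O(\sum_i\las^i s_i)=O(\log(1/\eps))=o(1/\eps)$. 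Since $\tfrac{\las}{2(1-\las)}=\tfrac1{2\eps}(1+O(\eps))$, it follows that
\[
 s_t=\frac{2\eps}{\las^{-t}-1}\,(1+o(1))\quad\text{uniformly for }t\ge t_0.
\]
This specialises to $s_t\sim 2/t$ when $t\to\infty$ with $\eps t\to0$ (as then $\las^{-t}-1\sim\eps t$), to $s_t\sim 2\eps/(\las^{-t}-1)$ when $t\ge\eps^{-2/3}$, and to $s_t=\Theta(\eps\las^t)$, respectively $s_t\sim 2\eps\las^t$, when $\eps t=\Omega(1)$, respectively $\eps t\to\infty$ --- exactly the stated claims.

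It remains to evaluate $\E\tau-1=\sum_{t\ge1}s_t$ to within $o(1)$. Fix a large constant $L\ge t_0$ and split the sum at $L$. Since $s_t(\eps)\to s_t(0)$ for each fixed $t$ as $\eps\to0$, the head satisfies $\sum_{t=1}^{L}s_t\to\sum_{t=1}^{L}s_t(0)$; running the $v_t$-analysis at $\las=1$ gives $s_t(0)=2/t+O(\log t/t^2)$, so $\sum_{t=1}^{L}s_t(0)=2\log L+c_1+o_L(1)$. For the tail, the summand is decreasing in $t$, so $\sum_{t>L}s_t=2\eps(1+o(1))\int_L^\infty\frac{1}{\las^{-t}-1}\dd t+O(1/L)$; substituting $u=t\log(1/\las)$, using $\int_\delta^\infty(e^u-1)^{-1}\dd u=\log(1/\delta)+O(1)$ as $\delta\to0$, and $\eps/\log(1/\las)=1+O(\eps)$, this becomes $2\log(1/\eps)-2\log L+c_2+O(1/L)+o_\eps(1)$. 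Adding the two pieces, $\sum_{t\ge1}s_t=2\log(1/\eps)+\bigl(\sum_{t=1}^{L}s_t(0)-2\log L+c_2\bigr)+O(1/L)+o_\eps(1)$; the left-hand side does not depend on $L$, while $\sum_{t=1}^{L}s_t(0)-2\log L\to c_1$, so letting first $\eps\to0$ and then $L\to\infty$ forces $\sum_{t\ge1}s_t=2\log(1/\eps)+c_0+o(1)$, completing the proof.

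The step I expect to be the main obstacle is the last one. Through the near-critical range $t\lesssim 1/\eps$ the partial sums of $s_t$ grow like $2\log t$, and they stop growing only once $s_t$ starts to decay geometrically at $t\gtrsim 1/\eps$; one has to match the two regimes accurately enough for the two logarithms to cancel and leave an $o(1)$ remainder. This is why $c_0$ can only be pinned down through an iterated limit, and why it is essential that the $O(\cdot)$ errors in the $v_t$-recursion are uniform in $\eps$ --- a uniformity which, as above, ultimately rests on the $O(1/t)$ bound obtained by domination by the critical branching process.
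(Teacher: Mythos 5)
Your proof is correct in its essentials and takes a genuinely different route from the paper's, the key difference being the telescoping identity. From $1-s_{t+1}=e^{-\las s_t}$ you get $\prod_{t=1}^{T}(1-s_t)=\exp(-\las\sum_{r=0}^{T-1}s_r)$, which turns the product \emph{exactly} into $e^{-\las\E\tau}$ and reduces everything to a single asymptotic for $\sum_t s_t$. The paper never uses this; instead it estimates $\prod_{t<T}(1-s_t)$ and $\prod_{t\ge T}(1-s_t)$ separately at the moving split point $T=\lfloor\eps^{-2/3}\rfloor$, matching the ``critical regime'' piece (where $s_t\approx\cs_t$) against the ``geometric decay'' piece (via the explicit ansatz $r_t=\frac{2(1-\las)}{\las(\las^{-t}-1)}$). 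Your $v_t=1/s_t$ recursion is, up to bookkeeping, the same device as the paper's $r_t$ (both turn the nonlinear extinction recursion into a near-affine one), and your constant-$L$ split plus iterated limit is a legitimate alternative to the paper's moving split. The telescoping buys a real simplification: the paper would otherwise need to control $\sum_t\log(1-s_t)$ including the $\sum s_t^2$ and higher corrections, whereas for you the identity is exact, and the factor $\las\sim 1$ in front of $\E\tau$ contributes only an $O(\eps\log(1/\eps))=o(1)$ correction, as you note.

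One imprecision worth flagging: the claim ``$s_t=\frac{2\eps}{\las^{-t}-1}(1+o(1))$ uniformly for $t\ge t_0$'' with $t_0$ an absolute constant is too strong. Writing $v_t=\frac{\las}{2(1-\las)}(\las^{-t}-1)+w_t$, the relative error is of order $\frac{(1-\las)|\las^t w_t|}{1-\las^t}$; in the range $t_0\le t\lesssim 1/\eps$ this is $\Theta\bigl(|\las^t w_t|/t\bigr)$, and since $s_t(\eps)\to\cs_t\neq 2/t$ for fixed small $t$, uniformity down to a constant is simply false. What you actually have --- and what the downstream argument needs --- is the sharper bound $|\las^t w_t|=O\bigl(\sum_{s<t}\las^s s_s\bigr)=O(\log t)$ for $t\lesssim 1/\eps$ (rather than the blanket $O(\log(1/\eps))$ you wrote), giving relative error $O(\log t/t)$, which does vanish as $t\to\infty$. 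This is exactly what makes the tail error $\sum_{t>L}\frac{2}{t}\cdot O(\log t/t)=O(\log L/L)=o_L(1)$, whereas with only $O(\log(1/\eps))$ the tail error would be $O(\log(1/\eps)/L)$ and would not vanish as $\eps\to0$ for fixed $L$. Since you use the formula only in regimes where $t\to\infty$, the proof survives, but the error bound should be stated in the $t$-dependent form. (Incidentally, you are right that $\cs_t=2/t+O(\log t/t^2)$; the paper's claimed $O(t^{-2})$ at \eqref{cst} is slightly too optimistic, though harmlessly so.)
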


\begin{proof}
Let $\cs_t$ be the probability that a critical Poisson Galton--Watson branching process
survives to time $t$, so $\cs_0=1$ and $\cs_{t+1}=1-\exp(-\cs_t)$.
It is well known (see~\cite{Kol} or~\cite[Section I.9, Thm 1]{AN}) that $\cs_t\sim 2/t$ as $t\to\infty$, and indeed one can check that
\begin{equation}\label{cst}
 \cs_t = 2t^{-1} +O(t^{-2}).
\end{equation}
Moreover, $t\cs_t$ approaches $2$ from below. Clearly, $s_t<\cs_t$, so we have
\begin{equation}\label{uup} 
 s_t < \cs_t < 2/t
\end{equation}
for all $\eps>0$ and $t\ge 1$.

On the other hand, we may construct $\bp_{\las}$ by first
constructing a critical process, and then deleting each edge of the resulting
tree with probability $1-\las\sim \eps$, independently of all other edges.
If the critical process survives to time $t$, then there is at least one path of length
$t$ witnessing this, and it follows that $s_t\ge \cs_t(1-\las)^t=\cs_t(1-O(\eps t))$.
If $t=o(1/\eps)$,
then $(1-\las)^t\sim 1$, so $s_t\sim \cs_t$ and the first statement of the lemma follows.
Note also for later that
\begin{equation}\label{close}
 s_t = \cs_t -O(\eps t)\cs_t = \cs_t - O(\eps).
\end{equation}

For larger $t$ we use the following iterative formula, obtained by 
considering the number of particles in $X_1^-$ with descendants in generation $t+1$:
\begin{equation}\nonumber 
 s_{t+1} = \Pr\bb{ \Po( \las s_t)>0 } = 1-e^{-\las s_t} = \las s_t - \las^2 s_t^2/2 +O(s_t^3),
\end{equation}
where, since $\las\le 1$ and $s_t\le 1$, the implicit constant is absolute.
Note also that $s_{t+1}\le \las s_t$.
Rewriting the formula above,
\begin{equation}\label{sit}
 s_{t+1} = \las s_t \exp\bb{-\las s_t/2 +O(s_t^2)}.
\end{equation}
We now simply `guess' an approximate form for $s_t$ (obtained by solving a differential
equation, although things are not quite that simple):
for $t\ge 1$, set
\[
 r_t = \frac{2(1-\las)}{\las(\las^{-t}-1)} \sim \frac{2\eps}{\las^{-t}-1}.
\]
Since $a\ge 0$ implies $(1+a)^t-1\ge at$, we have $r_t\le r_1/t=2/t$ for all $t$.
In particular, $r_t\le 1/2$ for $t\ge 4$.
Also,
\[
 \frac{\las r_t}{r_{t+1}} = \frac{\las(\las^{-t-1}-1)}{\las^{-t}-1} \\
 = \frac{\las^{-t}-\las}{\las^{-t}-1} = 1+\frac{1-\las}{\las^{-t}-1}
 = 1+\las r_t/2.
\]
In particular, $r_{t+1}\le \las r_t$.
Furthermore, for $t\ge 4$, which implies $r_t\le 1/2$, we have
\begin{equation}\label{rit}
 r_{t+1} = \las r_t (1+\las r_t/2)^{-1}
 = \las r_t\exp\bb{-\las r_t/2 +O(r_t^2)}.
\end{equation}

Using \eqref{sit} and \eqref{rit},
it is now not hard to show that $s_t$ and $r_t$ remain close for all
large $t$.
Set $T=\floor{\eps^{-2/3}}$, noting that $T\to \infty$ and $T=o(1/\eps)$.
Note that
\[
 \las^{-T}=(1/\las)^T=(1+\eps+O(\eps^2))^T=1+T\eps+O(T^2\eps^2+T\eps^2)
=1+T\eps(1+O(\eps^{1/3})),
\]
so
\begin{equation}\label{start}
 r_T, s_T = (1+O(\eps^{1/3})) 2/T,
\end{equation}
using \eqref{cst} and \eqref{close} for $s_T$.

Let $\rho_t=s_t/r_t-1$, noting that $\rho_T=O(\eps^{1/3})$.
Then, from \eqref{sit} and \eqref{rit},
\begin{eqnarray*}
 1+\rho_{t+1} &=& (1+\rho_t) \exp\bb{-\las(s_t-r_t)/2 +O(r_t^2+s_t^2)} \\
  &=& (1+\rho_t) \exp\bb{-\las \rho_t r_t/2 +O(r_t^2+s_t^2)}.
\end{eqnarray*}
Since $r_t$ and $s_t$ are bounded, we have $\exp(O(r_t^2+s_t^2))\le M(r_t^2+s_t^2)$
for some absolute constant $M$.
For $\eps$ small and $t\ge T$ we have $r_t\le r_T\le 1/10$, say. It follows
that whatever the sign of $\rho_t$,
the $\exp(-\las \rho_t r_t/2)$ term `pulls $(1+\rho_t)$ towards $1$'
without overshooting, and hence that
\[
 |\rho_{t+1}| \le |\rho_t| + (1+|\rho_t|)M(r_t^2+s_t^2).
\]
Using $r_t\le \las^{t-T}r_T$ and $s_t\le \las^{t-T}s_T$, it follows that
\[
 |\rho_t|\le |\rho_T| + 2M\sum_{0\le s\le t-T} \las^{2s} (r_T^2+s_T^2),
\]
provided $|\rho_s|<1$ for $T\le s<t$.
Since $r_T^2\sim s_T^2\sim (4/T)^2 =\Theta(\eps^{4/3})$,
while $\sum_{s\ge 0} \las^{2s}=O(1/\eps)$,
it follows
easily that $|\rho_t|$ does remain bounded by $1$, and in fact
that $|\rho_t|=O(\eps^{1/3})$ uniformly in $t\ge T$.
In particular, $s_t\sim r_t$ for
$t\ge T$, proving the second statement of the lemma.
The next two statements follow.

Finally, we turn to the estimate on $\prod_{t\ge 1}(1-s_t)$.
{F}rom \eqref{cst} we see that $\sum_t |\cs_t-2/t|=\sum_t O(t^{-2})$ is bounded.
It follows that $\sum_{t\ge 3} \log\left(\frac{1-\cs_t}{1-2/t}\right)$ converges;
let us write $c$ for the value of this sum, which does not involve $\eps$.
Since $T\to \infty$, the sum truncated at $T$ converges to $c$ as $\eps\to 0$.
Hence, from \eqref{close},
\begin{multline*}
 \prod_{t<T} (1-s_t) = \prod_{t<T} (1-\cs_t+O(\eps)) = e^{O(\eps T)}\prod_{t<T}(1-\cs_t)\\
 \sim (1-\cs_1)(1-\cs_2)e^{c}\prod_{3\le t<T}(1-2/t) \sim \gamma_0 T^{-2},
\end{multline*}
for some constant $\gamma_0>0$.
On the other hand, comparison with an integral shows that
\[
 \sum_{t\ge T} r_t = -2\log(\eps T)+O(\eps T) = -2\log(\eps T) +o(1).
\]
We have already seen that $\sum_{t\ge T}r_t^2=o(1)$, and the same for $s_t$,
so, using the bound on $|\rho_t|$ established above, it follows that
\begin{multline*}
\log\prod_{t\ge T}(1-s_t) =o(1) -\sum_{t\ge T}s_t
 = o(1) -(1+O(\eps^{1/3})) \sum_{t\ge T}r_t                 \\
= 2\log(\eps T)+o(1)+O(\eps^{1/3}\log(\eps T))
 = 2\log(\eps T)+o(1).
\end{multline*}
Thus $\prod_{t\ge 1}(1-s_t) \sim \gamma_0 T^{-2} (\eps T)^2 =\gamma_0 \eps^2$, as claimed.
\end{proof}

The final statement of Lemma~\ref{lsurv} shows that if we start one copy of $\bp_{\las}$ at each
time $t\ge 1$, the probability that for every $t$ the $t$th copy dies within $t$ generations
is asymptotically $\gamma_0\eps^2$.

\begin{remark}
Constructing $\bp_\la$ first by constructing $\bp_\la^+$, and then adding the subcritical
trees, we see that
$p_r=\prod_{t=1}^r (1-s_t)$ is exactly the probability that $|X_r|=1$ given that $|X_r^+|=1$.
We have
\begin{multline*}
 \Pr\bb{|X_r|=1 \bigm| |X_r^+|=1}  = \Pr\bb{|X_r^+|=1\bigm| |X_r|=1}\Pr(|X_r|=1)/\Pr(|X_r^+|=1) \\
 = s\Pr(|X_r|=1)/\Pr(|X_r^+|=1) = s\Pr(|X_r|=1)/(s\las^r) = \Pr(|X_r|=1)/\las^r.
\end{multline*}
So the final statement of Lemma~\ref{lsurv}
is equivalent to the statement that for large $r$, $\Pr(|X_r|=1)\sim \gamma_0\eps^2\las^r$
for some constant $\gamma_0$, which can presumably be seen more directly somehow.
\end{remark}

Before turning to our next real lemma, let us get a simple observation out of the way.
Trivially, $\E(|X_t^-|)=\las^t$; a simple inductive calculation gives the standard
formula $\E(|X_t^-|^2)=\las^t(1+\las+\cdots+\las^t)$.
Since $\las>1-\eps$ (see \eqref{las1e}), this gives $\E(|X_t^-|^2) \le \eps^{-1}\las^{2t}$,
so $\Var(|X_t^-|)\le \eps^{-1}(\E |X_t^-|)^2$.
If we start $\bp_{\las}$ with $N\ge 10/\eps$ particles in generation $0$,
and $r\le 1/\eps$, then the size of generation $r$ has expectation $\mu \ge N(1-\eps)^{1/\eps}\ge N/3$,
and, using independence of the offspring of different particles,
variance at most $\eps^{-1}\mu^2/N\le \mu^2/10$. It follows by Chebyshev's inequality
that
\begin{equation}\label{slowdie}
 \Pr\bb{|X_r^-|\ge N/6  \bigm| |X_0^-|=N} \ge 1/2
\end{equation}
whenever $N\ge 10/\eps$ and $r\le1/\eps$.

Let $(D_t)_{t\ge 0}$ denote the union of countably many independent copies of $\bp_{\las}$,
where the $i$th process starts with a single particle in generation $i$.
Thus $|D_0|=1$, while given $|D_t|$, the distribution of $|D_{t+1}|$ has the form
$1+\Po(\las |D_t|)$.

\begin{lemma}\label{dthin}
Let $0<\eps<1/10$ be given, and define $\la=1+\eps$ and $\las=\la(1-s(\la))$ as usual.
For $\omega\ge 20$ and $t\ge 0$
we have $\Pr(|D_t|\ge \omega/\eps)=e^{-\Omega(\omega)}$, where the implied
constant is absolute.
Furthermore, for $T\ge 1/\eps$,
\[
 \Pr\bb{ \exists t:0\le t\le T,\, |D_t|\ge \omega/\eps} = O(\eps T e^{-\Omega(\omega)}).
\]
\end{lemma}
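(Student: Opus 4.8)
\textbf{Proof proposal for Lemma~\ref{dthin}.}

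The plan is to analyse $(|D_t|)$ via its moment generating function, exactly as in the proofs of Lemmas~\ref{etail} and~\ref{fast}, exploiting the recursion $|D_{t+1}| \sim 1+\Po(\las|D_t|)$. First I would set $m_t(\theta)=\E\exp(\theta\eps|D_t|)$ and compute, using $\E e^{\alpha\Po(\mu)}=e^{\mu(e^\alpha-1)}$, that
\[
 m_{t+1}(\theta) = e^{\theta\eps}\,\E\exp\bb{\las|D_t|(e^{\theta\eps}-1)} = e^{\theta\eps}\, m_t\bb{\las(e^{\theta\eps}-1)/\eps}.
\]
Since $\las<1$ and $e^{\theta\eps}-1\le \theta\eps\, e^{\theta\eps}$, for $\theta\eps$ bounded (say $\theta\le \theta_0$ for a small absolute constant $\theta_0$) the argument of $m_t$ on the right is at most roughly $\theta e^{\theta_0}<\theta$ times a factor strictly less than $1$; combined with the prefactor $e^{\theta\eps}=1+O(\theta\eps)$, one gets a contraction that keeps $m_t(\theta)$ bounded by an absolute constant (say $2$) uniformly in $t$ and in $\eps\in(0,1/10)$. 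This is the branching-process core of the argument, and it is genuinely parallel to the induction establishing \eqref{yrih} in Lemma~\ref{etail}; the key point, as there, is that the bound is uniform in $\eps$ as $\eps\to 0$. Markov's inequality applied to $e^{\theta_0\eps|D_t|}$ then yields $\Pr(|D_t|\ge \omega/\eps)\le 2e^{-\theta_0\omega}=e^{-\Omega(\omega)}$, proving the first statement.

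For the second (union over $0\le t\le T$), the naive union bound loses a factor $T$, which is fine if $T$ is polynomial but the statement asserts exactly $O(\eps T e^{-\Omega(\omega)})$, so the naive bound $\sum_{t\le T} e^{-\Omega(\omega)} = O(Te^{-\Omega(\omega)})$ is already off by a factor $1/\eps$ (which is $\gg 1$). The remedy is the same two-step trick used in Lemma~\ref{bpgrow} (the $G_k/G_k'$ argument): chop $[0,T]$ into blocks of length $\floor{1/\eps}$, and for each block use \eqref{slowdie} — if $|D_t|$ ever reaches $\omega/\eps$ at some $t$ in a block, then (since $|D_t|$ has a $1+\Po(\las|D_{t-1}|)$ distribution and, crucially, the copies of $\bp_{\las}$ contributing to $|D_t|$ thereafter evolve independently, each starting from $\ge \omega/\eps \ge 10/\eps$ particles) with probability at least $1/2$ the value $|D_{t'}|$ at the \emph{end} of that block (which is $\le 1/\eps$ steps later) is still at least $(\omega/\eps)/6 \ge \omega/(6\eps)$. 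Hence
\[
 \Pr\bb{\exists t\le T:\, |D_t|\ge \omega/\eps} \le 2\sum_{k=0}^{\ceil{\eps T}} \Pr\bb{|D_{k\floor{1/\eps}}| \ge \omega/(6\eps)},
\]
and each term on the right is $e^{-\Omega(\omega)}$ by the first statement (with $\omega$ replaced by $\omega/6$, still $\to\infty$ and $\ge 20/6$ after adjusting the threshold $20$ slightly, which only changes the implicit constants). Summing the $O(\eps T)$ terms gives $O(\eps T e^{-\Omega(\omega)})$, as required.

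The main obstacle is the second step: one must be careful that \eqref{slowdie} is applicable, i.e. that once $|D_t|\ge \omega/\eps$ the process does not merely consist of the single fresh particle injected at each time but genuinely has $\ge 10/\eps$ particles whose subtrees are independent copies of $\bp_{\las}$ — this is true because $|D_t|$ itself counts particles alive at time $t$, and conditioning on $|D_t|=N$ the descendants of those $N$ particles (plus the particles injected at times $>t$) form independent subcritical trees, so $|D_{t'}|$ for $t'>t$ stochastically dominates the size of generation $t'-t$ of $\bp_{\las}$ started from $N$ particles. A minor secondary point is checking that \eqref{slowdie} needs $N\ge 10/\eps$ and $t'-t\le 1/\eps$, both of which hold within a block; and that the ``end of block'' time is well-defined (take $k\floor{1/\eps}$). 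Everything else is routine bookkeeping of the kind already carried out several times in the paper.
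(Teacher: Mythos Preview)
Your proposal is correct and matches the paper's proof essentially line for line. Your moment generating function $m_t(\theta)=\E e^{\theta\eps|D_t|}$ is the paper's probability generating function $f_t(x)=\E x^{|D_t|}$ evaluated at $x=e^{\theta\eps}$, and your contraction of the argument is exactly the paper's inductive bound $x_r\le 1+(1-\eps/3)^r\eps/10$ on the iterates $x_{r+1}=e^{\las(x_r-1)}$; the block trick for the second statement (length $\floor{1/\eps}$ blocks, \eqref{slowdie} to transfer a hit inside a block to the endpoint with probability $\ge 1/2$, then sum $O(\eps T)$ endpoint bounds) is identical. The one sentence that is garbled --- ``at most roughly $\theta e^{\theta_0}<\theta$ times a factor strictly less than $1$'' --- should read that the new argument is at most $\las e^{\theta_0\eps}\theta$, and for $\theta_0$ a small absolute constant the factor $\las e^{\theta_0\eps}=1-\eps+O(\eps^2)+\theta_0\eps+O(\eps^2)$ is at most $1-\eps/2$, giving a geometric decay whose sum is $O(\theta_0/\eps)$ and hence $m_t(\theta_0)\le e^{O(\theta_0)}$ uniformly.
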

\begin{proof}
Let $f_t(x)=\E x^{|D_t|}$ be the probability generating function of $|D_t|$.
Then $f_0(x)=x$, while from the relationship between $D_{t+1}$ and $D_t$ above we have
\[
 f_{t+1}(x)= x f_t\bb{e^{\las(x-1)}}
\]
for all $t\ge 0$ and all $x$.
Fix $t\ge 0$ and let $x_0=1+\eps/10$, say. Inductively defining
$x_r$ by $x_{r+1}=e^{\las(x_r-1)}>1$, note that
\begin{equation}\label{ftx0}
 f_t(x_0)=\prod_{r=0}^t x_r \le \prod_{r=0}^\infty x_r.
\end{equation}
We claim that for every $r$ we have
\begin{equation}\label{xrind}
 x_r\le 1+(1-\eps/3)^r \eps/10,
\end{equation}
say. This certainly holds for $r=0$. Suppose then that \eqref{xrind} holds for some particular $r$.
Since $\las<(1-\eps/2)$, it follows that $x_{r+1}\le \exp( (1-\eps/2) (1-\eps/3)^r\eps/10)$.
Using $\exp(y)\le 1+y+y^2$ for $y\le 1$, we thus have
\[
 x_{r+1} \le 1+  (1-\eps/2)(1-\eps/3)^r\eps/10 + (1-\eps/3)^r\eps^2/100 \le 1+(1-\eps/3)^{r+1}\eps/10,
\]
and \eqref{xrind} follows by induction.

Combining \eqref{ftx0} and \eqref{xrind} we have, crudely, $\log(f_t(x_0))\le 2\sum_r (1-\eps/3)^r\eps/10
=6/10$, so $f_t(x_0)\le 2$. Recalling that $x_0=1+\eps/10$, we thus have
$\Pr(|D_t|\ge \omega/\eps)\le f_t(x_0)/x_0^{\omega/\eps} \le 2e^{-\Omega(\omega)}$, and the first statement
of the lemma follows, for all $\omega\ge 2$, say.

For the second statement, suppose now that $\omega\ge 20$.
Using \eqref{slowdie}, and simply ignoring the one new particle added
in each generation, for $0\le r\le 1/\eps$,
conditional on $|D_t|=N\ge 10/\eps$,
the probability that $D_{t+r}\ge N/6$ is at least $1/2$.
Let $k=\floor{1/\eps}$.
Examining $D_t$, $D_{t+1},\ldots$, one by one, stopping the first time any of
these sets has size more than $\omega/\eps$, it follows
that
\[
 \Pr\bb{ |D_{t+k}|\ge \omega/(6\eps) \bigm| \exists t': t\le t'\le t+k,\, |D_{t'}|\ge \omega/\eps } \ge 1/2,
\]
so
\[
  \Pr\bb{ \exists t': t\le t'\le t+k,\, |D_{t'}|\ge \omega/\eps } \le 2 \Pr\bb{|D_{t+k}|\ge \omega/(6\eps)}
 = e^{-\Omega(\omega)},
\]
using the first part for the final bound. Summing over $0\le t\le T$ in steps of $k=\floor{1/\eps}$,
the second statement follows.
\end{proof}

Next we shall show that conditioning $\bp_{\las}$ to survive to (at least)
a certain time does not increase its expected total size too much.

\begin{lemma}\label{tall}
Suppose that $\eps>0$ and $t\ge 1$. Let $N$ denote the total number of
particles in $\bp_{\las}$. Then $\E(N\mid X_t^-\ne\emptyset)\le (t+1)/\eps$.
\end{lemma}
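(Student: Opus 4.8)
The plan is to obtain $\E(N\mid X_t^-\ne\emptyset)$ by setting up and solving a one–step recursion in $t$. Write $s_t=\Pr(|X_t^-|>0)$ (so $s_0=1$ and $s_t=1-e^{-\las s_{t-1}}$), and put $a_t=\E\bigl(N\ind{|X_t^-|>0}\bigr)$, so that the quantity we want is $e_t:=a_t/s_t=\E(N\mid |X_t^-|>0)$. Condition on the first generation: the root has $K\sim\Po(\las)$ children, each the root of an independent copy of $\bp_{\las}$ of total size $N_i$ with $\E N_i=1/(1-\las)$, and let $B_i$ be the event that the $i$th copy reaches depth $t-1$, so $\Pr(B_i)=s_{t-1}$ and $\{|X_t^-|>0\}=\bigcup_{i\le K}B_i$ for $t\ge1$. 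Since $N=1+\sum_{i\le K}N_i$, the key manipulation is the splitting $N_i\ind{\bigcup_j B_j}=N_i\ind{B_i}+N_i\ind{B_i^{\mathrm c}}\ind{\bigcup_{j\ne i}B_j}$, whose first term has expectation $a_{t-1}$ and whose second term, by independence of the $i$th subtree from the others, factors as $\bigl(\tfrac1{1-\las}-a_{t-1}\bigr)\bigl(1-(1-s_{t-1})^{K-1}\bigr)$. Taking expectations (conditionally on $K$, then over $K$) and using the elementary identity $\E\bigl[K\bigl(1-(1-s_{t-1})^{K-1}\bigr)\bigr]=\las-\las e^{-\las s_{t-1}}=\las s_t$ for $K\sim\Po(\las)$, one arrives at
\[
 a_t=s_t+\las a_{t-1}+\Bigl(\tfrac1{1-\las}-a_{t-1}\Bigr)\las s_t=\frac{s_t}{1-\las}+\las(1-s_t)\,a_{t-1}\qquad(t\ge1),
\]
with $a_0=\E N=1/(1-\las)$.

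Dividing by $s_t$ and using $1-s_t=e^{-\las s_{t-1}}$ and $s_t=1-e^{-\las s_{t-1}}$ to simplify the coefficient $\las(1-s_t)\,s_{t-1}/s_t$, this becomes
\[
 e_t=\frac1{1-\las}+\gamma_{t-1}\,e_{t-1},\qquad \gamma_{t-1}:=\frac{\las s_{t-1}}{e^{\las s_{t-1}}-1},
\]
and since $e^u-1>u$ for all $u>0$ we have $0<\gamma_{t-1}<1$. Hence $e_t<e_{t-1}+\tfrac1{1-\las}$ for every $t\ge1$, and since $e_0=\tfrac1{1-\las}$ this iterates to $e_t<(t+1)/(1-\las)$. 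As $0<\eps<1/10$, \eqref{lasasympt} gives $1-\las=\eps-\tfrac23\eps^2+O(\eps^3)$, so $(1-\las)^{-1}=\eps^{-1}(1+O(\eps))$, which yields the stated estimate $\E(N\mid X_t^-\ne\emptyset)\le (t+1)/\eps$. (If one wants the precise constant rather than $(1+o(1))(t+1)/\eps$, one simply keeps the full expansion $e_t=\tfrac1{1-\las}\sum_{j=0}^{t}\prod_{i=t-j}^{t-1}\gamma_i$ and notes that the factor $\gamma_0=\las/(e^\las-1)$ in the last summand is bounded away from $1$ uniformly over the relevant range.)

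The only real work is the recursion in the first paragraph; I expect the main obstacle to be handling the union $\{|X_t^-|>0\}=\bigcup_i B_i$ cleanly, i.e.\ getting the $N_i\ind{B_i}+N_i\ind{B_i^{\mathrm c}}\ind{\bigcup_{j\ne i}B_j}$ decomposition and the Poisson generating–function identity $\E[K(1-(1-s_{t-1})^{K-1})]=\las s_t$ right — after that, the monotonicity $\gamma_{t-1}<1$ and the iteration from $e_0=1/(1-\las)$ are immediate, and the passage from $1/(1-\las)$ to $1/\eps$ is routine given \eqref{las1e} and \eqref{lasasympt}.
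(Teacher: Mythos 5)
Your recursion is correct and is a genuinely different route from the paper's. The paper proves the lemma by a stochastic-domination/spine argument: it repeatedly uses the observation that a $\Po(\mu)$ random variable conditioned to be $\ge 1$ is dominated by $1+\Po(\mu)$, and iterates to show that $\bp_{\las}$ conditioned on $\{X_t^-\ne\emptyset\}$ is dominated by the union of $t+1$ independent copies of $\bp_{\las}$ started at generations $0,1,\dots,t$, whence the expected size is at most $(t+1)/(1-\las)$. You instead compute $\E\bigl(N\ind{X_t^-\ne\emptyset}\bigr)$ exactly by conditioning on the root's first generation, and your decomposition $N_i\ind{\bigcup_j B_j}=N_i\ind{B_i}+N_i\ind{B_i^c}\ind{\bigcup_{j\ne i}B_j}$, the factoring by independence of subtrees given $K$, and the generating-function identity $\E[K(1-(1-s_{t-1})^{K-1})]=\las s_t$ for $K\sim\Po(\las)$ are all correct; so is the simplification to $e_t=\tfrac{1}{1-\las}+\gamma_{t-1}e_{t-1}$ with $\gamma_{t-1}=\las s_{t-1}/(e^{\las s_{t-1}}-1)<1$, and hence $e_t<(t+1)/(1-\las)$. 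This is more elementary and self-contained, and unlike the paper's proof it actually pins down $\E(N\mid X_t^-\ne\emptyset)$ exactly.

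However, the very last step is where both you and the paper slip: passing from $(t+1)/(1-\las)$ to $(t+1)/\eps$ requires $1-\las\ge\eps$, but \eqref{las1e} says $\las>1-\eps$, i.e.\ $1-\las<\eps$, so the inequality runs the wrong way (by a factor $1+\Theta(\eps)$). The parenthetical fix you propose — keeping the full expansion $e_t=\tfrac1{1-\las}\sum_{j=0}^t\prod_{i=t-j}^{t-1}\gamma_i$ and using only that $\gamma_0=\las/(e^\las-1)$ is bounded away from $1$ — does not close the gap: the deficit $(t+1)/(1-\las)-(t+1)/\eps$ is of order $t$ as $t\to\infty$ (it is $\sim\tfrac23(t+1)$), while the $\gamma_0$ factor only reduces a single summand by a bounded amount. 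A genuine fix via your expansion would have to exploit that \emph{all} the $\gamma_i$ are strictly below $1$, with $\gamma_i\approx 1-\tfrac{\las s_i}{2}\approx 1-1/i$ for moderate $i$ (using $s_i\sim 2/i$ from Lemma~\ref{lsurv}); this makes the sum roughly $t/2$, not $t+1$, and comfortably gives a bound $C(t+1)/\eps$. Since the paper itself has the identical slip at this point and the constant is irrelevant to how the lemma is used, this is a minor issue, but as written your proof does not actually establish the stated constant, and your proposed patch does not work for $t$ large compared to $1/\eps$.
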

\begin{proof}
We shall use repeatedly the observation that for any $\mu$,
the distribution of a Poisson $\Po(\mu)$ random variable conditioned
to be at least $1$ is stochastically dominated by $1+\Po(\mu)$.
(This may be seen by considering the first point, if any, of a Poisson process
in an interval.)

We may view the first generation of $\bp_{\las}$ as the union of two sets: the set $S_1$
consisting of those children of the root
that survive to time $t$, and the set $S_2$ of those that do not.
The full process is then obtained by taking a copy of the process conditioned to survive
for $t-1$ generations for each particle in $S_1$, and a copy conditioned to die
within $t-1$ generations for each in $S_2$.
The sets $S_1$ and $S_2$ have independent Poisson sizes. Conditioning on $X_t^-$
being non-empty is equivalent to conditioning on $|S_1|\ge 1$. Let us
instead simply add a new particle to $S_1$. By the observation at the start
of the proof, this gives a process whose distribution dominates that of $\bp_{\las}$.
Our new process consists exactly of the standard process $\bp_{\las}$,
together with a copy of $\bp_{\las}$ conditioned to survive at least $t-1$ generations
started at time $1$.

Applying the same procedure to the new copy (i.e., to the children of the extra
particle in $S_1$, but {\em not} to those of the other particles in $S_1$), and continuing,
it follows that the distribution of $\bp_{\las}$ conditioned to survive
to time $t$ is dominated by the distribution of the union of $t+1$ copies
of $\bp_{\las}$, one started at each time $r$, $0\le r\le t$. This has expected
total size $(t+1)/(1-\las)\le (t+1)/\eps$.
\end{proof}

Finally, we observe that if we condition on $\bp_\la$ surviving,
this process quickly realizes its {\em conditional} expected size, which
by Lemma~\ref{l:X} is a factor $(1+o(1))/s\sim 1/(2\eps)$ larger than the unconditioned size.
\begin{lemma}\label{livestart}
Let $\la=1+\eps$, where $\eps=\eps(n)\to 0$. Let $\omega(n)$ and $\omega'(n)$ satisfy
$\omega'\to\infty$ and $\omega/\omega'\to\infty$, and set
$t_1=\floor{\log\omega/\log\la}$. Then
$\Pr\bb{|X_{t_1}|\ge \omega'/\eps \mid (X_t)\hbox{ survives}}\to 1$
as $n\to\infty$.
\end{lemma}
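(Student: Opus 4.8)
The plan is to condition throughout on the survival event $\surv=\{\forall t:\,|X_t|>0\}$, and to exploit the fact that the \emph{conditional} behaviour of $\bp_\la$ is governed by the martingale limit $Y=Y_\la=\lim_t|X_t|/\la^t$. Recall that $Y>0$ a.s.\ on $\surv$, and that by Lemma~\ref{YYp} together with the definition of $\YP_\la$ as $Y_\la^+$ conditioned on being positive (equivalently, on $\surv$), the conditional law of $Y$ given $\surv$ is that of $\YP_\la/s$, where $s=s(\la)\sim2\eps$ by~\eqref{sasympt}. Hence $Y$ is typically of order $1/\eps$ on $\surv$, so $|X_{t_1}|\approx Y\la^{t_1}$ is typically of order $\omega/\eps$ (using $\la^{t_1}=\Theta(\omega)$); as $\omega'=o(\omega)$, the inequality $|X_{t_1}|\ge\omega'/\eps$ is a comfortable lower-tail event. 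Precisely, since $\la^{t_1}\ge\omega/\la\ge\omega/2$ (using $\la<2$), it suffices to show that each of
\[
 E_1=\bigl\{\,|X_{t_1}|\ge\tfrac12 Y\la^{t_1}\,\bigr\}
 \qquad\text{and}\qquad
 E_2=\bigl\{\,Y\ge 4\omega'/(\eps\omega)\,\bigr\}
\]
has conditional probability tending to $1$ given $\surv$: on $E_1\cap E_2$ one has $|X_{t_1}|\ge\tfrac12\cdot\tfrac{4\omega'}{\eps\omega}\cdot\tfrac{\omega}{2}=\omega'/\eps$, and a union bound over $E_1^\cc,E_2^\cc$ finishes the proof.

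For $E_2$: by the identification of the conditional law of $Y$,
\[
 \Pr(E_2^\cc\mid\surv)=\Pr\bigl(\YP_\la/s<4\omega'/(\eps\omega)\bigr)=\Pr(\YP_\la<x_n),
 \qquad x_n:=4(s/\eps)\,\omega'/\omega,
\]
and $x_n\to0$ since $s/\eps\to2$ and $\omega'/\omega\to0$. By Theorem~\ref{YPt} (its hypotheses $\eps\to0$, $x_n\to0$ holding), $\Pr(\YP_\la\le x_n)\sim x_n^{\log(1/\las)/\log\la}\le x_n^{1/2}\to0$, the middle inequality being valid for large $n$ since $\log(1/\las)/\log\la\to1>1/2$ (see~\eqref{lllas}) and $0<x_n<1$. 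Thus $\Pr(E_2^\cc\mid\surv)\to0$.

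For $E_1$: the point is that $\bp_\la$ reaches a moderately large size well before time $t_1$, after which $|X_t|/\la^t$ stays close to $Y$. Let $t_{\sqrt\omega/\eps}=\min\{t:\,|X_t|\ge\sqrt\omega/\eps\}$. Applying Theorem~\ref{th19} with $\sqrt\omega$ in place of $\omega$ and $\tau:=t_1-\log\sqrt\omega/\log\la=(\tfrac12+o(1))\log\omega/\log\la$ in place of $t$ — the hypotheses $\sqrt\omega\to\infty$, $\tau\le100\log\sqrt\omega/\log\la$, $\eps\tau\to\infty$ are immediate — yields $\Pr(t_{\sqrt\omega/\eps}>t_1)\sim4\eps\las^{\tau}$. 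Since $\tau\log(1/\las)\sim\tfrac12\log\omega\to\infty$ we have $\las^{\tau}\to0$, so dividing by $\Pr(\surv)=s\sim2\eps$ gives $\Pr(t_{\sqrt\omega/\eps}>t_1\mid\surv)\to0$. On $\surv$ the time $t_{\sqrt\omega/\eps}$ is a.s.\ defined (as $|X_t|\to\infty$ a.s.\ on $\surv$), so applying Lemma~\ref{converged} with $\delta=\tfrac12$ and $\sqrt\omega$ in place of $\omega$, the event $E$ there (that $|X_t|/\la^t$ lies within a factor $1\pm\tfrac12$ of $Y$ for all $t\ge t_{\sqrt\omega/\eps}$) satisfies $\Pr(E^\cc\cap\surv)=O(\eps e^{-\Omega(\sqrt\omega)})$ — using the final estimate of Lemma~\ref{converged} and that $t_{\sqrt\omega/\eps}$ a.s.\ exists on $\surv$ — hence $\Pr(E^\cc\mid\surv)=O(e^{-\Omega(\sqrt\omega)})\to0$. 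On $\surv\cap E\cap\{t_{\sqrt\omega/\eps}\le t_1\}$ we have $t_1\ge t_{\sqrt\omega/\eps}$, whence $|X_{t_1}|\ge(1-\tfrac12)Y\la^{t_1}=\tfrac12 Y\la^{t_1}$; thus $E_1^\cc\cap\surv\subseteq(E^\cc\cap\surv)\cup(\{t_{\sqrt\omega/\eps}>t_1\}\cap\surv)$ and $\Pr(E_1^\cc\mid\surv)\to0$.

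The only point needing care is the restriction $t\le100\log\omega/\log\la$ in Theorem~\ref{th19}, which forbids applying that theorem directly with the $\omega'$-threshold (reached near time $\log\omega'/\log\la$), since $\omega$ need not be bounded by any fixed power of $\omega'$; the intermediate threshold $\sqrt\omega/\eps$, reached at about time $\tfrac12\log\omega/\log\la$, circumvents this, with Lemma~\ref{converged} controlling the growth from there up to time $t_1$. Everything else is routine bookkeeping.
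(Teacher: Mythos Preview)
Your proof is correct and follows essentially the same approach as the paper: both arguments combine the lower-tail estimate on $Y$ conditional on survival (Corollary~\ref{cYt}, equivalently Theorem~\ref{YPt} via Lemma~\ref{YYp}) with Lemma~\ref{converged} to conclude that $|X_{t_1}|\approx Y\la^{t_1}$ is large.

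The one structural difference is how you show the process has already reached a moderate size by time $t_1$. The paper applies Lemma~\ref{converged} directly with threshold $\omega'/\eps$ and reads off $t_{\omega'/\eps}\approx\log(\omega'/(\eps Y))/\log\la$ from the relation $|X_t|\approx Y\la^t$; it then uses the tail bound on $Y$ to see this is $<t_1$. You instead introduce the intermediate threshold $\sqrt\omega/\eps$ and invoke Theorem~\ref{th19} to control $t_{\sqrt\omega/\eps}$. Your remark about the $t\le100\log\omega/\log\la$ restriction in Theorem~\ref{th19} is well taken for your route, but note that the paper sidesteps this entirely by not using Theorem~\ref{th19} here at all. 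Both routes are valid; the paper's is marginally more direct.
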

\begin{proof}
This is a simple consequence of Lemma~\ref{converged} together with 
(a weak form of) our tail bound on the (a.s.\ defined) limit $Y=\lim_{t\to\infty}|X_t|/\la^t$.
Starting with the tail bound, set $x=2\omega'/\omega=o(1)$.
{From} Corollary~\ref{cYt} we have
\[
 \Pr(0<Y\le x/\eps)\sim 4\eps x^{\log(1/\las)/\log\la} =o(\eps),
\]
since $\log(1/\las)/\log\la\sim 1$.
Recalling that $Y>0$ if and only if the process survives, it follows
that $\Pr\bb{Y\le x/\eps \bigm| (X_t)\hbox{ survives}} = o(1)$.

Conditional on survival, there is some generation
with size at least $\omega'/\eps$ with probability $1$. By Lemma~\ref{converged},
with probability $1-o(1)$ the first such generation occurs at time
$\log(\omega'/\eps)/\log\la-\log Y/\log\la +O(1/\eps)$. By the tail
bound on $Y$ above, this is less than
$t_1$ with probability $1+o(1)$. Moreover, with probability $1-o(1)$,
from this point on $|X_t|$ is within a factor $2$, say, of
$\la^t Y$. At time $t_1$, $\la^{t_1}Y\ge 2\omega'/\eps$ unless
$Y\le 2\eps^{-1}\omega'/\omega=x/\eps$, an event of probability $o(1)$.
\end{proof}

\subsection{Typical distances in the 2-core}\label{ss_2c}

We are now almost ready to prove our lower bound on the diameter of $G(n,p)$.
It turns out that we need a result concerning typical
distances in the 2-core. Unfortunately, this does not seem to
follow easily from any published results, and our proof is a little painful.
We first need a result that essentially bounds
the $k$th moment of the size of the giant component.

Let $G=G(n,\la/n)$. We say that a $k$-tuple $(x_1,\ldots,x_k)$ of not necessarily
distinct vertices of $G$ is {\em useful} if for each $i$, either $x_i$
is in a component of $G$ containing a cycle, or it is joined by a path
in $G$ to some other $x_j$. It turns out that almost all useful
$k$-tuples arise as they should, i.e., from vertices in the giant component.
Recall that if $N$ is the number of vertices in the giant component of $G(n,\la/n)$,
then $N=(2+\littleop(1))\eps n$; see~\eqref{C1}.
An immediate consequence is that $\E N\ge (2-o(1))\eps n$. (In fact,
it is well known that $\E N\sim 2\eps n$.)

\begin{lemma}\label{lgc}
Let $\la=1+\eps$, where $\eps=\eps(n)>0$ satisfies $\eps\to 0$
and $\La=\eps^3 n\to\infty$, and let $k\ge 1$
be fixed. Then the expected number of useful $k$-tuples
in $G(n,\la/n)$ is $(1+o(1))(2\eps n)^k$.
\end{lemma}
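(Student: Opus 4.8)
The goal is to count useful $k$-tuples $(x_1,\dots,x_k)$ of vertices of $G=G(n,\la/n)$. The upper bound is the crux; the lower bound is essentially immediate. For the lower bound, note that if all $k$ vertices lie in the giant component $C_1$ then the tuple is certainly useful (each $x_i$ is joined by a path to $x_j$ for any $j\ne i$ since the tuple has length at least one; when $k=1$ we instead use that the giant component contains a cycle whp). Hence the number of useful $k$-tuples is at least $|C_1|^k$, and since $|C_1|=(2+\littleop(1))\eps n$ by \eqref{C1}, and $|C_1|\le n$ deterministically so the contribution of the event $\{|C_1|<(2-\delta)\eps n\}$ to the $k$th moment is $o((\eps n)^k)$ (its probability is $o(1)$, and $n^k\cdot o(1)$ would be too big — so instead we note $|C_1|^k \ge ((2-\delta)\eps n)^k$ with probability $1-o(1)$, giving $\E(|C_1|^k)\ge (1-o(1))(2-\delta)^k(\eps n)^k$, and let $\delta\to 0$). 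Thus the expected number of useful $k$-tuples is at least $(1-o(1))(2\eps n)^k$.

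\textbf{Upper bound.} This is where the real work lies. The plan is to show that almost all useful $k$-tuples have all their coordinates in the giant component; equivalently, that the expected number of useful $k$-tuples with at least one coordinate outside $C_1$ is $o((\eps n)^k)$. A coordinate $x_i$ outside the giant component, to make the tuple useful, must either lie in a small component containing a cycle (a ``complex or unicyclic'' small component), or be path-connected to some other $x_j$ — in which case $x_i$ and $x_j$ lie in a common component, which is either the giant one or a small one. Partition the useful $k$-tuples according to the partition of $\{1,\dots,k\}$ induced by ``lies in the same component'', together with, for each block, whether that component is the giant one or not. For a block whose component is not the giant one, the relevant count is controlled by the expected number of $m$-tuples of vertices ($m=$ block size) lying in a common small component, together with the case $m=1$ where the single vertex lies in a small component with a cycle. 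So the key estimate reduces to: (i) $\E\big(\sum_{x \text{ in small cyclic cpt}} 1\big)=o(\eps n)$, and more generally (ii) for each $m\ge 1$, the expected number of $m$-tuples of vertices all lying in a single small component is $o((\eps n)^m)$ — in fact I expect it to be $O(\eps n\cdot \polylog)$ or so for each fixed $m$, which is $o((\eps n)^m)$ already for $m\ge 2$ and needs the cyclic restriction for $m=1$.

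\textbf{The main obstacle and how to handle it.} The hard part is estimating the expected number of $m$-tuples lying in a common \emph{small} (non-giant) component. The natural approach is branching-process / exploration: for a fixed vertex $v$, the component of $v$ is dominated by the total progeny of $\bp_\la$, but conditioned on being small it behaves like the subcritical process $\bp_{\las}$, whose expected total size is $1/(1-\las)=O(1/\eps)$ by Lemma~\ref{lsurv}-type bounds (indeed $\E N = 1/(1-\las)\le 1/\eps$ from $\las<1-\eps$, cf. \eqref{las1e}). So the expected size of the small component containing a fixed $v$ is $O(1/\eps)$; summing over $v$ gives $O(n/\eps)$ for the expected number of (vertex, small component containing it) incidences — but we need to weight by (component size)$^{m-1}$ to count $m$-tuples, and also to restrict to small components. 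For this I would use the comparison with $\bp_{\las}$ more carefully: the $m$th moment of the total progeny of a subcritical Poisson($\las$) Galton--Watson tree is $O_m((1-\las)^{-(2m-1)})=O_m(\eps^{-(2m-1)})$ by a standard generating-function / induction argument (the total progeny $N$ of $\bp_\mu$, $\mu<1$, satisfies $\E N^m = O((1-\mu)^{-(2m-1)})$). Hence the expected number of $m$-tuples in small components containing a fixed $v$ is $O(\eps^{-(2m-1)})$, and summing over $v$ gives $O(n\eps^{-(2m-1)})$. Now $n\eps^{-(2m-1)} = \La\eps^{-3}\eps^{-(2m-1)}\cdot\eps^{3}$... let me instead compare directly to $(\eps n)^m$: we need $n\eps^{-(2m-1)} = o((\eps n)^m)$, i.e. $n^{1-m} = o(\eps^{3m-1})$, i.e. $n^{m-1}\eps^{3m-1}\to\infty$. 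Since $\La=\eps^3 n\to\infty$ and $\eps\le 1/10$, we have $n^{m-1}\eps^{3m-1} = \La^{m-1}\eps^{2} \cdot (\text{something})$; being careful, $n^{m-1}\eps^{3(m-1)} = \La^{m-1}\to\infty$, and the extra $\eps^{-2}$ slack versus $\eps^{3m-1}$ vs $\eps^{3(m-1)}=\eps^{3m-3}$ means we are short by $\eps^{2}$: we need $\La^{m-1}\eps^{2}\to\infty$, which fails when $m=1$. So for $m=1$ this crude bound gives only $O(n/\eps)=O(\La/\eps^4)$ incidences, not $o(\eps n)=o(\La/\eps^2)$ — consistent with the fact that for $m=1$ we genuinely need the ``cyclic component'' restriction. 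The cyclic restriction saves a factor $\Theta(\eps^2)$: the probability that the exploration from $v$ reveals a small component containing a cycle is $O(\eps^2)$ (heuristically, an extra back-edge among the $O(1/\eps)$ explored vertices, each present with probability $\la/n$, contributing $O(1/(\eps^2 n))$, times component size $O(1/\eps)$... one must be careful, but {\L}uczak-type estimates or a direct first-moment computation on the number of unicyclic/complex small components, which is $O(\log\La)$ or even $O(1)$ in expectation for the complex ones, do the job). For $m\ge 2$ no such restriction is needed and the crude moment bound suffices, possibly with a cleaner inductive exploration argument to get the powers of $\eps$ exactly right. I expect the main technical grind to be (a) the $m$th-moment bound for the total progeny of the subcritical tree with explicit dependence on $1-\las\sim\eps$, and (b) the first-moment estimate on vertices in small cyclic components, which can be extracted from the standard picture of the near-critical random graph (excess-edge counting) or cited from {\L}uczak~\cite{Luczak:diam} / Bollob\'as--Riordan~\cite{rgpb}. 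Once these are in place, summing over the finitely many component-partition patterns of $\{1,\dots,k\}$ and observing that the only pattern contributing at the leading order $(2\eps n)^k$ is ``all $k$ coordinates in the giant component'' completes the upper bound, matching the lower bound.
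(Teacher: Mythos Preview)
Your lower bound is fine and matches the paper's. The upper bound, however, has a genuine circularity. Your decomposition by the component partition of $\{1,\dots,k\}$ isolates, as the dominant term, the pattern ``all coordinates in $C_1$'', contributing $|C_1|^k$. Everything else you argue is $o((\eps n)^k)$ in expectation, so the upper bound reduces to showing $\E[|C_1|^k]\le(1+o(1))(2\eps n)^k$. But this is precisely what the lemma is for: as the paper remarks just before the statement, Lemma~\ref{lgc} is ``a result that essentially bounds the $k$th moment of the size of the giant component''. Concentration of $|C_1|$ in probability (Theorem~\ref{th_Gnp}) does not give moment convergence without a quantitative tail bound strong enough to dominate the deterministic worst case $|C_1|\le n$; you would need something like $\Pr(|C_1|>(2+\delta)\eps n)=o((\eps/1)^k)$, which you neither state nor prove. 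A secondary gap: your block decomposition also needs to control the dependence between ``block $B$ lies in some small component'' and ``the remaining coordinates form a useful $(k-|B|)$-tuple'', which are not independent events; you never say how you decouple them.

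The paper's proof sidesteps both issues by bounding useful $k$-tuples \emph{directly}, without ever estimating $\E[|C_1|^k]$. It proceeds by induction on $k$. A useful $k$-tuple is \emph{reducible} if some nonempty subset $S$ is a ``close'' $|S|$-tuple (all within distance $\psi/\eps$ of one vertex of $S$) lying in a component disjoint from the remaining coordinates; disjoint witnesses and the van~den~Berg--Kesten inequality, together with the close-tuple moment bound $\E C_r=o((\eps n)^r)$ and the inductive hypothesis, show reducible tuples contribute $o((\eps n)^k)$. For \emph{irreducible} tuples the paper picks $x_1,\dots,x_k$ one at a time, uniformly at random, and after choosing $x_i$ runs a truncated neighbourhood exploration (stopping at generation $\psi/\eps$ or when a generation reaches size $\psi/\eps$), classifying $x_i$ as atypical, good, or bad. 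If any $x_i$ is bad then its whole component is a small tree already fully explored, forcing a later $x_j$ to fall inside it for usefulness --- which makes the tuple reducible. Hence irreducible useful tuples have no bad coordinate, and the conditional probability that $x_i$ is not bad given the earlier explorations is shown to be at most $(1+o(1))2\eps$ (atypicality contributes $o(\eps)$; ``good'' corresponds to the exploration either lasting $\psi/\eps$ generations or reaching size $\psi/\eps$, an event of probability $\sim s\sim 2\eps$). Multiplying over $i$ gives the bound $(1+o(1))(2\eps)^k n^k$ directly.
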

\begin{proof}
The lower bound is immediate, since the number of useful $k$-tuples
is at least the $k$th power of the number $N$ of vertices in the largest
component of $G$, and $\E N^k\ge (\E N)^k\ge \bb{(2-o(1))\eps n}^k$.

Let $\psi=\psi(n)$ tend to infinity very slowly.

We first get a simple observation
out of the way. Let us say that a $k$-tuple of vertices is {\em close}
if each $x_j$, $j>1$, is within distance $\psi/\eps$ of $x_1$ in $G$.
Let $C_k$ denote the number of close $k$-tuples.
Set $t=\floor{\psi/\eps}$.
Then $\E C_k = n \E(|\Gat(x)|^{k-1})$, where $x=x_1$ is any fixed vertex of $G$.
Now $|\Gat(x)|$ is stochastically dominated by $|X_{\le t}|$, the union
of the first $t$ generations of $\bp_\la$.
(We are simplifying slightly here:
a binomial $\Bi(n,p)$ is  dominated by a Poisson with mean $-n\log(1-p)=np+O(np^2)$,
so we should consider the branching process with a parameter slightly larger
than $\la$; the difference is negligible.)
It is easy to check (for example by calculating inductively) that
with $r$ fixed,
$\E |X_{\le t}|^r = O(t^{2r-1}\la^{rt}) = O(\psi^{2r-1}e^{r\psi} \eps^{-(2r-1)})
=\Ot(\eps^{-(2r-1)})$, where we write $f=\Ot(g)$ if $f$ is bounded by a function
of $\psi$ times $g$.
It follows that
\begin{equation}\label{eCk}
 \E C_k = \Ot(n\eps^{-(2k-3)}) = o(\eps^k n^k),
\end{equation}
provided $\psi$ grows slowly enough, since $\eps^{3k-3}n^{k-1}\to\infty$.

Turning to useful $k$-tuples, we shall
proceed by induction on $k$. Let $U_k$ denote the number of $k$-tuples
of {\em distinct} vertices that are useful. Since $\eps n\to\infty$,
it suffices to prove that
$\E U_k\sim (2\eps n)^k$. We may then bound the total number
of useful $k$-tuples in terms of $U_1,\ldots,U_k$.

{From} now on we insist that $x_1,\ldots,x_k$ are distinct.
Let us say that a useful $k$-tuple is {\em reducible}
if it contains a non-empty subset $S$ which forms a close $r$-tuple
within a component of $G$ containing none of the remaining $x_i$.
If this holds, then there is some set of edges present
witnessing that $S$ is a close $r$-tuple, and a disjoint
set witnessing the event that the remaining set $S^\cc$
is useful. (We may have $k-r=0$; a 0-tuple is always useful.)
By the van den Berg--Kesten inequality~\cite{vdBK},
the probability of this event is at most the probability that $S$ is
close times the probability that $S^\cc$ is useful.
Using \eqref{eCk} and the induction hypothesis, this probability
is $o(\eps^r)O(\eps^{k-r})=o(\eps^k)$. Summing over $r$ and
over the $\binom{k}{r}$ sets $S$, we see
that the expected number of reducible useful $k$-tuples is $o(\eps^k n^k)$.

Finally, we estimate the number of irreducible useful $k$-tuples.
To do so, let us pick $x_1,\ldots,x_k$ one-by-one; we do {\em not}
fix them in advance. Each $x_i$ is chosen uniformly from the remaining
$n-i+1$ vertices.

Having chosen $x_i$, let us explore its neighbourhoods as follows.
First, if $x_i$ itself is in the set $R$ of vertices
previously reached by such explorations, we do not explore
at all, and declare $x_i$ to be `atypical for reason 1'.
Otherwise, we explore the neighbourhoods of $x_i$ as usual,
except that we do not (for the moment) test for edges
to $R$. Also, we stop as soon as either
(i) we reach generation $\psi/\eps$, or (ii) we find
$\psi/\eps$ vertices in one generation $t$. (We then stop
partway through this generation.)
Let $\Ga_i$ denote the set of vertices reached.
Our next step is to test all edges from $\Ga_i$ to $R$;
if such an edge is present, $x_i$ is `atypical for reason 2'.
We then test for non-tree edges within $\Ga_i$, i.e., for edges between two vertices in $\Ga_i$ at distance $t$ from $x_i$,
or for `redundant' edges between vertices at distances $t$ and $t+1$.
If we find such a non-tree edge, then $x_i$ is `atypical
for reason 3'.
Finally, if we have not yet labelled $x_i$ as atypical,
then we label $x_i$ as `good' if condition (i) or (ii) held,
and `bad' otherwise, i.e., if we ran out of vertices to explore.

Note that if any $x_i$ is bad, then $\Ga_i$ is its entire component,
this component is a tree, and every vertex of this
tree is within distance $\psi/\eps$ of $x_i$.
If $(x_1,\ldots,x_k)$ is useful and some $x_i$ is bad, then it follows
that at least one later $x_j$ lies in $\Ga_i$, so $(x_1,\ldots,x_k)$ is reducible.
Thus we may bound the expected number of irreducible useful $k$-tuples
by $n^k$ times the probability that no $x_i$ is bad. We do this by showing
that the conditional probability that $x_i$ is atypical or good
given $x_1,\ldots,x_{i-1}$ and the associated explorations
is at most $(1+o(1))2\eps$.

The definition of the exploration ensures that each $\Ga_i$
contains at most $\psi^2/\eps^2$ vertices, so $|R|\le k\psi^2\eps^{-2}$
and the probability that $x_i$ is atypical for reason $1$
is $\Ot(\eps^{-2}n^{-1})=o(\eps)$.
Suppose this does not happen. Then $|\Ga_i|$ is stochastically
dominated by $|X_{\le \psi/\eps}|$, which has expectation
$\sum_{r\le \psi/\eps}\la^r=O(\eps^{-1}\la^{\psi/\eps})=\Ot(\eps^{-1})$.
At the end of the previous exploration, we have already uncovered
all edges incident with all vertices of each $\Ga_j$, $j<i$,
except (possibly) for vertices in the last two generations.
(Two because we may have stopped part way through a generation.)
There are at most $2\psi k/\eps=\Ot(\eps^{-1})$ such vertices
in total. Hence, given $\Ga_i$, the conditional probability
that $x_i$ is atypical for reason $2$ is at most $|\Ga_i| \Ot(\eps^{-1}/n)$,
so the unconditional probability is at most $\Ot(\E|\Ga_i| \eps^{-1}/n) =\Ot(\eps^{-2}n^{-1})
=o(\eps)$.

Similarly, given $\Ga_i$, the conditional probability that $x_i$ is atypical
for reason $3$ is at most $|\Ga_i|(2\psi/\eps)\la/n$, since for each
vertex we have to test edges to the at most $2\psi/\eps$ other vertices
in the same generation or the previous generation. Hence the probability that $x_i$ is atypical
for this reason is also $o(\eps)$.

Finally, the exploration leading to $\Ga_i$ is dominated by $\bp_\la$,
so the probability that $x_i$ is good is bounded by the probability
that the branching process $\bp_\la$ either reaches size $\psi/\eps$,
or lasts for at least $\psi/\eps$ generations. It is easy to check
that the probability of this event is $(1+o(1))s\sim 2\eps$;
indeed, from Lemma~\ref{converged} (say), the event that $\bp_\la$
reaches size $\psi/\eps$ coincides up to probability $o(\eps)$
with the event that $\bp_\la$ survives, and Lemma~\ref{lsurv}
and the fact that $\bp_\la$ conditioned on dying is $(X_t^-)$
show that the events that $\bp_\la$ survives for $\psi/\eps$ generations
and that it survives forever agree up to probability $o(\eps)$.
\end{proof}

\begin{lemma}\label{l2c}
Let $\la=1+\eps$, where $\eps=\eps(n)>0$ satisfies $\eps\to 0$
and $\La=\eps^3 n\to\infty$, and let $C$ denote
the $2$-core of $G=G(n,\la/n)$. Then $N=|C|$ satisfies
\begin{equation}\label{Ck}
 \E (N^k) \sim (2\eps^2 n)^k
\end{equation}
for each fixed $k$. Furthermore, if $d=\log\La/\log\la-\omega/\eps$ with $\omega=\omega(n)\to\infty$,
then
\begin{equation}\label{md}
 \E M_d^{(k)} = o(\eps^{2k}n^k),
\end{equation}
where $M_d^{(k)}$ is the number of $k$-tuples of vertices of $C$ some pair of which 
are within distance~$d$.
\end{lemma}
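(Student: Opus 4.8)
The plan is to deduce both \eqref{Ck} and \eqref{md} from a single first--moment estimate, together with the one--vertex--at--a--time exploration argument used to prove Lemma~\ref{lgc}. For a fixed vertex $v$ I would first establish
\[
 \Pr(v\in C)=(2+o(1))\eps^2 \qquad\text{and}\qquad \E\bb{\,|\Ga_\ell(v)|\,\ind{v\in C}\,}=O(\eps\la^\ell)
\]
uniformly in $\ell\ge0$. Recall that $v$ lies in the $2$--core of $G$ exactly when $v$ lies on a cycle, i.e.\ when at least two of the components of $G-v$ meeting the neighbourhood of $v$ contain a cycle. Exploring the neighbourhood of $v$ and coupling to $\bp_\la$ via Lemma~\ref{spcpl}, the event that at least two of the branches issuing from $v$ ``survive''---reach size $\omega'/\eps$ for some $\omega'\to\infty$, which by Lemma~\ref{converged} agrees up to $o(\eps)$ with survival of the corresponding sub--branching--process---has probability $(1+o(1))\Pr(\Po(s\la)\ge2)=(1+o(1))s(1-\las)$, and $s(1-\las)\sim2\eps^2$ by \eqref{sasympt} and \eqref{lllas}. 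For the upper bound one checks in addition that whp $v$ does not lie on a non--giant component containing a cycle (the expected number of vertices on such components within any bounded distance of $v$ is $o(\eps^2)$); for the matching lower bound one uses the standard (but mildly painful) fact that once two branches from $v$ reach size $\omega'/\eps$ they whp reconnect inside $G-v$, so $v$ genuinely lies on a cycle that survives deletion of $v$'s remaining neighbours---this is where $\La\to\infty$ is used. The neighbourhood bound comes out the same way: conditioning on two surviving branches inflates the conditional expected neighbourhood size by a factor $O(1/\eps)$, as in Lemma~\ref{l:X}, giving $O(\eps^2)\cdot O(\la^\ell/\eps)=O(\eps\la^\ell)$.

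For \eqref{Ck}, the lower bound is immediate from $\E(N^k)\ge(\E N)^k$ and the basic estimate. For the upper bound I would repeat the proof of Lemma~\ref{lgc}: choose $v_1,\dots,v_k$ one at a time, explore each (stopping at size $\psi/\eps$ or generation $\psi/\eps$, $\psi\to\infty$ slowly), and bound by $(1+o(1))2\eps^2$ the conditional probability, given the earlier explorations, that the exploration from $v_i$ has at least two surviving branches; this is a local event implied---up to an event of probability $o(\eps^2)$---by $v_i\in C$, just as ``good'' served as a local proxy for giant--component membership in Lemma~\ref{lgc}. The ``atypical'' contributions (an exploration meeting an earlier one, a non--tree edge, an exploration dying out) are $o(\eps^2)$ exactly as there. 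This gives $(1+o(1))(2\eps^2n)^k$ for the number of $k$--tuples of \emph{distinct} vertices all in $C$; the remaining tuples contribute at most $O(k^2)(2\eps^2n)^{k-1}=o\bb{(\eps^2n)^k}$ since $\eps^2n\to\infty$, and \eqref{Ck} follows.

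For \eqref{md}, write $\E M_d^{(k)}$ as a sum over $k$--tuples and fix which pair, say $(v_1,v_2)$, is required to have $d(v_1,v_2)\le d$. Arguing as for \eqref{Ck}---exploring $v_1$ and $v_2$ first, which uses only $O(\sqrt{n/\eps})=o(n)$ vertices, and then absorbing $v_3,\dots,v_k$, each contributing a factor $(1+o(1))2\eps^2 n$ (non--distinct tuples handled as before)---it suffices to show that for a fixed pair of distinct vertices $\Pr(v_1,v_2\in C,\ d(v_1,v_2)\le d)=o(\eps^4)$. I would sum over $\ell=d(v_1,v_2)\le d$ the bound
\[
 \Pr\bb{v_1,v_2\in C,\ d(v_1,v_2)=\ell}=O\bb{\eps^2\la^\ell/n},
\]
which follows from the basic estimate: exploring from $v_1$ gives a factor $O(\eps^2)$ for $v_1\in C$, a factor $O(\la^\ell/(\eps n))$ for $v_2\in\Ga_\ell(v_1)$ given $v_1\in C$, and a further $O(\eps)$ for $v_2$ having a second surviving escape (it already has one, towards $v_1$, which leads to a cycle unless $v_2$ itself lies on it). Summing the geometric series and using $\la-1=\eps$ yields $\Pr(v_1,v_2\in C,\ d(v_1,v_2)\le d)=O(\eps\la^d/n)$; since $\la^d=\La\,\la^{-\omega/\eps}=\La\,e^{-(1+o(1))\omega}=o(\La)$ because $\omega\to\infty$, this is $O\bb{\eps\La e^{-(1+o(1))\omega}/n}=O\bb{\eps^4 e^{-(1+o(1))\omega}}=o(\eps^4)$. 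Substituting back, $\E M_d^{(k)}=o(\eps^4n^2)\cdot O\bb{(\eps^2n)^{k-2}}=o\bb{(\eps^2n)^k}=o(\eps^{2k}n^k)$.

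The delicate part is the basic estimate of the first paragraph: pinning down $\Pr(v\in C)$ to within a factor $1+o(1)$ needs both the branching--process description of $2$--core membership and the ``reconnection'' facts about the giant component (two long branches from $v$ rejoin inside $G-v$, and stray small cyclic components near $v$ are negligible), while the conditional neighbourhood estimate $\E(|\Ga_\ell(v)|\ind{v\in C})=O(\eps\la^\ell)$ is what produces the crucial $\la^\ell/n$ factor driving \eqref{md}. Once these are in place, everything else is the exploration bookkeeping already carried out for Lemma~\ref{lgc}.
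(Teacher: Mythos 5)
There is a genuine gap, and it arises precisely where the range of $\eps$ is widest. You propose to obtain the upper bound in \eqref{Ck} by ``repeating the proof of Lemma~\ref{lgc}'': explore $v_1,\dots,v_k$ one at a time, call $v_i$ good if two branches survive, and claim that the atypical events are $o(\eps^2)$ ``exactly as there.'' But in Lemma~\ref{lgc} the atypical probability per vertex is $\Ot(\eps^{-2}n^{-1})$, and the proof only needs this to be $o(\eps)$ (which holds because $\eps^3n\to\infty$). For a $2$-core proxy the same quantity would have to be $o(\eps^2)$, i.e.\ $\eps^{-4}n^{-1}\to0$, i.e.\ $\eps^4n\to\infty$. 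Under the stated hypothesis this can fail: take $\eps=n^{-1/4}/\log\log n$, so $\eps^3n\to\infty$ but $\eps^4n\to0$. There is no way to fix this by shrinking the exploration radius, since one must explore $\Theta(1/\eps)$ generations to detect survival, which already uses $\Theta(\eps^{-2})$ vertices; the collision probability is then $\Theta(\eps^{-2}/n)$, which is not $o(\eps^2)$. Similar (and in places more fragile) issues infect the moment computation in your \eqref{md} argument, and the ``further $O(\eps)$ for a second escape from $v_2$'' step is only a heuristic: most vertices $v_2\in\Ga_\ell(v_1)$ lie on dangling subtrees of $v_1$'s exploration and do not already have ``one surviving escape towards $v_1$.''

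The paper's proof sidesteps all of this. For \eqref{Ck} it never explores from the $x_i$ at $\eps^2$ accuracy. Instead it takes a minimal edge set $E$ witnessing that all $x_i$ lie in the $2$-core, splits $E$ into edges inside $S=\{x_1,\dots,x_k\}$, edges from $S$ to $G'=G-S$, and edges inside $G'$, and observes that the endpoints $b_1,\dots,b_r$ of the $S$--$G'$ edges form a useful $r$-tuple in $G'$. Applying Lemma~\ref{lgc} to $G'$ (which only requires $o(\eps)$ accuracy) gives $\E U_r\le(1+o(1))(2\eps n)^r$, and the dominant case $|E_0|=0$, $r=2k$ yields $(1+o(1))(2\eps n)^{2k}(\la/n)^{2k}/2^k=(1+o(1))(2\eps^2)^k$; all other cases are $o(\eps^{2k})$ by crude counting using $\eps^2n\to\infty$. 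For \eqref{md}, rather than a direct union bound the paper deduces it from \eqref{Ck}: using Lemma~\ref{spcplk} it exhibits enough $k$-tuples in $C$ that are pairwise far to show $\E M\ge(1+o(1))(2\eps^2n)^k$, where $M$ counts pairwise-far tuples; combined with $\E(N^k)\le(1+o(1))(2\eps^2n)^k$ this forces $\E M_d^{(k)}=o(\eps^{2k}n^k)$. That sandwich is more robust than trying to bound the ``close'' event directly, since it never needs a sharp two-point estimate like your $\Pr(v_1,v_2\in C,\ d(v_1,v_2)=\ell)=O(\eps^2\la^\ell/n)$.
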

One might expect the first statement to be known. Indeed,
Pittel and Wormald~\cite{PWio} have shown that the distribution of the size of the 2-core
is asymptotically normal, with mean $(2+o(1))\eps^2 n$ and variance
$(12+o(1))\eps n=o(\eps^4n^2)$. Unfortunately, convergence in distribution
does not imply convergence of the relevant moments, so we cannot simply
deduce \eqref{Ck}. We shall prove \eqref{Ck} using Lemma~\ref{lgc}; it is then
easy to deduce \eqref{md}.
\begin{proof}
Fix $k$ distinct vertices $x_1,\ldots,x_k$, and let $A$ be the event
that $x_1,\ldots,x_k$ are all in the 2-core. It suffices to show
that $\Pr(A)\le (1+o(1))(2\eps^2)^k$.

Let $G'=G-\{x_1,\ldots,x_k\}$, so $G'$ has the distribution of
$G(n',\la'/n')$ where $n'=n-k$ and $\la'-1\sim \la-1$. Let $U_r$ denote
the number of useful $r$-tuples of not necessarily distinct vertices
of $G'$. By Lemma~\ref{lgc}, we have
\begin{equation}\label{eu}
 \E U_r\le (1+o(1))(2\eps n)^r
\end{equation}
for any fixed $r$.

Suppose that $A$ holds, and let $E$ be a minimal set of edges witnessing
$A$. Note that every vertex of $S=\{x_1,\ldots,x_k\}$ meets at least
two edges of $E$. Also, since a vertex is in the 2-core if and only
if it is on a cycle or on a path joining two cycles,
$E$ may be written as the union of $k$ graphs with maximum degree at most $3$,
so at most $3k^2$ edges of $E$ meet $S$.

Let $E_0$, $E_1$ and $E_2$ denote respectively the sets of edges
of $E$ with both ends in $S$, one end in $S$, and neither end in $S$.
List the edges of $E_1$ as $a_ib_i$, $1\le i\le r\le 3k^2$,
where each $a_i$ is in $S$ and each $b_i$ in $G'$.
{From} the minimality of $E$, each $b_i$ is either joined to some other $b_j$
by a path in $E_2$ (which may have length $0$ if $b_i=b_j$), or is joined
by a path in $E_2$ to a cycle in $E_2$. (Otherwise, removing pendant
edges from $E$, we obtain a smaller witness to $A$.)
It follows that the $r$-tuple $(b_1,\ldots,b_r)$ is useful {\em in the graph $G'$}.
Let $t=|E_0|$.

Suppose first that $t=0$ and $|E_1|=2k$.
(More precisely, suppose there is a (minimal) witness $E$
with these properties.) Since each $x_i$ meets at least two edges
of $E_1$, it meets exactly two. Hence there is a
$2k$-tuple $(b_1,\ldots,b_{2k})$ that is useful in $G'$, with $x_i$ joined
to $b_{2i-1}$ and $b_{2i}$. But from \eqref{eu} and the independence
of $G'$ and the edges between $S$ and $G'$,
the expected number of such $2k$-tuples
is at most $(1+o(1)) (2\eps n)^{2k} (\la/n)^{2k} \sim 2^{2k}\eps^{2k}$.
Since the $2k$-tuple is ordered, whenever there is one there are at least $2^k$
(swapping $b_1$ and $b_2$, etc), so the probability that
a witness $E$ exists with $t=0$ and $|E_1|=2k$ is at most $(1+o(1))(2\eps^2)^k$.

It remains to show that the probability that there is a witness $E$ with $t>0$
or $t=0$ and $|E_1|>2k$ is $o(\eps^{2k})$, for which we simply bound the expected number
of such witnesses. Since each
vertex of $S$ meets at least two edges of $E_1$, we have $r=|E_1|\ge 2k-2t$,
while, as noted above, $r\le 3k^2$. Hence, setting $\Delta=0$
if $t>0$ and $\Delta=1$ if $t=0$, the expectation is bounded by
\[
 \sum_{t=0}^{\binom{k}{2}} \binom{k}{2}^t (\la/n)^t \sum_{r=2k-2t+\Delta}^{3k^2} k^r (\E U_r) (\la/n)^r,
\]
since there are most $\binom{k}{2}$ choices for each of the $t$ edges inside $S$,
and, given $r$, at most $k^r$ possibilities for which of the $x_j$ each $a_i$ is.
(Some $b_i$ may coincide, but we do not care.)
By \eqref{eu}, each term in the sum may be bounded by a constant times
\[
 n^{-t}(2\eps n)^r n^{-r} = O(n^{-t} \eps^r) = O(n^{-t} \eps^{2k-2t+\Delta}).
\]
For $t=0$ this is $O(\eps^{2k+1})=o(\eps^{2k})$. For $t\ge 1$,
since $\eps^2n\to\infty$, the final bound is $o(\eps^{2k})$. It
follows that $\Pr(A)\sim (2\eps^2)^k$, completing the proof of \eqref{Ck}.

Finally, as noted above,
it is relatively easy to deduce \eqref{md} from \eqref{Ck}.
Let $M$ be the number of $k$-tuples of vertices of $C$ in which every
pair is at distance larger than $d$. Then it suffices to show that
$\E M\ge (1+o(1))(2\eps^2n)^k$. 
In proving such a lower bound, we may consider $k$-tuples with additional
properties that make the analysis easier.

Let $\psi=\psi(n)=o(\omega)$ tend to infinity very slowly, let $E$ be the branching process 
event that at least two particles in generation $1$ survive to generation
$t=\psi/\eps$, that these particles each have at least $\psi/\eps$ descendants
in $X_t$, and that
$|X_{t'}| \le \psi^{10} \eps^{-1} \la^{\psi/\eps} = e^{O(\psi)}\eps^{-1}$
for $0\le t'\le t$. Recalling that, conditioned on survival, the branching process
typically has size of order $\eps^{-1}\la^{t'}$ in generations $t'$ where $t'$
is significantly larger than $1/\eps$ (see Lemmas~\ref{livestart} and~\ref{converged}),
it is easy to check that $\Pr(E)\sim s^2/2\sim 2\eps^2$, the asymptotic
probability that two particles in generation $1$ survive.
Also, Lemma~\ref{spcpl} applies to all trees consistent with $E$.

Given distinct vertices $x_1,\ldots,x_k$ of $G$, let $E_k'$ denote the event
that for every $i$ the $t$-neighbourhood of $x_i$ has the property corresponding
to $E$, and these $t$-neighbourhoods are disjoint.
Also, let $E_k$ be the event that $E_k'$ holds, every $x_i$ is in the 2-core,
and $d(x_i,x_j)>d$ for all $i$ and $j$.
By Lemma~\ref{spcplk} we have $\Pr(E_k')\sim \Pr(E)^k \sim (2\eps^2)^k$.
Since $\E M\ge (1+o(1))n^k\Pr(E_k)$, it thus suffices to show that $\Pr(E_k\mid E_k')=1-o(1)$.

But after testing whether $E_k'$ holds, we have
not looked at any edges outside the relevant neighbourhoods.
The expected number of paths of length at most $d$ joining one pair of vertices
in the last generation of these neighbourhoods
is bounded by
\[
 \sum_{1\le i\le d} n^{i-1}(\la/n)^i = n^{-1} \sum_{i\le d} \la^i \sim \eps^{-1}n^{-1}\la ^d.
\]
There are at most $\binom{k}{2}e^{O(\psi)}\eps^{-2}$ pairs to consider, so
the probability of finding any such path is at most
\[
 e^{O(\psi)} \eps^{-3}n^{-1}\la^d =  e^{O(\psi)}\La^{-1}\La \la^{-\omega/\eps} = e^{O(\psi)}e^{-(1+o(1))\omega} =o(1).
\]
Also, since for each of $x_1,\ldots,x_k$ we have two neighbours
with many (at least $\psi/\eps$) descendants in generation $t$, given $E_k'$
it is very likely that these neighbourhoods continue to expand and eventually meet,
so whp each $x_i$ is in $C$. Thus $\Pr(E_k\mid E_k')=1-o(1)$, as required.
\end{proof}
In fact, one can easily bound the expected number of pairs of vertices
of $C$ at distance significantly larger than $\log\La/\log\la$, noting that
all but at most $o(\eps^4n^2)$ such pairs also have the property $E_2'$.
Using Lemma~\ref{meet}
it is then easy to extend the argument above to show that if $x$ and $y$
are chosen uniformly at random from $C$, then
\[
 d(x,y) = \log\La/\log\la +\Op(1/\eps).
\]
Furthermore, one can obtain the limiting distribution of the correction
term without too much difficulty.
We omit the details as this is not our focus, and Lemma~\ref{l2c} is
all we shall need to know about the 2-core.

With the simple preliminaries of the last few subsections behind us,
we are now ready to begin the proof of Theorem~\ref{th_eps}.

\subsection{The lower bound on the diameter}\label{ss_low}

In this section we shall prove the lower bound on the diameter in Theorem~\ref{th_eps}.
As noted in Section~\ref{sec_intro}, we may assume that $\eps\to 0$.
The argument we present will be rather complicated. It is difficult to explain
why this is the case, other than to say that we have tried many promising simple
approaches, and while several are extremely plausible, we could not make the details
rigorous. Of course, a much simpler proof may nevertheless exist.

We must show that with high probability vertices $x$ and $y$ at large
distance exist. In doing so we may focus on vertices $x$ and $y$ whose
neighbourhoods satisfy certain restrictions, although if we are too
restrictive, we will not get a good bound.  Before turning to the
graph, let us describe the corresponding restrictions on the branching
process. 
Overall, our aim is to consider the event that a certain `wedge' condition
holds, and $t_{\omega/\eps}>t$, for $t$ near $t_0+t_1$, but
to make our arguments work we need some additional technical conditions.
We start by insisting that the process
$(X_t^+)$ consisting of those particles with infinitely many
descendants has size 1 for a large number of generations, then
bifurcates, and the non-surviving descendants of all the particles up
to this point have died out before very long, in a way to be made
precise. This condition will include an analogue of the weak wedge
condition described in \refSS{lower_fixed}.

For the rest of this section let $\eps=\eps(n)>0$
satisfy $\eps\to 0$ and $\La=\eps^3n\to\infty$.
Set
\[ 
 \omega=\La^{1/6},
\]
and let
\[
 t_1=\floor{\log\omega/\log\la}
\]
and
\[
 t_0=\floor{\log(\eps^3 n)/\log(1/\las)},
\]
as before. (The rounding to integers will always be irrelevant in calculations.)
Later, we shall also consider
\[
t_2=\log(\eps^3 n/\omega^2)/\log \la.
\]

For $r,q=O(1/\eps)$, set $T_0=t_0+r$ and $T_1=t_0+t_1+q$.
Recalling from \eqref{lllas}
that $\log\la, \log(1/\las)\sim\eps$, note that $T_0,T_1=O(\eps^{-1}\log\La)$.
We shall assume that $|r|\le t_0/2$
and that $|r|,|q|\le t_1/10$; these conditions hold for $n$ sufficiently large.

Let $A=A_{r}$ be the event that $|X_{T_0}^+|=1$ and $|X_{T_0+1}^+|=2$.
Then $\Pr(A)=s\Pr(Z_\la=1)^{T_0}\Pr(Z_\la=2)$, where, as before, $Z_\la$ is a Poisson with mean $s\la$ conditioned to be at least $1$. 
{From} \eqref{plus} we have $\Pr(Z_\la=1)=\las$, while from the definition
of $Z_\la$ we have $\Pr(Z_\la=2)/\Pr(Z_\la=1)=(s\la)/2\sim \eps$.
Hence,
\begin{equation}\label{PrA}
 \Pr(A) \sim 2\eps\las^{T_0} \eps\las \sim 2\eps^2\las^{T_0} \sim 2\eps^{-1}n^{-1}\las^r = \Theta(\eps^{-1}n^{-1}).
\end{equation}
When $A$ holds, let $x_i$ denote the unique
particle in $X_i^+$ for $0\le i\le T_0$, and let $y,y'$ be the two particles
in $X_{T_0+1}^+$.

Let $B=B_r$  be the event that $A=A_r$ holds, and the following conditions are satisfied:

(i) \label{Bi}
(the {\em strong wedge condition}) $x_0$ has no children other than $x_1$ and,
for $1\le i<T_0$, no children of $x_i$
other than $x_{i+1}$ or $y,y'$ have descendants in generation $2i$.

(ii) no particles in $X_{T_0+1}$ other than $y$ and $y'$ have descendants
in $X_{T_1'}$, where $T_1'=t_0+\floor{t_1/2}$.

Note that $T_0<T_1'<T_1$. Also,
since $T_1'-T_0=\floor{t_1}/2-r = t_1/2+O(\eps^{-1})$, we have $\eps(T_1'-T_0)\to\infty$.
For the moment we could simply write $T_1$ in place of $T_1'$ in condition (ii), but for the distribution
result in Section~\ref{sec_dist} it is convenient that $T_1'$ does not depend on $q$.

Unfortunately, it takes some effort to examine the effect that condition (i)
has upon the distribution of $t_{\omega/\eps}$, the time the branching
process takes to reach size $\omega/\eps$. (Condition (ii) presents no problems.)
Constructing $\bp_\la$ from $\bp_\la^+$ by adding independent copies of the subcritical process $\bp_{\las}$
starting at each particle, condition (i) says that
for $i<T_0$ the subcritical process started at $x_i$
dies by time $\max\{i,1\}$ (measured from its starting time), and condition (ii) that
for $i\le T_0$ the process
started from $x_i$ dies by time $T_1'-i$. Writing $d_t=1-s_t=\Pr(|X_t^-|=0)$
 for the probability 
that $\bp_{\las}$ dies by time $t$, we thus have
\[
 \Pr(B\mid A) = d_1 \prod_{i=1}^{T_0} d_{\min\{i,T_1'-i\}},
\]
so
\begin{equation}\label{BA}
 d_1 \prod_{i=1}^{\min\{T_0,T_1'/2\}} d_i \ge  \Pr(B\mid A)
 \ge d_1 \prod_{i=1}^\infty d_i \prod_{i=T_1'-T_0}^\infty d_i.
\end{equation}
By Lemma~\ref{lsurv}, as $\eps i\to\infty$ we have $s_i\sim 2\eps\las^i$,
and so $\log(1-s_i)\sim -2\eps\las^i$. 
Since $\eps(T_1'-T_0)\to\infty$, it follows that
\[
 \sum_{i\ge T_1'-T_0} \log(1-s_i) \sim -2\eps\sum_{i\ge T_1'-T_0}\las^i=O\bb{\las^{T_1'-T_0}}=o(1).
\]
Hence, $\prod_{i\ge T_1'-T_0}d_i\sim 1$. 
Similarly, since $\eps\min\{T_0,T_1'/2\}=\eps T_1'/2\to\infty$, we have $\prod_{i\ge \min\{T_0,T_1'/2\}}d_i\sim 1$. 
{From} \eqref{BA} it then follows that
\begin{equation}\label{PrB}
 \Pr(B\mid A) \sim d_1\prod_{i=1}^{T_1'/2}d_i \sim d_1\prod_{i=1}^\infty d_i
\sim d_1 \gamma_0\eps^2 = e^{-\las}\gamma_0\eps^2
  \sim \gamma_0e^{-1}\eps^2,
\end{equation}
using Lemma~\ref{lsurv} to estimate the infinite product.

Let $C$ be the event that $A$ holds, and the particles $y$ and $y'$ each
have at least $\omega'/\eps$ descendants in $X_{T_1}$, where
$\omega'=\sqrt{\omega}=\La^{1/12}$.
(Later we shall need to know that vertices corresponding to $y$ and $y'$ have many `descendants'
at distance $T_1$ from $x_0$; this will ensure that $x_{T_0}$ is in the 2-core.)
By Lemma~\ref{livestart}, applied
with $T_1-(T_0+1)=t_1+q-r-1=t_1+O(1/\eps)$ in place of $t_1$, i.e., with $\la^{t_1+q-r-1}=\Theta(\omega)$
in place of $\omega$, we have 
$$
\Pr(C\mid A)=1-o(1).
$$

We would like to impose the condition that $|X_t|<\omega/\eps$ for $0\le t\le T_1$;
however, for technical reasons we must consider the descendants of $x_{T_0}$
separately from the remaining particles.

Let $D_1$ be the event that $A$ holds, and between them the particles
$y$ and $y'$ have fewer than $(\omega-2\omega')/\eps$ descendants in each set $X_t$, $T_0+1\le t\le T_1$,
noting that $\omega-2\omega'\sim \omega$.
Conditioning on $A$, the trees of descendants of the two particles $y$, $y'$
form independent copies of $\bp_\la$, each conditioned on the event that it survives.
By Lemma~\ref{converged}, \whp\ as soon as the number of descendants of $y$ in $X_{T_0+1+r}$ is large compared to $\eps^{-1}$, it then remains
close to $\tY\la^r$, where $\tY$ has the distribution
of $Y=Y_\la$ conditioned to be positive.
Let $\tY_2$ have the distribution of the sum of
two independent copies of $Y$ each conditioned to be positive. Then it follows that
\[
 \Pr(D_1\mid A) = o(1)+ \Pr\bb{ \tY_2\la^{T_1-T_0-1} < (\omega-2\omega')/\eps}.
\]
Now $\la^{T_1-T_0-1}=\la^{t_1+q-r-1}=\omega\la^{q-r+O(1)}\sim\omega\la^{q-r}$,
and $\omega-2\omega'\sim\omega$, so
\[
 \Pr(D_1\mid A) = o(1)+ \Pr\bb{ \tY_2 < (1+o(1))\la^{r-q}/\eps}
 = o(1)+ \Pr\bb{ s\tY_2 < (2+o(1))e^{\eps(r-q)}},
\]
recalling that $s\sim 2\eps$ and noting that,
since $\eps(r-q)$ is bounded and $\la=1+\eps$, we have $\la^{r-q}\sim \exp(\eps(r-q))$.
In a moment we shall sum over $r$; 
we can evaluate the sum of the corresponding terms above by relating
it to a certain disjoint union of events and using Theorem~\ref{th19}.
While this is aesthetically pleasing, we in fact know the asymptotic distribution of $\tY_2$, 
so we shall just use it.

Recall from Lemma~\ref{YYp} and Corollary~\ref{Ycor} that $sY$
conditioned on $Y>0$ has the distribution of $\YP=\YP_\la$, which 
converges in distribution to an exponential with parameter $1$ as $\eps\to 0$.
It follows that $s\tY_2$ converges in distribution to the sum of two
independent such exponentials,
which has distribution function $\Psi(x)=\int_{y=0}^x e^{-y}(1-e^{-(x-y)})\dd y = 1-(x+1)e^{-x}$.
Thus
\[
 \Pr(D_1\mid A) = \Psi(2e^{r'-q'}) + o(1),
\]
where $r'=\eps r$ and $q'=\eps q$ and we use uniform continuity
to remove the $(1+o(1))$ factor in the argument of $\Psi$.

Since $r'-q'=\Theta(1)$, we thus have
$\Pr(D_1\mid A)=\Theta(1)$, and hence the above equation can be written as
$\Pr(D_1\mid A) \sim \Psi(2e^{r'-q'})$. Since $\Pr(C\mid A)=1-o(1)$, it follows that
$\Pr(C\cap D_1\mid A)\sim \Psi(2e^{r'-q'})$.
Given $A$, the events $B$ and $C\cap D_1$ are independent,
so
\begin{equation}\label{CD1}
 \Pr(C\cap D_1\mid A\cap B)\sim \Psi(2e^{r'-q'}).
\end{equation}

Turning to particles other than the descendants of $y$, $y'$, first let $D_1'$ 
be the event that $A$ holds and, for $T_0\le t\le T_1$, the set $X_t$ contains
at most $\omega'/\eps$ particles that are descendants of $x_{T_0}$ but
not of $y$ or $y'$. Given $A\cap B$, these particles form a copy of $\bp_\la$
starting at $x_{T_0}$ and conditioned to die within $T_1'-T_0$ generations.
This process may be viewed as $\bp_{\las}$ conditioned to die by a certain
time, so its distribution is dominated by that of $\bp_{\las}$.
Since the total expected size of $\bp_{\las}$ is $O(\eps^{-1})$, it follows
that $\Pr((D_1')^\cc\mid A\cap B)=o(1)$.

Let $D_2$ be the event that $A$ holds and, for $0\le t\le T_1$, the set $X_t$ contains
at most $\omega'/\eps$ particles that are not descendants of $x_{T_0}$.
Given $A\cap B$, the tree of particles that are not descendants of $x_{T_0}$
has the distribution of one copy of $\bp_{\las}$ started at each time $t$, $0\le t< T_0$,
conditioned on the various copies of $\bp_{\las}$ dying by various times.
This distribution is dominated by that studied in Lemma~\ref{dthin},
so by Lemma~\ref{dthin} we have $\Pr(D_2^\cc\mid A\cap B)=O(\eps T_1 e^{-\Omega(\omega')})
=o(1)$, recalling that $T_1=O(\eps^{-1}\log\La)$.

Let $D=D_1\cap D_1'\cap D_2$. Since $\Pr((D_1')^\cc\cup D_2^\cc\mid A\cap B)=o(1)$,
from \eqref{CD1}, we have
\begin{equation}\label{CD11}
 \Pr(C\cap D_1\cap D_1'\mid A\cap B)\sim \Psi(2e^{r'-q'})
\end{equation}
and
\[
 \Pr(C\cap D\mid A\cap B)\sim \Psi(2e^{r'-q'}).
\]
Note for later that if $D$ holds, then $|X_t|<\omega/\eps$ for $t\le T_1$. 

Finally, setting $E_{r,q}=A\cap B\cap C\cap D$, and recalling \eqref{PrA} and \eqref{PrB}, 
we have
\[
 \Pr(E_{r,q}) \sim 2\eps^{-1}n^{-1} \las^r \gamma_0e^{-1}\eps^2 \Psi(2e^{r'-q'})
 \sim 2\gamma_0e^{-1}\eps e^{-r'} \Psi(2e^{r'-q'})/n.
\]
Since this estimate holds uniformly in $r$, $q$ with $r,q=O(1/\eps)$,
it also holds uniformly in $r,q$ with $|q|,|r|\le 2M/\eps$, say,
for some function $M=M(n)$ tending to infinity.
For $|q|\le M/\eps$, let $E_q=\bigcup_{-2M/\eps\le r\le 2M/\eps} E_{r,q}$.
For fixed $q$, the events $E_{r,q}$ are disjoint, so we have
\begin{eqnarray*}
 \Pr(E_q) &\sim& 2\gamma_0e^{-1}\eps n^{-1} \sum_{-2M/\eps\le r\le 2M/\eps} e^{-r'} \Psi(2e^{r'-q'}) \\
  &=& 2\gamma_0e^{-1}\eps n^{-1} e^{-q'} \sum_{-2M/\eps-q\le r-q\le 2M/\eps-q} e^{-(r'-q')} \Psi(2e^{r'-q'}).
\end{eqnarray*}

The sum above simplifies considerably, since it corresponds to splitting
a single event according to the time that $(X_t^+)$ first subdivides.
Rather than using this observation, we simply calculate.
Since $\Psi(x)=O(1)$ as $x\to\infty$ and $\Psi(x)=O(x^2)$ as $x\to 0$,
the sum above has exponentially decaying
tails. Recalling that $r'$ and $q'$ simply denote $\eps r$ and $\eps q$, it follows easily that
\[
 \Pr(E_q) \sim 2\gamma_0e^{-1}n^{-1} e^{-\eps q} \int_{-\infty}^\infty e^{-x} \Psi(2e^{x}) \dd x.
\]
A simple computation shows that the integral evaluates to $2$, so
\[
 \Pr(E_q)\sim   4\gamma_0e^{-1}n^{-1}e^{-\eps q} \sim 4\gamma_0e^{-1}n^{-1}\las^q,
\]
uniformly in $|q|\le M/\eps$, provided $M=M(n)$ tends to infinity sufficiently slowly.

Note that the event $E_q$ requires that $y,y'\in X_{T_0+1}^+$, an event depending on an infinite number of generations of the process $\bp_\la$.   To work with the graph, we seek an event depending on a finite number of
generations of $\bp_\la$. Let $F_q$ be the event corresponding to $E_q$ but
depending only on the first $T_1=t_0+t_1+q$ generations. More precisely,
$F_q$ is the event that there are exactly two particles, $y$ and $y'$, say,
in some generation $T_0+1=t_0+r+1$, $-2M/\eps\le r\le 2M/\eps$,
with descendants in generation $T_1'=t_0+\floor{t_1/2}$,
each of these particles
has at least $\omega'/\eps$ descendants in $X_{T_1}$,
$y$ and $y'$ have a common
parent $x_{T_0}$, the equivalent of the strong wedge condition (i) holds,
and $D=D_1\cap D_1'\cap D_2$ holds.
{From} the strong wedge condition,  if $F_q$ holds
then, in the tree obtained from $\bp_\la$ by deleting all descendants of $x_{T_0}$,
the initial particle is the unique particle at maximum distance from $x_{T_0}$.

If $E_q$ holds, then so does $F_q$. Furthermore,
$\Pr(E_q\mid F_q)=1+o(1)$, since for each of $y$ and $y'$, the probability that none
of its at least $\omega'/\eps$
descendants in generation $T_1$ goes on to survive forever is $O((1-s)^{\omega'/\eps}) =o(1)$. Hence,
\[
 \Pr(F_q)\sim \Pr(E_q)\sim 4\gamma_0e^{-1}n^{-1}\las^q.
\]

Let $T$ be a tree of height $t=T_1$ consistent with $F_q$.
Then $t=O(\eps^{-1}\log\La)$, while, since $D$ holds,
each generation contains at most $\omega\eps^{-1}=\omega\La^{-1/3}n^{1/3}=o(n^{1/3})$
vertices. Also,
the total size $|T|$ of $T$ is
\begin{equation}\label{Tsize}
 O(\omega\eps^{-2}\log\La) =O(\omega\La^{-2/3}n^{2/3}\log\La) = o(n^{2/3}),
\end{equation}
and
$\eps|T|^2 = O(\omega^2\eps^{-3}\log^2\La) = O(\omega^2\La^{-1}n\log^2\La)=o(n)$.
Lemma~\ref{spcpl} applies to all such trees, telling us that
\[
 \Pr\bb{\Gat(x)\isom T} \sim \Pr\bb{\Gatm(x)\isom T} \sim \Pr\bb{X_{\le t}\isom T}.
\]

Let $F_q(x)$ denote the event that $\Gl{T_1}(x)$ is a tree satisfying the property
$F_q$, where $T_1=t_0+t_1+q$. Summing over all such trees, we see that
\begin{equation}\label{FE}
 \Pr( F_q(x)) \sim \Pr(F_q) \sim 4\gamma_0e^{-1}n^{-1}\las^q
\end{equation}
uniformly in $q$ such that $|\eps q|\le M$, for some $M\to\infty$.

Let $q_0$ be chosen so that $\eps q_0$ tends to minus
infinity very slowly, and let $\FF(x)=F_{q_0}(x)$.
Let $N$ be the number of vertices $x$ for which $\FF(x)$ holds;
then
\[
 \E N= n\Pr(F_{q_0}(x))\sim 4\gamma_0e^{-1}\las^{q_0} \to\infty.
\]
We are now almost finished: it remains to
use a second moment argument to show that $N$ is whp large, and then to bound
the probability that two vertices satisfying the relevant condition are close.

Given distinct vertices $x$ and $y$ of $G=G(n,\la/n)$,
let $A(x,y)$ be the event that $\FF(x)$ and $\FF(y)$ both hold,
with the trees `witnessing' this being disjoint. For trees $T_1$ and $T_2$ consistent
with $F_{q_0}$, by Lemma~\ref{spcplk} the probability that the relevant
neighbourhoods of $x$ and $y$ are disjoint and isomorphic to $T_1$ and $T_2$
respectively is asymptotically the product of the individual
probabilities. It follows easily that
\begin{equation}\label{Axy}
 \Pr(A(x,y))\sim \Pr(\FF(x))\Pr(\FF(y))  = \Pr(\FF(x))^2.
\end{equation}

At this point, it seems that there should be a simple argument involving `pulling
the trees off the $2$-core and reattaching them randomly'. However, once again,
we did not manage to make such an argument precise in a simple way.

Our next aim is to show that it is very unlikely that $\FF(x)$ and $\FF(y)$ hold
and the trees witnessing these events overlap.
Recall that if $\FF(x)$ holds, then there is a unique `first' vertex in the neighbourhoods
of $x$ with two children with descendants in generation
$t_0+t_1+q_0$. Let $x'$ denote this
vertex. Since the two children of $x'$ each have at least $\omega'/\eps=\La^{1/12}/\eps$ descendants
in generation $t_0+t_1+q_0$, with probability at least $1-o(\La^{-100})$, say,
their neighbourhoods continue to grow, and eventually meet, in which case $x'$ is
in the 2-core.
Let $\tF(x)$ be the event that $\FF(x)$ holds and $x'$ is in the 2-core,
so $\Pr(\tF(x))\sim \Pr(\FF(x))$. Also,
let $\gba$ be the `global bad event' that there is some vertex $x$ such that $\FF(x)$
holds but $x'$ is not in the 2-core. Then
\begin{equation}\label{B1small}
 \Pr(\gba)\le n\Pr(\FF(x))o(\La^{-100}) =o( \las^{q_0}\La^{-100}) =o(1),
\end{equation}
assuming, as we may, that $\eps q_0\ge -\log\log\La$, say.

Similarly, if $\FF(x)$ and $\FF(y)$ hold, then it is very likely that
$x$ and $y$ are in the same component. Writing $\gbb$ for the event
that there are $x$ and $y$ in different components such that $\FF(x)$ and $\FF(y)$ hold,
we have
\begin{equation}\label{B2small}
 \Pr(\gbb)=o(1).
\end{equation}

For our second moment bound, we will study $\tN$, the number of vertices $x$
such that $\tF(x)$ holds. Note that whp $\tN$ is equal to $N$, since $\gba$ has
probability $o(1)$. Also,
\[
 \E \tN = n\Pr(\tF(x)) \sim n\Pr(\FF(x)) =\E N \sim 4\gamma_0e^{-1}\las^{q_0}\to\infty.
\]

Let $\tA(x,y)$ denote the event that $\tF(x)$ and $\tF(y)$ hold,
with the trees witnessing $\FF(x)$ and $\FF(y)$ disjoint. If $\tA(x,y)$
holds, then so does $A(x,y)$. On the other hand, 
continuing to explore as before, we see that given $A(x,y)$, the
vertices $x'$ and $y'$ are very likely to be in the 2-core, so
\begin{equation}\label{tAxy}
 \Pr( \tA(x,y) ) \sim \Pr(A(x,y)) \sim \Pr(\FF(x))^2 \sim \Pr(\tF(x))^2.
\end{equation}

It remains to consider the case of overlapping trees.

We defined $\FF(x)$ in such a way that if $\FF(x)$ holds,
then $x'$ together with the component of $G-x'$ containing $x$ forms a tree,
in which $x$ is the unique vertex
at maximal distance from $x'$. If $\tF(x)$ holds,
so $x'$ is in the 2-core, then $x$ is the unique vertex of this tree
at maximal distance from the $2$-core.
Let $T_x$ denote this tree, or, in general, the tree component containing $x$ if we
delete from $G$ all edges lying in the $2$-core.
If $\tF(x)$ and $\tF(y)$ both hold, then from this uniqueness property,
the trees $T_x$ and $T_y$ are disjoint, except possibly at $x'$ and $y'$:
they are two distinct trees attached to the 2-core.

Let $\tB(x,y)$ be the event that $\tF(x)\cap \tF(y)$ holds and
the trees $T_x$ and $T_y$ are disjoint (except possibly at $x'$ and $y'$), but
the trees witnessing $\FF(x)$ and $\FF(y)$ overlap.
{From} the remarks above, for $x\ne y$,
\begin{equation}\label{tAtB}
 \tF(x)\cap \tF(y) = \tA(x,y) \cup \tB(x,y).
\end{equation}
To bound $\Pr(\tB(x,y))$, we first test whether
$\FF(x)$ ({\em not} $\tF(x)$) holds, in a way that first uncovers the tree $T_x$.
Roughly speaking, we would like to show that the number of trees $T_x$ hanging
off the $2$-core is well behaved (i.e., its second moment is not too large). Then we could
say that the attachment points to the $2$-core are uniformly distributed, so it's
unlikely that there are two trees attached to close points. The problem is
that we need independence to get the second moment bound, and we do not have this,
as we can't tell in advance when we have reached the 2-core and should stop exploring the tree
from $x$. To get around this, we choose a stopping vertex in advance.

Given distinct vertices $x$ and $\bx$,
let $\FF(x;\bx)$ be the event that $\FF(x)$ holds, with the division vertex $x'$ equal to $\bx$.
Note that $\FF(x)$ is the disjoint union of the events $\FF(x;\bx)$,
$\bx\in V(G)\setminus\{x\}$, all of which are equally likely. Thus
\begin{equation}\label{FFxbx}
 \Pr(\FF(x;\bx)) = (n-1)^{-1} \Pr(\FF(x)) \sim n^{-1}\Pr(\FF(x)).
\end{equation}
Let $T(x;\bx)$ be the event that $\bx$ together
with the component of $G-\bx$ containing $x$ forms a tree
consistent with $\FF(x;\bx)$.
In other words,
$T(x;\bx)$ is the event that the part of $G$ that we can reach from $x$ if we do
not allow ourselves to pass through $\bx$ is one of a certain set of trees.
Note that we do not insist
that $\bx$ is in fact in the 2-core, and that if $\FF(x;\bx)$ holds then $T(x;\bx)$ must hold.

Crucially, we may test whether $T(x;\bx)$ holds by exploring the neighbourhoods
of $x$ in the usual way, except that if we reach $\bx$ at some point, we do not test
for edges from $\bx$ to unseen vertices. (Since we require the relevant neighbourhood
to be a tree, we do test for edges between all pairs of reached vertices.)
Also, given $T(x;\bx)$, we may test whether $\FF(x;\bx)$ holds by continuing to explore
from $\bx$; roughly speaking, the property required of this further exploration is captured
by $C\cap D_1\cap D_1'$ above (this was the reason for `splitting off' $D_1'$ from $D_2$),
and has probability essentially $\Theta(\eps^2)$.

More precisely, suppose that $T(x;\bx)$ holds and let us condition
on the particular tree $T_x$ revealed by the exploration so far. Let $V'=V(G)\setminus V(T_x)\cup\{\bx\}$.
Then we have not yet examined any edges inside $V'$, and the only edges outside
$V'$ are those of $T_x$. Since $T_x$ is required to be consistent with $F(x;\bx)$,
we know that $d(x,x_0)=t_0+r$ for some $r$ with $|r|\le 2M/\eps$,
and, from \eqref{Tsize}, that $T_x$ contains $o(n^{2/3})$ vertices. 

Now (recalling that $F(x)=F_{q_0}(x)$), the event
$F(x;\bx)$ holds if and only if the following conditions
are satisfied as we explore a further $t=T_1-(t_0+r)=t_1+q_0-r$
steps from $\bx$ in $G[V']$:  (i) the graph we uncover is a tree,
(ii) there are exactly two vertices ($y$ and $y'$) in $\Gamma_1(\bx)$
with `descendants' in $\Gamma_{T_1'-(t_0+r)}(\bx)$, where $T_1'=t_0+\floor{t_1/2}$, (iii) these two vertices
each have at least $\omega'/\eps$ descendants in $\Gamma_t(\bx)$,
(iv) between them, $y$ and $y'$ have at most $(\omega-2\omega')/\eps$
descendants in each $\Gamma_{t'}(\bx)$, $t'\le t$,
and (v) the neighbours of $\bx$ other than $y$ and $y'$ have in total
at most $\omega'/\eps$ neighbours in each of these sets.
Indeed, (i) and (ii) together with the fact that $T_x$ is consistent
with $F(x;\bx)$ ensure that the event corresponding to $A\cap B\cap D_2$
in the definition of $F=F_{q_0}$ holds, (iii) ensures that $C$ holds,
(iv) that $D_1$ holds, and (v) $D_1'$.

Arguing as for \eqref{FE}, we can approximate the probability
of these conditions holding by that of the corresponding
branching process event (the conditions ensure that only $o(n^{2/3})$ vertices
are involved in total). Then we may consider the infinite version
of the branching process event, differing only in that we assume that
$y$ and $y'$ are in $X_1^+$. Now we require that $|X_1^+|=2$; since
$|X_1^+|\sim \Po(s\la)$, this has probability $\Theta(\eps^2)$. Given
this, in the branching process the remaining conditions corresponding
to (iii), (iv) and (v) are {\em exactly} the conditions $C$, $D_1$ and
$D_1'$ considered earlier, except that now $\bx$ plays the role
of the initial particle $x_0$, and all generation numbers
are offset by $t_0+r$. In particular, the conditional
probability of these events is exactly the probability
$\Pr(C\cap D_1\cap D_1' \mid A\cap B)$ evaluated in \eqref{CD11},
with $r'=\eps r$ and $q'=\eps q_0$.

Let $\psi=\psi(n)$ be a function tending infinity to arbitrarily slowly
(more slowly than the reciprocal  of the implicit function in the $o(\cdot)$ notation 
in \eqref{md}),
and let us write $f=\Tht(g)$ if $f/g=\psi^{O(1)}$.
Taking $M(n)$ to tend to infinity sufficiently slowly, from the comments
above and \eqref{CD11}, we see that
\begin{equation}\nonumber
 \Pr\bb{\FF(x;\bx)\mid T(x;\bx)} = \Tht(\eps^2)
\end{equation}
whenever $T(x;\bx)$ holds. From \eqref{FFxbx} it follows that for all $\bx\ne x$ we have
\[
 \Pr(T(x;\bx)) = \Tht\bb{\eps^{-2}n^{-1}\Pr(\FF(x))} = \Tht(\eps^{-2}n^{-2}),
\]
recalling (from \eqref{FE}) that $\Pr(\FF(x))=\Theta(n^{-1}\las^{q_0})=\Tht(n^{-1})$.

Given $x\ne y$ and $\bx$, $\by$, let $B'(x,y,\bx,\by)$ be the event
that $T(x,\bx)\cap T(y,\by)$ holds, with the trees $T_x$ and $T_y$ edge disjoint.
Note that if this event holds, then $\bx,\by\notin\{x,y\}$.
We may test whether $B'(x,y,\bx,\by)$ holds by exploring from $x$ and
$y$ respectively (with the explorations modified at $\bx$ and $\by$),
and the two explorations cannot `help' each other.
Arguing as for $\eqref{Axy}$ above, using Lemma~\ref{spcplk}, it follows that
\[
 \Pr(B'(x,y,\bx,\by))\sim \Pr(T(x,\bx))\Pr(T(y,\by)) =\Tht(\eps^{-4}n^{-4})
\]
for all $\bx$, $\by\notin\{x,y\}$; the probability is $0$ if $\bx$ or $\by\in \{x,y\}$.

Fix vertices $x\ne y$, and let $\rx$ and $\ry$ be chosen independently and uniformly
at random from $V(G)$. Note that
\begin{equation}\label{TT}
 \Pr(B'(x,y,\rx,\ry))=\Tht(\eps^{-4}n^{-4}).
\end{equation}

Let us condition on $B'(x,y,\rx,\ry)$. Moreover, we condition
on $V_x=V(T_x)\setminus \{\rx\}$, on $V_y=V(T_y)\setminus \{\ry\}$
and on the structure of the trees $T_x$ and $T_y$, but {\em not} on $\rx$
and $\ry$. Given this information, $\rx$ and $\ry$ are independent and
uniform from $V'=V(G)\setminus (V_x\cup V_y)$. Indeed, the given information
says that certain trees $T_x$ and $T_y$ are attached to $\rx,\ry\in V'$.
Each tree is equally likely to be attached to any vertex of $V'$,
so, given this, the attachment vertices are uniform on $V'$.

The event we have conditioned on does not depend on the edges in $V'$.
Hence, the conditional distribution of $G[V']$ is that of $G'=G(n',\la/n)$,
where $n'=n-|T_x|+1-|T_y|+1$. From the definition of $F_{q_0}$, we
have $|T_x|,|T_y|=o(n^{2/3})$, so $n'=n-o(n^{2/3})$.
The edge probabilities in $G'$ are thus $\la'/n'$ where
\[
 \la'=\la n'/n = (1+\eps)(n-o(n^{2/3}))/n = 1+\eps -o(n^{-1/3}) = 1+\eps',
\]
with $\eps'\sim \eps$.

Let $\tB'(x,y)$ be the event that $\tB(x,y)$ holds, and $\rx=x'$, $\ry=y'$,
so
\begin{equation}\label{tB'}
 \Pr(\tB'(x,y)) = n^{-2} \Pr(\tB(x,y)).
\end{equation}
If $\tB'(x,y)$ holds, then so does $B'(x,y,\rx,\ry)$.
Furthermore, $\rx$ and $\ry$ must be in the 2-core of $G$, which
is the same as the 2-core $U$ of $G'$. Also, $\rx$ and $\ry$ must be {\em close},
i.e., within distance $d=2t_1+4M(n)/\eps\sim 2t_1$.

{From} the remarks above, we may bound $\Pr\bb{\tB'(x,y) \mid B'(x,y,\rx,\ry)}$
by the conditional probability (given the trees $T_x$, $T_y$ etc but not $\rx$, $\ry$)
that $\rx$ and $\ry$ are close in $U$, and hence by
\[
 |G'|^{-2} \E M_d(G') \sim n^{-2} \E M_d(G'),
\]
where $M_d(G')$ is the number of close pairs in $U$, and the expectation
is over the random graph $G'$.

Now $G'$ has the distribution of $G(n',\la'/n')$, with $\la'=1+\eps'$ and
$\eps'\sim \eps$. Also, $d\sim 2t_1\sim 2\log\omega/\eps=\log(\eps^3 n)/(3\eps)
\sim 3^{-1} \log((\eps')^3n')/\eps'$.
By Lemma~\ref{l2c}, we thus have $\E M_d(G')=o(\eps^4n^2)$.
Taking our slowly growing function $\psi(n)$ small enough, 
the expectation is smaller than $\eps^4n^2$ by at least a factor $e^\psi$, say.
It follows that
\[
 \Pr\bb{\tB'(x,y) \mid B'(x,y,\rx,\ry)} \le \Theta(\eps^4e^{-\psi}).
\]
Using \eqref{TT} it follows that $\Pr(\tB'(x,y))=\Ot(n^{-4}e^{-\psi})=o(n^{-4})$,
and hence, from \eqref{tB'}, we have
\begin{equation}\label{tBsmall}
 \Pr(\tB(x,y))=o(n^{-2}).
\end{equation}
It follows that whp there are no pairs $(x,y)$ for which $\tB(x,y)$ holds.
Recalling \eqref{B1small}, \eqref{B2small} and \eqref{tAtB}, and noting
that $\tA(x,y)$ trivially implies $A(x,y)$, we see that whp every pair
of vertices $x\ne y$ for which $\FF(x)\cap\FF(y)$ holds has the properties
\begin{equation}\label{concl}
 d(x,y)<\infty \hbox{\quad and \quad} A(x,y).
\end{equation}

Using \eqref{tAxy}, \eqref{tAtB} and \eqref{tBsmall}, and recalling that $\tN$ denotes the number
of vertices $x$ such that $\tF(x)$ holds, we have
\[
 \E(\tN(\tN-1)) = \sum_x\sum_{y\ne x}\bb{ \Pr(\tA(x,y))+\Pr(\tB(x,y))}
  = (1+o(1))(\E\tN)^2 +o(1).
\]

Since $\E\tN\to\infty$, it follows that $\E \tN^2\sim (\E\tN)^2$,
and hence that $\tN$ is concentrated about its
mean. Since $\E N\sim \E\tN$, and $\tN$ and $N$ are whp equal,
we thus have $N$ concentrated about its mean also, where $N$ is the number
of $x$ such that $\FF(x)$ holds.

Finally, the end of the proof is as in Section~\ref{lower_fixed}.
Set $t_2=\log(\eps^3 n/\omega^2)/\log \la$,
let $K\eps \to \infty$ very slowly,
let $N$ be the number of vertices $x$ for which $\FF(x)$ holds,
and let $M$ be the number of pairs $x$, $y$ for which $A(x,y)$ holds (i.e., $\FF(x)$ and $\FF(y)$ hold
disjointly) but $d(x,y)\le d$, where
\begin{eqnarray*}
 d = 2(t_0+t_1+q_0)+t_2-K &=& 2\frac{\log(\eps^3n)}{\log(1/\las)} + 2\frac{\log\omega}{\log\la}
 + \frac{\log(\eps^3 n/\omega^2)}{\log \la} + O(1)+2q_0-K \\
 &=& \frac{\log(\eps^3 n)}{\log\la}+ 2\frac{\log(\eps^3n)}{\log(1/\las)} + O(1)+2q_0-K.
\end{eqnarray*}
Given that $\FF(x)$ and $\FF(y)$ hold disjointly, the $(t_0+t_1+q_0)$-neighbourhoods of $x$ and $y$
each contain at most $\omega/\eps$ vertices. Exploring from $x$ and $y$ in the obvious way, 
the rest of the graph is `unseen', and the expected number of paths of length at
most $t_2-K$ joining one neighbourhood to the other is at most
\begin{multline*}
 (\omega/\eps)^2 \sum_{k\le t_2-K} n^{k-1} (\la/n)^k = \omega^2\eps^{-2}n^{-1}\sum_{k\le t_2-K}\la^k \\
  = O(\omega^2\eps^{-3}n^{-1}\la^{t_2-K}) = O(\la^{-K})=o(1).
\end{multline*}
(Here it is important that we work with $\FF(x)$ and not $\tF(x)$.)
Hence, the conditional probability that $d(x,y)\le d$ is $o(1)$,
so $\E M=o\bb{n(n-1)\Pr(A(x,y))}=o((\E N)^2)$, using \eqref{Axy}.
It follows that whp there are at least $\E N/2\ge 2$ vertices $x$ for which $\FF(x)$
holds, but at most $(\E N)^2/5$ pairs of vertices with $A(x,y)$ holding but
$d(x,y)\le d$. Using \eqref{concl}, it follows
that $\diam(G)\le d$ whp. Recalling that both $-q_0\eps>0$ and $K\eps$ may be taken to tend
to infinity arbitrarily slowly, this completes the proof of the lower bound in Theorem~\ref{th_eps}.

\subsection{The upper bound}\label{ss_up}

Throughout we fix a function $\eps=\eps(n)>0$
satisfying $\eps\to 0$ and $\eps^3n\to\infty$.
As before we shall often
write $\La$ for $\eps^3 n$, and set%
\[
 \omega = \La^{1/6}.
\]
As before, let $t_0=\log(\eps^3 n)/\log(1/\las)$, $t_1=\log\omega/\log\la$, and
$t_2=\log(\eps^3 n/\omega^2)/\log\la$; we ignore rounding to integers,
which makes no essential difference in our calculations.

Let $K=K(n)$ be such that $K\eps\to \infty$, and let
\begin{equation}\label{d0def}
 d_0=\log(\eps^3n)/\log\la+2\log(\eps^3n)/\log(1/\las) = 2t_0+2t_1+t_2,
\end{equation}
so our aim is
to prove that $\diam(G)\le d_0+K$ holds whp, and we may assume if we
like that $K\eps$ grows slower than any given function of $n$ tending
to infinity.
The basic idea is to simply estimate the expected number of pairs $x$, $y$
with $d(x,y)\ge d_0+K$.
However, the calculations in the previous sections imply that on its own,
this will not work; the expectation turns out to be roughly
$\eps^{-4}$ if $K\eps$ grows slowly. The reason is that, given
that a tree hanging off the 2-core has height at least $h$,
the expected number of vertices it contains at distance at least
$h$ from the 2-core is of order $\eps^{-2}$. 

To get around this, we need to impose a version of the wedge condition; we
should like to consider only vertices $x$ that are at maximal distance
from the 2-core in their tree. (Note that we cannot insist that $x$
is the unique vertex at this distance in its tree, as we did before.)
This suggests the {\em weak wedge condition}: roughly speaking,
we should like any `side branches' starting
from $\Ga_t(x)$ to have height at most $t$, one more than the height allowed
in the strong wedge condition.
This is all very well if the neighbourhoods of $x$ out to the relevant
distance form a tree, but in the upper bound we must consider {\em all}
vertices $x$, so we must modify the condition. Unfortunately, most of the
work in this section will be needed to show that we can rule
out various unlikely cases (such as the diameter coming from a pair $x$, $y$
where $x$ is close to a short cycle).

Suppose that $x$ and $y$ are a pair of vertices at maximal distance,
pick any $t\le d(x,y)$,
and consider any shortest path $P$
from $x$ to $y$.
Then, tracing $P$ backwards from $y$ to $x$, we first
meet $\Gat(x)$ at some vertex $v_t\in \Ga_t(x)$. Since $P$ is shortest, $d(v_t,y)=d(x,y)-t$,
so continuing from $v_t$ to $x$ along the unique path in $\Gatm(x)$ joining these vertices,
we find another shortest path $P'$ from $x$ to $y$ that starts with $v_0v_1v_2\cdots v_t$,
a path in $\Gatm(x)$.
We shall split the tree $\Gatm(x)$ into the {\em trunk} $T$, consisting of all vertices
with descendants in $\Ga_t(x)$, plus one {\em side branch} $B_v$ for each $v\in T$.
Here $B_v$ consists of $v$ together with all its descendants in $\Gatm(x)$
that are not descendants of another trunk vertex. (This corresponds roughly
to the decomposition of $\bp_\la$ into $\bp_\la^+$ together with independent
copies of $\bp_{\las}$; the difference is that we only consider
finitely many generations, as we must in the graph.)

Of course each $v_i$ is a trunk vertex.
The key observation is that for $0\le i\le t$, the side branch
$B_{v_i}$ is either {\em short}, i.e., has height
at most $i$, or is {\em reattached}, i.e., $B_{v_i}-v_i$ meets an edge of
$\Gat(x)\setminus \Gatm(x)$. Otherwise, let $w$ be a vertex of $B_{v_i}$ at maximum
distance from $v_i$ in $B_{v_i}$. Since $B_{v_i}$ is not reattached,
any path from $w$ to $y$ must pass via
$v_i$. Since $B_{v_i}$ is not short,
the total length of such a path exceeds $d(x,y)$, contradicting the 
assumption that $x$ and $y$ are
at maximum distance.

Given $1\le d\le t$ and a vertex $x$,
let $S$ denote the set of vertices of $\Ga_d(x)$ that have one or more descendants
in $\Ga_t(x)$, in the tree $\Gatm(x)$.
We say that $x$ is {\em $(d,t)$-acceptable}
if there is a vertex $v\in S$
such that every side branch in $\Gatm(x)$ of the path $x=v_0v_1\cdots v_d=v$ is either
short or reattached. From the observation above,
if $x$ and $y$ are at maximal distance,
then $x$ and $y$ must be $(d,t)$-acceptable for any $1\le d<t<d(x,y)$.

Set $h=\eps^{-1}\log\log\La$, say. (Here $\eps^{-1}$ times any slowly-enough growing function will do.)
For $t>h$, let $A_t=A_t(x)$ be the event that $x$ is $(h,t)$-acceptable, and
let $B_t=B_t(x)$ be the event
that $0<|\Ga_r(x)|< \omega/\eps$ holds for $0\le r\le t$.
The following lemma will play a key role in our estimates.

\begin{lemma}\label{A2}
Under the assumptions of Theorem~\ref{th_eps} we have
\begin{equation}\label{ubaim}
 \Pr(A_t\cap B_t) \le (1+o(1)) 4\gamma_0 \eps^3\las^{t-t_1}
\end{equation}
uniformly in all $t$ in the range
$t_1+3h\le t\le 10\eps^{-1}\log\La$,
where $\gamma_0>0$ is the constant appearing in Lemma~\ref{lsurv}.
\end{lemma}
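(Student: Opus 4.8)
The plan is to transfer the event $A_t\cap B_t$, which is defined in terms of the graph $G=G(n,\la/n)$, to the branching process $\bp_\la$ using the coupling Lemma~\ref{spcpl}, and then to estimate the corresponding branching process probability by decomposing $\bp_\la$ into $\bp_\la^+$ plus subcritical side trees, exactly as in the lower bound argument of \refSS{ss_low}. First I would note that, on $B_t$, each generation has size at most $\omega/\eps=o(n^{1/3})$ and the whole explored tree $\Gatm(x)$ has $o(n^{2/3})$ vertices with $\eps|T|^2=o(n)$, so Lemma~\ref{spcpl} applies: $\Pr(\Gatm(x)\isom T)\sim\Pr(X_{\le t}\isom T)$ uniformly over the relevant trees. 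The event that $x$ is $(h,t)$-acceptable is determined by $\Gat(x)$, not just $\Gatm(x)$, because of the `reattached' alternative; but a reattached side branch requires an extra (non-tree) edge, of which there are $o(1)$ in expectation given $\Gatm(x)\isom T$, so up to a $1+o(1)$ factor we may replace `short or reattached' by simply `short'. Thus it suffices to bound the branching process probability $\Pr(\tA_t\cap\tB_t)$, where $\tB_t=\{0<|X_r|<\omega/\eps,\ 0\le r\le t\}$ and $\tA_t$ is the event that some particle $v$ in generation $h$ with descendants in generation $t$ has the property that every side branch hanging off the path from the root to $v$ (in $X_{\le t}$) has height at most its distance from the root (a one-sided weak wedge condition).

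Next I would split $\bp_\la$ as $\bp_\la^+$ (the particles with infinitely many descendants) together with independent copies of $\bp_{\las}$ rooted at each particle of $\bp_\la^+$, as done repeatedly in the paper. On $\tB_t$ the process stays thin until generation $t$, and in particular the number of surviving particles $|X_r^+|$ is small for all $r\le t$; arguing as in Theorem~\ref{th19} and in \refSS{ss_low}, the dominant contribution comes from the case $|X_r^+|=1$ for all $r\le t$, i.e.\ the `one-track' surviving spine of length $t$, which has probability $s\las^t\sim 2\eps\las^t$. (The contributions where the spine branches before generation $t$, or where the process dies, are lower order — the former by a factor of $\eps$, the latter because a thin non-surviving process reaching generation $t$ has probability $\Theta(\eps\las^t)$ but then has essentially no room, and combining with the constraint is negligible, exactly the $o(\eps\las^t)$ estimate used in the proof of \eqref{bdb}.) Given the spine $x_0x_1\cdots$ survives as the unique surviving line through generation $t$, the acceptability event becomes the requirement that for each $i\le h$ the subcritical tree hanging off $x_i$ dies within $i$ generations, and for $h<i\le$ (the relevant range) it merely has to die within $t-i$ generations (so that $|X_t|<\omega/\eps$ can hold — here one also needs the $x_h$-or-later part of the spine to reach only size $<\omega/\eps$ by generation $t$, contributing the $\las^{-t_1}$ factor via Corollary~\ref{cYt} exactly as in Theorem~\ref{th19}). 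Writing $d_i=\Pr(|X_i^-|=0)$, the wedge part contributes $d_1\prod_{i=1}^h d_i$, and since $h=\eps^{-1}\log\log\La\to\infty$ with $\eps h\to\infty$, Lemma~\ref{lsurv} gives $d_1\prod_{i=1}^h d_i\sim d_1\prod_{i=1}^\infty d_i\sim\gamma_0 e^{-1}\eps^2$; the tail constraints $i>h$ contribute $\prod_{i>h}d_i\sim 1$ since $\eps h\to\infty$.

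Putting the pieces together: the $2\eps$ from the spine surviving, the $\gamma_0e^{-1}\eps^2$ from the weak wedge condition through generation $h$, the factor $2$ arising (as in \refSS{ss_low} and Corollary~\ref{cYt}) because the relevant conditioned limit $sY$ is asymptotically $\Po$-conditioned and $s\sim2\eps$ doubles the naive count, and the $\las^{t-t_1}$ from Corollary~\ref{cYt}/Theorem~\ref{th19} controlling the probability that, after the wedge segment, the remaining roughly $t-t_1$ generations of growth keep the population below $\omega/\eps$, together give $(1+o(1))\,2\eps\cdot\gamma_0 e^{-1}\eps^2\cdot 2\cdot\las^{t-t_1}=(1+o(1))4\gamma_0 e^{-1}\eps^3\las^{t-t_1}$ — but in fact the bookkeeping of the $e^{-1}$ and the two factors of $2$ is precisely the computation already carried out in \refSS{ss_low} (see \eqref{PrA}–\eqref{PrB} and the passage to $\Pr(F_q)$), where the net outcome of the analogous constants is $4\gamma_0 e^{-1}$; I would simply reuse that calculation, noting that the present event is \emph{weaker} (one wedge instead of the two-sided diamond, and a one-sided acceptability rather than uniqueness), which is exactly why we get an upper bound rather than an asymptotic equality. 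The main obstacle I anticipate is the careful justification that the `reattached' and multiple-candidate-$v$ possibilities, together with the cases where the spine is not a single surviving line or $t_{\omega/\eps}$ behaves atypically, all contribute only a $1+o(1)$ factor or a genuinely lower-order term; this requires invoking Lemmas~\ref{converged}, \ref{noj}, \ref{dthin} and Theorem~\ref{th19} to control the tails, uniformly over the stated wide range of $t$, and checking that the error terms of the form $O(\eps e^{-\Omega(\omega)})$ from those lemmas are negligible compared with $\eps^3\las^{t-t_1}=\eps^3\omega^{-O(1)}$ throughout $t_1+3h\le t\le 10\eps^{-1}\log\La$.
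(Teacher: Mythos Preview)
There is a genuine gap in your treatment of the ``reattached'' alternative. You write that an extra (non-tree) edge in $\Gat(x)$ has expectation $o(1)$ given $B_t$, and conclude that ``short or reattached'' may be replaced by ``short'' up to a $1+o(1)$ factor. But this only gives
\[
 \Pr\bigl((A_t\setminus A_t')\cap B_t\bigr) \le \Pr(\hbox{extra edge}\cap B_t) = o(1)\cdot \Pr(B_t) = o(\eps\las^{t-t_1}),
\]
whereas the target \eqref{ubaim} is of order $\eps^3\las^{t-t_1}$. You are therefore a factor $\eps^2$ short. Concretely, the conditional expected number of extra edges on $B_t$ is $O\bigl(t(\omega/\eps)^2/n\bigr)=O(\La^{-2/3}\log\La)$, and this is \emph{not} $o(\eps^2)$ in general: for instance if $\eps=n^{-0.3}$ then $\La=n^{0.1}$, so $\La^{-2/3}\approx n^{-0.067}\gg n^{-0.6}=\eps^2$. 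Even restricting to extra edges incident to the $h$ side branches does not close the gap, since their total expected size is $\Theta(h/\eps)$ and the resulting bound $O(h\omega\eps^{-2}/n)=O(\La^{-5/6}\log\log\La)$ can still dominate $\eps^2$.

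The paper's proof does \emph{not} discard reattachment by a crude extra-edge count. Instead it conditions on the trunk $T'$ and on the side branches beyond generation $h$ (via a weakened event $B'\supset B$ with $\Pr(B')\sim\Pr(B)$), so that only the first $h$ side branches $W_0,\ldots,W_{h-1}$ and the extra edges remain random. It then classifies the $W_i$ as short or tall and shows that the event ``all tall $W_i$ are reattached'' has probability $o(\Pr(\wN=0))=o(\eps^2)$ by a careful combinatorial argument: a reattachment edge from $W_i$ can ``spoil'' at most one other $W_j$, so with $\wN$ tall branches at least $\wN/2$ genuinely independent reattachment edges are required, each of probability $O(i\omega\eps^{-2}n^{-1})$ by Lemma~\ref{tall}. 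Summing over configurations yields a geometric series in $S=O(h\omega\eps^{-2}n^{-1})=O(\La^{-5/6}\log\log\La)=o(1)$, which is the quantity that actually needs to be small---not $o(\eps^2)$ per edge but $o(1)$ after the combinatorics. Your decomposition via the single surviving spine $|X_r^+|=1$ and the reuse of \eqref{PrA}--\eqref{PrB} also fails to separate the wedge condition on generations $\le h$ from the size constraint $B_t$ on later generations; the paper achieves this independence precisely through the $B\to B'$ weakening and the moment bound $\E(|S|\mid B')\sim 1$. Finally, the constant: the side branches here are required to be short in the weak sense (height $\le i$, giving $\prod(1-s_{i+1})\sim\gamma_0\eps^2$), not the strong sense used in \refSS{ss_low}, so no stray $e^{-1}$ should appear.
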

Recall that, by the second part of Theorem~\ref{th19},
in  the branching process $\bp_\la$ we have
\begin{equation}\label{Bsim}
 \Pr\bb{ 0<|X_r|< \omega/\eps,\, r=0\ldots t} \sim 4\eps\las^{t-t_1}
\end{equation}
for any $t\le 10\eps^{-1}\log \La$ such that $\eps(t-t_1)\to\infty$;
by Lemma~\ref{spcpl}, this carries over to the graph. Thus
Lemma~\ref{A2} says essentially that
the conditional probability that our modified wedge condition holds
is asymptotically $\gamma_0\eps^2$.
We postpone the proof of the lemma for the moment.

Unfortunately, to handle the case when $\La=\eps^3 n$ grows slowly, it turns out that we need
two further lemmas. The first is a very simple observation; once one thinks
of the lemma, it is very easy to prove. We thought of it after
seeing the preprint of Ding, Kim, Lubetzky and Peres~\cite{DKLP_diam}.
\begin{lemma}\label{noshort}
Let $L=L(n)$ be any function satisfying $L=o(1/\eps)$. Then, under the conditions
of Theorem~\ref{th_eps}, whp the giant component of $G(n,\la/n)$ contains
no cycle of length at most $L$.
\end{lemma}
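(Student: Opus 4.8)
The plan is to show that the expected number of short cycles lying entirely inside the giant component is $o(1)$, and then apply Markov's inequality. The technical point to get around is that we cannot simply bound the expected number of short cycles in the whole graph $G(n,\la/n)$: such a cycle exists with probability bounded away from $0$, since on average there are $\sum_{k\ge 3} \la^k/(2k) = \Theta(1)$ cycles of each length $k\ge 3$ (and in fact $\Theta(1)$ in total for $L=o(1/\eps)$, because $\la^k\le e^{\eps L}=e^{o(1)}$ for $k\le L$). The extra ingredient we must exploit is that we only care about cycles inside the \emph{giant} component, and that short cycles typically lie in small tree-like components or just on their own.

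First I would set up the counting. Fix $3\le k\le L$. For each $k$-subset of vertices and each cyclic arrangement, let $E_C$ be the event that the corresponding $k$-cycle $C$ is present \emph{and} $C$ lies in the giant component. I would bound $\Pr(E_C)$ by $(\la/n)^k$ times the probability that, conditioned on $C$ being present, $C$ is connected to the rest of the giant component — equivalently, that some vertex of $C$ lies in a `large' component when we delete the edges of $C$. Here I would use the branching-process/neighbourhood-exploration technology already in the paper: starting an exploration from any fixed vertex $v$, the probability that $v$ lies in a component of size $\ge\omega/\eps$ (equivalently, from Lemma~\ref{converged} and Lemma~\ref{lsurv}, that the associated process survives) is $\Theta(\eps)$. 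So, crudely, $\Pr(C \text{ present and in giant}) \le (\la/n)^k \cdot O(k\eps) = O(k\eps/n^k)$, since there are $k$ vertices of $C$ from which the component could grow large (a union bound suffices — we do not need independence here).

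Next I would sum. The number of potential $k$-cycles is $\binom{n}{k}\frac{(k-1)!}{2}\le n^k/(2k)$, so the expected number of short cycles in the giant is at most
\[
 \sum_{k=3}^{L} \frac{n^k}{2k}\cdot O\!\left(\frac{k\eps}{n^k}\right) = O\!\left(\eps L\right) = o(1),
\]
using the hypothesis $L=o(1/\eps)$. By Markov's inequality, whp there is no short cycle in the giant component, as claimed.

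The main obstacle is making the bound $\Pr(C\text{ present and in the giant})=O(k\eps/n^k)$ rigorous, i.e.\ correctly handling the conditioning on the $k$ edges of $C$ being present while estimating the probability that a neighbourhood exploration from a vertex of $C$ reaches size $\omega/\eps$ (which is the operational definition of `in the giant', up to the $o(1)$-probability events controlled by Lemma~\ref{converged} and Theorem~\ref{th19}). The cleanest route is: expose the $k$ edges of $C$; then in the remaining graph $G(n',\la/n)$ on the other $n-O(k)$ vertices (edge set disjoint from $C$), run the exploration from each $C$-vertex in turn, using that each such exploration reaches size $\omega/\eps$ with probability $O(\eps)$ by the branching-process comparison, and a union bound over the $k$ starting vertices. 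One should also note that a vertex of $C$ being in the giant component of $G$ is essentially equivalent to its exploration in $G\setminus E(C)$ being large, up to negligible corrections; since $\omega/\eps = o(n)$ this matching is routine. With $L=o(1/\eps)$ one has $k\le L=o(1/\eps)$, so $\la^k=e^{o(1)}$ and all the $\la^k$ factors are harmless, which is exactly why the hypothesis on $L$ is the natural one.
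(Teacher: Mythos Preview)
Your proposal is correct and is essentially identical to the paper's proof: condition on the cycle being present, use the branching-process comparison to bound the conditional probability of being in the giant by $O(k\eps)$ via a union bound over the $k$ vertices, sum over cycles and lengths to get $O(\eps L)=o(1)$, and apply Markov. The paper phrases the exploration as ``exploring outwards from this cycle'' rather than from each vertex separately, but this comes to the same $O(\ell s)=O(\eps\ell)$ bound, and your discussion of the conditioning is if anything more careful than the paper's.
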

\begin{proof}
Fix $3\le \ell\le L$ and a sequence $v_1,\ldots,v_\ell$ of distinct vertices
of $G=G(n,\la/n)$.
Let $E$ be the event that this sequence forms
a cycle, i.e., that the edges $v_1v_2$, $v_2v_3, \ldots, v_\ell v_1$ are all
present, so $\Pr(E)=\la^\ell/n^\ell\sim n^{-\ell}$.
Let $F$ be the event that $E$ holds and this cycle is in the giant
component. First testing whether $E$ holds, and then exploring outwards
from this cycle, by comparison with the branching process as usual
we see that $\Pr(F\mid E)=O(\ell s)=O(\eps \ell)$, with the implicit constant universal.
Hence $\Pr(F)=O(\eps\ell n^{-\ell})$. Summing over all at most $n^{\ell}$ sequences,
and dividing by $2\ell$ to avoid overcounting, the expected number of $\ell$-cycles
in the giant component is thus $O(\eps)$. Finally summing over $\ell\le L$ and 
using Markov's inequality gives the result.
\end{proof}

\begin{lemma}\label{A1}
Let $\psi=\psi(n)$ be some function of $n$ tending to infinity slowly,
with $\psi=O(\La^{1/8})$ and $\psi=o(\eps^{-1/10})$.
Let $A^*(x)$ denote the event that $t_{\omega/\eps}(x)$ is defined,
$x$ is $(d,t)$-acceptable for all $1\le d<t\le t_{\omega/\eps}(x)$,
and $\Gat(x)$ is a tree for $t=\min\{t_{\omega/\eps}(x),\eps^{-1}/\psi\}$.
Under the assumptions of Theorem~\ref{th_eps} we have
\[
 \Pr(A^*(x))=O(\eps^3\psi^8) = O(\eps^3\La).
\]
\end{lemma}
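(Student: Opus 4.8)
The plan is to condition on the (random) value $\sigma=t_{\omega/\eps}(x)$ and to prove a bound of the shape $\Pr\bb{A^*(x)\cap\{t_{\omega/\eps}(x)=\sigma\}}=O(\eps^4\psi^{2}\las^{\sigma-t_1})$, up to some extra slack in the power of $\psi$ discussed below, uniformly in the relevant range of $\sigma$; summing over $\sigma$ and using $\sum_\sigma\las^{\sigma-t_1}=O(1/\eps)$ then gives $\Pr(A^*(x))=O(\eps^3\psi^{8})$, which equals $O(\eps^3\La)$ since $\psi=O(\La^{1/8})$. First I would dispose of the extreme ranges. For $\sigma\le \eps^{-1}/10$, Lemma~\ref{fast} (noting $\omega/\eps\ge 20\sigma$) together with the standard stochastic domination of the neighbourhood exploration of $x$ by a Poisson branching process shows $\Pr(t_{\omega/\eps}(x)=\sigma)$ is super-exponentially small in $\omega$; for $\eps^{-1}/10<\sigma\le t_1/2$ one passes to $\bp_\la$ via Lemma~\ref{spcpl} (the trees that arise have $o(n^{2/3})$ vertices) and invokes the upper tail of $Y_\la$ from Lemma~\ref{etail} and Corollary~\ref{cYt} to obtain a bound of order $\eps e^{-c\la^{t_1-\sigma}}$; and for $\sigma>t_1+10\eps^{-1}\log\La$ one bounds crudely by $\Pr\bb{t_{\omega/\eps}^{\bp_\la}>t_1+10\eps^{-1}\log\La}=O(\eps\las^{10\eps^{-1}\log\La})$. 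All of these contributions are negligible compared with $\eps^3\La$.

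So the substance is the range $t_1/2\le\sigma\le t_1+10\eps^{-1}\log\La$, where the idea is that $A^*(x)$ forces a thin-spine structure on the early neighbourhood. Put $d^*=\bfloor{\eps^{-1}/\psi}$. On $A^*(x)$ the graph $\Gamma_{\le d^*}(x)$ is a tree, and $(d^*,\sigma)$-acceptability (which $A^*(x)$ supplies, since $d^*<\sigma$) then yields a path $x=v_0v_1\cdots v_{d^*}$ along which every side branch $B_{v_i}$ is short or reattached, with $v_{d^*}$ having descendants at distance $\sigma$; since short branches and the short-cycle structure kept under control below die out well before distance $\sigma$, one checks that $\Gamma_\sigma(x)$ consists of descendants of $v_{d^*}$, so that the subtree hanging from $v_{d^*}$ by itself attains size $\omega/\eps$, at generation $\sigma-d^*$. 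The thin-spine part I would bound by passing to $\bp_\la$ (Lemma~\ref{spcpl}) and computing the expected number of such paths: conditionally on the spine, $B_{v_i}$ is a union of independent subcritical trees hanging off $v_i$ and has height at most $i$ with probability $\exp(-\las s_i)$, where $s_i=\Pr(|X_i^-|>0)$, so the expectation is $\prod_{i<d^*}\exp(-\las s_i)=\exp\bb{-\las\sum_{i<d^*}s_i}$; by Lemma~\ref{lsurv}, $s_i\sim 2/i$ for $i=o(1/\eps)$, whence $\sum_{i<d^*}s_i=2\log d^*+O(1)=2\log(1/\eps)-2\log\psi+O(1)$ and the product is $\Theta(\eps^2\psi^2)$. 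Conditionally on the (small) part of the exploration outside the deep subtree of $v_{d^*}$, that subtree is a fresh copy of $\bp_\la$ whose own reach-time is exactly $\sigma-d^*$, an event of probability $\Theta(\eps^2\las^{\sigma-d^*-t_1})$ by the localized form of Theorem~\ref{th19} (differencing its two tail statements, supplemented by Corollary~\ref{cYt} when $\sigma$ is within $O(1/\eps)$ of $t_1$). Multiplying these two essentially independent factors and using $\las^{d^*}=1+o(1)$ gives the desired per-$\sigma$ bound.

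The hard part is making this middle step honest, and it has two facets. First, I claimed the neighbourhood reaches size $\omega/\eps$ through a subtree of $v_{d^*}$ that genuinely survives and grows regularly, rather than through a transient bushy excursion of a subtree that later dies. A subcritical tree attains a generation of size $\omega/\eps$ only with probability $e^{-\Omega(\omega)}$ (by the moment-generating-function argument behind Lemma~\ref{dthin}, together with $s_t=\Theta(\eps\las^t)$ from Lemma~\ref{lsurv}), but when $\La$ grows slowly this is not small enough on its own, so one must genuinely use that acceptability holds at \emph{every} scale $t\le\sigma$: a subtree that is bushy at the scale where it first reaches size $\omega/\eps$ has, at distance $\sigma$, vertices whose ancestral paths carry large side branches that are neither short nor reattached, forcing failure of $(d,\sigma)$-acceptability for some intermediate $d$. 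Quantifying this needs control of the shape of near-critical trees conditioned to attain a given size, via Lemma~\ref{ggrow} and the lemmas of \refSS{ss_bpslow}; it also helps that such a subtree reaching size $\omega/\eps$ and then dying would place $x$ in a component much larger than $C_2(G)$ but not the giant, which is impossible whenever $\omega/\eps$ exceeds the order of $C_2(G)$. Second, acceptability forces short side branches along the spine only up to distance $d^*$; beyond that a side branch may instead be reattached via a non-tree edge, and one must show these reattachments erode the $\Theta(\eps^2\psi^2)$ factor by at most a bounded power of $\psi$. Here Lemma~\ref{noshort} enters: for $x$ in the giant every cycle is long, so a spine side branch can only be reattached via a long-range edge, and counting such edges against $\La=\eps^3n$ is exactly where the hypotheses $\psi=O(\La^{1/8})$ and $\psi=o(\eps^{-1/10})$ — and the slack between $\psi^2$ and $\psi^8$ — are consumed.
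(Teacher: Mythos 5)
Your overall strategy — condition on $\sigma=t_{\omega/\eps}(x)$, extract a thin spine up to $d^*=\floor{\eps^{-1}/\psi}$ from the acceptability condition, decouple the deep subtree, multiply and sum over $\sigma$ — is a plausible alternative to the paper's proof, but it has a real gap in the "easy" ranges and is harder to make honest in the main one. Concretely, for $\sigma\le t_1/2$ you dismiss the contribution using only Lemma~\ref{fast} and the upper tail of $Y_\la$, without using acceptability or the tree condition at all. Summing the resulting bounds over $\sigma$ gives something of order $e^{-\Omega(\omega)}/\omega$, and for $\La\to\infty$ slowly (say $\La=\log\log n$, allowed under the hypotheses) this is nowhere near $O(\eps^3\psi^8)=O(\La^{7/6}\psi^8/n)$, because $e^{-\Omega(\La^{1/6})}$ is then close to $1$ while $1/n$ is tiny. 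This is precisely the regime the lemma exists to handle (the paper's remark at the end of Section~\ref{sec_eps} explains that this lemma, with the tree condition built in, is what removed a $\log^*n$-type lower bound on $\La$); the paper's proof handles it by imposing $(t_k/2,t_k)$-acceptability with $t_k$ adapted to be of the same order as $\sigma$, so the $\Theta(t_k^{-2})$ spine factor is present even when $\sigma$ is small.

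The paper's choice of scale also dissolves your "hard second facet": because $t_k\le\eps^{-1}/\psi$ and the first $\min\{\sigma,\eps^{-1}/\psi\}$ generations are a tree, the whole $(t_k/2,t_k)$-decomposition lives inside a tree, so reattachment is literally impossible and every side branch must be short — no appeal to Lemma~\ref{noshort} is needed here. By pegging the acceptability check at the fixed scale $(d^*,\sigma)$ with $\sigma\gg d^*$ you let side branches reach outside the tree region, and then you must argue reattachments are rare; you correctly flag this as nontrivial, but it is avoidable by picking the right scale. Finally, your per-$\sigma$ product formula needs care for the "bushy then dies" scenario; you mention this, but the paper's route (bound $\Pr(A_k\mid T=T')\le Ct_k^{-2}N(T')$ via a union bound over trunk vertices, split on $N\lessgtr n_0$, bound $\E[N(N-1)]$ and $\Pr(L_k)$ separately) sidesteps conditioning on exact arrival times entirely, and is substantially cleaner than differencing Theorem~\ref{th19} to extract a local probability for $t_{\omega/\eps}=\sigma$.
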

(It is likely that the probability estimated above is $O(\eps^3)$, at least if the quantity
$\eps^{-1}/\psi$ in the definition of $t$ is replaced by a small constant times $\eps^{-1}$,
but even a bound such as $O(\La^{100}\eps^3)$ would be more than enough for us here.)

Assuming Lemmas~\ref{A2} and~\ref{A1} for the moment, it is not hard
to complete the proof of Theorem~\ref{th_eps},
calculating as in Section~\ref{sec_const}, by summing the
expected number of pairs $x$, $y$ with $t_{\omega/\eps}$ in certain ranges and 
both having acceptable neighbourhoods.

\begin{proof}[Proof of Theorem~\ref{th_eps}.]
Let $d_0$ be defined by \eqref{d0def}, and let $K=K(n)$ be such that $K\eps\to\infty$
and $K\le \eps^{-1}\log\log\La$, say. Our aim is to show that whp there is no
pair $(x,y)$ of vertices in the same component with $d(x,y)\ge d_0+K$.
In the light of \L uczak's bound~\eqref{Luczak} from~\cite{Luczak:diam},
and a standard duality argument, we need only consider the giant component. 
(In fact, {\L}uczak and Seierstad~\cite{LuczakSeierstad} have shown that in the random
graph process, whp, for all densities in the range considered here, the diameter is realized
by the giant component.)

Let us say that a vertex $x$ is {\em tree-like} if $\Gl{\eps^{-1}/\psi}(x)$ is a tree.
By Lemma~\ref{noshort}, whp {\em every} vertex in the giant component
is tree-like, so it suffices to consider pairs $(x,y)$ in which
both $x$ and $y$ have this property.

As noted above,
in any pair $(x,y)$ at maximal distance greater than $d_0$, both $x$ and $y$
must be $(d,t)$-acceptable for any $d<t<d_0$.
Set
\[
 t^+ = t_0+t_1+ K/3,
\]
noting that $t^+<d_0/2$ and $t^+>h=\eps^{-1}\log\log\La$.
By Lemma~\ref{A2}, for any vertex $x$ we have
\[
 \Pr(A_{t^+}(x)\cap B_{t^+}(x)) \le (4+o(1))\gamma_0\eps^3\las^{t^+-t_1}=O(n^{-1}\las^{K/3})=o(n^{-1}),
\]
so whp there is no vertex for which this event holds.
Let $A'(x)$ be the event that $t_{\omega/\eps}(x)$ is defined and at most $t^+$, and
$A^*(x)$ holds, where $A^*(x)$ is defined in Lemma~\ref{A1}.
Let us call $(x,y)$ a {\em regular far pair} if $d(x,y)>d_0+K$, and the events $A'(x)$ and $A'(y)$ hold.
Then from the comments above it suffices to prove that whp there are no regular far pairs.

We may test whether $A'(x)$ holds by uncovering successive neighbourhoods of $x$, stopping
at the first (if there is one) with at least $\omega/\eps$ vertices, and then
testing for acceptability and the tree condition, or stopping after $t^+$ steps if there
is no such neighbourhood (in which case $A'(x)$ does not hold).
By definition, each neighbourhood other than the last has at most $\omega/\eps$
vertices. By Lemma~\ref{nojg}, the probability that we
find more than $2\omega/\eps$ vertices in the last neighbourhood is at most
$\exp(-\Omega(\omega/\eps))=\exp(-\Omega(\eps^{-1/2}n^{1/6}))=o(n^{-100})$. Ignoring this event,
testing $A'(x)$ involves uncovering
\[
 O(t^+\omega/\eps)=O(\omega\log\La/\eps^2)=O(\La^{1/3}\eps^{-2})=O(\La^{1/3}\La^{-2/3}n^{2/3}) =o(n^{2/3})
\]
vertices.
Also, we uncover $O(\omega/\eps)=o(n^{1/3})$ vertices in each generation.
Noting that $\eps(t^+\omega/\eps)^2=O(\La^{2/3}\eps^{-3})=O(n\La^{-1/3})=o(n)$,
Lemmas~\ref{spcpl} and~\ref{spcplk} apply to the corresponding trees.
By Lemma~\ref{spcplk} it follows that for $x$ and $y$ distinct,
\begin{multline}
 \Pr\bb{A'(x)\cap A'(y)\cap \{d(x,y)>t_{\omega/\eps}(x)+t_{\omega/\eps}(y)\}} \\
 = (1+o(1))\Pr(A'(x))\Pr(A'(y))+o(n^{-100}) = O(\La^2\eps^6),\label{A'xy}
\end{multline}
using $\Pr(A'(x))\le \Pr(A^*(x))$ and Lemma~\ref{A1} for the final bound. (In fact, we have glossed
over something here: using Lemma~\ref{spcplk} shows that the events that the
explorations from $x$ and $y$ give certain trees consistent with $A'(x)$ and $A'(y)$ are asymptotically
independent.
However, the events $A'(z)$, $z=x,y$, depend not just on the trees,
but also on any additional edges between the trees' vertices.
Since these are present independently with probability $\la/n$,
asymptotic independence of the trees gives asymptotic
independence of the entire neighbourhoods.)

Suppose we have explored the neighbourhoods of $x$ and $y$ and found that the event described
above holds, i.e., $A'(x)$ and $A'(y)$ hold disjointly.
Then Lemma~\ref{meet} applies, and the conditional probability
that the explorations do not meet within $t_2+2\eps^{-1}\log\log\La$ further steps
is $\exp\bb{-(1+o(1))(\log \La)^{2+o(1)}}+O(\La^{-10})=O(\La^{-10})$.
Summing over choices for $x$ and $y$, we see that the expected number of regular far
pairs with $d(x,y)\ge t_{\omega/\eps}(x)+t_{\omega/\eps}(y)+t_2+2\eps^{-1}\log\log\La$
is $O(n^2\La^2\eps^6\La^{-10})=O(\La^{-6})=o(1)$. Hence, whp there are no such pairs.

Set
\[
 t^- = t_0+t_1-2\eps^{-1}\log\log\La,
\]
noting that whp every vertex $x$ in a regular far pair satisfies
\begin{equation}\label{rf2}
 t_{\omega/\eps}(x)\ge d_0+K-(t_2+2\eps^{-1}\log\log\La) -t^+ \ge t_0+t_1-2\eps^{-1}\log\log\La = t^-.
\end{equation}
This value is large enough that Lemma~\ref{A2} applies.

(Let us remark that if $\La\ge (\log n)^{20}$, say, then the argument
above simplifies: we may replace $2\eps^{-1}\log\log\La$ by $2\eps^{-1}\log\log n$, and the error
probability given by Lemma~\ref{meet} is then $o(n^{-100})$ (using the
middle expression in~\eqref{jb}), so there is no
need to check acceptability to conclude the equivalent of \eqref{rf2}. In particular,
there is no need for Lemma~\ref{A1} in this case at all.)

For distinct vertices $x$ and $y$ and integers $t^-\le t,t'\le t^+$, let
\[
 E_{x,y,t,t'} = A_{[t]}(x)\cap \{ t_{\omega/\eps}(x)=t \} \cap A_{[t']}(y) \cap \{t_{\omega/\eps}(y) =t' \}
 \cap \{ d(x,y)\ge d_0+K \},
\]
where $[t]$ denotes the largest multiple of $\floor{1/\eps}$ that is strictly smaller than $t$.
{From} the comments above, to prove that $\diam(G)\le d_0+K$ holds whp it suffices
to prove that whp none of the events $E_{x,y,t,t'}$ holds. (Here we may impose whatever
acceptability conditions we like: the reason for choosing exactly $A_{[t]}(x)$ will become
clear in a moment.)

Using Lemma~\ref{spcplk} as above,
the probability that $E_1=A_{[t]}(x)\cap \{t_{\omega/\eps}(x)=t\}$ and
$E_2=A_{[t']}(y)\cap \{t_{\omega/\eps}(y)=t'\}$ hold with disjoint witnesses
is asymptotically $\Pr(E_1)\Pr(E_2)$.
Noting that $d_0+K-t-t'-t_2$ is within $5\eps^{-1}\log\log\La=o(t_2)$ of
$0$ and is hence at least $-t_2/2$,
given that $E_1$ and $E_2$ hold disjointly, Lemma~\ref{meet} tells us
that the probability that $d(x,y)\ge d_0+K$ is $\exp(-(1+o(1))\la^{d_0+K-t-t'-t_2})+O(\La^{-10})$.

Let $U=\bigcup_{x\ne y,\, t^-\le t,t'\le t^+} E_{x,y,t,t'}$. Then, writing $A\lesssim B$ for $A\le (1+o(1))B$, 
\begin{multline*}
 \Pr(U) \lesssim n^2 \sum_{t=t^-}^{t^+} \sum_{t'=t^-}^{t^+} \Pr\bb{A_{[t]}(x)\cap \{t_{\omega/\eps}(x)=t\} }
 \Pr\bb{A_{[t']}(x)\cap \{t_{\omega/\eps}(x)=t'\} } \\
 \left(\exp\bb{-(1+o(1))\la^{d_0+K-t-t'-t_2}} +O(\La^{-10})\right).
\end{multline*}

Grouping the sums into blocks of size $k=\floor{1/\eps}$, and noting that if $r$ is a multiple of $k$ then
\[
 \sum_{t=r+1}^{r+k}\Pr\bb{A_{[t]}(x)\cap \{t_{\omega/\eps}(x)=t\} } =
  \Pr\bb{A_r(x)\cap \{r< t_{\omega/\eps}(x)\le r+k\} } \le \Pr(A_r(x)\cap B_r(x)),
\]
we have
\begin{multline*}
 \Pr(U) \lesssim n^2 \sum_{t^--k\le t\le t^+}'\  \sum_{t^--k\le t'\le t^+}'
  \Pr(A_t(x)\cap B_t(x)) \Pr(A_{t'}(x)\cap B_{t'}(x)) \\
 \left(\exp\bb{-(1+o(1))\la^{d_0+K-t-t'-t_2-2k}} +O(\La^{-10})\right),
\end{multline*}
where primes denote sums that run over multiples of $k$.
{From} Lemma~\ref{A2} we thus have
\begin{eqnarray*}
 \Pr(U) &\lesssim& n^2 \sum_{t,\,t'}'
  16\gamma_0^2 \eps^6 \las^{t+t'-2t_1} \left(\exp\bb{-(1+o(1))\la^{d_0+K-t-t'-t_2-2k}} +O(\La^{-10})\right), \\
 &=& o(1)+ n^2 \sum_{t,\,t'}'
  16\gamma_0^2 \eps^6 \las^{t+t'-2t_1} \exp\bb{-(1+o(1))\la^{d_0+K-t-t'-t_2-2k}},
\end{eqnarray*}
since there are at most $(\eps t^+)^2=O((\log\La)^2)$ terms in the double
sum (which has the same limits as before), so
the contribution of the $O(\La^{-10})$ term can be bounded by
$16n^2\gamma_0^2\eps^6(\log\La)^2 O(\La^{-10})=O(\La^{-8}(\log\La)^2)=o(1)$.

Taking the final term in the sums above, we have
$t$ and $t'$ at least $t^+-k$, so 
the exponent of $\la$ above
is at least $d_0+K-2t^+-t_2-4k=K/3-4k$, which is at least $K/4$
if $n$ is large. Hence the exponential term above is {\em always} at most
$\exp(-\la^{K/4}/2)$, say.
Taking the final term in the sum, the corresponding $\las^{\cdots}$ term
is at most
\[
 \las^{2t^+-2k-2t_1} = \las^{2t_0+2K/3-2k} \le \las^{2t_0} = \eps^{-6}n^{-2}.
\]
As $t+t'$ decreases from its maximum possible value in steps of $k$, the exponent
of $\la$ in the exponential increases by $k\sim 1/\eps \sim 1/\log\la$,
so the $\la^{\cdots}$ term increases by a factor that is asymptotically $e$
and certainly at least $2$. The $\las^{\cdots}$ term
increases by a factor of $\las^{-k}$ which is asymptotically $e$ and certainly at most
$3$. Also, after $r$ steps, there are at most $r+1$ ways of realizing
a given sum $t+t'$. It follows that
\[
 \Pr(U)\le o(1) + \sum_{r=0}^\infty 16\gamma_0^2(r+1)3^r\exp(-\la^{K/4}2^{r-1}),
\]
say. Since $\la^{K/4}\to\infty$, the exponential term in the final sum decreases
extremely rapidly, and the whole sum is dominated by its first term, which is $o(1)$.
This completes the proof of Theorem~\ref{th_eps}, assuming Lemmas~\ref{A2} and~\ref{A1}.
\end{proof}

Let us note for later, when we come to consider the distribution of the diameter,
that if we modify the definition of $E_{x,y,t,t'}$ by replacing
$d(x,y)\ge d_0+K$ by $d(x,y)\ge d_0-K$, then we obtain
\[
 \Pr(U) \le \sum_{r=0}^\infty 16\gamma_0^2(r+1)3^r\exp(-\la^{K/4-2K}2^{r-1}).
\]
Indeed, everything is as before except that the exponent of $\la$ has
decreased by $2K$. Now this new sum is large, but the contribution
from terms with $r\ge \log(\la^{3K})/\log 2 \sim 3K\eps/\log 2$
is still small. Hence, the sum from terms in which
one of $t,t'$ is smaller than $t^+$ by more than $3\floor{1/\eps}K\eps/\log 2\sim 3K/\log 2\le 5K$
is small. Since $\diam(G)\ge d_0-K$ whp, it follows that
whp the diameter is realized by vertices $x$ and $y$ which form
a regular far pair in which each vertex $z$ has
$t_0+t_1-5K\le t_{\omega/\eps}(z)\le t^+=t_0+t_1+K/3$. Since $K\eps$ may be taken
to tend to infinity arbitrarily slowly, this says that for a given error
probability, it suffices to consider regular far pairs
in which the vertices satisfy
\begin{equation}\label{bstar}
t_{\omega/\eps}(z)=t_0+t_1+O(1/\eps).
\end{equation}

It remains to prove Lemmas~\ref{A2} and~\ref{A1}.

\begin{proof}[Proof of Lemma~\ref{A2}.]
Recall that $t_1+3h\le t \le 10\eps^{-1}\log\La$, and $h=\eps^{-1}\log\log\La<t/2$.
Let $A=A(x)$ denote the event that $x$ is $(h,t)$-acceptable, and
$B_t=B_t(x)$ the event that $0<|\Ga_r(x)|<\omega/\eps$ holds for $0\le r\le t$.
Our aim is to bound the probability of $A\cap B_t$; note that this event
depends only on $\Gat(x)$.

To avoid dependence, we'd like to work with the branching process rather than the graph,
but we cannot assume that the relevant neighbourhoods of $x$ are trees.
So let us model the pair $(\Gatm(x),\Gat(x))$
by a pair $(T^\star,G^\star)$ as follows: first construct the branching process 
$(X_r)_{0\le r\le t}$, keeping track of the order in which the particles are
born, as in the proof of Lemma~\ref{spcpl}.
Let $T^\star$ be the corresponding labelled rooted tree of height at most~$t$.
Given $T^\star$, i.e., given $(X_r)$, form $G^\star$ by starting with $T^\star$ and adding
each of the following `potential extra edges' independently with probability $\la/n$:
all possible edges within $X_r$ and, for each $v\in X_r$, all possible
edges from $v$ to children (in $X_{r+1}$) of earlier particles $v'\in X_r$.
The potential extra  edges correspond to edges that would not have been tested in the graph
exploration, so the conditional distribution of $G^\star$ given $T^\star$ is the same
as that of $\Gat(x)$ given $\Gatm(x)$ (with an order on the vertices).
If $(T_0,G_0)$ is any possible value of $(\Gatm(x),\Gat(x))$ consistent with $A\cap B_t$, then
since $B_t$ holds, $T_0$ is a tree to which Lemma~\ref{spcpl} applies.
So $\Pr(\Gatm(x)\isom T_0) \sim \Pr(T^\star\isom T_0)$.
It follows that $\Pr(\Gat(x)\isom G_0)\sim \Pr(G^\star\isom G_0)$.
Hence, $\Pr((T^\star,G^\star)\in A\cap B_t)$ is asymptotically equal to the probability
that $(\Gatm(x),\Gat(x))\in A\cap B_t$. From now on we consider the model
$(T^\star,G^\star)$, forgetting about the graph $G(n,\la/n)$.

For technical reasons we modify $G^\star$ slightly as follows:
recalling that each set $X_r$ comes with an order, we only test
for possible extra edges $vw$ when both endvertices are among
the first $\omega/\eps$ vertices in the relevant set(s) $X_r$.
This does not affect the probability
of $A\cap B_t$, since when $B_t$ holds (which is determined by $T^\star$), 
the distribution of $G^\star$ given $T^\star$ is unchanged.

Let $S$ be the set of particles in $X_h$ with descendants in $X_t$.
To achieve independence between $A$ and $B_t$, let us weaken $B=B_t$ to $B'=B'_t$, the condition
that for every $v\in S$, the number of descendants of $v$ in each $X_r$, $h\le r\le t$,
is at most $\omega/\eps$. Our aim is to bound $\Pr(A\cap B)$ by $\Pr(A\cap B')$; to evaluate
the latter we estimate $\Pr(B')$ and $\Pr(A\mid B')$.

Our first aim is to show that
\begin{equation}\label{bsim1}
 \Pr(B')\sim \Pr(B)=p_0\sim 4\eps\las^{t-t_1},
\end{equation}
where the final estimate is from \eqref{Bsim}.
Note that $\Pr(B'\mid |S|=s)=p^s$, where $p$ is the
(unconditional) probability that $0<|X_r|< \omega/\eps$
holds for $0\le r\le t-h$. From Theorem~\ref{th19},
we have $p\sim 4\eps\las^{t-h-t_1}\sim p_0\las^{-h}$.
Also, since $t\ge t_1+3h$, we have $p\le (1+o(1))\las^{2h}$.
Since $\eps h\to\infty$ it follows that $p^2\le (1+o(1))\las^{h}p_0=o(p_0)$.

Since $B\subset B'$, we have
\[
 \Pr\bb{B\cap\{|S|\ge 2\}}\le \Pr\bb{B'\cap\{|S|\ge 2\}} \le \Pr\bb{B'\mid\{|S|\ge 2\}}\le p^2=o(\Pr(B)).
\]
Recalling that if $B$ or $B'$ holds then $|S|\ge 1$, to show that $\Pr(B')\sim \Pr(B)$
it suffices to show that $\Pr(B\cap\{|S|=1\})\sim \Pr(B'\cap\{|S|=1\})=p$.
Let $B''$ be a strengthened version of $B'$, where we replace the upper
bound $\omega/\eps$ by $\omega'/\eps$, with $\omega'=(1-1/\log\La)\omega\sim \omega$.
Applying Theorem~\ref{th19} again with this new value of $\omega'$,
we find that $\Pr(B''\mid |S|=1)\sim p$. But given that $|S|=1$ and $B''$ holds,
$B$ certainly holds as long as the tree $T$ formed by the descendants of the root
that are not descendants of the unique particle in $S$ contains at most
$\omega/(\eps\log\La)>\La^{1/10}/\eps$ particles in each generation.

The distribution of $T$ is dominated by that of the tree $T'$ formed
by starting one copy of $\bp_{\las}$ in each generation $0\le t<h$.
(In $T$ these copies are conditioned to die by a specific time.)
The first $h-1$ generations of $T'$ have exactly the distribution of
the process $(D_t)$ studied in Lemma~\ref{dthin}.
Hence, by the second part of that lemma, the probability that one of the first $h$
generations of $T$ exceeds size $\La^{1/20}/\eps$ is
$O(\eps h e^{-\Omega(\La^{1/20})})=o(1)$. From generation $h$ onwards, the tree
$T$ evolves as a subcritical branching process, and from a standard martingale argument
the probability that any later generation exceeds the size of generation $h$ by a factor
of $\La^{1/20}$ is at most $1/\La^{1/20}=o(1)$.
Thus we do indeed have $\Pr(B\mid B''\cap\{|S|=1\})\sim 1$, and it follows that
$\Pr(B')\sim \Pr(B)$, as claimed.

Recalling that $p^2=o(\Pr(B))$ and hence $p^2=o(\Pr(B'))$, for $r\ge 2$ we have
\[
 \Pr(|S|=r\mid B') \le \Pr(B'\mid |S|=r)/\Pr(B') = o(p^{r-2}).
\]
Summing, it follows that
\begin{equation}\label{esim1}
 \E(|S|\mid B')\sim 1.
\end{equation}

We claim that (in the modified $G^\star$ model)
\begin{equation}\label{cs1}
 \Pr( A \mid B'\cap\{|S|=N\}) \le (1+o(1)) \gamma_0N\eps^2.
\end{equation}
Using
$\Pr(A\mid B')=\sum_{N\ge 1} \Pr(|S|=N\mid B')\Pr( A \mid B'\cap\{|S|=N\})$,
and \eqref{esim1} and \eqref{bsim1},
the required bound \eqref{ubaim} on $\Pr(A\cap B)\le \Pr(A\cap B')$
then follows.

It remains only to prove \eqref{cs1}.
Recall that we are working with the model $(T^\star,G^\star)$.
Let us construct $T^\star$ (which is simply the first $t$ generations of $\bp_\la$)
by decomposing it into the trunk and side branches exactly as in the graph.
Thus the {\em trunk} consists of the subtree $T'$ of $T^\star$ consisting of all particles
with descendants in $X_t$.
Then $T^\star$ may be formed by adding for each $v$
in generation $r$, $0\le r<t$, of $T'$ a copy $W_v$ of the process $(X_{t'})_{0\le t'\le t-r}$
conditioned on $X_{t-r}$ being empty. We may think of $W_v$ as the subcritical
process $\bp_{\las}$ conditioned on dying out by time $t-r$.

Now whether $B'$ holds is determined by $T'$ together with the trees $W_v$ for
vertices $v$ in sets $X_r$, $r\ge h$. Let us condition on $T'$ and these trees $W_v$;
the only remaining randomness is in the $W_v$ for $v\in X_r$, $r<h$.

Let $v$ be one of the $N=|S|$ vertices in $S$, and let $x=v_0v_1\cdots v_h=v$ be the path to $v$.
Let $W_i=W_{v_i}$, for $0\le i\le h-1$. Let $A_v$ be the event
that every $W_i$ is either short or, when we come to $G^\star$, reattached.
Note that $A$ holds if and only if one of the events $A_v$ holds,
so it suffices to prove that the conditional probability of $A_v$
is $(1+o(1))\gamma_0\eps^2$.
Since the different $W_w$ are independent given $T'$, the conditional distribution
of each $W_i$ (given $T'$ and the $W_w$, $w\in X_r$, $r\ge h$)
is just the unconditioned distribution.
Writing, as before,
$s_i=\Pr(|X_i^-|\ge 0)$ and $d_i=1-s_i$, 
the probability that $W_i$ is {\em tall} (not short) is just
\begin{multline}\label{sidepi}
  p_i = \Pr\bb{ |X_{i+1}|>0 \bigm| |X_{t-i}|=0 } = \Pr\bb{ |X_{i+1}^-|>0 \bigm| |X_{t-i}^-|=0 } \\
  = \frac{d_{t-i}-d_{i+1}}{d_{t-i}} = \frac{s_{i+1}-s_{t-i}}{1-s_{t-i}}
 = s_{i+1}-O(s_{t-i}) = s_{i+1}-O(s_h),
\end{multline}
since $t\ge 2h$ and $i\le h$.

Now $h\ge 1/\eps$, so (by Lemma~\ref{lsurv}) $s_h=O(\eps\las^h)$.
Let $\wN$ be the number of tall $W_i$. Then from the estimate above and Lemma~\ref{lsurv},
\[
 \Pr(\wN=0) = \prod_{i=0}^{h-1} (1-p_i) =\exp(O(h\eps\las^h))\prod_{i=1}^h (1-s_i)
\sim \prod_{i=1}^\infty (1-s_i) \sim \gamma_0\eps^2,
\]
since $\las^h=\exp(-(1+o(1))\eps h)$ and $\eps h\to\infty$, so $h\eps\las^h=o(1)$.
It thus suffices to show that $\Pr(A_v)\le (1+o(1))\Pr(\wN=0)$;
then \eqref{cs1} follows by the union bound.
In other words, we must show that $A_v\cap \{\wN>0\}$ is much less likely than $\wN=0$.

Let $I$ be any subset of $\{0,1,2,\ldots,h-1\}$ with $|I|\ge1$, and let us condition
on precisely the corresponding trees $W_i: i\in I$ being tall.
Let $M_i$, $i\in I$, be the number of vertices in each tall tree $W_i$, noting
that these numbers are conditionally independent.
Given that a particular $W_i$ is tall, its average size is at most that of $\bp_{\las}$
conditioned to survive to height $i+1$ (we also condition on dying out by height $t-i$).
By Lemma~\ref{tall} this is at most $(i+2)/\eps$.

Let us now go through the tall trees in order, checking to see whether each is reattached.
(We will be forced to skip some; see below.)
Due to the way we modified $G^\star$, when checking if $W_i$ is reattached, for each
vertex $u$ of $W_i$ with $u\in X_r$ we need only check for edges of $G^\star\setminus T^\star$
between $u$ and up to $\omega/\eps$ vertices in each of $X_{r-1}$, $X_r$
and $X_{r+1}$.
For each $u$, the probability of finding such an edge is at most
$p=3(\omega/\eps)\la /n\le 4\omega\eps^{-1}n^{-1}$.
The probability that the tall tree $W_i$ reattaches is thus at most
$\E(M_ip)=\E(M_i)p \le (i+2)\eps^{-1}p \le 4(i+2)\omega\eps^{-2}n^{-1}$.

When testing whether the first tall tree does reattach,
we stop if we find one edge witnessing this. This edge may `spoil'
a later tall $W_j$ by going to a vertex of that $W_j$.
For $J\subset I$, let $E_J$ be the event that the tall trees $W_j:j\in J$ are reattached
by $|J|$ edges each with one end in the appropriate $W_j$ and the other outside $\bigcup_{i\in J}W_i$.
Given all the trees, the conditional probability of $E_J$ is at most
$\prod_{j\in J} M_jp$. Since, conditioning only on which
trees are tall but not their sizes, the $M_j$ are independent,
it follows that
\[
 \Pr(E_J\mid I) \le \prod_{j\in J}\frac{4(j+2)\omega}{\eps^2 n}.
\]
If all $W_i:i\in I$ are reattached, then the testing algorithm above shows
that $E_J$ must hold for some $J$ containing at least half of the
first $k$ elements of $I$ for every $k\le |I|$, corresponding to the fact
that we test trees in order, and each spoils at most one later one.
Hence,
\[
 \Pr(A_v\mid I) \le \sum_J \prod_{j\in J} \frac{4(j+2)\omega}{\eps^2 n},
\]
with the sum restricted as above.

Suppose that $|I|=2k-1$ or $|I|=2k$, and list the elements of $I$ as $i_1$, $i_2,\ldots$ in order.
There are at most $4^k$ terms in the sum, and the largest has $J=\{i_1,i_3,i_5,\ldots,i_{2k-1}\}$,
so given $I$, the probability of reattachment is at most
\[
 \frac{16(i_1+2)\omega}{\eps^2n}\frac{16(i_3+2)\omega}{\eps^2n}\cdots \frac{16(i_{2k-1}+2)\omega}{\eps^2n}.
\]

Now the probability that the tall trees are exactly those indexed by $I$ is
\[
 \Pr(\wN=0)\prod_{i\in I}\frac{p_i}{1-p_i} \le \Pr(\wN=0)\prod_{i\in I}3p_i \le \Pr(\wN=0)\prod_{i\in I}\frac{10}{i+2},
\]
say, noting that $p_i\le s_{i+1}$
and using the crude upper bound $3/(i+2)$ for $s_{i+1}$.
Summing over $I$ with $|I|\ge 1$ we find that
\begin{multline*}
 \Pr(A_v\cap\{\wN>0\})\\ \le \Pr(\wN=0) \sum_{r\ge 1}\, \sum_{0\le i_1<i_2<\cdots<i_r<h} \frac{10}{i_1+2}\frac{10}{i_2+2}\cdots\frac{16(i_1+2)\omega}{\eps^2n}\frac{16(i_3+2)\omega}{\eps^2n}\cdots.
\end{multline*}
The sum over even $r$, say $r=2k$, may be crudely bounded by $\sum_{k=1}^\infty S^k$, where 
\[
 S = \sum_{0\le a<b< h} \frac{10}{a+2}\frac{10}{b+2}\frac{16(a+2)\omega}{\eps^2n} \le 
\sum_{b< h} b\frac{1600}{b+2}\frac{\omega}{\eps^2 n} \le 1600h\omega\eps^{-2}n^{-1}.
\]
Since $h\le \eps^{-1}\log\log\La$, we have $S=o(1)$. Bounding the
sum over odd $r$ similarly, it follows
that $\Pr(A_v\cap\{\wN>0\})=o(\Pr(\wN=0))$, as required.
\end{proof}

Finally, we prove Lemma~\ref{A1}.
\begin{proof}[Proof of Lemma~\ref{A1}.]
Throughout this proof, let $K=\ceil{\log(1/\eps)}$ and, for $1\le k\le K$,
let $t_k=\eps^{-1}/(k\psi)$. (We ignore the irrelevant rounding to integers,
noting that $t_K\to\infty$.)

For $2\le k\le K$ let $E_k(x)$ denote the event that $t_k<t_{\omega/\eps}(x)\le t_{k-1}$,
the neighbourhoods of $x$ to distance $t_{\omega/\eps}(x)$ form a tree, and $x$
is $(t_k/2,t_k)$-acceptable.
Let $E_1(x)$ denote the event that $B_{t_1}$ holds (i.e., $0<|\Ga_t(x)|<\omega/\eps$
for $0\le t\le t_1$), that $\Gl{t_1}(x)$ is a tree, and $x$ is $(t_1/2,t_1)$-acceptable.
Finally, let $E_{\infty}(x)$ denote the event that $t_{\omega/\eps}(x)\le t_K$.
Splitting into cases according to the value of $t_{\omega/\eps}(x)$, we see
that if $A^*(x)$ holds, then so does one of the events $E_1(x)$, $E_\infty(x)$ or $E_k(x)$,
$2\le k\le K$.

Let us start with a simple branching process observation related to that in Lemma~\ref{dthin},
writing $t_{\omega/\eps}$ for $\min\{t: |X_t|\ge \omega/\eps\}$, as before, whenever this is defined.
Suppose we have chosen some $t\ge 1$ in advance. If we explore the branching process
step by step and find a generation $X_r$, $r\le t$, with size at least $\omega/\eps$,
then it is easy to see that the conditional probability that $|X_t|\ge \omega/\eps$ 
is at least $1/10$, say.
Thus $\Pr(|X_t|\ge \omega/\eps)\ge \Pr(t_{\omega/\eps}\le t)/10$,
and hence
\begin{equation}\label{bpbig}
 \Pr(t_{\omega/\eps} \le t) \le 10 \Pr(|X_t|\ge \omega/\eps).
\end{equation}
Using this observation and Lemma~\ref{fast}, we see that
\[
\Pr(t_{\omega/\eps}\le t_K)\le 10t_K^{-1}e^{-\omega\eps^{-1}t_K^{-1}/20} = 10t_K^{-1}e^{-\omega\psi K/20}
 = o(\eps^3),
\]
since $\omega\psi\to\infty$ while $K\ge \log(1/\eps)$. Comparing the graph
and branching process as usual, it follows that $\Pr(E_\infty(x))=o(\eps^3)$.

Turning to $E_k(x)$ for $1\le k\le K$, note that we may test whether this event
holds by exploring at most $t_1=O(1/\eps)$ steps from $x$, stopping if we reach
a neighbourhood of size $\omega/\eps$,
and then checking that the neighbourhoods so far form a tree,
and satisfy the relevant acceptability conditions.
Arguing as above~\eqref{A'xy},
Lemma~\ref{spcpl} thus gives $\Pr(E_k(x))=(1+o(1))\Pr(E_k)+O(n^{-100})$, where
$E_k$ is the branching process event corresponding to $E_k(x)$. It thus suffices to show that
\begin{equation}\label{eksum}
 \sum_{k=1}^K \Pr(E_k) = O(\eps^3\psi^8).
\end{equation}
This statement involves only the branching process $\bp_\la$, so from now on we work
with this rather than the graph.

Let $A_k$ be the event that the branching process satisfies the condition
corresponding to $(t_k/2,t_k)$-acceptability.
To simplify the arguments, for $2\le k\le K$ let $E_k'$
be the event that $A_k$ holds and
%
$|X_{t_{k-1}}|\ge \omega/\eps$.
Only the second condition involves generations beyond $t_k$, so arguing as for \eqref{bpbig}
we have $\Pr(E_k'\mid E_k)\ge 1/10$, and hence $\Pr(E_k)\le 10\Pr(E_k')$.
Also, let $E_1'$ be the event that $A_1$ holds and $|X_{t_1}|>0$.
Then $E_1'\supset E_1$. Hence
\begin{equation}\label{Ekk'}
 \Pr(E_k)\le 10\Pr(E_k')
\end{equation}
for all $1\le k\le K$.

For $k\le 2$ let $L_k$ be the event that $|X_{t_{k-1}}|\ge \omega/\eps$;
let $L_1$ be the event that $|X_{t_1}|>0$, so $E_k'=A_k\cap L_k$.
As before, let $T$ be the {\em trunk} of $\bp_\la$ defined up to generation
$t_k$, so $T$ is the random tree consisting of all particles with descendants
in $X_{t_k}$. If we condition on the first $t_k$ generations
of $\bp_\la$, then the conditional probability of $L_k$ depends only on $|X_{t_k}|$.
Since knowing the trunk $T$ determines $|X_{t_k}|$, we thus have
\[
 \Pr(E_k') = \Pr(A_k\cap L_k) = \sum_{T'} \Pr(T=T') \Pr(A_k\mid T=T')\Pr(L_k\mid T=T'),
\]
where the sum runs over all possible trunks $T'$. Note that we may assume $T'$
is non-empty, i.e., $|X_{t_k}|>0$, as otherwise $L_k$ cannot hold.

As before, given the trunk, we may reconstruct $X_{\le t_k}$ by adding independent
random branches to each trunk vertex, with each branch a copy of $\bp_{\las}$
conditioned to die by (absolute, not relative) time $t_k$.
Let $S$ be the set of trunk vertices in generation $t_{k/2}$, and $N=|S|$
the number of such vertices, so $N$ is random but depends only on $T$.
Since we are considering the branching process, which is by definition a tree,
the acceptability condition $A_k$ holds if and only if some
$v\in S$ has the property that the side branch
started at each $v_i$ has height at most $i$ for all $0\le i\le t_k/2$,
where $v_0v_1v_2\cdots v_{t_k/2}=v$ is the chain of ancestors of $v$.
For a given $v$, the probability of this event is exactly
$\prod_{i=0}^{t_k/2} (1-p_i)$, where $p_i$ is given by \eqref{sidepi}
with $t=t_k$ and $h$ replaced by $t_k/2$. (The argument is as for \eqref{sidepi}.)
It follows easily from the estimates in Lemma~\ref{lsurv} that $s_{t_k/2}=O(t_k^{-1})$ and that
\[
 \prod_{i=0}^{t_k/2} (1-p_i) =\Theta(1)\prod_{i=1}^{t_k/2} (1-s_i) =O(t_k^{-2}).
\]
So far we considered a single $v\in S$; by the union bound it follows that
$
 \Pr(A_k\mid T=T') \le C t_k^{-2} N(T')
$
for some absolute constant $C$. Hence,
\begin{equation}\label{Ek2}
 \Pr(E_k') \le Ct_k^{-2} \sum_{T'} \Pr(T=T') N(T') \Pr(L_k\mid T=T').
\end{equation}

Let $n_0=n_0(k)=(k\psi)^5$. Let $\mu_k^-$ and $\mu_k^+$ denote respectively
the contributions to the sum in \eqref{Ek2} from trunks $T'$ with $N(T')\le n_0$
and $N(T')>n_0$, so $\Pr(E_k')\le \mu_k^-+\mu_k^+$.
Trivially, we have
\begin{equation}\label{mumb}
 \mu_k^- \le Ct_k^{-2} n_0 \sum_{T'} \Pr(T=T')\Pr(L_k\mid T=T') = Ct_k^{-2} n_0 \Pr(L_k).
\end{equation}
For $k=1$ we have $\Pr(L_1)=\Pr(|X_{t_1}|>0)$.
Writing $\surv$ for the event that the whole process survives, we have
\[
 \Pr(|X_t|>0) = s+(1-s)\Pr(|X_t|>0\mid \surv^\cc) =s+(1-s)\Pr(|X_t^-|>0).
\]
By Lemma~\ref{lsurv}, it follows that for $t=o(1/\eps)$ we have
\begin{equation}\label{lastt}
 \Pr(|X_t|>0)\sim 2/t.
\end{equation}
In particular, $\Pr(L_1)=O(1/t_1)$, so
from \eqref{mumb}
\[
 \mu_1^- =O(t_1^{-3}n_0) = O(\eps^3\psi^3 \psi^5) = O(\eps^3\psi^8).
\]
For $k\ge 2$, from Lemma~\ref{fast} we have
\[
 \Pr(L_k) = \Pr\bb{|X_{t_{k-1}}|\ge \omega/\eps} \le t_{k-1}^{-1}e^{-\omega\psi(k-1)/20},
\]
so, from \eqref{mumb},
$\mu_k^-\le 10Ct_k^{-2}n_0t_{k-1}^{-1}e^{-\psi\omega (k-1)/20}$.
Recalling that $t_k=\eps^{-1}/(\psi k)$ and $n_0=n_0(k)=k^5\psi^5$, it follows that
\[
 \sum_{k=2}^K \mu_k^- \le \sum_{k\ge 2} 10C \eps^3 k^8\psi^8 e^{-\psi\omega(k-1)/20}.
\]
Since $\omega$ and $\psi$ are large for $n$ large, the first term dominates, and this
sum is $o(\eps^3)$. Together with the bound for $\mu_1^-$ above this gives
\begin{equation}\label{mkm}
 \sum_{k=1}^K \mu_k^- = O(\eps^3\psi^8).
\end{equation}

It remains to bound $\mu_k^+$. Noting that $N(N-1)\ge n_0N$ whenever $N>n_0$, 
and that $\Pr(L_k\mid T=T')\le 1$, from \eqref{Ek2} we have
\[
 \mu_k^+ \le Ct_k^{-2} \sum_{T'} \Pr(T=T') n_0^{-1} N(T')(N(T')-1)
 = Ct_k^{-2} n_0^{-1} \E\bb{N(N-1)},
\]
where the final expectation is unconditional.
Given $X_{t_k/2}$, each particle in this generation survives to generation
$t_k$ independently with probability $p=\Pr(|X_{t_k/2}|>0)=O(t_k^{-1})$,
from \eqref{lastt}. Hence
\[
 \E\bb{N(N-1)} = p^2 \E\bb{|X_{t_k/2}|(|X_{t_k/2}|-1)}
 = O(t_k^{-2}) \E\bb{|X_{t_k/2}|(|X_{t_k/2}|-1)}.
\]
A simple inductive formula, or a tree counting argument, gives
$\E\bb{|X_t|(|X_t|-1)} =\la^t(\la+\la^2+\cdots+\la^t) \le t\la^{2t}$.
With $t=t_k/2\le 1/\eps$, this is $O(t_k)$, so
$\E\bb{N(N-1)} = O(t_k^{-1})$.
Hence,
\[
 \mu_k^+=O(t_k^{-3}n_0^{-1}) = O\bb{\eps^3\psi^3k^3(k\psi)^{-5}} = O(\eps^3 k^{-2}).
\]
Thus $\sum_{k=1}^K\mu_k^+=O(\eps^3)$. Recalling that $\Pr(E_k')\le \mu_k^-+\mu_k^+$,
and using \eqref{mkm} and \eqref{Ekk'}, this establishes \eqref{eksum}. As
noted earlier, the lemma follows.
\end{proof}

\begin{remark}
As noted earlier, in the first draft of this paper we needed the condition
$\La\ge e^{(\log^* n)^4}$. The changes that allowed us to eliminate
this are the introduction of Lemma~\ref{noshort} (making checking
for acceptability in the case when $t_{\omega/\eps}(x)=o(1/\eps)$ much simpler),
the modification of Lemma~\ref{A1} to include the tree condition,
and the new proof of Lemma~\ref{A1} above.
\end{remark}

\section{The distribution of the correction term}\label{sec_dist}

In this section we shall describe the limiting distribution
of the correction term in Theorem~\ref{th_eps} and, very briefly,
that in Theorem~\ref{th_const}.
Surprisingly, although Theorem~\ref{th_eps} is much harder to prove
than Theorem~\ref{th_const}, the study of the correction term
is much easier in the former case. Indeed, with $p=\la/n$
and $\la$ constant, even the description of the correction term is rather complicated.
Let us start with the simpler case, assuming that $\la=1+\eps$ with $\eps=\eps(n)\to 0$.
It turns out that given the results of the previous section, not much
extra work is needed to obtain the distribution. Essentially, only one
natural extra idea is needed. Since the formal details would take
some time to write out, we shall only sketch the arguments.

In \refSS{ss_low}, we obtained a lower bound on the diameter by
considering vertices $x$ with a certain property $\FF=\FF_q$, $q=q_0$,
depending on the $t=t_0+t_1+q$ neighbourhoods, where $|q\eps|\le M$ was
essentially bounded. (We shall repeatedly use the observation
that if some probability is $o(1)$ uniformly in $|q\eps|\le M$ for any
constant $M$, then it is $o(1)$ uniformly in $|q\eps|\le M$ if $M=M(n)$ tends to
infinity slowly enough. It is often easier to think
of $M$ as constant, although in the end we need $M\to\infty$.)

One aspect of this property $\FF_q$, or rather of the related property $\tF_q$, was that
in the tree $T_x$ containing $x$ and attached to the 2-core,
$x$ is the unique vertex at maximal distance from the 2-core.
It turns out that a positive fraction of the trees attached
to the 2-core have more than one vertex at maximal distance,
and to obtain a precise result we must also consider such trees.
But we must only count each tree once.
The solution is very natural: we consider an auxiliary random
order $\prec$ on $V(G)$, and consider only vertices $x$
such that, writing $S_x$ for the set of vertices
of $T_x$ at maximal distance from the 2-core,
$x$ is the first vertex of $S_x$ in the order $\prec$.

More precisely, we modify the definition of
the branching process events $E_q$ and $F_q$, by weakening
the `strong wedge condition' $B$(i) on page~\pageref{Bi}: instead of insisting that the
`side branch' starting at generation $i$ dies within $i$
generations, we insist that it dies within $i+1$
generations (this is the {\em weak wedge condition}), and also,
writing $S_i$ for the set of particles
in the $i$th generation of the $i$th side branch, letting $S$ be
the union of the sets $S_i$ together with the initial particle,
and taking a random order on $S$, we insist that the initial particle
comes first in this order; we call this the {\em medium wedge condition}.

We showed that the probability of the strong wedge condition was 
asymptotically\break $d_1\prod_{i=1}^\infty d_i\sim d_1\gamma_0\eps^2 \sim e^{-1}\gamma_0\eps^2$,
where $d_i=\Pr(|X_i^-|=0)=1-s_i$ is the probability that the subcritical
process dies by time $i$.
Similarly, the probability of the weak wedge condition is asymptotically
$\prod_{i=1}^\infty d_i\sim \gamma_0\eps^2$.

If we condition on the weak wedge condition, then the distribution
of $S$ depends on $\eps$. However, the conditional probability that $S_i$
is non-empty is bounded by
\begin{multline*}
 \Pr\bb{|X_i^-|>0\bigm| |X_{i+1}^-|=0} = 1-\Pr\bb{|X_i^-|=0 \bigm| |X_{i+1}^-|=0} \\
  = 1-\frac{d_i}{d_{i+1}} =\frac{s_i-s_{i+1}}{1-s_{i+1}} \sim s_i-s_{i+1}.
\end{multline*}
{From} \eqref{uup} we have $s_i<2/i$ for all $i\ge 1$, so $\sum_i \Pr(S_i\ne\emptyset)$ converges
uniformly as $\eps\to 0$. Hence, for any $M(n)\to\infty$, the probability
that any $S_i$, $i>M$, is non-empty tends to $0$.
For fixed $i$, the distribution of $S_i$ converges as $\eps\to 0$,
in fact, to the distribution of the size of the $i$th generation of the exactly critical
process $\bp_1$ given that the $(i+1)$st generation is empty.
It follows that, in the branching process, $|S|$ converges in distribution
to some random variable $R$ not depending on $\eps$.
Modifying the arguments in \refSS{ss_low}, we find that when
we replace the strong wedge condition by the medium wedge condition,
in place of \eqref{FE} we obtain the estimate
\begin{equation}\label{Eqsim}
  \Pr(F_q(x))\sim \Pr(F_q)\sim 4\gamma_1 n^{-1}\las^q
\end{equation}
uniformly in $|q|\le M/\eps$,
where $\gamma_1=\E(1/R)\gamma_0$, and $\gamma_0$
is the constant in Lemma~\ref{lsurv}.

Turning to the upper bound, after much work mostly
involving ruling out pathological cases, we showed in \refSS{ss_up}
that for any function $M(n)$ tending to infinity,
whp any vertex $x$ that is part of a pair $(x,y)$ at maximal
distance satisfies the property $B^*(x)$, that $|t_{\omega/\eps}(x) -t_0-t_1|\le M/\eps$,
(see \eqref{bstar})
together with a certain unpleasant `acceptability' condition $A^*(x)$.
Moreover, Lemma~\ref{A2} shows that the expected number
of such vertices is bounded by some function of $M$.
Thinking of $M$ as constant for the moment, this expectation
is bounded. Now given that a vertex has property $B^*$, it is likely
that its relevant neighbourhood (up to $t_{\omega/\eps}$)
is a tree. (The expected number of edges within sets $\Ga_t(x)$
is bounded by $\delta=\la n^{-1}t_{\omega/\eps}
\binom{\omega/\eps}{2}=O(\omega^2(\log\La)\eps^{-3}n^{-1})=O(\La^{-2/3}\log\La)=o(1)$;
a similar
bound holds for the expected number of `redundant' edges between consecutive
$\Ga_t(x)$.)
We had to consider the non-tree case, because $\delta$ may go to zero only slowly,
but after reducing to vertices satisfying $B^*$, it is easy to check
from the proof of Lemma~\ref{A2} that the probability that $A^*\cap B^*$ holds
and the neighbourhood is not a tree is $o(\Pr(A^*\cap B^*))$.
It follows that (if $M$ increases slowly enough), the expected number
of vertices with $A^*\cap B^*$ holding and the neighbourhood not a tree is $o(1)$.

When considering tree neighbourhoods, acceptability becomes a much simpler
condition, closely related to the weak wedge condition. So far we considered
any vertex $x$ in a pair $(x,y)$ at maximal distance. Since we are only interested
in the existence of a pair at a certain distance, we may restrict
our attention to those $x$ that are first in their tree $T_x$ in our auxiliary
random order. For vertices satisfying $A^*\cap B^*$, the conditional probability
of this extra condition is asymptotically $\E(1/R)$, as above. Putting the pieces
together, we find that whp the diameter is realized by some pair of vertices
each of which satisfies a certain condition $F_q'$ depending
on its $t=t_0+t_1+q$ neighbourhood, where again $|q\eps|\le M$.
This condition is that $\Gat(x)$ is a tree,
and the event $A_t\cap B_t$ considered in Lemma~\ref{A2} 
modified to the medium wedge condition holds.
Also, modifying the proof of this lemma as indicated above, the
probability that a vertex satisfies this condition is
\[
 \Pr(F_q'(x))\sim \E(1/R) 4\gamma_0\eps^3\las^{t-t_1} \sim 4\gamma_1\las^q n^{-1}.
\]
Now the precise details of $F_q(x)$ and $F_q'(x)$ are rather different.
However, the definitions are such that $F_q(x)$ implies $F_q'(x)$.
(Firstly, in defining $F_q(x)$ we insisted that $\Gat(x)$ is a tree. Secondly,
via the condition $D=D_1\cap D_1'\cap D_2$, we ensured that $|\Ga_{t'}(x)|<\omega/\eps$
for $0\le t'\le t$. Thirdly, via $A$ we ensured that for all $t'$ up to
$t_0-r\ge t_0-2M/\eps$,
which is much larger than $h$,
there is a unique particle in each generation $t'$ with descendants
in $\Ga_t(x)$. Finally, we imposed the (there strong, but now medium)
wedge condition on all the side branches starting
up to time (at least) $t_0-2M/\eps$. This implies the (modified) form
of $(h,t)$-acceptability in $F_q'$.)

Since $\Pr(F_q'(x))\sim \Pr(F_q(x))$, and the expected number of vertices
with $F_q(x)$ is (for $M$ fixed) $\Theta(1)$, it follows that
for each $q$, whp {\em every} vertex with property $F_q'(x)$ also has $F_q(x)$.
We shall essentially consider only a bounded number of values of $q$
(again, a number that tends to infinity arbitrarily slowly),
so this holds whp for all such values.
Thus, whp, the diameter is equal to the maximum distance between
vertices with property $F_q(x)$ for suitable $q$. This also applies
if $M\to\infty$ slowly enough. We may thus forget
about $F_q'(x)$.

Now the condition $F_q(x)$ says that the (medium) wedge
condition holds, that $t(x)=t_{\omega/\eps}(x)>t_0+t_1+q$,
and that certain other technical conditions hold.
We shall need to know a little more, namely roughly how large $t(x)$
is. From the remarks above, we may ignore $x$ with $t(x)\ge t_0+t_1+M/\eps$.
For $-M^2\le i\le M^2$, let $q_i=i/(M\eps)$. Let us say that $x$
is {\em of type $i$} if $F_{q_i}(x)\setminus F_{q_{i+1}}(x)$ holds;
this corresponds roughly to the wedge condition plus $q_i< t(x)-t_0-t_1\le q_{i+1}$.
Let $N_i$ be the number of type $i$ vertices.
With $M$ constant, applying \eqref{Eqsim} twice shows that $\E N_i$
is asymptotically what it should be, and as usual this extends to $M\to\infty$
slowly enough, in which case
\[
 \E N_i \sim 4\gamma_1  e^{-q_i\eps}/M,
\]
since $\las^q=(1-\eps+O(\eps^2))^q \sim e^{-q\eps}$ if $q\eps$ does not grow too fast.

Let us say that $x$ is {\em plausible} if it is of type $i$ for some $-M^2\le i\le M^2$.
{From} the comments above, whp the diameter is realized by a pair of plausible vertices.

Now, the precise technical conditions in the definition of type $i$ vertices
are as in \refSS{ss_low}; as there, these allow us to calculate 2nd moments,
and indeed $r$th moments for any fixed $r$. More precisely, given a sequence
$\bi=(i_1,\ldots,i_r)$, let us say that a sequence $(x_1,\ldots,x_r)$ of distinct
vertices is an {\em $r$-tuple of type $\bi$} if each $x_j$ is of type $i_j$.
Such an $r$-tuple is {\em good} if the relevant trees witnessing this are disjoint,
and {\em bad} otherwise. Arguing as in \refSS{ss_low}, the expected
number of good $r$-tuples of type~$\bi$
is what it should be, namely $(1+o(1))\prod_{j=1}^r \E N_{i_j}$
(which is $\Theta(1)$ if $M$ is fixed),
and the expected number of bad $r$-tuples is $o(1)$.
This shows that all fixed mixed moments of
the sequence $(N_{-M^2},\ldots,N_{M^2})$
converge to what we expect, and thus that (for $M$ fixed) the sequence $(N_i)$ converges
in distribution to a sequence of independent Poisson random variables.

Turning to the diameter, let $P$ be the number of unordered {\em pairs} $(x,y)$ of plausible
vertices with $d(x,y)\ge d=d_0+c\eps^{-1}$, where $c$ is constant. We
aim to understand $\Pr(P>0)$ by evaluating the factorial moments
$\E_k(P) = \E(P(P-1)\ldots(P-k+1))$.
Now $\E_k(P)$ is the expected number of $k$-tuples of distinct pairs with the relevant property.
It may be that several pairs involve the same vertex; in general we can write
$\E_k(P)$ as a sum over integers $r\le 2k$ and graphs $H$ on $\{1,2,\ldots,r\}$ with $k$
edges of the expectation of the number of $r$-tuples of plausible
vertices in which certain specified pairs are at distance at least $d$ and the others are not.
We evaluate this by summing over the types of the relevant vertices.
Thus we must evaluate the expected number of $r$-tuples $(x_1,\ldots,x_r)$
of type $\bi$ in which $k$ specified pairs are at distance at least $d$ and the others are not.

Since there are $o(1)$ bad $r$-tuples, we consider only good $r$-tuples. Finally, we test
whether a particular sequence $(x_1,\ldots,x_r)$ has the required property by exploring
the neighbourhoods of each $x_j$ out to the relevant distance ($t_0+t_1+q_{i_j}$).
By Lemma~\ref{spcplk}, the probability that the explorations are disjoint and each $x_j$ is of the right
type is `what it should be', namely $n^{-r}$ times the expected number of good
$r$-tuples of type $\bi$. Suppose this happens. Then we have not so far
tested any edges outside these neighbourhoods.

Continuing to explore, the neighbourhoods grow at the expected rate whp. We explore
$t_2/2-O(1/\eps)$ further steps, by which time the neighbourhoods have size $\Theta(\sqrt{\eps n})$.
(Recall that this is the size at which they typically meet.)
By this time, there are very few (in expectation $O(1)$) vertices in two or more neighbourhoods,
and whp none in three or more. It follows that the times at which different pairs
of neighbourhoods meet are essentially independent, with distribution
given by Lemma~\ref{meet}. This allows us to calculate $\E_k(P)$,
and hence $\Pr(\diam(G(n,\la/n))\ge d)\sim \Pr(P>0)$.

Rather than give any further details, let us describe the limiting distribution
we obtain. It should then be clear that all expectations being `what they should be'
corresponds to convergence to the corresponding values for this limiting distribution.

Let $\PP$
be a Poisson process on $\R$ with density function $f(x)=4\gamma_1e^{-x}$.
Note that $\int_{x'\ge x} f(x') \dd x'= f(x)<\infty$ for any $x$, so with probability
1 we may list the points of $\PP$ as $z_1,z_2,\ldots$ in decreasing order.
For each $1\le i<j$, let $T_{ij}$ be a random variable
with $\Pr(T_{ij}>x)=\exp(-e^x)$, with these variables independent of each other and
of $\PP$.
Finally, let $D=\sup\{z_i+z_j+T_{ij}\}$. It is not hard to check
that with probability 1 $D$ is finite, and the supremum is attained.
Indeed, as $M\to\infty$, the probability that it is attained by some
$i$, $j$ with $z_i,z_j\ge -M$ tends to $1$.

\begin{theorem}\label{th:dist1}
Let $\eps=\eps(n)>0$ satisfy $\eps\to 0$ and $\eps^3n\to\infty$,
and let $\la=1+\eps$.
For any constant $c$ we have
\[
 \Pr\left(  \diam(G(n,\la/n)) \ge \frac{\log(\eps^3 n)}{\log\la} + 2\frac{\log(\eps^3 n)}{\log(1/\las)} +c/\eps \right) \to \Pr(D\ge c)
\]
as $n\to\infty$.\noproof
\end{theorem}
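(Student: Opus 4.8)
The plan is to establish convergence of all factorial moments $\E_k(P_c)$, where $P_c$ denotes the number of unordered pairs of plausible vertices at distance at least $d_0+c/\eps$, to the corresponding factorial moments of the number of pairs $\{i,j\}$ with $z_i+z_j+T_{ij}\ge c$ in the limiting object. Since $\Pr(\diam(G(n,\la/n))\ge d_0+c/\eps)\sim \Pr(P_c>0)$ by the upper and lower bound arguments of \refSSs{ss_low} and \ref{ss_up} (these show that whp the diameter is realised by a pair of plausible vertices, and that no pair with both $t_{\omega/\eps}(z)$ outside $t_0+t_1+O(1/\eps)$ contributes), moment convergence plus the method of moments will give $\Pr(P_c>0)\to\Pr(D\ge c)$, which is $\Pr(D\ge c)$ by definition of $D$.

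First I would fix a slowly growing $M=M(n)\to\infty$ and restrict to vertices of type $i$ for $-M^2\le i\le M^2$, as sketched before the theorem statement; the contribution of vertices with $|t_{\omega/\eps}(x)-t_0-t_1|>M/\eps$ is negligible by the remarks following the proof of Theorem~\ref{th_eps} (the $\exp(-\la^{K/4-2K}2^{r-1})$ tail bound) together with Lemma~\ref{A2}. Next, I would record that the vector $(N_i)_{-M^2\le i\le M^2}$ of type-counts converges in distribution to independent Poisson variables with means $4\gamma_1 e^{-q_i\eps}/M$: this follows from the good/bad $r$-tuple dichotomy, where the expected number of good $r$-tuples of a given type $\bi$ is $(1+o(1))\prod_j \E N_{i_j}$ by Lemma~\ref{spcplk} and the expected number of bad $r$-tuples is $o(1)$ by the reducibility estimates of \refSS{ss_low} (the van den Berg--Kesten-type bounds via Lemma~\ref{lgc}). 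As $M\to\infty$ these Poisson means, summed against $e^{-q_i\eps}/M$, approximate $\int 4\gamma_1 e^{-x}\dd x$, i.e.\ the Poisson process $\PP$ with density $f(x)=4\gamma_1 e^{-x}$ arises as the limit of the (rescaled) configuration of $t_{\omega/\eps}$-values of plausible vertices.

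Then, for the pair statistics, I would compute $\E_k(P_c)$ by expanding it as a sum over $r\le 2k$, over graphs $H$ on $[r]$ with $k$ edges, and over type vectors $\bi\in\{-M^2,\dots,M^2\}^r$, of the expected number of good $r$-tuples $(x_1,\dots,x_r)$ of type $\bi$ in which the pairs specified by $E(H)$ are at distance $\ge d_0+c/\eps$ and the others at distance $<d_0+c/\eps$. For each such $r$-tuple, after using Lemma~\ref{spcplk} to see that the disjoint-neighbourhood event has probability $\sim n^{-r}\prod_j\E N_{i_j}$, one continues the explorations a further $t_2/2-O(1/\eps)$ steps: whp the neighbourhoods grow at the predicted rate (Lemma~\ref{ggrow}), no vertex lies in three of them, and hence by Lemma~\ref{meet} the meeting times of the $\binom r2$ pairs are asymptotically independent with $\Pr(\text{not met by step }d_0+a/\eps\text{ past the relevant }t_2)\to e^{-(1+o(1))\la^{\cdot}}$, which after rescaling by $1/\eps$ becomes exactly $\Pr(T_{ij}>\cdot)=\exp(-e^{\cdot})$ — here one uses $\log\la\sim\eps$ and that $\log(\eps M'\!/\omega)/\log\la+t_2$-type offsets contribute only the $z_i+z_j$ shift. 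Matching the limiting expression term by term against $\E_k$ of the number of pairs in $\{(i,j):z_i+z_j+T_{ij}\ge c\}$ completes the moment computation, and a standard moment-problem argument (the limiting pair-count has all moments finite and determined, since $\PP$ has exponentially light tails above any level) upgrades this to convergence in distribution.

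The main obstacle will be controlling the interaction between different pairs \emph{at the meeting stage}: once $r\ge 3$ neighbourhoods have been grown to size $\Theta(\sqrt{\eps n})$, their pairwise meeting events are only approximately independent, and one must show that the corrections are $o(1)$ uniformly — i.e.\ that whp no vertex is in two neighbourhoods at the critical size (so the tested cross-edge sets are essentially disjoint across pairs), and that the "already met" dependence (if $x$ has met $y$, this slightly changes $x$'s neighbourhood when it later tries to meet $w$) is negligible. This is exactly the kind of bookkeeping carried out in the four-vertex analysis in the proof of Theorem~\ref{th_inf} and in \refSS{ss_low}, and I would adapt it: the key point is that each neighbourhood uses $o(n^{2/3})$ vertices, so any two neighbourhoods share at most an $n^{-1/3}$ fraction, which does not materially affect the Poisson-type calculation of meeting times in Lemma~\ref{meet}. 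Everything else — the appearance of $\gamma_1=\E(1/R)\gamma_0$ from the medium wedge condition and the auxiliary random order, the reduction to tree neighbourhoods, and the passage $M\to\infty$ — is routine given the machinery already assembled, so I would present it briefly and concentrate the write-up on the independence-of-meeting-times estimate.
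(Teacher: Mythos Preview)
Your proposal is correct and follows essentially the same outline as the paper's sketch in Section~\ref{sec_dist}: reduce to plausible vertices via the medium wedge condition, show the type-counts $(N_i)$ are asymptotically independent Poisson via mixed moments of good $r$-tuples, compute the factorial moments $\E_k(P_c)$ using Lemma~\ref{meet} for the (asymptotically independent) meeting times, and conclude by the method of moments. One minor imprecision: the control of bad $r$-tuples in \refSS{ss_low} is not a van den Berg--Kesten argument via Lemma~\ref{lgc} but rather the 2-core attachment argument culminating in \eqref{tBsmall} via Lemma~\ref{l2c}; otherwise your plan matches the paper's.
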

In other words, the $\Op(1/\eps)$ correction term in \eqref{deps}
converges in distribution to $D$ (after multiplication by $\eps$).

We have proved Theorem~\ref{th:dist1} in outline above. There are
a few further technical details (such as checking that the relevant
sequences of moments do not grow too fast, so convergence
of all fixed moments gives convergence in distribution), but we
shall not describe these any further.

The description of the random variable $D$
is somewhat complicated; however, it seems rather unlikely that this random variable
will have a simpler description. Given this description, the branching
process approach taken here seems with hindsight very natural: the description
of $D$ more or less forces us to consider the (exponentially distributed)
times that the vertices take for their neighbourhoods to reach certain
very large sizes, and then the time they take to meet after this.

Finally, let us comment very briefly on the case $p=\la/n$, $\la$ constant.
It is not that the proof is any harder in this case (it is much easier),
but the result is much harder to describe. Again we consider vertices
satisfying the medium wedge condition (which now has probability
bounded away from $0$), and, taking $\omega=(\log n)^6$, say,
we study the distribution of $t_{\omega}(x)$
for such $x$,
in the range where $\Pr(t_\omega(x)\ge t_0)$ is of order $1/n$.
{From} Lemma~\ref{l1} it is very easy to check that when $t_{\omega}(x)$
is very large, this is almost always because for many generations there is only
one neighbour whose descendants do not die quickly, and we easily
find asymptotic independence of the event $\{t_{\omega}(x)>t_0\}$ and the wedge condition.

Approximating by a branching process, it is easy to prove
an equivalent of Theorem~\ref{th19}, showing
that the distribution of $t_\omega(x)$
may be described (as in the $\la\to 1$ case) by the tail
of $Y=Y_\la$ near 0. But now the first complication appears:
this random variable no longer has a nice power-law tail,
but asymptotically follows a power law multiplied by a function
that oscillates periodically within a constant factor.
Also, when we explore neighbourhoods and reach size $\omega$,
the current neighbourhood may have any size between $\omega$ and $\la\omega$;
this constant factor affects the probability of joining up with another neighbourhood
within a certain time.
In the end it turns out that the distribution depends on the fractional
parts of both $\log n/\log \la$ and $\log n/\log\las$, as indeed
it must from the form of~\eqref{dform}. We omit the details, as
a precise statement of the result would be rather lengthy.

\begin{acknowledgements}
This research started during the program `Random Graphs and Large-Scale Real-World Networks'
at the Institute for Mathematical Sciences, National University of Singapore in summer 2006;
the authors are grateful to the Institute for its support.
The authors would like to thank the anonymous referee for a careful reading of the paper
and many helpful suggestions concerning the presentation.
\end{acknowledgements}

\end{document}